\newtheorem{theorem}{Theorem}
\newtheorem{conj}{Conjecture}
\newtheorem{lemma}{Lemma}
\numberwithin{lemma}{section}
\numberwithin{equation}{section}
\newcommand{\Aut}{\operatorname{Aut}}
\newcommand{\Inndiag}{\operatorname{Inndiag}}
\newcommand{\gexp}{\operatorname{exp}}
\begin{document}
%\noindent УДК 512.542

\vspace{1cm}

\title[On recognition of symplectic and orthogonal groups]{On recognition of symplectic and orthogonal groups of small dimensions by spectrum}

\author{M.A. Grechkoseeva}

\author{A.V. Vasil'ev}

\author{M.A. Zvezdina}

\thanks{The work is supported by  Russian Science Foundation (project 14-21-00065).}

\begin{abstract}
We refer to the set of the orders of elements of a finite group as its spectrum and say that finite groups are isospectral if their spectra coincide.
In the paper we determine all finite groups isospectral to the simple groups $S_6(q)$, $O_7(q)$, and $O_8^+(q)$.
In particular, we prove that with just four exceptions, every such a finite group is an extension of the initial simple group by a (possibly trivial) field automorphism.

{\bf Keywords:} simple classical group, orders of elements, recognition by spectrum.
 \end{abstract}

\maketitle

\section{Introduction}

The spectrum  $\omega(G)$ of a finite group $G$ is the set of the orders of its elements. Groups whose spectra coincide are said to be isospectral.
If $G$ is a finite group having a nontrivial normal solvable subgroup, then by \cite[Lemma 1]{98Maz.t}, there are infinitely many pairwise nonisomorphic finite groups isospectral to $G$.
In contrast, the finite nonabelian simple groups are rather satisfactorily determined by the spectrum. We refer to a nonabelian simple group $L$ as recognizable by spectrum if
every finite group $G$ isospectral to $L$ is isomorphic to $L$, and as almost recognizable by spectrum if every such a group $G$ is an almost simple group with socle isomorphic to $L$.
It is known that all sporadic and alternating groups, except for $J_2$, $A_6$ and $A_{10}$, are recognizable by spectrum (see \cite{98MazShi, 13Gor.t}) and all exceptional groups excluding $^3D_4(2)$ are
almost recognizable by spectrum (see \cite{14VasSt.t,16Zve.t}). In 2007 V.D. Mazurov conjectured that there is a positive integer $n_0$ such that all simple classical groups of dimension at least $n_0$
are as well almost recognizable by spectrum. Mazurov's conjecture was proved in \cite[Theorem 1.1]{15VasGr1} with $n_0=62$. Later it was shown in \cite[Theorem 1.2]{17Sta} that we can take $n_0=38$.
It is clear that this bound is far from being final, and we conjecture that the following holds \cite[Conjecture 1]{15VasGr1}.

\begin{conj} \label{c:1} Suppose that $L$ is one of the following nonabelian simple groups:
\begin{enumerate}
\item $L_n(q)$, where $n\geqslant5$;
\item $U_n(q)$, where $n\geqslant5$ and $(n,q)\neq(5,2)$;
\item $S_{2n}(q)$, where $n\geqslant3$, $n\neq4$ and $(n,q)\neq(3,2)$;
\item $O_{2n+1}(q)$, where $q$ is odd, $n\geqslant3$, $n\neq4$ and $(n,q)\neq(3,3)$;
\item $O_{2n}^\varepsilon(q)$, where $n\geqslant4$ and $(n,q,\varepsilon)\neq (4,2,+),(4,3,+)$.
\end{enumerate}
Then every finite group isospectral to $L$ is an almost simple group with socle isomorphic to $L$.
\end{conj}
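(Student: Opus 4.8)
Conjecture~\ref{c:1} is a program rather than a single theorem, and I describe the route that settles it for $L\in\{S_6(q),O_7(q),O_8^+(q)\}$ (the cases $n=3,3,4$ of items (iii)--(v)), which is the aim of this paper. Fix a finite group $G$ with $\omega(G)=\omega(L)$, write $p$ for the defining characteristic of $L$ and $\pi=\pi(L)$. The plan is to prove successively that the solvable radical of $G$ is trivial, that $G$ is then almost simple, that its socle $S$ is isomorphic to $L$, and finally --- sharpening the conclusion of the conjecture --- that $G/L$ is cyclic, generated by a field automorphism, apart from a handful of exceptions occurring for the smallest values of $q$.

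\textbf{Trivial solvable radical.} Suppose the solvable radical $K$ of $G$ is nontrivial; pick a $G$-chief factor $V\leqslant K$, an elementary abelian $r$-group, and set $\bar G=G/C_G(V)$. The element orders of $S_6(q)$, $O_7(q)$ and $O_8^+(q)$ are completely known (they may be read off the maximal tori), so one can find a prime $t\neq r$ and an $r'$-element $\bar h\in\bar G$ with $t\mid|\bar h|$ such that $rt\notin\omega(L)$, or a related obstruction of the form $rm\in\omega(L)$. The standard module lemma --- that the natural extension of $\langle\bar h\rangle$ by $V$ contains an element of order $r\cdot|\bar h|$ exactly when $\bar h$ fixes a nonzero vector of $V$ --- then contradicts $\omega(G)=\omega(L)$. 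Hence $K=1$, so $\mathrm{Soc}(G)=S_1\times\dots\times S_k$ with each $S_i$ nonabelian simple, and a comparison of the largest element orders and of $\pi$ with those of a direct product forces $k=1$ and $C_G(S_1)=1$; thus $G$ is almost simple with socle $S:=S_1$.

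\textbf{Identifying the socle and the outer part.} From $\omega(S)\subseteq\omega(L)$ and the classification, $S$ lies on a finite, explicit list of simple groups; primitive prime divisors of $q^i-1$ together with the largest element order and $\pi(L)$ pin $S$ down to characteristic $p$ and essentially the order of $L$, leaving $S\cong L$ except for a few configurations attached to $S_6(2)$, $O_7(3)$, $O_8^+(2)$ and $O_8^+(3)$ --- the four groups excluded from Conjecture~\ref{c:1} and the source of the four exceptions mentioned in the abstract. It then remains, with $S\leqslant G\leqslant\Aut S$, to exclude diagonal and graph automorphisms (including triality for $O_8^+(q)$): an outer element inducing such an automorphism, multiplied by a suitable semisimple element of $S$, yields an order outside $\omega(L)$, whereas a field automorphism of the relevant order does not; so $G/S$ embeds into the group of field automorphisms, which completes the argument for these $L$.

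\textbf{Main obstacle.} Since $\dim L\in\{6,7,8\}$ lies below the bound $n_0=38$ of \cite{17Sta}, the general reduction to the almost simple case cannot be invoked, and the prime graphs of these $L$ are in general connected, ruling out the classical disconnected-graph arguments; so the first step must be carried out by hand, bounding the faithful $\mathbb{F}_r$-modules of every possible quotient $\bar G$ and verifying the fixed-point conditions case by case. This is the core difficulty, and it is hardest for $O_8^+(q)$, whose rich subgroup structure and triality also make the identification of $S$ and the control of $G/S$ the two remaining delicate points.
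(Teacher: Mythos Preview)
Your plan inverts the logical order that actually works here, and the inversion is not cosmetic. You propose to kill the solvable radical $K$ first, by picking a chief factor $V\leqslant K$ and exhibiting an element $\bar h\in G/C_G(V)$ whose order forces a forbidden product $rt\in\omega(G)$. But at that stage you know nothing about $G/K$, so you cannot locate such an $\bar h$: the ``standard module lemma'' needs as input an element of a specific order, and producing it requires control over the nonabelian composition factor $S$ --- precisely what you have not yet secured. In the paper the order is reversed. The structure $S\leqslant G/K\leqslant\Aut S$ with $S$ simple is obtained immediately from the general coclique criterion (Lemma~\ref{l:str}), not from a direct-product/largest-order comparison; $K$ is then shown only to be \emph{nilpotent} (Lemma~\ref{l:a}), not trivial; the bulk of the work is Theorem~\ref{t:2}, which rules out every group of Lie type in characteristic $v\neq p$ as a candidate for $S$; only after $S\simeq L$ is known does a separate cited result (\cite{15Gr}) give $K=1$.

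Correspondingly, you have misidentified the main obstacle. It is not ``bounding the faithful $\mathbb F_r$-modules of every possible quotient $\bar G$'': the paper never classifies such modules. The hard step is the cross-characteristic elimination of $S$, carried out by comparing $\gexp(S)$ with the explicit bound $\gexp(L)<q^9$ (Lemma~\ref{l:expc3}), exploiting the coclique $\{p,r_3(\varepsilon q),r_4(q),r_6(\varepsilon q)\}$ and the fact that $k_6(\varepsilon q)\in\omega(S)$, and using Frobenius/Hall--Higman style lemmas (Lemmas~\ref{l:frob}, \ref{l:hh}, \ref{l:rest_k}) together with the nilpotence of $K$ to restrict $\pi(K)$ and $\pi(\overline G/S)$. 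Your sketch of the identification step (``primitive prime divisors \dots\ pin $S$ down to characteristic $p$'') is a one-line placeholder for what is in fact two full sections of case analysis; and your final step on outer automorphisms, while correct in spirit, is again a citation (\cite{16Gr.t,18Gr.t}) rather than an argument done here.
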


In the present paper we prove the part of Conjecture \ref{c:1} concerning the groups $S_6(q)$, $O_7(q)$ and $O_8^+(q)$. More precisely, we determine finite groups isospectral to these simple groups.
%This description involves field automorphisms of the groups under consideration.
%If $L$ is one of the groups $S_6(q)$, $O_7(q)$, or  $O_8^+(q)$, where $q=p^m$ and $p$ is a prime, then $L$ has a field automorphism $\varphi$
%of order $m$

%% !полевые автоморфизмы

\begin{theorem} \label{t:1} Let $L$ be one of the simple groups $S_6(q)$, $O_7(q)$, or $O_8^+(q)$, where $q$ is a power of a prime $p$, and let $G$ be a finite group.
\begin{enumerate}
\item If $q$ is odd, $p\neq 5$ and $L\neq O_7(3), O_8^+(3)$, then $\omega(G)=\omega(L)$ if and only if up to isomorphism
$G=L\rtimes\langle \varphi\rangle$, where $\varphi$ is a (possibly trivial) field automorphism of $L$ whose  order is a power of $2$.
\item If $q>2$ is even or $p=5$, then $\omega(G)=\omega(L)$ if and only if $G\simeq L$.
\item If $L=O_7(3)$ or $L=O_8^+(3)$, then $\omega(G)=\omega(L)$ if and only if $G\simeq O_7(3)$ or $G\simeq O_8^+(3)$.
\item If $L=S_6(2)$ or $L=O_8^+(2)$, then $\omega(G)=\omega(L)$ if and only if $G\simeq S_6(2)$ or $G\simeq O_8^+(2)$.
\end{enumerate}
\end{theorem}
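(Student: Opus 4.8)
The plan is to prove the two implications of each part separately, the "only if" direction carrying almost all of the weight. All computations rest on the explicit description of the spectra of $S_6(q)$, $O_7(q)$, $O_8^+(q)$ and of the related classical and small exceptional groups in terms of the cyclotomic values $\Phi_d(q)$, together with the structure of the fixed-point subgroup of a field, graph, or triality automorphism. For the "if" direction in case (i) one takes $\varphi$ of order $2^k$; its centralizer in $L$ is, up to bounded index, the group of the same type over $\mathbb{F}_{q^{1/2^k}}$, and every element of $L\rtimes\langle\varphi\rangle$ has order dividing $2^i m$ for some $i\leqslant k$ and some $m\in\omega(C_L(\psi))$ with $\psi$ a power of $\varphi$ of order $2^i$; one then checks, $\Phi_d$ by $\Phi_d$, that each such number already lies in $\omega(L)$. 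This works because the maximal tori of the symplectic or orthogonal group over $\mathbb{F}_{q^{1/2^i}}$ embed into tori of $L$, the extra $2$-parts being supplied by $-1$ (central in the symplectic case, realised as $-I_2\perp I_{n-2}$ in the orthogonal one) and by the even order of these tori when $q$ is odd, and because, for $p\neq5$, the largest $p$-part of a unipotent element of $S_6$, $O_7$, $O_8^+$ either equals $p$ or, when it equals $p^2$, is realised by unipotents whose Jordan type is not forced to be regular, so that $p^2$ may still be multiplied by a noncentral involution. For the exceptional $L$ of (iii) and (iv) one instead verifies the identities $\omega(O_7(3))=\omega(O_8^+(3))$ and $\omega(S_6(2))=\omega(O_8^+(2))$ directly.

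For the converse, assume $\omega(G)=\omega(L)$ and let $K$ be the soluble radical of $G$. The prime graph $GK(L)$ is connected, so the Gruenberg--Kegel theorem does not apply directly; instead I would use that $L$ has an independent set of three vertices (for instance $\{p,r_3,r_6\}$, where $r_i$ denotes a primitive prime divisor of $q^i-1$) together with an independent set of size two containing $2$, and invoke the prime-graph criteria for almost simplicity from \cite{15VasGr1} and its precursors to rule out the Frobenius and $2$-Frobenius possibilities for $G/K$ and to conclude that $G/K$ is almost simple with socle a nonabelian simple group $S$. A separate argument, using that $K$ can neither contain nor centralise the large cyclic subgroups realising the orders $r_1r_3$, $r_2r_6$, $r_1r_4$, then gives $K=1$, so once $S$ is identified we have $L\lhd G\leqslant\Aut(S)$.

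Identifying $S$ is, I expect, the main obstacle. Since $S$ contains elements of orders $r_3$, $r_4$, $r_6$ and $\omega(S)\subseteq\omega(L)$, Zsigmondy's theorem and the known spectra restrict $S$ to classical and small exceptional groups in characteristic $p$ over a field whose size is tied to $q$; one must then eliminate, one family at a time, the groups $L_n$, $U_n$, $S_{2n}$, $O_{2n+1}$, $O_{2n}^\varepsilon$ of the wrong dimension and the exceptional groups such as $G_2$ and $^3D_4$, using the precise adjacencies among $r_1,r_2,r_3,r_4,r_6$ in $GK(L)$ and the absence from $\omega(L)$ of primitive prime divisors of $q^5-1$, $q^8-1$, $q^{10}-1$, $q^{12}-1$. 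This forces $S\cong L$ except for the genuine coincidences $\omega(O_7(3))=\omega(O_8^+(3))$ and $\omega(S_6(2))=\omega(O_8^+(2))$; for these finitely many small groups, and for the claim that $G$ is then exactly one of the two listed groups, a direct and if necessary computer-assisted analysis is required. The delicate point throughout this step is that it needs a complete and sharp knowledge of the spectra of all simple groups of order not much exceeding $|L|$.

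Finally, with $L\lhd G\leqslant\Aut(L)$, I would determine the admissible overgroups. Inner-diagonal automorphisms are excluded because $\Inndiag(L)$, namely $PGSp_6(q)$, $SO_7(q)$, or $\Inndiag(O_8^+(q))$, contains elements whose orders lie outside $\omega(L)$; graph and triality automorphisms of $O_8^+(q)$ are excluded because the associated fixed-point subgroups ($P\Omega_7(q)$, or $PSp_6(q)$ when $q$ is even, and $G_2(q)$ or $^3D_4(q_0)$) contribute orders of the form $2m$ or $3m$ with $m$ an element order of one of these groups that is incompatible with $\omega(O_8^+(q))$. A field automorphism $\psi$ of odd prime order $s$ is excluded because its centralizer over $\mathbb{F}_{q^{1/s}}$ has an element of order $r$, a primitive prime divisor of $(q^{1/s})^j-1$ for a suitable $j$, so that $L\rtimes\langle\psi\rangle$ has an element of order $sr$ while $s$ and $r$ are nonadjacent in $GK(L)$; hence $\varphi$ must have $2$-power order. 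For $p=5$ with $q$ an even power of $5$, even a field automorphism of order $2$ fails, since $C_L(\varphi)$ has a regular unipotent element of order $25$, so $L\rtimes\langle\varphi\rangle$ has an element of order $50$, whereas in $L$ every element of order $25$ is a regular unipotent whose centralizer contains no noncentral involution, so $50\notin\omega(L)$. For even $q>2$ the same phenomenon occurs with a semisimple element: writing $q=q_0^2$, the group $C_L(\varphi)$ over $\mathbb{F}_{q_0}$ has an element of order $q_0^3+1$, so $L\rtimes\langle\varphi\rangle$ has an element of order $2(q_0^3+1)$, but in $L$ an element of order $q_0^3+1$ is regular semisimple with a maximal torus of odd order as centralizer, so $2(q_0^3+1)\notin\omega(L)$; hence for even $q>2$ only $G\simeq L$ survives. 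Combining these facts with the "if" direction yields the stated classification.
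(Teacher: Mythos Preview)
Your overall architecture is sensible, and the parts that concern the ``if'' direction and the final analysis of which outer automorphisms preserve the spectrum are close in spirit to what is needed (in the paper these are not reproved but imported from \cite{16Gr.t} and \cite{18Gr.t}). The genuine gap is in your identification of the nonabelian composition factor $S$.

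You write that ``Zsigmondy's theorem and the known spectra restrict $S$ to classical and small exceptional groups in characteristic $p$ over a field whose size is tied to $q$''. This is exactly the statement of Theorem~\ref{t:2}, and it is the entire content of the paper: Sections~4 through~6 are devoted to showing, family by family, that $S$ cannot be a group of Lie type in characteristic $v\neq p$. Zsigmondy alone does not do this. The primes $r_3(\varepsilon q)$, $r_4(q)$, $r_6(\varepsilon q)$ can perfectly well occur as primitive prime divisors $r_j(u)$ for suitable $j$ and a prime power $u$ in a different characteristic; to exclude this one needs quantitative comparisons between $\gexp(S)$ and $\gexp(L)$ (Lemmas~\ref{l:expc3}, \ref{l:exp}, \ref{l:exp_e}), arithmetic constraints of the type in Lemma~\ref{l:k3}, control of what happens inside $K$ and $\overline G/S$ (Lemmas~\ref{l:rest_k}, \ref{l:rest_aut}), and a long case analysis (Lemmas~\ref{l:l2} through the end of Section~6). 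Your proposal contains none of this, and the sentence quoted above papers over the principal difficulty of the theorem.

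A second, smaller issue is the order of operations around the solvable radical. You propose to prove $K=1$ before $S$ is identified, by arguing that $K$ ``can neither contain nor centralise'' certain cyclic subgroups. This does not obviously work: the paper only obtains that $K$ is nilpotent at that stage (Lemma~\ref{l:a}) and carries a nontrivial $K$ through the entire elimination of cross-characteristic $S$. Triviality of $K$ is established only after one knows $S\simeq L$, by invoking the results of \cite{15Gr} on element orders in covers of $L$; your sketch gives no mechanism to force $K=1$ while $S$ is still unknown.
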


Theorem \ref{t:1} has been proved for $q=3$ and  $q$ even (see  \cite{97ShiTan} and \cite{12Sta.t} respectively), so we consider the case when $q$ is odd and $q>3$.
In this case the structure of groups isospectral to $L$ is also well studied and to prove Theorem \ref{t:1}, it remains to establish the following.

\begin{theorem} \label{t:2} Let $L$ be one of the simple groups $S_6(q)$, $O_7(q)$,  or $O_8^+(q)$, where $q$ is a power of an odd prime $p$ and $q>3$.
If $G$ is a finite group and $\omega(G)=\omega(L)$, then any simple group of Lie type in characteristic other than $p$ cannot be a composition factor of $G$.
\end{theorem}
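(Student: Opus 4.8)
Suppose, for contradiction, that $G$ has a composition factor $S$ which is a simple group of Lie type over a field of order $q_0=s^u$ with $s\ne p$. By the known description of finite groups isospectral to $L$ (the part referred to above as well studied; it reflects the fact that the Gruenberg--Kegel graph $\Gamma(L)$ has independence number $3$), $G$ has a solvable normal subgroup $K$ such that $\overline G:=G/K$ is an almost simple group with socle isomorphic to $S$ -- so $S$ is the \emph{unique} nonabelian composition factor of $G$ -- and $|K|$ is not divisible by the primitive prime divisors $r_3=r_3(q)$ of $q^3-1$ and $r_6=r_6(q)$ of $q^6-1$. Below $\Phi_i$ denotes $\Phi_i(q)$.

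\textbf{Arithmetic of $L$.} From $|S_6(q)|=|O_7(q)|=\tfrac12 q^9\Phi_1^3\Phi_2^3\Phi_3\Phi_4\Phi_6$ and $|O_8^+(q)|=\tfrac14 q^{12}\Phi_1^4\Phi_2^4\Phi_3\Phi_4^2\Phi_6$ we get $\pi(L)=\{p\}\cup\pi(\Phi_1\Phi_2\Phi_3\Phi_4\Phi_6)$; hence $\pi(S)\subseteq\pi(G)=\pi(L)$ forces $\mathrm{ord}_r(q)\in\{1,2,3,4,6\}$ for every prime $r\mid|S|$ with $r\ne p$, and $s\mid\Phi_e$ for some $e\in\{1,2,3,4,6\}$. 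From the maximal tori and unipotent classes of $L$ one records: (i) every element order in $\omega(L)$ is at most $q^4$, and the $p'$-part of any element order of $L$ divides a product of the $\Phi_i$ of total degree at most $3$ (at most $4$ for $O_8^+$); and (ii) $\Gamma(L)$ has the non-edges $\{p,r_3\}$, $\{p,r_6\}$, $\{r_2,r_3\}$, $\{r_3,r_4\}$, $\{r_1,r_6\}$, $\{r_4,r_6\}$, $\{r_3,r_6\}$ -- in fact, among the representatives $p,r_1,r_2,r_3,r_4,r_6$, the vertex $r_3$ is joined only to $r_1$ and $r_6$ only to $r_2$, while $\{p,r_1,r_2,r_4\}$ is a clique. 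Here $r_1,r_3,r_4,r_6$ exist since $q$ is odd and $q>3$; $r_2=r_2(q)$ exists unless $q$ is a Mersenne prime, which is a separate subcase.

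\textbf{Bounding $S$.} For a prime $t\nmid|K|$ a Sylow $t$-subgroup of $G$ maps isomorphically onto one of $\overline G$, and more generally any cyclic subgroup of $\overline G$ of order coprime to $|K|$ lifts to $G$ by the Schur--Zassenhaus theorem; thus every element order of $\overline G$, and in particular of $S$, that is coprime to $|K|$ lies in $\omega(G)=\omega(L)$. Applying this to orders of semisimple elements of $S$ assembled from primitive prime divisors of the $\Phi_j(q_0)$ and invoking (i), these orders are at most $q^4$; with Zsygmondy's theorem, and using that a growing Lie rank forces a growing independence number of $\Gamma(S)$ while $t(L)=3$, this bounds the Lie rank of $S$ by a small absolute constant, and then bounds $q_0$ in terms of $q$. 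Hence $S$ lies in a short explicit list -- $L_2(q_0)$, $L_3^\varepsilon(q_0)$, $S_4(q_0)$, $\,^2B_2(q_0)$, $G_2(q_0)$, $\,^3D_4(q_0)$, $L_4^\varepsilon(q_0)$, $S_6(q_0)$, $O_7(q_0)$, $O_8^\varepsilon(q_0)$, and a few similar groups -- each with $\pi(S)\subseteq\pi(L)$ and with element orders bounded as above.

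\textbf{Elimination, and the main obstacle.} On the primes dividing $|S|$ but not $|K|$, the graph $\Gamma(S)$ embeds into $\Gamma(L)$. In a group of Lie type in characteristic $s$ the characteristic prime, or a suitable primitive prime divisor, is joined in $\Gamma(S)$ to many vertices; once these are identified, via $\mathrm{ord}_r(q)\in\{1,2,3,4,6\}$, with the sparse vertices $r_3,r_6$ of $\Gamma(L)$ from (ii), this is impossible, and together with the element-order bound (i) it rules out all candidates of Lie rank at least $2$ and the exceptional ones, the small residual cases being checked by hand using $q>3$. \emph{The hard part} is the rank-one families $L_2(q_0)$, $\,^2B_2(q_0)$, $\,^2G_2(q_0)$ (and $L_3^\varepsilon(q_0)$): there $q_0$ is bounded only in terms of $q$, not absolutely, so one cannot simply enumerate finitely many groups but must argue arithmetically -- for each prime $r\mid(q_0^k\pm1)$ reconciling $\mathrm{ord}_r(q_0)=k$ with $\mathrm{ord}_r(q)\le6$ -- and the subcase $p\mid|S|$ must be treated separately, where $p$ is a primitive prime divisor of some $\Phi_j(q_0)$ and its rich adjacency in $\Gamma(S)$ has to be reconciled with its restricted adjacency ($p\sim r_1,r_2,r_4$ only) in $\Gamma(L)$. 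This arithmetic, together with the Mersenne-$q$ boundary case, is where essentially all of the work of Theorem~\ref{t:2} lies; the reductions preceding it are routine.
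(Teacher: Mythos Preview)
Your proposal is a strategy sketch rather than a proof, and it contains a genuine gap in the step where you bound the Lie rank of $S$. You write that ``a growing Lie rank forces a growing independence number of $\Gamma(S)$ while $t(L)=3$, this bounds the Lie rank of $S$ by a small absolute constant.'' This inference is in the wrong direction. From $\omega(S)\subseteq\omega(\overline G)\subseteq\omega(G)=\omega(L)$ you get that every edge of $GK(S)$ is an edge of $GK(L)$, hence cocliques of $GK(L)$ restricted to $\pi(S)$ are cocliques of $GK(S)$; but a coclique of $GK(S)$ need \emph{not} be a coclique of $GK(L)$. So $t(L)=3$ gives no upper bound on $t(S)$ whatsoever (indeed the structural lemma you invoke only yields the lower bound $t(S)\geqslant t(L)-1$). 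Consequently your list of candidate groups for $S$ is not justified, and nothing in the proposal actually excludes, say, $L_{20}(u)$ or $E_8(u)$.

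The paper handles this by a completely different mechanism that is absent from your sketch: it compares exponents. One has $\exp(S)\mid\exp(G)=\exp(L)<q^9$, while explicit lower bounds of the shape $\exp(S)>u^{3F(n)/4}$ (with $F(n)=\sum_{i\le n}\varphi(i)$) hold for classical $S$ of rank $n$; on the other side, the fact that $k_6(\varepsilon q)\in\omega(S)$ and is not adjacent to $2$ forces it to divide one of two specific $k_j(u)$, giving $q^2\lesssim u^{n-1}$. These two inequalities combine to $F(n)<6n+O(1)$, which bounds $n$ absolutely. Analogous exponent bounds dispatch $E_6^\tau$, $E_7$, $E_8$, ${}^2F_4$. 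Your element-order bound ``$\leqslant q^4$'' is far too weak a substitute: it only relates $q_0$ and $q$ without bounding the rank. You also misidentify where the work lies: in the paper the rank-one groups are not noticeably harder than $G_2(u)$, ${}^3D_4(u)$, $F_4(u)$, $E_6^\tau(u)$, or the small-dimensional classical groups (each of which requires a page of arithmetic); and the paper never needs a ``Mersenne-$q$ boundary case,'' because it works with $R_6(\varepsilon q)$ (where $q\equiv\varepsilon\pmod 4$) as the set of primes non-adjacent to $2$, rather than with $R_2(q)$. Finally, the actual eliminations --- which in the paper involve the nilpotency of $K$, Frobenius-subgroup and Hall--Higman arguments to control $\pi(K)$ and $\pi(\overline G/S)$, and a delicate case analysis for each surviving $S$ --- are entirely absent from your write-up; ``checked by hand'' is not a proof.
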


Note that the situation with $S_6(q)$, $O_7(q)$, and $O_8^+(q)$ is typical for all simple groups mentioned in Conjecture \ref{c:1} but not covered by \cite{15VasGr1, 17Sta}: the conclusion of the conjecture
is proved either for some specific $n$ and $q$, mostly when the prime graph of $L$ is  disconnected, or for even $q$, while in other cases,  it remains to prove an assertion similar to Theorem~\ref{t:2}.
There are few works that solve the described problem of eliminating groups of Lie type in cross characteristic when the prime graph of $L$ is connected and  the characteristic of $L$ is odd,
and even fewer general methods of solution. It is one of our aims to find such methods, and in particular Lemma \ref{l:a} can be regarded as a step in this direction.

\section{Preliminaries}

As usual, $[m_1,m_2,\dots,m_k]$ and $(m_1,m_2,\dots,m_k)$ denote respectively the least common multiple and greatest common divisor of integers $m_1,m_2,\dots,m_s$.
Given a positive integer $m$, we write $\pi(m)$ for the set of prime divisors of $m$. Also for a prime $r$, we write $(m)_{r}$ for the $r$-part of $m$, that is, the highest power of $r$ dividing $m$,
and $(m)_{r'}$ for the $r'$-part of $m$, that is, the ratio $m/(m)_{r}$. If $\varepsilon\in\{+,-\}$, then in arithmetic expressions, we abbreviate $\varepsilon 1$ to $\varepsilon$. The integer value of a real number $x$ is denoted by $[x]$.

The next two lemmas  are well known (see, for example, \cite[Ch. IX, Lemma 8.1]{82HupBl2}). We write $\Phi_m(x)$ to denote the $m$th cyclotomic polynomial.

\begin{lemma}\label{l:r-part_c}
Let $r$ be a prime, let $a$ be an integer prime to $r$ with $|a|>1$ and let $k$ be the multiplicative order of $a$  modulo $r$.
If $r$ is odd, then 
\begin{enumerate}
\item $(\Phi_k(a))_r>1$;
\item $(\Phi_{kr^i}(a))_r=r$ for all $i\geqslant 1$;
\item $(\Phi_{m}(a))_r=1$ for all other $m\geqslant 1$.
\end{enumerate}
Also if $r=2$, then 
\begin{enumerate}
\item $(\Phi_{r^i}(a))_r=r$ for all $i\geqslant 2$;
\item $(\Phi_{m}(a))_r=1$ for all  $m\neq r^i$ $(i\geqslant 0)$.
\end{enumerate}

\end{lemma}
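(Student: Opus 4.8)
The plan is to reduce everything to two classical inputs: the factorization $a^n-1=\prod_{d\mid n}\Phi_d(a)$, which gives $v_r(a^n-1)=\sum_{d\mid n}v_r(\Phi_d(a))$ where $v_r$ denotes the $r$-adic valuation, and the lifting-the-exponent evaluation of $v_r(a^n-1)$. So first I would settle the question of which $\Phi_m(a)$ are divisible by $r$ at all; this pins down the support of the valuation and immediately yields all the clauses asserting $(\Phi_m(a))_r=1$.

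For this, I would reduce modulo $r$. Writing $m=m_0r^j$ with $r\nmid m_0$, one has the standard congruence $\Phi_m(x)\equiv\Phi_{m_0}(x)^{\varphi(r^j)}\pmod r$, which follows from $\Phi_{m_0r}(x)=\Phi_{m_0}(x^r)/\Phi_{m_0}(x)$ together with the Frobenius relation $\Phi_{m_0}(x^r)\equiv\Phi_{m_0}(x)^r$. Since the roots of $\Phi_{m_0}$ over $\overline{\mathbb F_r}$ are precisely the elements of multiplicative order $m_0$, the residue of $a$ is a root of $\Phi_m\bmod r$ exactly when its order $k$ equals $m_0$. Hence for odd $r$ one gets $r\mid\Phi_m(a)$ iff $m=kr^j$ for some $j\geqslant0$, which is clause (iii); and for $r=2$, where $a\equiv1$ has order $1$, one gets $2\mid\Phi_m(a)$ iff $m=2^j$, which is the clause $(\Phi_m(a))_r=1$ for $m\neq r^i$.

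It remains to compute the exact power in the divisible cases, where telescoping does the work. For odd $r$, put $e=v_r(a^k-1)\geqslant1$; since the previous step shows that among the divisors of $kr^i$ only $k,kr,\dots,kr^i$ contribute to the $r$-part, we have $v_r(a^{kr^i}-1)=\sum_{j=0}^{i}v_r(\Phi_{kr^j}(a))$. Lifting the exponent gives $v_r(a^{kr^i}-1)=e+i$, so the $i=0$ instance yields $v_r(\Phi_k(a))=e\geqslant1$ (clause (i)), and subtracting consecutive instances yields $v_r(\Phi_{kr^i}(a))=(e+i)-(e+i-1)=1$ for $i\geqslant1$ (clause (ii)). The case $r=2$ is analogous but uses the even-exponent form $v_2(a^{2^i}-1)=v_2(a^2-1)+i-1$ valid for $i\geqslant1$; telescoping then gives $v_2(\Phi_{2^i}(a))=1$, that is $(\Phi_{2^i}(a))_2=2$, for $i\geqslant2$.

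I expect the main subtlety to be the $r=2$ case, where the order of $a$ is always trivial and the valuation of $a^n-1$ obeys a different rule, splitting off both $a-1$ and $a+1$; this is exactly why the statement excludes $m=1,2$ from the clause $(\Phi_{r^i}(a))_r=r$ and why that case must be handled with the even-exponent lifting-the-exponent formula rather than the odd one.
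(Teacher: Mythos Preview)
Your argument is correct: the reduction modulo $r$ via $\Phi_{m_0r^j}(x)\equiv\Phi_{m_0}(x)^{\varphi(r^j)}$ pins down exactly which $\Phi_m(a)$ are divisible by $r$, and then lifting the exponent together with the telescoping identity $v_r(a^{kr^i}-1)=\sum_{j\le i}v_r(\Phi_{kr^j}(a))$ gives the precise valuations, with the even-prime case handled by the formula $v_2(a^{2^i}-1)=v_2(a^2-1)+i-1$ for $i\geqslant 1$. The paper itself does not prove this lemma at all; it merely records it as well known with a reference to Huppert--Blackburn, so there is nothing to compare your approach against.
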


\begin{lemma}\label{l:r-part}
Let $a$ and $m$ be positive integers and let $a>1$. Suppose that $r$ is a prime and  $a\equiv \varepsilon\pmod r$, where  $\varepsilon\in\{+1,-1\}$.
\begin{enumerate}
\item If $r$ is odd, then $(a^m-\varepsilon^m)_{r}=(m)_r(a-\varepsilon)_{r}$.
\item If $a\equiv 1\pmod 4$, then $(a^m-1)_{2}=(m)_{2}(a-1)_{2}$.
\end{enumerate}
\end{lemma}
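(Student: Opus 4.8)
The plan is to prove both parts by the classical lifting-the-exponent argument, reducing a general exponent $m$ to the two building blocks $r\nmid m$ and $m=r$ and then combining them multiplicatively. Throughout let $v_r$ denote the $r$-adic valuation, so that $(x)_r=r^{v_r(x)}$; the asserted equalities then read $v_r(a^m-\varepsilon^m)=v_r(m)+v_r(a-\varepsilon)$ and $v_2(a^m-1)=v_2(m)+v_2(a-1)$.

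First I would settle the coprime case, starting with $\varepsilon=1$. From the factorization $a^m-1=(a-1)(a^{m-1}+\dots+a+1)$ together with $a\equiv1\pmod r$, the second factor is congruent to $m$ modulo $r$; hence when $r\nmid m$ the cofactor is coprime to $r$ and $v_r(a^m-1)=v_r(a-1)$. The second building block is the single lifting step $v_r(a^r-1)=v_r(a-1)+1$: writing $a=1+r^\alpha u$ with $r\nmid u$ and $\alpha=v_r(a-1)\geqslant1$, the binomial expansion of $a^r$ shows that the term $r^{\alpha+1}u$ strictly dominates in valuation every other term $\binom{r}{k}r^{k\alpha}u^k$ (here one uses $r\mid\binom{r}{k}$ for $1\leqslant k\leqslant r-1$ and $\alpha\geqslant1$, $r\geqslant3$). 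Iterating this step $\beta$ times and then applying the coprime case to the base $a^{r^\beta}\equiv1\pmod r$ yields $v_r(a^m-1)=v_r(a-1)+\beta=v_r(a-1)+v_r(m)$ for $m=r^\beta m'$ with $r\nmid m'$, which is part (i) for $\varepsilon=1$.

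Next I would deduce the case $\varepsilon=-1$ from the case just proved. For even $m$ I would write $a^m-1=(a^2)^{m/2}-1$ with $a^2\equiv1\pmod r$ and apply the $\varepsilon=1$ result, using $v_r(a^2-1)=v_r(a-1)+v_r(a+1)=v_r(a+1)$ (since $a\equiv-1\pmod r$ and $r$ odd force $r\nmid a-1$) and $v_r(m/2)=v_r(m)$. For odd $m$ I would use that $a^m-(-1)^m=a^m+1=-((-a)^m-1)$ with $-a\equiv1\pmod r$; since the two building blocks above use only the congruence of the base to $1$ and remain valid for the integer base $-a$, this reduces the odd case to the identity already established, giving $v_r(a^m+1)=v_r(m)+v_r(-a-1)=v_r(m)+v_r(a+1)$.

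Finally, part (ii) is the delicate point and is where the hypothesis $a\equiv1\pmod4$ is essential. The odd-exponent argument goes through verbatim at $r=2$, giving $v_2(a^m-1)=v_2(a-1)$ for odd $m$. For the lifting step the correct identity is $v_2(a^2-1)=v_2(a-1)+v_2(a+1)$, and here $a\equiv1\pmod4$ is exactly what forces $v_2(a+1)=1$, so that the step again reads $v_2(a^2-1)=v_2(a-1)+1$; moreover $a^2\equiv1\pmod4$, so the iteration is self-sustaining, and combining the doubling step $\beta$ times with the odd case gives $v_2(a^m-1)=v_2(a-1)+v_2(m)$ for $m=2^\beta m'$. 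I expect this prime-$2$ anomaly to be the main obstacle: dropping the hypothesis (say $a\equiv3\pmod4$) makes $v_2(a+1)\geqslant2$ and destroys the clean formula, so the congruence must be invoked precisely at the base of the induction. As an alternative route I would note that both parts also follow quickly from Lemma \ref{l:r-part_c} by factoring $a^m-\varepsilon^m$ into cyclotomic values $\prod_d\Phi_d(a)$ and reading off which factors contribute a power of $r$; there the prime-$2$ subtlety resurfaces as exactly the two factors $\Phi_1(a)=a-1$ and $\Phi_2(a)=a+1$ that Lemma \ref{l:r-part_c} leaves uncovered.
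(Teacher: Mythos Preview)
Your proof is correct and complete; this is the standard lifting-the-exponent argument, and the reduction of the $\varepsilon=-1$ case to $\varepsilon=+1$ via $a^2$ (for even $m$) and $-a$ (for odd $m$) is handled cleanly. The paper itself does not prove this lemma at all: it simply states that it is well known and cites \cite[Ch.~IX, Lemma~8.1]{82HupBl2}, so your write-up actually supplies more than the paper does.
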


Let $a$ be an integer. If $r$ is an odd prime and $(a,r)=1$, then $e(r,a)$ denotes the multiplicative order of
$a$ modulo $r$. Define $e(2,a)$ to be 1 if  $4$ divides $a-1$ and to be $2$ if $4$ divides $a+1$.
A primitive prime divisor of $a^m-1$, where $|a|>1$ and $m\geqslant 1$, is a prime $r$ such that $e(r,a)=m$.
The set of primitive prime divisors of $a^m-1$ is denoted by $R_m(a)$,
and we write $r_m(a)$ for an element of $R_m(a)$ (provided that it is not empty). The following lemma is proved in \cite{86Bang}, and also in \cite{Zs}.

\begin{lemma}[Bang--Zsigmondy]\label{l:bz}
Let $a$ and $m$ be integers, $|a|>1$ and $m\geqslant 1$. Then the set $R_m(a)$ is not empty, except when $$(a,m)\in\{(2,1),(2,6),(-2,2),(-2,3),(3,1),(-3,2)\}.$$
\end{lemma}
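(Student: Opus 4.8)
The plan is to reduce the assertion to a single containment of spectra and then eliminate the resulting group by arithmetic. The starting observation is that spectra behave well under sections: if $H_2\trianglelefteq H_1\leqslant G$, then $\omega(H_1/H_2)\subseteq\omega(H_1)\subseteq\omega(G)$, because the spectrum of any finite group is closed under divisors and any preimage of an element of order $n$ has order divisible by $n$. Since every composition factor of $G$ is such a section, any simple group $S$ of Lie type in characteristic $t\neq p$ that is a composition factor of $G$ satisfies $\omega(S)\subseteq\omega(G)=\omega(L)$. Thus it suffices to prove that no simple group $S$ of Lie type over a field of characteristic $t\neq p$ admits such a containment $\omega(S)\subseteq\omega(L)$. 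I record the two consequences I will use: $\pi(S)\subseteq\pi(L)$, and every edge of the prime graph $GK(S)$ is an edge of $GK(L)$, so that every non-adjacency in $GK(L)$ forces a non-adjacency in $GK(S)$.

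Next I would pin down the arithmetic of $\omega(L)$. By Lemma \ref{l:bz} together with Lemma \ref{l:r-part_c}, every prime of $\pi(L)\setminus\{p\}$ lies in exactly one class $R_m(q)$ with $m\in\{1,2,3,4,6\}$, these being the indices for which $\Phi_m(q)$ divides $(q^2-1)(q^4-1)(q^6-1)$. The list of maximal tori of $L$ then yields an explicit description of the maximal element orders of $L$: all are products of the relevant $\Phi_m(q)$ of total degree at most the rank, hence bounded by a polynomial in $q$ of degree at most $4$. In particular the primitive primes $r_3,r_4,r_6$ are pairwise non-adjacent (up to the precise adjacencies, and the adjacencies to $p$, read off from the tori) and supply a coclique witnessing $t(L)\geqslant 3$. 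Finally, Lemma \ref{l:r-part} controls, for each prime $t\neq p$ in $\pi(L)$, the exponent of a Sylow $t$-subgroup of $L$, that is, the largest power of $t$ occurring in $\omega(L)$.

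With these in hand I would bound and then enumerate $S$. Write $u=t^f$ for the order of its defining field and $\ell$ for its Lie rank. First, $S$ has a semisimple element whose order is comparable to $u^{\ell}$ (from a Coxeter-type torus); since this order lies in $\omega(L)$, it is $O(q^{4})$, so $u^{\ell}=O(q^{4})$. Secondly, the characteristic $t$ lies in some $R_m(q)$, so $t\leqslant\Phi_3(q)=q^2+q+1$, and every primitive prime $r_k(u)\in\pi(S)$ is again an $R_m(q)$-prime, which ties $e(r_k(u),t)$ to the small index $m=e(r_k(u),q)$. Thirdly, the adjacency criterion for prime graphs of simple groups of Lie type (see \cite{15VasGr1} and the references therein), applied in both $GK(S)$ and $GK(L)$, shows that the neighbourhood of $t$ in $GK(S)$ cannot be contained in the comparatively small neighbourhood prescribed by $GK(L)$ once $\ell$ or $u$ is large. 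Together these reduce $S$ to a short, $q$-independent list of low-rank classical and exceptional groups, together with the finitely many exceptional pairs of Lemma \ref{l:bz}; each surviving $S$ is then discarded by exhibiting an element order in $\omega(S)\setminus\omega(L)$, equivalently a $GK(S)$-edge that is a non-edge of $GK(L)$.

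The main obstacle is the genuinely two-characteristic bookkeeping, and in particular the case of $S$ of small Lie rank over a large field, where $u$ is polynomially comparable to $q$ and there is no reduction to a finite list. Here one cannot argue by enumeration; instead one must show directly that the required element orders of $S$ — for instance $u\pm1$ and the orders arising from its Coxeter torus — cannot simultaneously divide the explicit maximal element orders of $L$. The tool is again primitive prime divisors: a primitive prime of $u^k-1$ lies in $\pi(L)$ and hence in some $R_m(q)$ with $m\leqslant 6$, and reconciling $e(r,t)$ with $e(r,q)$ across all relevant $k$ and $m$ forces a contradiction outside the explicit exceptional configurations. Isolating the generic part of this reconciliation into one uniform statement is precisely the role of Lemma \ref{l:a}; making it strong enough to treat $S_6(q)$, $O_7(q)$, and $O_8^+(q)$ simultaneously, while still disposing of the small-field exceptions by hand, is the most delicate part of the argument.
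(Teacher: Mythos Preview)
Your proposal does not address the stated lemma at all. Lemma~\ref{l:bz} is the classical Bang--Zsigmondy theorem: a purely number-theoretic assertion that $a^m-1$ admits a primitive prime divisor except for the listed pairs $(a,m)$. The paper does not prove it; it simply cites the original sources \cite{86Bang,Zs}. Your text, by contrast, is a sketch of the cross-characteristic elimination (Theorem~\ref{t:2}), and you even invoke Lemma~\ref{l:bz} as an ingredient, which would be circular if this were meant as its proof.

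Treated instead as an outline for Theorem~\ref{t:2}, the approach still has a genuine gap. You reduce to the bare containment $\omega(S)\subseteq\omega(L)$ for an arbitrary composition factor $S$, and propose to exclude each candidate $S$ by finding some element order in $\omega(S)\setminus\omega(L)$. The paper does not, and cannot, proceed this way: it first uses Lemma~\ref{l:str} to pin down the structure $S\leqslant G/K\leqslant\Aut S$ with $K$ solvable (and, via Lemma~\ref{l:a}, nilpotent), and then exploits this structure heavily---information about which primes lie in $\pi(K)$ or $\pi(\overline G/S)$, Frobenius-subgroup and Hall--Higman arguments forcing extra element orders in $G$ (Lemmas~\ref{l:rest_k} and~\ref{l:rest_aut}), and exponent comparisons $\gexp(S)\leqslant\gexp(G)$. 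The containment $\omega(S)\subseteq\omega(L)$ alone is too weak for the small-rank, large-field cases you flag as the ``main obstacle'': for example, eliminating $S=L_2(u)$ or $S=G_2(u)$ requires knowing that specific primes such as $p$ or $r_3(\varepsilon q)$ actually lie in $\pi(S)$ rather than in $\pi(K)\cup\pi(\overline G/S)$, and that certain products of primes are forced into $\omega(G)$ by the action of $S$ on $K$. None of that is visible from $\omega(S)\subseteq\omega(L)$.
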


The largest primitive divisor of $a^m-1$, where $|a|>1$, $m\geqslant 1$, is the number $k_m(a)=\prod_{r\in R_m(a)}|a^m-1|_{r}$ if $m\neq 2$ and
the number $k_2(a)=\prod_{r\in R_2(a)}|a+1|_{r}$ if $m=2$. The largest primitive divisors can be written in terms of cyclotomic polynomials. 

\begin{lemma}\label{l:k_n}
Let $a$ and $m$ be integers, $|a|>1$ and $m\geqslant 3$. Suppose that $r$ is the largest prime divisor of $m$ and $l=(m)_{r'}$. Then
$$k_m(a)=\frac{|\Phi_m(a)|}{(r,\Phi_l(a))}.$$
Furthermore, $(r,\Phi_l(a))=1$ whenever $l$ does not divide $r-1$.
\end{lemma}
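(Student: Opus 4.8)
The plan is to argue prime by prime, reading every $s$-adic valuation off Lemma~\ref{l:r-part_c}. I would start from the factorization $a^m-1=\prod_{d\mid m}\Phi_d(a)$. Since $|a|>1$, this already shows that every prime dividing $\Phi_m(a)$ is prime to $a$; and since $m\geqslant 3$ while $e(2,a)\in\{1,2\}$, every $s\in R_m(a)$ is odd and prime to $a$. For such an $s$ we have $e(s,a)=m$, and no proper divisor $d$ of $m$ equals $ms^i$ with $i\geqslant1$, so Lemma~\ref{l:r-part_c} gives $(\Phi_d(a))_s=1$ for every $d\mid m$ with $d\neq m$; hence $(a^m-1)_s=(\Phi_m(a))_s$. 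This yields
$$k_m(a)=\prod_{s\in R_m(a)}(a^m-1)_s=\prod_{s\in R_m(a)}(\Phi_m(a))_s,$$
so that $|\Phi_m(a)|=k_m(a)\cdot c$, where $c$ is the product of $(\Phi_m(a))_s$ over those primes $s$ that divide $\Phi_m(a)$ with $e(s,a)\neq m$.

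The key point is to show that $c\in\{1,r\}$. Let $s$ be a prime with $s\mid\Phi_m(a)$ and $e(s,a)\neq m$. If $s$ is odd, Lemma~\ref{l:r-part_c} forces $m=e(s,a)\,s^i$ for some $i\geqslant1$; since $e(s,a)\mid s-1$ is prime to $s$, this yields $s\mid m$, $e(s,a)=(m)_{s'}$ and $(\Phi_m(a))_s=s$. If $s=2$, then $a$ must be odd, and the $r=2$ clause of Lemma~\ref{l:r-part_c} shows $m$ is a power of $2$ and $(\Phi_m(a))_2=2$; in particular $s\mid m$. So in both cases $s\mid m$ and $(\Phi_m(a))_s=s$. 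Now if $s$ were not the largest prime divisor $r$ of $m$, then $r\mid(m)_{s'}=e(s,a)\mid s-1$, so $r<s\leqslant r$, a contradiction. Hence $r$ is the only prime that can appear in $c$, contributing the single factor $r$, and $c=r$ precisely when $r\mid\Phi_m(a)$.

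It then remains to check that $r\mid\Phi_m(a)$ if and only if $r\mid\Phi_l(a)$, where $l=(m)_{r'}$. When $r\mid a$ both divisibilities fail, so assume $r\nmid a$ and put $k=e(r,a)$, which is prime to $r$; writing $m=lr^\alpha$ with $\alpha\geqslant1$ and $(l,r)=1$ and applying Lemma~\ref{l:r-part_c}, one finds (comparing $r$-parts) that $r\mid\Phi_m(a)$ precisely when $k=l$, and likewise $r\mid\Phi_l(a)$ precisely when $k=l$. (For $r=2$ one has $l=1$, and both conditions just say $a$ is odd.) Therefore $c=(r,\Phi_l(a))$, which gives $k_m(a)=|\Phi_m(a)|/(r,\Phi_l(a))$. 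For the final assertion, observe that for odd $r$ the relation $r\mid\Phi_l(a)$ implies, again by Lemma~\ref{l:r-part_c} together with $(l,r)=1$, that $l=e(r,a)\mid r-1$; so $l\nmid r-1$ forces $(r,\Phi_l(a))=1$, while for $r=2$ we have $l=1\mid r-1$ and the hypothesis is vacuous.

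I expect the middle step to be the only delicate one. Everything rests on the remark that, because $e(s,a)\mid s-1$, a prime that divides $\Phi_m(a)$ but is not a primitive prime divisor of $a^m-1$ must be the largest prime divisor of $m$, and then divides $\Phi_m(a)$ exactly once; granting that, the remaining steps are routine applications of Lemma~\ref{l:r-part_c}, with the prime $2$ handled by its own clause in that lemma.
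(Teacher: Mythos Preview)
Your argument is correct. The paper does not actually prove this lemma: it simply cites Roitman's \cite[Proposition~2]{97Roi} and \cite[Lemma~2.2]{15VasGr.t}. By contrast, you give a self-contained proof that uses only Lemma~\ref{l:r-part_c} from the present paper, computing each $s$-adic valuation of $|\Phi_m(a)|$ and showing that the only possible non-primitive prime contribution is a single factor of the largest prime $r$ of $m$. This is exactly the standard route underlying the cited results, so your approach is not materially different from what one would find in those references; it just makes the reasoning explicit here rather than outsourcing it. One small point of presentation: in the case $s=2$ you correctly observe that $m$ must be a $2$-power, which already forces $s=r$; the subsequent sentence ``$r\mid (m)_{s'}=e(s,a)\mid s-1$'' is tailored to odd $s$ and need not be invoked for $s=2$. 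It would read more cleanly to dispose of $s=2$ immediately after noting $m=2^i$, and reserve the $e(s,a)\mid s-1$ argument for odd $s$.
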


\begin{proof}
This follows from \cite[Proposition 2]{97Roi} (see, for example, \cite[Lemma 2.2]{15VasGr.t}).
\end{proof}

Recall that $\omega(G)$ is the set of the orders of elements of $G$. We write $\mu(G)$ for the set of maximal under divisibility elements of $\omega(G)$. The least common multiple of the elements of $\omega(G)$ is equal to
the exponent of $G$ and denoted by $\gexp(G)$. Given a prime $r$, $\omega_{r}(G)$ and $\gexp_r(G)$ are respectively the spectrum and the exponent of a Sylow $r$-subgroup of $G$. Similarly, $\omega_{r'}(G)$ and $\gexp_{r'}(G)$ are respectively the set of the orders of elements of $G$ that are coprime to $r$ and the least common multiple of these orders.

The prime graph $GK(G)$ of $G$ is a labelled graph whose vertex set is $\pi(G)$, the set of all prime divisors of $|G|$, and in which two different vertices labelled by $r$ and $s$ are adjacent if
and only if $rs\in\omega(G)$.
Recall that a coclique of a graph is the set of pairwise nonadjacent vertices. Define $t(G)$ to be the largest size of a coclique of $GK(G)$. Similarly, given $r\in\pi(G)$, we write
$t(r,G)$ for the largest size of a coclique of $G$ containing $r$. It was proved in \cite{05Vas.t} that a finite group $G$ with $t(G)\geqslant 3$ and $t(2,G)\geqslant 2$ has exactly one nonabelian composition factor.
The next lemma is a corollary of this result.

\begin{lemma}[{\cite{05Vas.t, 09VasGor.t}}]\label{l:str}
Let $L$ be a finite nonabelian simple group such that $t(L)\geqslant3$ and $t(2,L)\geqslant2$, and suppose that a finite group $G$
satisfies $\omega(G)=\omega(L)$. Then the following holds.
\begin{enumerate}
\item There is a nonabelian simple group $S$ such that $$S\leqslant \overline G=G/K\leqslant \Aut S,$$ with $K$ being
the largest normal solvable subgroup of $G$.

\item If $\rho$ is a coclique of $GK(G)$ of size at least  $3$, then at most one prime of $\rho$
divides $|K|\cdot|\overline{G}/S|$. In particular, $t(S)\geqslant t(G)-1$.

\item If $r\in\pi(G)$ is not adjacent to $2$ in $GK(G)$, then $r$ does not divide
$|K|\cdot|\overline{G}/S|$. In particular,  $t(2,S)\geqslant t(2,G)$.
\end{enumerate}
\end{lemma}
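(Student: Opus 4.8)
The plan is to deduce all three assertions from the structural machinery of \cite{05Vas.t, 09VasGor.t}, whose engine is the classification of finite simple groups together with a Gruenberg--Kegel-type analysis of normal series. First observe that the hypothesis $\omega(G)=\omega(L)$ yields $GK(G)=GK(L)$, so $t(G)=t(L)\geqslant3$ and $t(2,G)=t(2,L)\geqslant2$; thus it suffices to prove (i)--(iii) for an arbitrary finite group $G$ with $t(G)\geqslant3$ and $t(2,G)\geqslant2$. Let $K$ be the largest normal solvable subgroup of $G$ and set $\overline{G}=G/K$. Then $\overline{G}$ has trivial solvable radical, so its generalized Fitting subgroup coincides with its socle and is a direct product $\operatorname{Soc}(\overline{G})=S_1\times\cdots\times S_m$ of nonabelian simple groups; moreover $C_{\overline{G}}(\operatorname{Soc}(\overline{G}))=1$, which gives $\operatorname{Soc}(\overline{G})\leqslant\overline{G}\leqslant\Aut(\operatorname{Soc}(\overline{G}))$.

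The heart of (i) is to show that $m=1$, so that $S=\operatorname{Soc}(\overline{G})$ is the required simple group. The key combinatorial principle is that a direct product of two nonabelian simple groups produces many edges of the prime graph: if $r$ divides $|S_1|$ and $s$ divides $|S_2|$, then commuting elements of orders $r$ and $s$ furnish an element of order $rs$, so $r$ and $s$ are adjacent. Since $2$ divides the order of every nonabelian simple group, once $m\geqslant2$ this already forces $2$ to be adjacent in $GK(\overline{G})$ to every odd prime dividing $|\operatorname{Soc}(\overline{G})|$. Transferring these adjacencies back to $GK(G)$ by coprime lifting (discussed below) and combining them with the two witnesses of the coclique hypotheses --- a triple of pairwise nonadjacent primes and a prime nonadjacent to $2$ --- shows that $m\geqslant2$ is incompatible with $t(G)\geqslant3$ and $t(2,G)\geqslant2$. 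Hence $m=1$ and (i) follows.

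Assertions (ii) and (iii) say that a large coclique must concentrate in the single factor $S$: at most one of its primes can divide $|K|\cdot|\overline{G}/S|$, and no prime nonadjacent to $2$ can do so at all. The mechanism is a three-prime argument. Write $|G|=|K|\cdot|\overline{G}/S|\cdot|S|$. If two primes $r,s$ of a coclique $\rho$ with $|\rho|\geqslant3$ both divided $|K|\cdot|\overline{G}/S|$, one would produce an element of order $rs$, contradicting nonadjacency: when $r,s\in\pi(K)$ this rests on the solvability of $K$ (so that a Hall $\{r,s\}$-subgroup or a Frobenius argument applies), and when they lie in the outer part $\overline{G}/S\leqslant\operatorname{Out}(S)$ one exploits the restricted solvable structure of $\operatorname{Out}(S)$ for simple $S$. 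This yields $t(S)\geqslant t(G)-1$. For (iii) the analysis is sharpened: a prime $r$ nonadjacent to $2$ cannot divide $|K|$, because a Sylow $2$-subgroup acting on a Sylow $r$-subgroup of the solvable group $K$ would, by coprime action, create an element of order $2r$; and it cannot divide $|\overline{G}/S|$ for the analogous reason inside $\operatorname{Out}(S)$. Hence every such $r$ divides $|S|$ and remains nonadjacent to $2$ there, giving $t(2,S)\geqslant t(2,G)$.

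The main obstacle throughout is the transfer of information between $G$ and the quotient $\overline{G}=G/K$: since $\omega(\overline{G})$ need not be contained in $\omega(G)$, adjacencies visible in $GK(\operatorname{Soc}(\overline{G}))$ cannot be lifted to $GK(G)$ for free. One circumvents this by restricting attention to elements whose orders are coprime to $|K|$, where Schur--Zassenhaus and coprime Hall subgroups guarantee lifting, and by a careful case division on how the primes of a given coclique are distributed among $K$, $\overline{G}/S$, and $S$. Carrying this out uniformly requires the classification of finite simple groups --- to control the possible socles, their prime graphs, and the structure of their outer automorphism groups --- which is precisely the input supplied by \cite{05Vas.t, 09VasGor.t}.
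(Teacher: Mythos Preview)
The paper does not supply a proof of this lemma at all: it is quoted as a corollary of \cite{05Vas.t}, with \cite{09VasGor.t} cited for the connected-prime-graph refinement. So there is no in-paper argument to compare against; your sketch is essentially an attempt to reconstruct the proofs in those references.

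Your overall architecture is right, but two of the mechanisms you name are incorrect, and they are precisely the places where the real work happens.

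\textbf{Part (iii).} You write that if $r$ is nonadjacent to $2$ and $r\in\pi(K)$, then ``a Sylow $2$-subgroup acting on a Sylow $r$-subgroup of the solvable group $K$ would, by coprime action, create an element of order $2r$.'' Coprime action does not do this: a cyclic $2$-group can act fixed-point-freely on an $r$-group (Frobenius complement), and then no element of order $2r$ appears. The genuine reason is that once (i) is in hand, $S$ is a nonabelian simple group, so its Sylow $2$-subgroups are noncyclic; hence $G/K$ contains a Klein four-group. A four-group cannot act fixed-point-freely on any nontrivial group (Frobenius complements have cyclic or generalized quaternion Sylow $2$-subgroups), so its action on an $r$-chief factor of $K$ forces a centralized $r$-element and hence $2r\in\omega(G)$. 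The same noncyclic-$2$ obstruction handles $r\in\pi(\overline{G}/S)$.

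\textbf{Part (ii).} You claim that if two primes $r,s$ of a coclique both lie in $\pi(K)$, solvability of $K$ already produces an element of order $rs$. It does not: a Frobenius group of order $r^a s^b$ is solvable and has no element of order $rs$. The actual argument is a three-prime one. If $r,s\in\pi(K)$, a Hall $\{r,s\}$-subgroup of $K$ is a Frobenius group, say with kernel an $r$-group and complement of order divisible by $s$. Now bring in the third prime $t$ of the coclique (this is why $|\rho|\geqslant 3$ is essential): a $t$-element of $G$ acts on this Frobenius group, and an analysis in the spirit of Lemmas \ref{l:frob} and \ref{l:hh} (Hall--Higman/Mazurov) forces $rt$ or $st$ into $\omega(G)$, contradicting the coclique. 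The mixed cases ($r\in\pi(K)$, $s\in\pi(\overline{G}/S)$, etc.) are handled similarly using the solvable structure of $\operatorname{Out}(S)$.

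Finally, your argument for (i) tacitly uses the ``at most one prime of a coclique lies in $\pi(K)$'' part of (ii) to justify the coprime lifting; you should make that dependency explicit, since as written you prove (i) before (ii).
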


The next three lemmas are standard tools for calculating the orders of elements in group extensions. 
All of them are corollaries of well-known results (such as the Hall--Higman theorem).

\begin{lemma}\label{l:nonc} Suppose that $G$ is a finite group,  $K$ is a normal subgroup of $G$
and $r\in\pi(K)$. If $G/K$ has a section that is a noncyclic abelian $p$-group
for some odd prime $p\neq r$, then $rp\in\omega(G)$.
\end{lemma}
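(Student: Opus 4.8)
The plan is to reduce the statement to a Frobenius-type configuration and then invoke the structure of Frobenius complements. First I would translate the hypothesis about sections into the ambient group: a section of $G/K$ isomorphic to a noncyclic abelian $p$-group $A$ has the form $H/N$ for suitable subgroups $K\leqslant N\trianglelefteq H\leqslant G$, and since the order of $gN$ divides the order of $g$ we have $\omega(H)\subseteq\omega(G)$; so it suffices to find an element of order divisible by $rp$ in $H$, and replacing $G$ by $H$ and $K$ by $N$ we may assume $G/K\cong A$ and $r\in\pi(K)$.

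Next I would produce a normal $r$-subgroup. Let $R$ be a Sylow $r$-subgroup of $K$; it is nontrivial because $r\in\pi(K)$. By the Frattini argument $G=K\,N_G(R)$, hence $N_G(R)/(N_G(R)\cap K)\cong G/K\cong A$ and $\omega(N_G(R))\subseteq\omega(G)$, so after replacing $G$ by $N_G(R)$ and $K$ by $N_G(R)\cap K$ we may assume in addition that $R\trianglelefteq G$. Now pick a Sylow $p$-subgroup $P$ of $G$. Since $A=G/K$ is a $p$-group, $P$ maps onto $G/K$; in particular $P$ has a noncyclic quotient and so is itself noncyclic. As $R$ is a normal $r$-group and $P$ is a $p$-group with $p\neq r$, the product $\Gamma=RP$ is a subgroup of $G$ equal to the semidirect product $R\rtimes P$.

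The endgame is a clean dichotomy inside $\Gamma$. If some element $y\in P$ of order $p$ centralizes a nontrivial element $x\in R$, then $x$ and $y$ have coprime orders, so $xy$ has order divisible by $rp$ and we are done. Otherwise $C_R(y)=1$ for every element of order $p$ in $P$; since every nontrivial element of a $p$-group has a power of order $p$, this forces $C_R(y)=1$ for all $y\in P\setminus\{1\}$, so $\Gamma=R\rtimes P$ is a Frobenius group with kernel $R$ and complement $P$. But a $p$-group with $p$ odd that is a Frobenius complement is cyclic --- contradicting the fact that $P$ has a noncyclic quotient. (Both hypotheses $p\neq r$ and $p$ odd are genuinely used here; for $p=2$ the complement could be a generalized quaternion group and the statement fails.)

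I expect the only real care is needed in the bookkeeping of the first two paragraphs --- keeping the noncyclic abelian $p$-section available (through a Sylow $p$-subgroup) while simultaneously moving an $r$-group into a normal position so that $RP$ is a genuine subgroup. Once that is arranged, the decisive point --- that a fixed point of an order-$p$ element yields an element of order divisible by $rp$, while its absence makes $RP$ a Frobenius group with an impossible complement --- is immediate, which is why the lemma is naturally a corollary of standard tools (the Frattini argument and the theory of Frobenius complements), in the spirit of the Hall--Higman type results mentioned above.
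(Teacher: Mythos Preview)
Your proof is correct. The paper itself does not prove this lemma at all: it simply cites \cite[Lemma 1.5]{11VasGrSt.t}. What you have written is a clean, self-contained argument that unwinds the standard proof behind that citation --- reduce via the correspondence theorem and the Frattini argument to a normal $r$-subgroup $R$ with a noncyclic $p$-group $P$ acting on it, and then observe that either some order-$p$ element has a nontrivial fixed point on $R$ (giving $rp\in\omega(G)$) or $R\rtimes P$ is Frobenius, which is impossible since odd-prime Sylow subgroups of Frobenius complements are cyclic. The bookkeeping you flagged (keeping $r\in\pi(K)$ through the replacements and ensuring $P$ surjects onto the noncyclic quotient) is handled correctly.
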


\begin{proof}
See \cite[Lemma 1.5]{11VasGrSt.t}.
\end{proof}

\begin{lemma} \label{l:frob} Suppose that $G$ is a finite group,  $K$ is a normal $r$-subgroup of $G$
for some prime $r$ and $G/K$ is a Frobenius group with kernel $F$ and cyclic complement $C$. If
$(|F|, r) = 1$ and $F$ is not contained in $KC_G(K)/K$, then $r|C| \in\omega(G)$.
\end{lemma}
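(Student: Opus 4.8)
The plan is to cut $K$ down to a single chief factor, recast the problem as a statement about a Frobenius group acting on an $\mathbb{F}_r$-module, and then produce an element of order $r|C|$ almost by hand. Throughout I use the following trivial remark: if $g\in G$, the image of $g$ in $G/K$ has order $n$ and $g^{n}\neq1$, then $g^{n}$ is a nontrivial $r$-element, $\operatorname{ord}(g)=n\operatorname{ord}(g^{n})$, and $g^{r^{e-1}}$ has order exactly $rn$, where $r^{e}=\operatorname{ord}(g^{n})$.

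Set $\overline G=G/K$, let $\widehat F\trianglelefteq G$ be the preimage of $F$, and use $(|F|,r)=1$ and Schur--Zassenhaus to write $\widehat F=K\rtimes F_1$ with $F_1\cong F$; the hypothesis $F\not\leqslant KC_G(K)/K$ says exactly that $F_1$ does not centralise $K$. Choose a series $1=K_0\trianglelefteq\dots\trianglelefteq K_m=K$ of normal subgroups of $G$ with $G$-chief factors (so the factors are elementary abelian $r$-groups). As the subgroup of $\Aut(K)$ stabilising this series and acting trivially on each factor is an $r$-group while $F_1$ is an $r'$-group not centralising $K$, $F_1$ acts nontrivially on some factor $V=K_j/K_{j-1}$; and since $K$ is a normal $r$-subgroup, $C_V(K)$ is a nonzero $G$-submodule, so $K$ acts trivially on $V$ and $V$ is an $\mathbb{F}_r[\overline G]$-module on which $F_1$ (equivalently $F$) acts nontrivially. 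Next, by the Frattini argument applied to the $K$-conjugate complements of $K$ in $\widehat F$ one gets $G=K\,N_G(F_1)$, so $N_G(F_1)$ maps onto $\overline G$; pick $h\in N_G(F_1)$ whose image is a generator of $C$, and put $n=|C|$, so $h^{n}\in K$. If $h^{n}\neq1$ the remark already yields an element of order $rn$. Otherwise $\langle h\rangle$ is cyclic of order $n$ and acts on $F_1$ the way $C$ acts on $F$, hence freely, so $Y:=F_1\rtimes\langle h\rangle$ is a Frobenius subgroup of $G$ with cyclic complement of order $n$ (in fact $G=K\rtimes Y$), and $V$ is an $\mathbb{F}_r[Y]$-module with $F_1$ acting nontrivially. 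For $w\in V$ lifted to $\widetilde w\in K_j$, the element $\widetilde wh$ has $\overline{\widetilde wh}$ of order $n$ and, since $h^{n}=1$ and $K_j\trianglelefteq G$,
$$(\widetilde wh)^{n}=\widetilde w\cdot{}^{h}\widetilde w\cdot{}^{h^{2}}\widetilde w\cdots{}^{h^{n-1}}\widetilde w\in K_j ,$$
with image $\tau w$ in $V=K_j/K_{j-1}$, where $\tau=1+h+\dots+h^{n-1}$ acts on $V$ through $\langle h\rangle$. So it is enough to find $w$ with $\tau w\neq0$: then $(\widetilde wh)^{n}\notin K_{j-1}$ and the remark finishes the proof.

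The remaining module-theoretic core is: \emph{if $Y=F_1\rtimes\langle h\rangle$ is a Frobenius group with cyclic complement of order $n$ and $(|F_1|,r)=1$, acting on a nonzero $\mathbb{F}_r[Y]$-module $V$ with $F_1$ acting nontrivially, then $\tau=1+h+\dots+h^{n-1}$ does not annihilate $V$}. I would prove this by induction on $|F_1|+n$. Passing to $[V,F_1]$ and then to an irreducible submodule, and extending scalars, one may assume $V$ is absolutely irreducible over $\overline{\mathbb{F}}_r$ with $C_V(F_1)=0$; write $V|_{F_1}=\bigoplus V_i$ (semisimple, as $r\nmid|F_1|$), fix a nontrivial summand $V_1$, and let $C_0\leqslant\langle h\rangle$ be its inertia group. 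If $C_0=\langle h\rangle$, then $V|_{F_1}=V_1$ and the central character of $V_1$ is $\langle h\rangle$-invariant, hence trivial because $\langle h\rangle$ acts freely on $Z(F_1)$; so $Z(F_1)\neq1$ acts trivially on $V$ and $V$ is a module for the smaller Frobenius group $(F_1/Z(F_1))\rtimes\langle h\rangle$, to which induction applies. If $C_0=1$, then $V|_{\langle h\rangle}$ is free over $\mathbb{F}_r[\langle h\rangle]$ and $\tau$, being the nonzero norm element of the group algebra, does not annihilate it. If $1\neq C_0\neq\langle h\rangle$, then by Clifford $V=\operatorname{Ind}_{F_1C_0}^{Y}(M)$ for an irreducible $\mathbb{F}_r[F_1C_0]$-module $M$ lying over $V_1$ (so $C_M(F_1)=0$), $F_1C_0$ is Frobenius with complement $C_0$ of order $<n$, and induction gives that the norm element of $C_0$ is nonzero on $M$; by Mackey $V|_{\langle h\rangle}=\operatorname{Ind}_{C_0}^{\langle h\rangle}(M|_{C_0})$, and one checks directly that the norm element of $\langle h\rangle$ acts nontrivially on $\operatorname{Ind}_{C_0}^{\langle h\rangle}(W)$ whenever the norm element of $C_0$ acts nontrivially on $W$.

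The main obstacle is this core. When $r\nmid|C|$ it is easy: $\tau$ is $n$ times the projection of $V$ onto $C_V(C)$, so everything reduces to the classical fact --- a standard consequence of Hall--Higman-type results --- that a Frobenius group acting on a module coprimely to its kernel with nontrivial kernel-action has $C_V(C)\neq0$; this is the case already known in the literature. The work is entirely in the case $r\mid|C|$: there $\mathbb{F}_r[\langle h\rangle]$ is not semisimple, $\tau$ is not a projector, and $\tau V\neq0$ means that $h$ has a Jordan block of maximal $r$-power size with eigenvalue $1$ on $V$ --- a property forced only by the Frobenius structure, whose control through the Clifford reduction (tracking eigenvalues and Jordan-block sizes, as above) is the delicate point. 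The passage from the chief factor back to $G$, by contrast, is routine, being handled above by the ``pass to a suitable $r$-power'' device.
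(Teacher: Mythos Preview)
Your argument is correct. The paper itself gives no proof of this lemma at all; it simply cites Mazurov's 1997 paper \cite{97Maz.t}, Lemma~1. You have supplied a self-contained proof of that cited result, and your route --- reduce to a chief factor, lift a Frobenius complement via Schur--Zassenhaus and the Frattini argument, compute $(\widetilde wh)^n$, and then show by Clifford-theoretic induction that the norm element $\tau$ of $\langle h\rangle$ does not kill the module --- is the natural one and is essentially how Mazurov's lemma is proved.

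Two small points. First, in the case $C_0=\langle h\rangle$ you write $V|_{F_1}=V_1$; Clifford only gives that $V|_{F_1}$ is $V_1$-isotypic, but that is all you use: the central character of $V$ is the central character of $V_1$, it is $\langle h\rangle$-invariant, and hence trivial because $\langle h\rangle$ acts fixed-point-freely (and coprimely) on $Z(F_1)$, so $Z(F_1)$ acts trivially on $V$ and the induction goes through. Second, it is worth making explicit that when you pass to $F_1/Z(F_1)$ the quotient group $(F_1/Z(F_1))\rtimes\langle h\rangle$ is still Frobenius: this is because $C_{F_1/Z(F_1)}(h)=C_{F_1}(h)Z(F_1)/Z(F_1)=1$ by the coprimeness of $|F_1|$ and $|\langle h\rangle|$. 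With those two clarifications your induction is airtight, and the passage from $\tau w\neq0$ back to $r|C|\in\omega(G)$ via the order-of-$g^{r^{e-1}}$ remark is clean.
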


\begin{proof}
See \cite[Lemma 1]{97Maz.t}.
\end{proof}

\begin{lemma}\label{l:hh} Let $p$ and $s$ be primes such that $p\neq s$ and let $G$ be a semidirect product of a finite $p$-group $T$ and a cyclic group $\langle g\rangle$ of order $s$.
Suppose that $[T,g]\neq 1$ and $G$ acts faithfully on a vector space $V$ of positive characteristic  $r\neq p$. Then either the natural semidirect product $V\rtimes G$ has an element of order $sr$,
or the following holds:
\begin{enumerate}
 \item $C_T(g)\neq 1$;
 \item $T$ is nonabelian;
 \item $p=2$ and $s$ is a Fermat prime.
\end{enumerate}
\end{lemma}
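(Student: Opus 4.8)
The plan is to prove Lemma~\ref{l:hh} by reducing to the action of the $p$-group $T$ on $V$ and invoking the Hall--Higman-type analysis of fixed points. First I would observe that if $V\rtimes G$ has no element of order $sr$, then in particular no nontrivial element of $\langle g\rangle$ can centralize a nonzero vector of $V$ together with... more precisely, thinking of $W=C_V(T')$ or passing to a suitable $G$-invariant section, one reduces to the case where $T$ acts on $V$ in a way controlled by the Hall--Higman theorem. The key is that the element $g$ of order $s$ acts on $T$, and $[T,g]\neq 1$ means $g$ acts nontrivially; we want to produce a vector $v\in V$ fixed by $g$ (or by a conjugate of $g$) and then, since $r\nmid s$ and $[T,g]\neq1$ forces some element of order $sp$... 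Actually the cleanest route: argue that the absence of an element of order $sr$ in $V\rtimes G$ forces $C_V(g)=0$, i.e. $g$ acts fixed-point-freely on $V$; then $\langle g\rangle$ together with the $p$-group $T$, with $g$ acting fixed-point-freely on $V$, is severely constrained.

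The main technical step is to exploit fixed-point-freeness. If $g$ acts without nonzero fixed points on $V$, then by a standard argument (coprime action, $r\neq s$) every $T$-composition factor of $V$ on which... hmm. The better-known statement is: if a group $G=T\rtimes\langle g\rangle$ acts faithfully on $V$ over $\mathbb{F}_r$ and $V\rtimes G$ has no element of order $sr$, then $C_V(g)=0$, and then one analyses the $p$-group $T$ acting on the various $\langle g\rangle$-eigenspaces (over the algebraic closure). Since $C_V(g)=0$, the Brauer character / Clifford-theoretic counting shows that $V|_{\langle g\rangle}$ is a multiple of the regular representation minus a trivial summand, so in particular $\dim V$ is divisible by $s-1$ if $s$ does not divide $r$... and crucially $T$ permutes the eigenspaces of $g$ transitively on each orbit of size $s$ (as $[T,g]\neq1$ so $g$ is not central). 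Then I would invoke the Hall--Higman theorem (specifically the $p=2$, Fermat prime exceptional case, or more precisely the theorem on the exponent of the group generated by a conjugacy class) applied to $T\langle g\rangle$ acting on $C_V(g)$... but $C_V(g)=0$, so instead one looks at how $T$ moves the fixed space of a conjugate $g^t$.

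Let me restructure: the cleanest proof uses the following. Suppose $V\rtimes G$ has no element of order $sr$. Then $g$ is fixed-point-free on $V$. I claim $C_T(g)=1$: for if $1\neq x\in C_T(g)$, take $\langle x\rangle$; then $\langle x,g\rangle$ acts on $V$ with $g$ fixed-point-free, and a Glauberman-type or direct argument produces a vector fixed by $x$ but then fixed by a $\langle g\rangle$-conjugate... actually this is where the exception $C_T(g)\neq1$ comes from — so the claim is NOT that $C_T(g)=1$; rather, the conclusion splits. So the real plan: assume $V\rtimes G$ has no element of order $sr$; I will show that if additionally $C_T(g)=1$, or $T$ is abelian, or ($p$ odd or $s$ is not a Fermat prime), then we reach a contradiction. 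The contradiction comes from: $C_T(g)=1$ implies (by coprime action, $T$ being a $p$-group with a fixed-point-free automorphism of prime order $s$, so $T$ is nilpotent of class bounded by... Higman's theorem) $T$ is, say, abelian when $s=2$ — no wait, fixed-point-free automorphism of order $2$ forces $T$ abelian and inverted. Then with $T$ abelian and $g$ inverting $T$ and acting fixed-point-freely on $V$, one builds an element of order $sr$ directly: pick $1\neq t\in T$, then $tg$ has order $s$ (if $s=2$, $(tg)^2=tg tg = t t^{-1} g^2=1$)... and $\langle tg\rangle$ has a fixed vector on $V$ unless... Actually $tg$ is conjugate to $g$ in $T\langle g\rangle$ precisely when $t\in[T,g]$, which here is all of $T$, so $tg$ is also fixed-point-free, no help. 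One instead uses Lemma~\ref{l:frob}: $V\rtimes T\langle g\rangle$ with $C_V(g)=0$ gives a Frobenius-type configuration. I would phrase the whole argument so that the outcome "no element of order $sr$" plus the negations of (i)--(iii) feeds into Lemma~\ref{l:frob} (or the Hall--Higman exponent bound) to produce $sr\in\omega(V\rtimes G)$, a contradiction. The hard part is organizing the case split on whether $T$ is abelian and on the prime $s$ so that exactly the three listed exceptions survive; the underlying representation theory (fixed-point-free automorphisms of $p$-groups, coprime action, Hall--Higman) is standard, but bookkeeping the Fermat-prime case for $p=2$ is the delicate point.
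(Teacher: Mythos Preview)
The paper does not prove this lemma at all: its proof is a one-line citation to \cite[Lemma 3.6]{15Vas}. So there is no ``paper's own proof'' to compare against, and any self-contained argument you write is automatically a different route.

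That said, your proposal is not yet a proof; it is a sequence of partial starts that you abandon (``hmm'', ``no wait'', ``actually''). You correctly isolate the opening move --- no element of order $sr$ forces $C_V(g)=0$ --- and you correctly sense that Lemma~\ref{l:frob} and the Hall--Higman theorem are the tools. But you never pin down which statement of Hall--Higman you are invoking, and your attempt to derive (i) goes in the wrong direction: you write ``I claim $C_T(g)=1$'' and then realize the conclusion is the opposite. The clean argument for (i) is immediate from Lemma~\ref{l:frob}: if $C_T(g)=1$ then $T\langle g\rangle$ is a Frobenius group with kernel $T$ and cyclic complement $\langle g\rangle$, the action on $V$ is faithful, and Lemma~\ref{l:frob} gives $rs\in\omega(V\rtimes G)$, a contradiction.

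The real content is (ii) and (iii), and here your proposal has a genuine gap. For (ii) you need: if $T$ is abelian then, using $T=[T,g]\times C_T(g)$ (coprime action), the Frobenius subgroup $[T,g]\rtimes\langle g\rangle$ still acts with $[T,g]$ nontrivial on some $G$-invariant section of $V$, and Lemma~\ref{l:frob} again yields $rs$. For (iii) you must invoke the precise Hall--Higman/Shult statement: if a group $T\langle g\rangle$ with $T$ a $p$-group and $g$ of prime order $s$ acts faithfully on $V$ in characteristic $r\neq p$ with $C_V(g)=0$, then (outside the case $p=2$, $s$ a Fermat prime) the minimal polynomial of $g$ on $V$ is $x^s-1$, forcing a fixed point and a contradiction. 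You gesture at this but never state it, and your digressions about eigenspace permutations and ``regular representation minus a trivial summand'' do not lead anywhere. To turn this into a proof you need to commit to that Hall--Higman input explicitly and organize the three conclusions as three separate short contradictions.
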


\begin{proof}
See \cite[Lemma 3.6]{15Vas}.
\end{proof}

If $G$ is a group of the hypothesis of Lemma \ref{l:str}, then imposing some additional restrictions on $\omega(G)$, we can
guarantee that the solvable radical of $G$ is nilpotent.

\begin{lemma}\label{l:nilp}
Let $G$ be a finite group and let $S\leqslant G/K\leqslant \Aut S$, where $K$ is a normal solvable subgroup of $G$ and $S$ is a nonabelian simple group.
Suppose that for every $r\in\pi(K)$, there is $a\in\omega(S)$ such that $\pi(a)\cap\pi(K)=\varnothing$
and $ar\not\in\omega(G)$. Then $K$ is nilpotent.
\end{lemma}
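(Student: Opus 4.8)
\emph{Proof proposal.} The plan is to argue by contradiction and, in fact, to replace $G$ by a counterexample of smallest order. First note that the hypothesis is inherited by every quotient $G/N$ with $N$ a minimal normal subgroup of $G$ contained in $K$: here $\omega(G/N)\subseteq\omega(G)$ (since $\omega$ is closed under divisors) and $\pi(K/N)\subseteq\pi(K)$, while $K/N$ is again the solvable radical of $G/N$ with $S\leqslant (G/N)/(K/N)\leqslant\operatorname{Aut}S$, so the stated condition transfers verbatim. Minimality therefore forces $K/N$ to be nilpotent for every such $N$; and since a group embedding into a direct product of two nilpotent groups is itself nilpotent, $G$ has a \emph{unique} minimal normal subgroup $N$ lying in $K$. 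Thus $N$ is an elementary abelian $r$-group for some $r\in\pi(K)$ and $K/N$ is nilpotent; $r$ is the prime at which I will contradict the hypothesis. Write $\widetilde S$ for the full preimage of $S$ in $G$, let $a\in\omega(S)$ be the element supplied by the hypothesis for this $r$ (so $\pi(a)\cap\pi(K)=\varnothing$ and $ar\notin\omega(G)$), and, using Schur--Zassenhaus ($(a,|K|)=1$), pick $y\in\widetilde S$ of order $a$ whose image in $S$ has order $a$. The aim is to produce an element of order $ar$ in $G$.

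Next I would pin down the structure around $N$. The image of $C_{\widetilde S}(N)$ in $S=\widetilde S/K$ is normal in $S$, hence equals $1$ or $S$. If it equals $S$, then $\widetilde S=C_{\widetilde S}(N)K$, so the action of $\widetilde S$ on $N$ factors through the solvable group $K/C_K(N)$; as $(a,|K|)=1$, the element $y$ then centralises $N$, hence commutes with a nontrivial element of the $r$-group $N$, giving $ar\in\omega(G)$ --- a contradiction. So $C_{\widetilde S}(N)=C_K(N)$, and since $\operatorname{Out}S$ is solvable (Schreier's conjecture, via CFSG), $C_G(N)$ is an extension of the solvable group $C_K(N)$ by a subgroup of $\operatorname{Out}S$, hence solvable, hence $C_G(N)=C_K(N)$. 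Now $N\leqslant Z(C_K(N))$ and $C_K(N)/N\leqslant K/N$ is nilpotent, so $C_K(N)$ is nilpotent; as $N$ is the only minimal normal subgroup of $G$ inside $K$ and is an $r$-group, every Sylow $q$-subgroup of $C_K(N)$ with $q\neq r$ is trivial (it is normal in $G$ and would otherwise contain a minimal normal subgroup of $G$ distinct from $N$), so $C_K(N)=O_r(K)=O_r(G)$ is an $r$-group. Consequently $O_r(K)$ is a Sylow $r$-subgroup of $K$, $K/O_r(K)$ is nilpotent with trivial $r$-core and so an $r'$-group, and by Schur--Zassenhaus $K=O_r(K)\rtimes H$ for a nilpotent $r'$-group $H$ acting faithfully on $N$, with $H\neq 1$ (otherwise $N\leqslant Z(K)$ and $K/N$ nilpotent would make $K$ nilpotent) and $[O_r(K),H]\neq 1$ because $K$ is not nilpotent.

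Finally one treats the two possibilities for the coprime action of $\langle y\rangle$ on $O_r(K)$. If $C_{O_r(K)}(y)\neq 1$ then $y$ commutes with a nontrivial $r$-element and $ar\in\omega(G)$, contradiction; so $y$ acts fixed-point-freely on $O_r(K)$, and this is the main obstacle. A Frattini argument for the Hall $r'$-subgroup $H$ of $K$, together with a Hall-subgroup argument inside the solvable group $\langle y\rangle K$, lets me assume (after conjugating $H$) that $\langle y\rangle$ normalises $H$; then $X:=O_r(K)\rtimes(H\rtimes\langle y\rangle)$ is a subgroup of $G$ whose normal $r$-subgroup $O_r(K)$ carries a nontrivial, indeed faithful, action of $H$. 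If $\langle y\rangle$ acts fixed-point-freely on $H$, then $X/O_r(K)=H\rtimes\langle y\rangle$ is a Frobenius group with kernel $H$ and cyclic complement $\langle y\rangle$ of order $a$, so Lemma~\ref{l:frob} gives $ra\in\omega(X)\subseteq\omega(G)$ --- the desired contradiction. Otherwise $C_H(y)\neq 1$, and here I would use that $\langle y\rangle$ is fixed-point-free on every chief factor of $O_r(K)$ (coprimeness) while $H\neq 1$ is nilpotent and acts nontrivially on some such factor $V$, to extract inside $X$ a section of the form $V\rtimes(T\rtimes\langle g\rangle)$ with $V$ elementary abelian of characteristic $r$, $T$ a $p$-group ($p\neq r$) coming from $H$, $g$ of prime order $s\mid a$, and $[T,g]\neq 1$, and then invoke Lemma~\ref{l:hh} --- with Lemma~\ref{l:nonc} taking over whenever the exceptional conclusion of Lemma~\ref{l:hh} forces a noncyclic abelian section into play, and with $C_H(y)\neq 1$ feeding the remaining constructions. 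The technical heart of the proof is exactly this last step: arranging the fixed-point-free case so that one of Lemmas~\ref{l:frob},~\ref{l:hh},~\ref{l:nonc} delivers an element of order divisible by $ar$ (and not merely one of order $rs$ for a single prime $s\mid a$). Once that is done, $ar\in\omega(G)$ contradicts the choice of $a$, so no counterexample exists and $K$ is nilpotent.
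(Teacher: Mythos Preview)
Your reduction to a minimal counterexample is sound, and your structural analysis up through $K=O_r(K)\rtimes H$ with $H$ a nontrivial nilpotent $r'$-group acting faithfully on $N$ is correct. The gap is precisely where you flag it: the case $C_H(y)\neq 1$. The tools you propose there cannot close it. Lemma~\ref{l:hh} (and likewise Lemma~\ref{l:nonc}) produces an element of order $rs$ for a \emph{single} prime $s\mid a$, not of order $ra$; and nothing in the hypothesis forbids $rs\in\omega(G)$ for a proper divisor $s$ of $a$. You note this yourself, but offer no mechanism to upgrade $rs$ to $ra$, and I do not see one along these lines: once $\langle y\rangle$ fails to be fixed-point-free on $H$, there is no Frobenius group with complement of full order $a$ available, and the auxiliary lemmas are intrinsically one-prime-at-a-time.

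The paper's proof sidesteps this difficulty by organising the argument differently. Rather than working below a minimal normal subgroup, it passes to $\widetilde G=G/F$ where $F$ is the Fitting subgroup of $K$, and picks a minimal normal subgroup $\widetilde T$ of $\widetilde G$ inside $K/F$ (an elementary abelian $t$-group). One then finds a prime $r\in\pi(F)\setminus\{t\}$ such that $\widetilde T$ acts faithfully on the Sylow $r$-subgroup $R$ of $F$, and applies the hypothesis to \emph{this} $r$. The key point is that $C_{\widetilde G}(\widetilde T)\leqslant K/F$, so any element $x\in\widetilde G$ of order $a$ (coprime to $|K|$) satisfies $[\widetilde T,x]\neq 1$; since $\widetilde T$ is abelian, coprime action gives $\widetilde T=[\widetilde T,x]\times C_{\widetilde T}(x)$, and hence $[\widetilde T,x]\rtimes\langle x\rangle$ is automatically a Frobenius group with cyclic complement of order exactly $a$. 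Lemma~\ref{l:frob} then yields $ra\in\omega(G)$ directly. In your language: instead of asking $\langle y\rangle$ to be fixed-point-free on the nilpotent complement $H$, the paper manufactures the Frobenius kernel \emph{inside} (a quotient of) $H$, where abelianness of the chief factor guarantees the fixed-point-free decomposition for free.
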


\begin{proof}
Otherwise, the Fitting subgroup $F$ of $K$ is a proper subgroup of $K$.
Define $\widetilde G=G/F$ and $\widetilde K=K/F$. Let $\widetilde T$ be a minimal normal subgroup of
$\widetilde G$ contained in $\widetilde K$ and let $T$ be its preimage in $G$. It is clear that $\widetilde T$ is an elementary abelian $t$-group
for some prime $t$. Given $r\in \pi(F)\setminus \{t\}$, denote the Sylow $r$-subgroup of $F$ by $R$,
its centralizer in $G$ by $C_r$ and the image of $C_r$ in $\widetilde G$ by $\widetilde C_r$. Since $\widetilde C_r$ is normal in $\widetilde G$,
it follows that either $\widetilde T\leqslant \widetilde C_r $ or $\widetilde C_r \cap \widetilde T = 1$.
If $\widetilde T\leqslant \widetilde C_r$ for all $r\in\pi(F)\setminus \{t\}$, then $T$ is a normal nilpotent subgroup of $K$, which contradict the choice of $\widetilde T$.
Thus there is $r\in\pi(F)\setminus\{t\}$ such that $\widetilde C_r \cap \widetilde T = 1$.

If $C_{\widetilde G}(\widetilde T)$ is not contained in $\widetilde K$, then it has a section isomorphic to $S$. In this case $ta\in\omega(G)$ for every $a\in\omega_{t'}(S)$, contrary
to the hypothesis. Thus  $C_{\widetilde G}(\widetilde T)\leqslant \widetilde K$.

Choose $a\in\omega_{r'}(S)$ such that $\pi(a)\cap\pi(K)=\varnothing$ and $ra\not\in\omega(G)$, and let $x\in \widetilde G$ be an element of order $a$. Then
$x\not\in C_{\widetilde G}(\widetilde T)$, therefore, $[\widetilde T,x]\neq 1$ and so $[\widetilde T,x]\rtimes\langle x\rangle$ is a Frobenius group with complement $\langle x\rangle$.
Since $\widetilde C_r \cap \widetilde T = 1$, we can apply Lemma \ref{l:frob} and conclude that $ra\in\omega(G)$, contrary to the choice of $a$.
\end{proof}

The last two lemmas are concerned with the numbers $k_3(q)$, $k_4(q)$ and $k_6(q)$, which are related to the spectra of $S_6(q)$, $O_7(q)$ and $O_8^+(q)$. Applying Lemma \ref{l:k_n},
we see that $$k_3(q)=\frac{q^2+q+1}{(3,q-1)}, \quad k_4(q)=\frac{q^2+1}{(2,q-1)}, \quad k_6(q)=\frac{q^2-q+1}{(3,q+1)}.$$

\begin{lemma}\label{l:nl4}
Let $q$ be a prime power and let $r$ be a prime. If $q^2+1=2r^l$, with
$l>1$, then either $l=2$ and $q$ is a prime, or $l=4$, $q=239$ and $r=13$.
\end{lemma}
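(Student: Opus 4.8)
The plan is to reduce the Diophantine equation $q^2+1=2r^l$ to a small number of elliptic or superelliptic curves and finish by elementary factorization arguments. First I would dispose of the parity: since $q^2+1$ is even, $q$ is odd, and $q^2+1\equiv 2\pmod 4$, so $(q^2+1)/2=r^l$ is odd, whence $r$ is an odd prime. Writing the equation as $q^2+1=2r^l$ with $l\geqslant 2$, I would split according to the residue of $l$ modulo the small primes $2$ and $3$.

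The key step is the case $l$ even, say $l=2m$. Then $q^2+1=2(r^m)^2$, i.e. $q^2-2(r^m)^2=-1$, which is a negative Pell equation in the unknowns $q$ and $s:=r^m$. Its solutions $(q,s)$ are exactly the odd-indexed convergents of $\sqrt2$, generated by $q+s\sqrt2=(1+\sqrt2)^{2k+1}$; in particular $s$ runs through $1,5,29,169,\dots$, the sequence $s_{k+1}=6s_k-s_{k-1}$. I would then observe that $s=r^m$ forces $s$ to be a prime power, and analyze the recurrence: for $m=1$ we just need $s=r$ prime and $l=2$, which gives the first alternative (here $q$ is automatically prime because $q^2+1=2r$ with $q$ odd and $q\geqslant 3$ forces any proper factor of $q$ to make $q^2+1$ too composite — more carefully, if $q=q_1q_2$ nontrivially then $q^2+1=2r$ has the odd part $r$ prime, but standard divisibility of the Pell sequence shows $s_k\mid s_{kj}$, and a short check rules out $q$ composite when $l=2$). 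For $m\geqslant 2$ we need $r^m$ to be a perfect power appearing in the Pell sequence $1,5,29,169,985,\dots$; $169=13^2$ is such a term, giving $s=13^2$, $l=4$, $q=239$, and I would invoke the divisibility relations in the Pell sequence together with a Bang--Zsigmondy-type argument (Lemma \ref{l:bz}) to show no further perfect power occurs — alternatively cite the classical fact that $169$ is the only nontrivial perfect power among the numerators/denominators of convergents of $\sqrt2$.

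For $l$ odd with $l\geqslant 3$ I would argue directly: factor $2r^l=q^2+1=(q+i)(q-i)$ in $\mathbb Z[i]$; since $r$ is an odd prime with $r\equiv 1\pmod 4$ (as it divides $q^2+1$ with $q$ odd, so $-1$ is a QR mod $r$), write $r=\pi\bar\pi$ with $\pi$ prime in $\mathbb Z[i]$, and then $q+i=u\,\pi^l$ or $q+i=u\,\bar\pi^l$ up to a unit $u$ and the factor $1+i$. Expanding $(a+bi)^l=q\pm i$ and comparing imaginary parts gives $\pm1=b\sum(\text{terms})$, forcing $b=\pm1$, and then the real part yields $q=\pm(a^l-\binom{l}{2}a^{l-2}+\dots)$ with $a^2+1=r$; substituting back into $q^2+1=2r^l=2(a^2+1)^l$ produces a polynomial identity in $a$ that has no solutions for $l\geqslant3$ except the already-found ones, which I would check by a short degree/size estimate. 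The main obstacle I anticipate is making the "no further perfect power in the Pell sequence" step fully rigorous without simply quoting a deep result; I expect the cleanest route is the Gaussian-integer factorization above, which handles all odd $l\geqslant3$ uniformly and leaves only the genuinely finite check for even $l$, where the term $169=13^2$ must be shown to be the last perfect power — and for that I would use that consecutive Pell numerators are coprime and that $s_{2k}=s_k(6s_{k-1}-\dots)$ type identities force any prime power term with $m\geqslant2$ to have a very constrained index, reducing to a finite search.
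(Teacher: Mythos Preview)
The paper does not give a self-contained proof of this lemma: it simply cites Crescenzo's 1975 paper \cite{75Cre} (Lemmas 3 and 4 there), which treats exactly this Diophantine equation. So there is no argument to compare your proposal against; what you have written is an attempt to reprove Crescenzo's result from scratch, which is a different undertaking from what the paper needs.

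Your overall plan (negative Pell for even $l$, Gaussian-integer factorisation for odd $l$) is the classical line of attack and is the right framework. However, several steps are genuine gaps rather than routine checks. First, in the $l=2$ case you write ``$q$ is automatically prime because $q^2+1=2r$\ldots''; the equation is $q^2+1=2r^2$, and your sketch does not actually show that a prime power $q=p^k$ with $k\geqslant2$ is impossible --- this is precisely the content of one of Crescenzo's lemmas, and it does not fall out of Pell divisibility alone (note, e.g., that $(q,r)=(8119,5741)$ is a Pell solution with $r$ prime and $q$ composite, so one really needs the prime-power hypothesis on $q$). Second, the assertion that $169$ is the only nontrivial perfect power in the sequence $1,5,29,169,985,\dots$ is a known theorem (Ljunggren), but it is not a ``short check'' and does not follow from Bang--Zsigmondy; your proposed reduction via divisibility identities in the Pell sequence does not produce a finite search without further input. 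Third, the Gaussian-integer argument for odd $l\geqslant3$ is a reasonable start, but ``substituting back\ldots produces a polynomial identity in $a$ that has no solutions'' is exactly the hard step and needs a real size or congruence argument.

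In short: your outline is sound, but each of the three places where you write ``a short check'' or ``standard divisibility'' is in fact where the actual work of Crescenzo's paper lies. For the purposes of this paper, the citation is the intended proof.
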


\begin{proof}
See Lemmas 3 and 4 in \cite{75Cre} (Lemma 1 of \cite{75Cre} misses the case $q=3$).
\end{proof}

\begin{lemma}\label{l:k3}
Let $q$ be an odd prime power, $q\geqslant 7$ and $q\equiv \varepsilon \pmod 4$, where $\varepsilon \in\{+1,-1\}$.
\begin{enumerate}
 \item Every prime divisor of $k_3(\varepsilon q)$ or $k_6(\varepsilon q)$ is congruent to $1$ modulo $3$.
 \item $k_6(\varepsilon q)>16q^2/51$, $k_3(\varepsilon q)>10q^2/33$ and  $k_3(\varepsilon q), k_6(\varepsilon q)\geqslant 19$.
 \item If $k_3(\varepsilon q)\leqslant 241$, then $q$ is contained in the following table:
 $$\begin{array}{|c|c|c|c|c|c|c|}
\hline
q&7&9&11&13&23&25\\
\hline
k_3(\varepsilon q)&43&7\cdot 13&37& 61&13^2&7\cdot 31\\
\hline
\end{array}$$
 \item If $k_6(\varepsilon q)\leqslant 757$, then $q$ is contained in the following table:
$$\begin{array}{|c|c|c|c|c|c|c|c|c|c|c|c|c|c|}
\hline
q&7&9&11&13&17&19&23&25&27&29&31&41&43\\
\hline
k_6(\varepsilon q)& 19&73&7\cdot 19& 157&7\cdot 13&127&7\cdot 79&757&601&271&331&547&631\\
\hline
\end{array}$$
\item The number $k_6(\varepsilon q)$ cannot be equal to either of the numbers $k_8(7)=1201$, $k_7(3)=1093$, $k_7(4)=43\cdot 127$, $k_7(5)=19531$, $k_7(8)=127\cdot 337$, $k_7(9)=547\cdot 1093$, $k_7(17)=41761$, $13\cdot 61$, $1321$.

\end{enumerate}
\end{lemma}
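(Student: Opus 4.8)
The plan is to reduce the whole lemma to elementary estimates together with a small number of explicit evaluations. By Lemma~\ref{l:k_n},
\[
k_3(\varepsilon q)=\frac{q^2+\varepsilon q+1}{(3,\varepsilon q-1)},\qquad k_6(\varepsilon q)=\frac{q^2-\varepsilon q+1}{(3,\varepsilon q+1)},
\]
so both denominators lie in $\{1,3\}$, and since exactly one of $q-1,q,q+1$ is divisible by $3$, at most one denominator equals $3$; in particular each of the two numbers has numerator at least $q^2-q+1$ and denominator at most $3$. For part~(i) it suffices to note that every prime divisor $r$ of $k_m(\varepsilon q)$ lies in $R_m(\varepsilon q)$, i.e. $e(r,\varepsilon q)=m$, whence $m\mid r-1$; taking $m=3$ and $m=6$ gives the claim (indeed every prime divisor of $k_6(\varepsilon q)$ is congruent to $1$ modulo $6$). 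The sets $R_3(\varepsilon q)$ and $R_6(\varepsilon q)$ are nonempty by Lemma~\ref{l:bz} since $q\geqslant7$.

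For part~(ii), the remark above gives $k_3(\varepsilon q),k_6(\varepsilon q)\geqslant(q^2-q+1)/3$. The inequalities $(q^2-q+1)/3>10q^2/33$ and $(q^2-q+1)/3>16q^2/51$ are equivalent to $q^2-11q+11>0$ and $q^2-17q+17>0$, which hold for all $q$ outside a short initial range; the remaining values $q\in\{7,9,11,13\}$, as well as the slightly stronger bound $k_3(\varepsilon q),k_6(\varepsilon q)\geqslant19$, are verified directly from the displayed formulas, with $\varepsilon$ fixed by $q\equiv\varepsilon\pmod 4$.

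Parts~(iii) and~(iv) are then bounded searches. Combining $k_3(\varepsilon q)\leqslant241$ with $k_3(\varepsilon q)>10q^2/33$ forces $q^2<796$, hence $q\leqslant27$; likewise $k_6(\varepsilon q)\leqslant757$ forces $q^2<2413$, hence $q\leqslant49$. It then remains to run through the finitely many odd prime powers $q$ in these ranges, reading off $\varepsilon$ from the residue of $q$ modulo $4$ and evaluating $k_3(\varepsilon q)$ (respectively $k_6(\varepsilon q)$) from the formula above; keeping only those $q$ for which the value does not exceed the stated bound reproduces exactly the entries of the two tables.

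For part~(v) the point is a quadratic completion. If $k_6(\varepsilon q)=N$ for one of the nine listed numbers, put $d=(3,\varepsilon q+1)\in\{1,3\}$; then $q^2-\varepsilon q+1=dN$, and multiplying by $4$ gives $(2q-\varepsilon)^2=4dN-3$, so that $4dN-3$ must be a perfect square. For each of the nine values of $N$ and each $d\in\{1,3\}$ a single estimate shows that $4dN-3$ lies strictly between two consecutive squares, and this contradiction finishes the proof. I do not expect a real obstacle here: the lemma packages several elementary arithmetic facts, and the only step that is an idea rather than a routine computation or a finite check is this reduction of~(v) to non-squareness; the remaining care is purely in the bookkeeping --- tying $\varepsilon$ to the congruence $q\equiv\varepsilon\pmod 4$, not overlooking the small values of $q$ in~(ii), and using the sharp ranges in~(iii)--(iv).
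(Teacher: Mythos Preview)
Your proof is correct and, for parts (i)--(iv), follows essentially the same path as the paper: the divisor congruence from the definition of $R_m$, the lower bound $(q^2-q+1)/3$ (the paper uses the slightly weaker $(q^2-q)/3$) combined with a direct check for small $q$, and the bounded search produced by feeding the bounds of (ii) back into the inequalities of (iii)--(iv).

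For part (v) you take a genuinely different route. The paper rewrites $k_6(\varepsilon q)=N$ as $q(q-\varepsilon)=dN-1$ and then exploits the extra hypothesis that $q$ is an odd prime power: since $q$ and $q-\varepsilon$ are coprime and $q-\varepsilon$ is even, $q$ must be the largest odd primary factor of $dN-1$, which pins down $q$ and leads to a contradiction. Your approach completes the square to $(2q-\varepsilon)^2=4dN-3$ and checks that $4dN-3$ is not a perfect square for each of the eighteen pairs $(N,d)$. Your method is slightly cleaner in that it does not use the prime-power hypothesis at all and reduces each case to a single square test; the paper's method has the advantage that the factorization of $dN-1$ is often visibly incompatible with the shape $q(q-\varepsilon)$ without any numerical estimate. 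Either way the verification is a finite computation of comparable size.
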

\begin{proof}

 (i) This follows from the definition of a primitive divisor and Little Fermat's Theorem.

 (ii) It is clear that $k_3(q), k_6(q)>(q^2-q)/3$. If $q\geqslant 17$, then $$(q^2-q)/3=q^2(q-1)/3q\geqslant 16q^2/51>10q^2/33>19.$$ For $q=7, 9,11,13$, the desired inequalities
 are verified by direct calculation.

 (iii)-(iv) If $q\geqslant 47$, then $k_6(\varepsilon q)> 601$ and $k_3(\varepsilon q)>241$ by (ii). For $q\leqslant 43$, the assertion follows by direct calculation.

(v) Suppose that $k_6(\varepsilon q)=k_8(7)=1201$. Then $q(q-\varepsilon)$ is equal to either $1200=16\cdot 3\cdot 25$ or $3\cdot 1201-1=2\cdot 1801$.
Since $q$ is an odd prime power, $q$ is equal to the largest odd primary divisor of $q(q-\varepsilon)$. Thus $q=25$ and $q-\varepsilon=48$, or $q=1081$ and $q-\varepsilon=2$, and this is a contradiction.
The other cases are handled in a similar manner.

\end{proof}

\section{Spectra and exponents of groups of Lie type}

In this section we give some lower bounds on the exponents of simple groups of Lie type and list the spectra of some groups of low Lie rank.
Throughout the paper we repeatedly use, mostly without explicit references,  the description of the spectra of simple classical groups from \cite{10But.t} (with corrections from \cite[Lemma 2.3]{16Gr.t}) and \cite{08But.t},
as well as the adjacency criterion for the prime graphs of simple groups of Lie type from \cite{05VasVd.t} (with corrections from \cite{11VasVd.t}).
Also we use the abbreviations $L_n^\tau(u)$ and $E_6^\tau(u)$, where $\tau\in\{+,-\}$, that are defined as follows:  $L_n^+(u)=L_n(u)$, $L_n^-(u)=U_n(u)$, $E_6^+(u)=E_6(u)$ and $E_6^-(u)={}^2E_6(u)$.

\begin{lemma}\label{l:spec_c3}
Let $q$ be a power of an odd prime $p$. Let $L=S_6(q)$ and $d=1$, or $L=O_7(q)$ and $d=2$. Then $\omega(L)$ consists of the divisors of the following numbers:
\begin{enumerate}
  \item $(q^3\pm 1)/2$, $(q^2+1)(q+1)/2$, $(q^2+1)(q-1)/2$, $q^2-1$, $p(q^2\pm1)/d$;
  \item $9(q\pm1)/d$ if $p=3$;
  \item $25$ if $p=5$.
\end{enumerate}
The set $\omega(O_8^+(q))$ is the union of $\omega(O_7(q))$ and the set of divisors of $(q^4-1)/4$.
\end{lemma}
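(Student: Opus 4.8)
The plan is to determine $\omega(L)$ directly from the known description of element orders in finite classical groups, using the standard machinery of semisimple and unipotent parts, and then translate into the cyclotomic/torus language that gives the explicit list in the statement. Throughout I would lean on the description of spectra of classical groups referenced in the paper (from \cite{10But.t, 08But.t} with corrections from \cite{16Gr.t}), and on Lemma~\ref{l:r-part_c}, Lemma~\ref{l:r-part}, and the formulas $k_3(q)=(q^2+q+1)/(3,q-1)$, $k_4(q)=(q^2+1)/(2,q-1)$, $k_6(q)=(q^2-q+1)/(3,q+1)$ recalled just before Lemma~\ref{l:nl4}.

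First I would treat $L=S_6(q)$, i.e. $PSp_6(q)$. An element of $Sp_6(q)$ has order $\operatorname{lcm}(|s|,p^a)$ where $s$ is its semisimple part and $p^a$ bounds the unipotent part on the fixed space of $s$; the semisimple orders are exactly the orders of elements of maximal tori of $Sp_6(q)$, whose structure is governed by the conjugacy classes of the Weyl group of type $C_3$. Running through the tori one gets that the semisimple element orders are divisors of $q^3-1$, $q^3+1$, $(q^2+1)(q+1)$, $(q^2+1)(q-1)$, $(q^2-1)(q+1)$, $(q^2-1)(q-1)$, the last two of which divide $q^2-1$ times $q\pm1$ — but after passing to the simple quotient (dividing by $d=(2,q-1)=2$) and combining with the centre, these collapse to the numbers $(q^3\pm1)/2$, $(q^2+1)(q\pm1)/2$, $q^2-1$ listed in (i). For the mixed (non-semisimple) orders, the only unipotent elements that commute with a nontrivial semisimple part and enlarge the order are those giving the extra factor of $p$ on a long/short root $SL_2$ or $Sp_2$ block; this yields $p(q^2-1)$, $p(q^2+1)$ divided by $d$, again as in (i). The exceptional entries (ii) $9(q\pm1)/d$ for $p=3$ and (iii) $25$ for $p=5$ come from the fact that in small characteristic a unipotent element can have order $p^2$ (order $9$ regular unipotent-type elements in a rank-one subsystem for $p=3$, and the order-$25$ element in characteristic $5$), commuting with a torus element of order $q\pm1$; here Lemma~\ref{l:r-part} pins down precisely when $p^2 \in \omega$.

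For $L=O_7(q)=\Omega_7(q)$ with $d=2$ I would use the exceptional isomorphism $\Omega_7(q)\cong PSp_6(q)=S_6(q)$, so $\omega(O_7(q))=\omega(S_6(q))$ and the list with $d=2$ is literally the same as for $S_6(q)$ with $d=1$ after the centre identification; this is why the lemma states both cases together with the single parameter $d$. For $L=O_8^+(q)=P\Omega_8^+(q)$, the maximal tori are indexed by the Weyl group of type $D_4$, which on top of the $C_3$/$B_3$ tori contributes tori of order $(q^2-1)^2$, $(q^4-1)$, $(q^2+1)^2$, $(q^3-1)(q-1)$, $(q^3+1)(q+1)$; the genuinely new element orders are the divisors of $(q^4-1)/4$ (the $4=(2,q-1)^2$ or the order of the centre of $\Omega_8^+$, whichever applies, but numerically $(4,q^4-1)=4$ for odd $q$), while every other torus order already divides one of the numbers in $\omega(O_7(q))$ together with $q^2-1$. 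The mixed orders in $O_8^+(q)$ do not exceed those already recorded, because any unipotent element centralising a semisimple element of order divisible by $k_4(q)$ or $k_3(q)$ or $k_6(q)$ lies in a factor on which it adds at most one power of $p$, already covered by $p(q^2\pm1)/d$, and the small-characteristic phenomena are inherited from the $B_3$ subsystem. Hence $\omega(O_8^+(q))=\omega(O_7(q))\cup\{\text{divisors of }(q^4-1)/4\}$ as claimed.

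The main obstacle is the careful bookkeeping of mixed (semisimple-times-unipotent) element orders: one must verify that \emph{no} such element escapes the list, i.e. that whenever a semisimple $s$ has order not dividing $q^2-1$ (the ``large'' tori involving $q^3\pm1$ or $q^2+1$), its centraliser is a product of tori and small unipotent-free factors so that the unipotent part contributes at most the single factor $p$ recorded in item (i). This is exactly the content of the adjacency criterion for prime graphs of groups of Lie type (from \cite{05VasVd.t}, corrected in \cite{11VasVd.t}) combined with the explicit spectra in \cite{10But.t,08But.t}; concretely one checks that $p$ is non-adjacent in $GK(L)$ to every primitive prime divisor $r\in R_6(q)\cup R_3(q)\cup R_4(q)$, while $p\cdot(q^2\pm1)/d\in\omega(L)$ accounts for the adjacencies of $p$ with the remaining primes. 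The characteristic-specific cases $p=3$ and $p=5$ require separately confirming, via Lemma~\ref{l:r-part}, that $\gexp_p(L)=p^2$ there (and $=p$ otherwise for these ranks), which fixes the entries $9(q\pm1)/d$ and $25$. Once these adjacency and $p$-exponent facts are in hand, the statement is a direct transcription.
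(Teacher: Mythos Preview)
The paper does not actually prove this lemma; it is stated as a direct transcription of the spectra computed in \cite{10But.t} (with corrections from \cite[Lemma~2.3]{16Gr.t}), as announced at the start of Section~3. So there is no argument to compare against, only a citation.

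Your proposal, however, contains a genuine error. You write that for $L=O_7(q)$ you ``would use the exceptional isomorphism $\Omega_7(q)\cong PSp_6(q)=S_6(q)$''. No such isomorphism exists when $q$ is odd. The root systems $B_3$ and $C_3$ are dual, and there is an isogeny between the corresponding algebraic groups, but it induces a group isomorphism only in characteristic~$2$; for odd $q$ the simple groups $O_7(q)$ and $S_6(q)$ are a classical pair of \emph{non-isomorphic} simple groups of the same order. Their spectra differ, and the lemma records exactly this: the parameter $d$ is $1$ for $S_6(q)$ and $2$ for $O_7(q)$, so for instance $p(q^2+1)\in\omega(S_6(q))$ while only $p(q^2+1)/2\in\omega(O_7(q))$. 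Your sentence that ``the list with $d=2$ is literally the same as for $S_6(q)$ with $d=1$ after the centre identification'' is therefore self-contradictory. To handle $O_7(q)$ you must run the torus/centraliser analysis separately for type $B_3$, where the relevant tori of $\Omega_7(q)$ have orders such as $(q^3\pm1)/2$, $(q^2+1)(q\pm1)/2$, $q^2-1$, and the mixed orders involving $p$ pick up the extra factor $1/2$ compared with $Sp_6(q)$ because of how the spinor norm interacts with the unipotent part; this is precisely what \cite{10But.t} works out.

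A smaller point: in your $S_6(q)$ paragraph you describe passing to the simple quotient as ``dividing by $d=(2,q-1)=2$'', but the statement has $d=1$ for $S_6(q)$. The semisimple orders $(q^3\pm1)/2$ etc.\ already arise in $PSp_6(q)$ because the central involution of $Sp_6(q)$ lies in every maximal torus; the halving there is not the parameter $d$ of the lemma.
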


\begin{lemma} \label{l:expc3} Let $q$ be a power of an odd prime $p$ and let $L$ be one of the groups $S_6(q)$, $O_7(q)$, or $O_8^+(q)$. Then
$$\gexp(L)=\begin{cases} p^2(q^6-1)(q^2+1)/2&\text{ if } p=3,5\\
                         p(q^6-1)(q^2+1)/2&\text{ if } p>5.\\
           \end{cases}
$$
If $q\geqslant 7$, $a=p(q^2+1)/2$ and $b=(q^3-1)/2$, then $\gexp(L)$ is less than any of the numbers
$q^9$, $6q^6b/5$, $5b^3$, and $a^4$.
\end{lemma}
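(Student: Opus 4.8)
The plan is to read $\omega(L)$ off from Lemma~\ref{l:spec_c3}: since $\gexp(L)$ is the least common multiple of $\omega(L)$, it equals the least common multiple of the finitely many numbers listed there. I would compute the $p$-part and the $p'$-part of this number separately. Set $d=1$ for $L=S_6(q)$ and $d=2$ for $L=O_7(q),O_8^+(q)$. The $p$-part is immediate: since $q$ is a power of $p$, the numbers $q^3\pm1$, $q^2\pm1$ and $q^4-1$ are coprime to $p$, so the $p$-part of $\gexp(L)$ is the largest of $(p(q^2\pm1)/d)_p=p$, and, when $p=3$, $(9(q\pm1)/d)_3=9$, and, when $p=5$, $(25)_5=25$ (here $p\nmid q\pm1$ and $p\nmid d$ as $p$ is odd), which equals $p^2$ for $p\in\{3,5\}$ and $p$ for $p>5$. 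For the $p'$-part: since $p(q^2\pm1)/d$ contributes only the factor $(q^2\pm1)/d$, which divides $q^2-1$ or $(q^2+1)(q+1)/2$, it remains to prove
$$\bigl[(q^3-1)/2,(q^3+1)/2,(q^2+1)(q+1)/2,(q^2+1)(q-1)/2,q^2-1,(q^4-1)/4\bigr]=(q^6-1)(q^2+1)/2$$
(the last entry occurs only for $O_8^+(q)$, but it divides the right-hand side, so its presence is immaterial).

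I would prove this identity by comparing $r$-parts. For an odd prime $r\neq p$: at most one of $q^3-1,q^3+1$ is divisible by $r$ and at most one of $q-1,q+1$ is, so the first two numbers have $r$-part $(q^6-1)_r$, while $(q^2+1)(q\pm1)/2$ and $q^2-1$ have $r$-part $(q^4-1)_r$; since $q^2+1$ is coprime to $q^4+q^2+1$, one has $(q^6-1)_r=1$ whenever $r\mid q^2+1$, and in every case $\max\{(q^6-1)_r,(q^4-1)_r\}=(q^6-1)_r(q^2+1)_r$, which is the $r$-part of the right-hand side. For $r=2$: using $(q^2+1)_2=2$, $(q^3\pm1)_2=(q\pm1)_2$ and $(q^4-1)_2=2(q^2-1)_2$, the largest $2$-part among the six numbers is $(q^2-1)_2$, and this equals the $2$-part of the right-hand side because $q^4+q^2+1$ is odd. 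This proves the identity, and with it the formula for $\gexp(L)$.

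For the inequalities, write $\gexp(L)=p^kN$, where $N=(q^6-1)(q^2+1)/2=b(q^3+1)(q^2+1)$ with $b=(q^3-1)/2$, and $k\in\{1,2\}$. The crucial observation is that $q\geqslant7$ forces $p^k\leqslant q$: if $p>5$ then $k=1$ and $p^k=p\leqslant q$, while if $p\in\{3,5\}$ then $q=p^j$ with $j\geqslant2$, whence $q\geqslant p^2=p^k$. Thus $\gexp(L)\leqslant qN$, and since $N<q^8$ (because $(q^6-1)(q^2+1)<q^8+q^6\leqslant2q^8$) we get $\gexp(L)<q^9$ at once. Cancelling $b$ and again using $p^k\leqslant q$, the bound $\gexp(L)<6q^6b/5$ reduces to $5(q^3+1)(q^2+1)<6q^5$; the bound $\gexp(L)<5b^3$ reduces to $4q(q^3+1)(q^2+1)<5(q^3-1)^2$, that is, to $q^6-4q^4-14q^3-4q+5>0$; and, since $\gexp(L)<p^kq^6(q^2+1)/2$, the bound $\gexp(L)<a^4$ with $a=p(q^2+1)/2$ reduces to $8q^6<p^{4-k}(q^2+1)^3$, which is clear from $p^{4-k}\geqslant9>8$ and $(q^2+1)^3>q^6$. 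The first two reduced inequalities are easily verified for $q\geqslant7$. I do not foresee a genuine obstacle: the only place where the estimates are not comfortably slack is the bound $\gexp(L)<5b^3$, whose two sides have ratio tending to $4/5$ as $q\to\infty$, so that a real (though routine) estimate is needed there; the only points requiring care are the book-keeping of $r$-parts above and the separate handling of the primes $3$ and $5$.
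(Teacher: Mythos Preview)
Your proof is correct and follows essentially the same approach as the paper's: both derive the exponent formula from Lemma~\ref{l:spec_c3} and both hinge on the observation that $\gexp_p(L)\leqslant q$ when $q\geqslant 7$, after which the four inequalities are routine. You have simply written out in full the $r$-part bookkeeping and the polynomial estimates that the paper leaves implicit (the paper's proof is a three-line sketch giving the chain $\gexp(L)/b\leqslant q(q^2+1)(q^3+1)<6q^6/5<5(q^3-1)^2/4=5b^2$ and similarly $\gexp(L)/a\leqslant p(q^6-1)<p^3q^6/8<a^3$).
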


\begin{proof}
The first assertion follows from Lemma \ref{l:spec_c3}. If $q\geqslant 7$, then $\gexp_p(L)\leqslant q$ and so
$\gexp(L)<q^9$. Furthermore,
$$\gexp(L)/b\leqslant q(q^2+1)(q^3+1)<6q^6/5<5(q^3-1)^2/4= 5b^2,$$
$$\gexp(L)/a\leqslant p(q^6-1)<\frac{p^3q^6}{8}<a^3.$$
\end{proof}

We will write the exponent of a classical group of Lie rank $n$ over a field of order $q$ in terms of the product $\prod_{i=1}^{n}|\Phi_i(q^k)|$, where $k$ depends on the type of the group,
so we will need some lower bounds on the number $|\Phi_i(a)|$ and on the function $F(n)$, where
$$%\label{e:sum_varphi}
 F(n)=\sum_{i=1}^{n}\varphi(i).
$$

\begin{lemma}\label{l:cycl}
Let $\varepsilon\in\{+1,-1\}$ and let $a$ and $n$ be positive integers. If $a\geqslant 2$ and $n\geqslant 3$, then $|\Phi_n(\varepsilon a)|>a^{3\varphi(n)/4}$.
In particular, if $a\geqslant 2$ and $n\geqslant 2$, then $\prod_{i=1}^{n}|\Phi_i(\varepsilon a)|>a^{3F(n)/4}$.
\end{lemma}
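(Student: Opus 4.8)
The plan is to prove the pointwise bound $|\Phi_n(\varepsilon a)|>a^{3\varphi(n)/4}$ for $a\ge 2$ and $n\ge 3$, and then obtain the product bound by multiplying over $i$. First I would record the two standard facts about cyclotomic polynomials that drive everything: for $n\ge 2$ one has $\Phi_n(x)=\prod_{\zeta}(x-\zeta)$, the product over primitive $n$th roots of unity, so that $|\Phi_n(\varepsilon a)|\ge(a-1)^{\varphi(n)}$ trivially, and more usefully $\Phi_n(\varepsilon a)$ divides $a^n\mp 1$ while $\prod_{d\mid n}\Phi_d(x)=x^n-1$. The crude estimate $|\Phi_n(\varepsilon a)|\ge(a-1)^{\varphi(n)}$ already gives what we want once $(a-1)^{\varphi(n)}\ge a^{3\varphi(n)/4}$, i.e.\ once $(a-1)^4\ge a^3$; this holds for all $a\ge 5$ (and for $a=4$: $81\ge 64$), so the pointwise claim is immediate for $a\ge 4$, and it remains to treat $a=2$ and $a=3$.

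For $a=2$ and $a=3$ I would argue via the complementary divisors. Write $a^n-\delta=\prod_{d}\Phi_d(\varepsilon a)$ (with $\delta=1$ when $\varepsilon=+1$, and the obvious sign bookkeeping when $\varepsilon=-1$, reducing the minus case to the plus case at $a^2$ or simply noting $\Phi_n(-a)=\pm\Phi_n(a)$ or $\pm\Phi_{2n}(a)$ depending on parity of $n$). Then
$$
|\Phi_n(\varepsilon a)|\;\ge\;\frac{a^n-1}{\prod_{d\mid n,\,d<n}(a^d+1)}\;\ge\;\frac{a^n-1}{a^{\,\sigma}},\qquad \sigma=\sum_{d\mid n,\ d<n} d,
$$
after absorbing the $+1$'s (here one uses $\prod_{d\mid n, d<n}(a^d+1)<a^{\,\sum_{d\mid n,d<n}(d+1)}$ or a slightly sharper grouping). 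Since $\sum_{d\mid n}d - n$ can be bounded well below $n/4$ once $n$ is not too small, and $\varphi(n)$ grows, one gets $|\Phi_n(\varepsilon a)|>a^{3\varphi(n)/4}$ for all sufficiently large $n$; the finitely many small $n$ (say $n\le 30$ or so) for $a=2,3$ are then checked by direct computation of $\Phi_n(\pm 2)$, $\Phi_n(\pm 3)$ against $2^{3\varphi(n)/4}$, $3^{3\varphi(n)/4}$. I expect the main nuisance to be making the cutoff between ``asymptotic argument'' and ``finite check'' clean and honest, especially pinning down the worst cases where $\varphi(n)$ is small relative to the number and size of proper divisors of $n$ (numbers like $n=6,12,18,30$); these are exactly the $n$ where $\sum_{d\mid n}d$ is large compared with $\varphi(n)$, so the divisor bound is weakest there, and they must be done by hand.

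Finally, for the ``in particular'' clause: for $a\ge 2$ and $n\ge 2$ we have $\prod_{i=1}^{n}|\Phi_i(\varepsilon a)|=|\Phi_1(\varepsilon a)|\cdot|\Phi_2(\varepsilon a)|\cdot\prod_{i=3}^{n}|\Phi_i(\varepsilon a)|$. Using $|\Phi_1(\varepsilon a)|=|\varepsilon a-1|\ge a-1$ and $|\Phi_2(\varepsilon a)|=|\varepsilon a+1|\ge a-1$ together with $(a-1)^2\ge a^{3\cdot 2/4}=a^{3/2}$ for $a\ge 4$ (and a direct check that $\Phi_1\Phi_2$ at $\pm 2,\pm 3$ still beats $a^{3/2}$, noting $\varphi(1)+\varphi(2)=2$), and applying the pointwise bound to each factor with $i\ge 3$, we get $\prod_{i=1}^{n}|\Phi_i(\varepsilon a)|>a^{3(\varphi(1)+\varphi(2))/4}\cdot a^{3\sum_{i=3}^n\varphi(i)/4}=a^{3F(n)/4}$, as required. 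The only subtlety in this last step is that the pointwise inequality is stated for $n\ge 3$, so the $i=1,2$ terms must be handled separately, which is why the displayed constants are arranged so that $\varphi(1)+\varphi(2)=2$ absorbs cleanly into $F(n)$.
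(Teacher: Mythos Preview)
Your reduction to $a\in\{2,3\}$ via the trivial bound $|\Phi_n(\varepsilon a)|\geqslant(a-1)^{\varphi(n)}$ is correct: $(a-1)^4>a^3$ holds for all $a\geqslant4$, so the pointwise claim is immediate there, and the product step at the end is essentially the paper's own (it writes $\Phi_1(a)\Phi_2(a)=a^2-1>a^{3/2}$).

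The genuine gap is in your treatment of $a=2,3$. The lower bound
\[
\Phi_n(a)\ =\ \frac{a^n-1}{\displaystyle\prod_{d\mid n,\ d<n}\Phi_d(a)}\ \geqslant\ \frac{a^n-1}{a^{\sigma(n)-n}}
\]
(using $\Phi_d(a)\leqslant a^d-1<a^d$) is valid, but your assertion that $\sigma(n)-n=\sum_{d\mid n,\,d<n}d$ ``can be bounded well below $n/4$ once $n$ is not too small'' is false. For every abundant $n$ one has $\sigma(n)-n>n$, so the displayed bound drops below $1$ and is vacuous; and abundant numbers form an infinite set of positive density (every multiple of $12$ is abundant, for instance). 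Thus the ``finite check'' you envisage never terminates. The alternative estimate $\Phi_d(a)\leqslant(a+1)^{\varphi(d)}$ together with $\sum_{d\mid n}\varphi(d)=n$ gives $\Phi_n(2)\geqslant(2^n-1)/3^{\,n-\varphi(n)}$, but forcing this above $2^{3\varphi(n)/4}$ requires roughly $\varphi(n)/n>0.7$, which again fails for infinitely many $n$ (any even $n$ already has $\varphi(n)/n\leqslant 1/2$).

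The paper avoids this by arguing on the prime shape of $n$ rather than the size of $a$. It cites a bound of Roitman to the effect that $\Phi_n(a)>a^{\varphi(n)(r-2)/(r-1)}$ for any prime $r\mid n$, so choosing some $r\geqslant5$ settles the claim whenever $n$ has such a prime factor. For $n=2^k3^j$ it uses the identity $\Phi_n(a)=\Phi_{n/r}(a^r)$ (valid when $r^2\mid n$) to reduce inductively to $n\in\{3,4,6\}$, where $\Phi_n(a)\geqslant a^2-a+1>a^{3/2}$ is immediate. If you want to keep your split on $a$, the clean repair is to borrow precisely this recursion on $n$ for the cases $a=2,3$; your divisor-sum route cannot be made to work uniformly in $n$.
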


\begin{proof}
Because of well-known relations between cyclotomic polynomials, it suffices to consider the case $\varepsilon =+1$ only.

Given a a prime divisor $r$ of $n$, set $m=(n)_{r'}$ and $(n)_r=r^k$. Applying \cite[Lemma 1]{97Roi} and the condition $a\geqslant 2$, we have $\Phi_n(a)>a^{r^{k-1}(r-2)\varphi(m)}$.
If $r\geqslant 5$, then  $(r-2)/(r-1)\geqslant 3/4$, and hence $r^{k-1}(r-2)\varphi(m)\geqslant 3\varphi(n)/4$.

Thus we can assume that $n=2^k3^j$. If $n$ is divisible by $r^2$, with $r=2,3$, then $n/r\geqslant r$,
$\Phi_n(a)=\Phi_{n/r}(a^r)$ and $\varphi(n)=r\varphi(n/r)$, so working by induction on $n$, we can assume that $n$ is equal to $3$, or $4$, or $6$.
In this situation $\Phi_n(a)\geqslant a^2-a+1>a^{3/2}$, as desired.

The last assertion follows from the above and the inequality $\Phi_1(a)\Phi_2(a)=a^2-1>a^{3/2}$.

\end{proof}

\begin{lemma}\label{l:sum_varphi}
If $n\geqslant 1$, then $F(n)\geqslant [(n+1)/2]^2$.
\end{lemma}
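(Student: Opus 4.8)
The plan is to prove the inequality $F(n)\geqslant [(n+1)/2]^2$ by a direct, elementary argument that separates the cases $n$ even and $n$ odd, and in each case reduces the estimate to a classical lower bound on partial sums of Euler's totient function. Recall that $F(n)=\sum_{i=1}^n\varphi(i)$ counts the number of pairs $(a,b)$ with $1\leqslant a\leqslant b\leqslant n$ and $(a,b)=1$; equivalently, $2F(n)-1$ is the number of Farey fractions of order $n$, so $F(n)$ is essentially half the count of visible lattice points in the triangle $\{1\leqslant a\leqslant b\leqslant n\}$. The well-known heuristic is $F(n)\sim 3n^2/\pi^2$, and since $3/\pi^2>1/4$, the stated bound is comfortably true asymptotically; the work is to make it hold for all $n\geqslant 1$.

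First I would verify the base cases by hand, say for $n$ up to some small bound like $n\leqslant 8$, using the explicit values $\varphi(1),\dots,\varphi(8)=1,1,2,2,4,2,6,4$, which give $F(1),\dots,F(8)=1,2,4,6,10,12,18,22$, to be compared with $[(n+1)/2]^2=1,2,4,4,9,9,16,16$. Then, for the inductive step, I would pass from $n$ to $n+2$ and compare the increment $F(n+2)-F(n)=\varphi(n+1)+\varphi(n+2)$ with the increment of the right-hand side. When $n$ is even, $n=2k$, the right side jumps from $k^2$ (at $n$ and $n+1$) to $(k+1)^2$ (at $n+2$ and $n+3$), an increase of $2k+1=n+1$; when $n$ is odd, $n=2k-1$, it jumps from $k^2$ to... the same value at $n+1=2k$, so the per-two-step increase alternates. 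The cleanest route is therefore: for even $m=2k$ show $F(2k)\geqslant k^2$ by induction, using that $\varphi(2k-1)+\varphi(2k)\geqslant 2k-1$ for $k\geqslant 2$ (indeed $\varphi(2k)\geqslant k$ when... hmm, not always, e.g. $\varphi(2k)$ can be as small as roughly $k/\log\log k$), so I would instead sum two consecutive totients and use that $\varphi(2j-1)+\varphi(2j)\geqslant 2j-1$ fails in general too.

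Given that naive telescoping does not obviously work, the honest approach — and the step I expect to be the main obstacle — is to obtain a robust lower bound for $F(n)$ from the identity $F(n)=\frac12\bigl(1+\sum_{i=1}^n\mu(i)\lfloor n/i\rfloor^2\bigr)$ (or the Farey-fraction count) and control the error term well enough to beat $n^2/4$ uniformly. Concretely I would write $2F(n)-1=\sum_{d=1}^n\mu(d)\lfloor n/d\rfloor^2$ and split as $n^2\sum_{d\leqslant n}\mu(d)/d^2$ minus an error of size $O(n\log n)$; since $\sum_{d=1}^\infty\mu(d)/d^2=6/\pi^2$ and the tail is $O(1/n)$, one gets $2F(n)\geqslant 6n^2/\pi^2 - Cn\log n$ for an explicit constant $C$, which exceeds $n^2/2$ once $n$ is larger than an explicit threshold $n_0$; then one finishes by checking $1\leqslant n\leqslant n_0$ directly (a finite computation). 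The delicate point is making the constant in the $O(n\log n)$ error completely explicit so that the threshold $n_0$ is small enough to verify by hand or by a short table — this is where the real care is needed, and I would look to known explicit Farey-fraction estimates (Franel–Landau type bounds) to pin it down cleanly.
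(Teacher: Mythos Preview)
Your approach is valid in outline but takes a much heavier route than the paper's, and you leave the decisive step (making the $O(n\log n)$ error explicit and bounding the threshold $n_0$) unexecuted. The paper's argument is three lines and entirely elementary: from the identity $\sum_{i\leqslant n} F(n/i) = I(n)$, where $I(n)=\sum_{i\leqslant n} i$ (this is just $\sum_{d\mid m}\varphi(d)=m$ summed over $m\leqslant n$), one subtracts twice the same identity at $n/2$ to obtain
\[
I(n)-2I(\lfloor n/2\rfloor)=F(n)-F(n/2)+F(n/3)-F(n/4)+\cdots\leqslant F(n),
\]
the last inequality because $F$ is nondecreasing. A direct computation of the left side gives $(n+1)^2/4$ when $n$ is odd and $n^2/4$ when $n$ is even, i.e.\ exactly $[(n+1)/2]^2$. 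So the paper obtains the bound \emph{sharply} from a single summation identity, with no asymptotics, no explicit constants, and no finite case-check beyond the parity split.

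Your M\"obius-inversion plan would eventually succeed, but it trades this clean combinatorial identity for analytic error control that you yourself flag as the hard part; and your attempted inductive shortcut via $\varphi(2k-1)+\varphi(2k)\geqslant 2k-1$ indeed fails (e.g.\ $k=53$ gives $\varphi(105)+\varphi(106)=48+52=100<105$), so that avenue is a genuine dead end, as you noticed. (Minor slip in your table: $[(2+1)/2]^2=1$, not $2$.)
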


\begin{proof}
 %http://pieterjandesmetcalculations.blogspot.tu/2015/05/upper-and-lower-bounds-on-totient.html
This proof is due to \cite{PJDS}. Extend the definition of $F(n)$ by setting $F(x)=\sum_{i\leqslant x}\varphi(i)$.
Then by \cite[Theorem 3.11]{76Apo}, it follows that
$$\sum_{i\leqslant x}F(x/i)=\sum_{i\leqslant x}\sum_{d\mid i}\varphi(d)=\sum_{i\leqslant x}i.$$ Writing $I(x)=\sum_{i\leqslant x}i$, we have
$F(n)+F(n/2)+\dots+F(1)=I(n)$, and hence $$I(n)-2I(n/2)=F(n)-F(n/2)+F(n/3)-\dots\leqslant F(n).$$
If $n$ is odd, then $$I(n)-2I(n/2)=(n+1)n/2-(n+1)(n-1)/4=(n+1)^2/4,$$ while for even $n$, we have $I(n)-2I(n/2)=n^2/4$.
\end{proof}

\begin{lemma}\label{l:exp} Let $u$ be a power of a prime $v$.
\begin{enumerate}

\item Let $S=L_n^\tau(u)$, where $n\geqslant 3$. Then $\gexp_{v'}(S)=\prod_{i=1}^{n} |\Phi_i(\tau u)|/c$, where $c=r\in\pi(q-\tau)$ if $n=r^s$ and $c=1$ otherwise.
In particular, $$\gexp(S)\geqslant \frac{n}{c}\cdot \prod_{i=1}^{n} |\Phi_i(\tau u)|>\frac{n}{c}\cdot u^{3F(n)/4}\geqslant u^{3F(n)/4}.$$

\item Let $S=S_{2n}(u)$ or $S=O_{2n+1}(u)$, where $n\geqslant 2$. Then $\gexp_{v'}(S)=\prod_{i=1}^{n} \Phi_{i}(u^2)/c$, where $c=(2,u-1)^2$ if $n=2^s$ and $c=(2,u-1)$ otherwise.
In particular, $$\gexp(S)>\frac{2n}{c}\cdot \prod_{i=1}^{n} \Phi_i(u^2)>\frac{n}{2}\cdot u^{3F(n)/2}.$$

\item Let $S=O_{2n}^-(u)$, where $n\geqslant 4$. If $n$ is even, then $\gexp_{v'}(S)=\gexp_{v'}(O_{2n+1}(q))$ and $$\gexp(S)\geqslant \frac{2n-1}{c}\cdot \prod_{i=1}^{n} \Phi_i(u^2)>\frac{2n-1}{4}\cdot u^{3F(n)/2},$$
where $c$ is as in \rm{(ii)}. If $n$ is odd, then $\gexp_{v'}(S)=\Phi_{2n}(u)\prod_{i=1}^{n-1} \Phi_{i}(u^2)/(2,u-1)$ and $$\gexp(S)\geqslant \frac{2n-1}{(2,u-1)}\cdot \Phi_{2n}(u)\prod_{i=1}^{n-1} \Phi_i(u^2)> \frac{2n-1}{2}\cdot u^{3(F(n)+F(n-1))/4}.$$

\item Let $S=O_{2n}^+(u)$, where $n\geqslant 4$. If $n$ is even, then $\gexp(S)=\gexp(O_{2n-1}(q))$. If $n$ is odd, then $\gexp_{v'}(S)=\Phi_{n}(u)\prod_{i=1}^{n-1} \Phi_{i}(u^2)/(2,u-1)$ and $$\gexp(S)\geqslant \frac{2n-1}{(2,u-1)}\cdot \Phi_{n}(u)\prod_{i=1}^{n-1} \Phi_i(u^2)>\frac{2n-1}{2}\cdot u^{3(F(n)+F(n-1))/4}.$$
\end{enumerate}
\end{lemma}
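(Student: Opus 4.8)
The plan is to prove all four parts by one scheme. For a finite simple group $S$ of Lie type in characteristic $v$ one has $\gexp(S)=\gexp_v(S)\cdot\gexp_{v'}(S)$, since every element is the product of commuting $v$- and $v'$-parts (so $\gexp(S)$ divides the right-hand side) while both factors divide $\gexp(S)$ and are coprime. Thus for each family it suffices to (1) compute $\gexp_{v'}(S)$, the least common multiple of the orders of the semisimple elements, and (2) bound $\gexp_v(S)$ from below by exhibiting one unipotent element of large order. Granting (1) and (2), the displayed numerical estimates drop out: the first inequality in each case combines (1), (2), and the crude bound on $c$ (in (i), $c=r\mid n=r^s$, so $c\leqslant n$; in (ii)--(iv), $c\in\{1,2,4\}$), and the passage to a power of $u$ uses Lemma~\ref{l:cycl} together with the identity $F(n)=F(n-1)+\varphi(n)$, which in particular gives $\varphi(n)+2F(n-1)=F(n)+F(n-1)$ --- the bookkeeping identity needed for the orthogonal cases with $n$ odd.

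For step (1) I would use the description of the maximal tori of the ambient matrix group that underlies the spectrum data of \cite{10But.t,08But.t}. Inside $\mathrm{GL}^\tau_n(u)$ the maximal tori are the direct products $\prod_i\mathbb{Z}_{u^{d_i}-\tau^{d_i}}$ over compositions $n=\sum_i d_i$; inside the isometry groups of the symplectic and of the odd-dimensional orthogonal spaces they are $\prod_i\mathbb{Z}_{u^{d_i}-\eta_i}$ with $n=\sum_i d_i$ and $\eta_i\in\{+1,-1\}$ unrestricted; and for an even-dimensional orthogonal space the same products occur subject to the single sign condition $\prod_i\eta_i=\varepsilon$. Taking the least common multiple of the torus exponents over all admissible data and rewriting the factors via $u^d-1=\prod_{e\mid d}\Phi_e(u)$ and $u^d+1=\prod_{e\mid d}\Phi_e(u^2)/(u^d-1)$ --- equivalently $\Phi_i(u^2)=\Phi_i(u)\Phi_{2i}(u)$ for odd $i$ and $\Phi_i(u^2)=\Phi_{2i}(u)$ for even $i$ --- one obtains the full product $\prod_{i=1}^n|\Phi_i(\tau u)|$ in the linear/unitary case and $\prod_{i=1}^n\Phi_i(u^2)$ in the $S_{2n}$ and $O_{2n+1}$ cases, the fullness being what the freedom in the $\eta_i$ supplies. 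For an even-dimensional orthogonal space the condition $\prod_i\eta_i=\varepsilon$ governs the two ``top'' cyclotomics: $\Phi_{2n}(u)$ occurs only in a torus of order $u^n+1$, which requires a single block $d=n$ with $\eta=-1$ and is thus available only when $\varepsilon=-$, whereas $\Phi_n(u)$ occurs in the torus $u^n-1$ (available only when $\varepsilon=+$) and, for even $n$, also in $u^{n/2}+1$; comparing this with the cyclotomics already present in $\prod_{i=1}^{n-1}\Phi_i(u^2)$ (namely $\Phi_j$ with $j\leqslant 2(n-1)$ even, or $j\leqslant n-1$ odd) produces the four sub-cases of (iii)--(iv), including the reductions to $\gexp_{v'}(O_{2n+1}(u))$ and to $\gexp(O_{2n-1}(u))$ when $n$ is even.

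The correction $c$ appears when one passes from the ambient group to $S$, i.e.\ restricts to elements of determinant $1$ (resp.\ trivial spinor norm) and then factors out the centre. I would check the stated value of $c$ one prime at a time: for a prime $s$, the $s$-part of the determinant/spinor condition and of the order of the centre can be arranged to fall on a torus factor carrying only cyclotomic values $\Phi_l$ with $l$ a power of $s$, and tracking $s$-parts with Lemmas~\ref{l:r-part_c} and~\ref{l:r-part} shows the net loss is one prime $r\mid u-\tau$ exactly when $n=r^s$ in (i), and a factor $(2,u-1)$ --- squared when $n=2^s$ --- in (ii)--(iv). For step (2) it is enough to note that $S$ contains a unipotent element of order $v^{\lceil\log_v m\rceil}\geqslant m$, namely a regular unipotent with $m=n$ for $L^\tau_n(u)$ and $m=2n$ for $S_{2n}(u)$ and $O_{2n+1}(u)$, and, for $O^\varepsilon_{2n}(u)$, the unipotent with Jordan blocks $2n-1,1$ when $\varepsilon=+$ (and the largest admissible Jordan type when $\varepsilon=-$), giving $m=2n-1$; these heights are read off from the cited spectrum descriptions and are uniform in $v$.

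The step I expect to be the main obstacle is the exact evaluation of $c$: for the way this lemma is used later only an upper estimate for $c$ matters, but the statement asserts the precise value of $\gexp_{v'}(S)$, so the determinant/spinor-norm and central-quotient bookkeeping has to be done carefully and family by family --- and it is precisely this step that produces the dichotomies ``$n$ a prime power'' in (i) and ``$n$ a power of $2$'' in (ii)--(iv). A lesser technical point is confirming the unipotent heights $2n$ and $2n-1$ in characteristic $2$, where the Jordan-block arithmetic behaves slightly differently; I would settle that by appeal to the known orders of unipotent elements in these groups rather than by a separate argument.
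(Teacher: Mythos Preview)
Your proposal is correct and follows essentially the same scheme as the paper: a prime-by-prime comparison of $(\prod_i\Phi_i)_r$ with $\gexp_r(S)$ via Lemmas~\ref{l:r-part_c} and~\ref{l:r-part}, the bound $\gexp_v(S)\geqslant m$ from unipotent orders (the paper cites \cite{95Tes}), and Lemma~\ref{l:cycl} for the final power-of-$u$ estimates. The one noteworthy difference is where the work sits: you frame step~(1) as ``compute the torus-LCM in the ambient matrix group, then correct by the determinant/spinor-norm and central quotient,'' and flag that correction as the main obstacle; the paper instead reads $\gexp_r(S)$ directly off the spectrum descriptions in \cite{10But.t,08But.t}, which already have that bookkeeping built in, and then simply compares with $(A)_r$. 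So your obstacle is real but is precisely what those references absorb --- you need not redo it from scratch.
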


\begin{proof} All bounds on $\gexp_{v}(S)$ below follow from \cite[Proposition 0.5]{95Tes}.

(i) Let $A=\prod_{i=1}^{n}|\Phi_i(\tau u)|$ and fix $r\in\pi(S)\setminus\{v\}$. Set $j=e(r,\tau u)$ if $r$ is odd and $j=1$ if $r=2$, and choose the largest $s$ such that $jr^s\leqslant n$.
Then $$(A)_r=(u^{jr^s}-\tau^{jr^s})_r.$$ Indeed, if $i\leqslant n$ and $i$ does not divide  $jr^s$, then $r$ does not divide $\Phi_i(\tau u)$ by Lemma \ref{l:r-part_c}, and the product of $\Phi_i(\tau u)$ over all $i$ dividing   $jr^s$ is equal to $(\tau u)^{jr^s}-1$.

If $r$ is coprime to $u-\tau$ or $jr^s\leqslant n-2$, then $\gexp_r(S)=(u^{jr^s}-\tau^{jr^s})_r$. Suppose that $r$ divides $u-\tau$, in other words,  $j=1$, and also $r^s\geqslant n-1$. The last inequality yields $s>0$. If $r^s=n-1$, then $(r,n)=1$, so in this case $\gexp_r(S)=(u^{jr^s}-\tau^{jr^s})_r$ as well. If $r^s=n$, then $n/r<n-1$ and $\gexp_r(S)=(u^{n/r}-\tau^{n/r})_r=(A)_r/r$, where the last equality holds by Lemma \ref{l:r-part} (observe that $n/r$ is even if $r=2$). Thus we have the desired formula for $\gexp_{v'}(S)$. Combining this with the inequality $\gexp_v(S)\geqslant n$, we derive the first  bound for $\gexp(S)$.
The further bounds follow from Lemma \ref{l:cycl}.

(ii) Let $A=\prod_{i=1}^{n}\Phi_i(u^2)$. Observe that $\Phi_i(u^2)$ is equal to $\Phi_{2i}(u)$ if $i$ is even and to $\Phi_i(u)\Phi_{2i}(u)$ if $i$ is odd.
Fix $r\in\pi(S)$, set $j=e(r,u^2)$ and choose the largest $s$ such that $jr^s\leqslant n$. Then $(A)_r=(u^{2jr^s}-1)_r$.

Let $r$ be odd. Then $\gexp_r(S)=(u^{jr^s}-1)_r$ or $\gexp_r(S)=(u^{jr^s}+1)_r$ depending on whether $r$ divides $u^j-1$ or $u^j+1$. In any case $(A)_r=\gexp_r(S)$.
If $r=2$, then $\gexp_2(S)=(u^{2^s}-1)_2=(A)_2/2$ if $n\neq 2^s$ and $\gexp_2(S)=(u^{n}-1)_2/2=(A)_2/4$ if $n=2^s$.

The bounds on $\gexp(S)$ follows from Lemma \ref{l:cycl} and the fact that $\gexp_v(S)>2n-1$.

(iii) Let $n$ be even. 
If $r$ is odd and $r$ does not divide $u^n-1$, then $\gexp_r(S)=\gexp_r(O_{2n+1}(u))$. If $r$ is odd and divides  $u^n-1$, then $(u^n-1)_r=(u^{n/2}-1)_r$ or $(u^n-1)_r=(u^{n/2}+1)_r$,
and again $\gexp_r(S)=\gexp_r(O_{2n+1}(u))$. Furthermore, $S$ has elements of order $u^i-1$ for all $i\leqslant n-1$, and so
$\gexp_2(S)=\gexp_2(O_{2n+1}(u))$ for odd $u$.

Let $n$ be odd. If $r$ is coprime to $u^n+1$, then $\gexp_r(S)=\gexp_r(O_{2n-1}(q))$.
Suppose that $r$ is odd and divides $u^n+1$. Then $e(r,u)$ is even and $e(r,n)/2$ divides $n$. If $2n\neq e(r,n)r^s$, then  $\gexp_r(S)=\gexp_r(O_{2n-1}(q))$.
If $2n=e(r,n)r^s$, then $\gexp_r(S)=(q^n+1)_r=\Phi_{2n}(u)_r\gexp_r(O_{2n-1}(u))$. In any case $\gexp_r(S)=(q^n+1)_r=\Phi_{2n}(u)_r\gexp_r(O_{2n-1}(u))$.
Let $u$ be odd. There are elements of $S$ of order $u^i-1$ for every $i\leqslant n-2$ and if $n-1=2^s$, then there is an element of order $u^{n-1}-1$. This yields
$\gexp_2(S)=(\prod_{i=1}^{n-1}\Phi_i(u))_2=
(\prod_{i=1}^{n-1}\Phi_i(u^2))_2/2$.

(iv) Let $n$ be even. 
Reasoning as in (iii), we see that $\gexp_r(S)=\gexp_r(O_{2n-1}(u))$ for odd $r$. If $u$ is odd, then $\gexp_2(S)=(u^{2^s}-1)_2$, where $2^s\leqslant n-2$, and so $\gexp_2(S)=\gexp_2(O_{2n-1}(u))$.

Let $n$ be odd. Reasoning as in (iii), we calculate that $\gexp_r(S)=\Phi_{n}(u)_r\gexp_r(O_{2n-1}(u))$ for all odd $r$. Suppose that $n$ is odd.
If $n-1\neq 2^s$, then $\gexp_2(S)=\gexp_2(O_{2n-1}(u))$.  If $n-1=2^s$, then $S$ has an element of order $u^{n-1}-1$, and so $\gexp_2(S)=2\gexp_2(O_{2n-1}(u))$.

\end{proof}

\begin{lemma}\label{l:exp_e} Let $u$ be a power of a prime $v$.
\begin{enumerate}
\item If $S=E_8(u)$, then $\gexp_v(S)\geqslant 31$ and $$\gexp_{v'}(S)=\frac{(u^{20}+u^{10}+1)(u^{12}+u^6+1)(u^{12}+1)(u^6+1)(u^{20}-1)(u^{14}-1)}{(u^4-1)(5,u^2+1)(3,u^2-1)}.$$ In particular,
$\gexp(S)>2u^{80}$

\item If $S=E_7(u)$, then $\gexp_v(S)\geqslant 19$ and $$\gexp_{v'}(S))=\frac{(u^{12}+u^6+1)(u^{14}-1)(u^{10}-1)(u^{12}-1)(u^4+1)}{(u^2-1)^2(6,u^2-1)}.$$ In particular, $\gexp(S)>3u^{48}$.

\item If $S=E_6^\tau(u)$, then $\gexp_v(S)\geqslant 13$ and $$\gexp_{v'}(S)=\frac{(u^6+\tau u^3+1)(u^5-\tau)(u^{12}-1)}{(u-\tau)(6,u-\tau)}.$$ In particular, $\gexp(S)> u^{22}$.

\item $\gexp({}^2F_4(u))=16(u^6+1)(u^3+1)(u-1)/3$.
\end{enumerate}
\end{lemma}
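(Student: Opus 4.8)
\emph{Proof proposal.} The plan is to follow the scheme of Lemma \ref{l:exp}: for each group $S$ I write $\gexp(S)=\gexp_v(S)\cdot\gexp_{v'}(S)$ and bound the unipotent and the semisimple factor separately. For $\gexp_v(S)$, the exponent of a Sylow $v$-subgroup, I would argue as in the proof of Lemma \ref{l:exp} via \cite[Proposition 0.5]{95Tes}: when $v$ is a good prime this equals the order $v^{\lceil\log_v h\rceil}$ of a regular unipotent element, where $h$ is the Coxeter number ($h=30$ for $E_8$, $h=18$ for $E_7$, and $h=12$ for both $E_6$ and $F_4$), and for bad $v$ it can only be larger. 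Minimising $v^{\lceil\log_v h\rceil}$ over all primes $v$---the minimum being attained at $v=h+1$, which is prime in each of these cases---gives $\gexp_v(S)\geqslant31$ for $E_8(u)$, $\gexp_v(S)\geqslant19$ for $E_7(u)$, and $\gexp_v(S)\geqslant13$ for $E_6^\tau(u)$. For ${}^2F_4(u)$ the characteristic is forced to be $2$, a bad prime for $F_4$, and here the Sylow $2$-subgroup has exponent exactly $2^4=16$; this is why the statement gives $\gexp({}^2F_4(u))$ as an identity rather than as a lower bound.

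For $\gexp_{v'}(S)$ I would use that every $v'$-element of $S$ lies in a maximal torus, so that $\gexp_{v'}(S)$ is the least common multiple of the exponents of the maximal tori. Writing $G$ for the simply connected group with $S=G/Z(G)$, where $|Z(G)|$ equals $1$, $(2,u-1)$, or $(3,u-\tau)$ for $E_8$, $E_7$, $E_6^\tau$ respectively, the $F$-conjugacy classes of the Weyl group parametrise the maximal tori $T_w$ of $G$; the order $|T_w|$ is the value at $u$ of the characteristic polynomial of $wF$ on the cocharacter lattice, hence a product $\prod_d\Phi_d(u)^{a_d(w)}$, and since $Z(G)$ lies in every $T_w$, the exponent of the image of $T_w$ in $S$ is $\gexp(T_w)$ divided by a divisor of $|Z(G)|$. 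Collecting cyclotomic factors in the resulting least common multiple yields the stated formulas, the gcd terms $(5,u^2+1)(3,u^2-1)$, $(6,u^2-1)$, $(6,u-\tau)$ being exactly what the centre-quotients contribute to the denominators; the analogous bookkeeping with the twisted tori of ${}^2F_4(u)$---in particular the two of orders $u^2\pm\sqrt{2u^3}+u\pm\sqrt{2u}+1$ whose product is $\Phi_{12}(u)=u^4-u^2+1$, together with $3\mid u^2-u+1$ (since $u\equiv2\pmod3$)---produces $\gexp_{2'}({}^2F_4(u))=(u^6+1)(u^3+1)(u-1)/3$. Alternatively, all four formulas can simply be read off the known descriptions of the spectra of exceptional groups of Lie type. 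I expect this step to be the main obstacle: for each $\Phi_d$ with $\varphi(d)$ at most the rank one has to decide whether, and with what multiplicity, it occurs in some torus exponent, which comes down to inspecting the (twisted) conjugacy classes of the Weyl group and is the most error-prone part, so it should be cross-checked against the tables of maximal tori in the literature.

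Finally, the ``in particular'' bounds follow by crude estimates, as in Lemma \ref{l:expc3}. In $\gexp(S)=\gexp_v(S)\cdot\gexp_{v'}(S)$, for $E_8(u)$ the numerator of the formula for $\gexp_{v'}(S)$ has degree $84$ while the denominator is at most $15(u^4-1)$, so replacing each factor by its leading term readily shows $\gexp(S)\geqslant31\gexp_{v'}(S)>2u^{80}$ for all $u\geqslant2$. The cases $E_7(u)$ (numerator of degree $52$, denominator at most $6(u^2-1)^2$, $\gexp_v(S)\geqslant19$) and $E_6^\tau(u)$ (numerator of degree $23$, denominator at most $6(u-\tau)$, $\gexp_v(S)\geqslant13$) are entirely analogous and yield $\gexp(S)>3u^{48}$ and $\gexp(S)>u^{22}$. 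For ${}^2F_4(u)$ nothing more is needed, since $\gexp({}^2F_4(u))=16\cdot(u^6+1)(u^3+1)(u-1)/3$ has already been obtained as an equality.
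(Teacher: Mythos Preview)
Your approach is essentially the same as the paper's: both invoke \cite[Proposition 0.5]{95Tes} for the bound on $\gexp_v(S)$ and both compute $\gexp_{v'}(S)$ from the structure of the maximal tori (the paper cites \cite{91DerFak,84Der}), comparing the $r$-part of the relevant product of cyclotomic values with the $r$-exponent of $S$ prime by prime. The paper carries this out explicitly for $E_7(u)$, exactly as you outline.

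One point to correct: the gcd factors in the denominators are \emph{not} what the centre-quotient contributes. For $E_8(u)$ the simply connected group already has trivial centre, yet the factor $(5,u^2+1)(3,u^2-1)$ still appears. What actually happens, and what the paper's proof of the $E_7$ case makes explicit via Lemma~\ref{l:r-part_c}, is that for a small prime $r$ several distinct $\Phi_i(u)$ with $i\in I$ are divisible by $r$ (for example $\Phi_1(u)$, $\Phi_3(u)$, $\Phi_9(u)$ when $r=3$ and $3\mid u-1$), so $\prod_{i\in I}\Phi_i(u)$ overcounts $(\gexp_{v'}(S))_r$ by exactly those extra factors of $r$. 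In the $E_7$ calculation, for instance, the factor $(3,u^2-1)$ arises because $(A)_3=3^2(\Phi_j(u))_3$ while $\gexp_3(S)=3(\Phi_j(u))_3$, not because $3$ divides $|Z(G)|$ (it does not). Your fallback of reading off the formulas from known spectra is fine, but if you want to justify the bookkeeping yourself, track the small primes via Lemma~\ref{l:r-part_c} rather than via the centre.
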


\begin{proof}
The bounds on $\gexp_v(S)$ follow from \cite[Proposition 0.5]{95Tes}. The formulas for $\gexp_{v'}(S)$ follow from the description of maximal tori of $S$ in \cite{91DerFak, 84Der}.
We prove the lemma for $S=E_7(u)$, the other cases being similar.

Let $S=E_7(u)$ and fix $r\in\pi(S)$, $r\neq v$. Set $j=e(u,r)$ if $r$ is odd and $j=1$ if $r=2$. Then $$j\in I=\{1,2,3,4,5,6,7,8,9,10,12,14,18\}.$$
Define $$A=\prod_{i\in I}\Phi_i(u)=\frac{(u^{12}+u^6+1)(u^{14}-1)(u^{10}-1)(u^{12}-1)(u^4+1)}{(u^2-1)^2}.$$ By Lemma \ref{l:r-part_c}, the number $\Phi_i(u)$ is divisible by $r$ if and only if $i=jr^s$ for $s\geqslant 0$ and, furthermore, $(\Phi_{jr^s}(u))_r=r$ for $s>0$ and odd  $r$. Hence $(A)_r=(\Phi_j(u))_r$ if $r>7$ or $j>2$, $(A)_r=r(\Phi_j(u))_r$ if $r=5,7$ and $j=1,2$, $(A)_r=r^2(\Phi_j(u))_r$ if $r=3$, and $(A)_2=(u^8-1)_2$.

On the other hand, it follows from  \cite{91DerFak} that $\gexp_r(S)=(u^j-1)_r=(\Phi_j(u))_r$ if $r>7$ or $j>2$
If $j=1, 2$ and $r>2$, then $\gexp_r(u)=(u^r\mp 1)_r=r(\Phi_j(u))_r$. Finally, if $r=2$, then $\gexp_r(S)=(u^4-1)_2$. Thus $A=(2,u-1)(3,u^2-1)\gexp_{v'}(S)$.
\end{proof}

\begin{lemma}\label{l:spec_g2}
Let $u$ be a power of a prime $v$. Then $\omega(G_2(u))$ consists of the divisors of the numbers $u^2\pm u+1$,
$u^2-1$, and $v(u\pm 1)$ together with the divisors of
\begin{enumerate}
 \item $8$, $12$ if $v=2$;
 \item $v^2$ if $v=3,5$.
\end{enumerate}
\end{lemma}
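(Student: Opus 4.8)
\emph{Proof plan.} Set $G=G_2(u)$. The plan is to partition the elements of $G$ according to their Jordan decomposition into semisimple, unipotent, and mixed (having both a nontrivial semisimple and a nontrivial unipotent part), to show that the order of an element of each type divides one of the numbers in the statement, and, in the course of doing so, to exhibit subgroups of $G$ in which every divisor of every listed number occurs as an element order; the latter gives the reverse inclusion essentially for free.

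For the semisimple elements I would invoke the list of maximal tori of $G_2(u)$, whose orders are $(u-1)^2$, $(u+1)^2$, $u^2-1$, $\Phi_3(u)=u^2+u+1$, and $\Phi_6(u)=u^2-u+1$ (see \cite{84Der}). A torus of order $(u\mp1)^2$ is a product of two cyclic groups of order $u\mp1$, so its element orders divide $u^2-1$; hence every semisimple element has order dividing one of $u^2-1$, $u^2+u+1$, $u^2-u+1$, and conversely all divisors of these three numbers occur because the subgroups $SL_3(u)$ and $SU_3(u)$ of $G_2(u)$ contain cyclic subgroups of orders $u^2-1$ and $u^2\pm u+1$. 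For the unipotent elements the order is a power of $v$ bounded by $\gexp_v(G)$; the Coxeter number of $G_2$ being $6$, a regular unipotent element has order $v^{\lceil\log_v 6\rceil}$, which equals $8$, $9$, $25$, $v$ for $v=2,3,5$ and $v\geqslant 7$ respectively, and by \cite[Proposition~0.5]{95Tes} this is also the exponent of a Sylow $v$-subgroup. Since $8\mid 8$, $9,25\mid v^2$, and $v\mid v(u-1)$, the unipotent orders are as required.

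The mixed elements carry the weight of the argument. If $g=g_sg_u$ with $g_s\neq 1\neq g_u$, then $g_u\in C_G(g_s)$ and $g_s\in Z(C_G(g_s))$, so $|g|=\operatorname{lcm}(|g_s|,|g_u|)$ with $|g_s|$ dividing the exponent of $Z(C_G(g_s))$ and $|g_u|$ dividing the exponent of a Sylow $v$-subgroup of $C_G(g_s)$. As $G_2$ is simply connected, $C_G(g_s)$ is connected reductive of maximal rank, and the Borel--de Siebenthal procedure for $G_2$ limits its type, up to isogeny, to a maximal torus (then $g$ is semisimple), or $SL_2(u)\circ SL_2(u)$, or a group with derived subgroup $SL_2(u)$ and cyclic centre of order $u-1$ or $u+1$, or $SL_3(u)$, respectively $SU_3(u)$, which occurs only when $3\mid u-1$, respectively $3\mid u+1$, and then $g_s$ is the central element of order $3$. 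Running through the cases: in the $A_1$-type centralizers $|g_s|$ divides $u-1$ or $u+1$ and $|g_u|=v$ (or $|g_s|=2$, $|g_u|=v$, $u$ odd, in the $SL_2(u)\circ SL_2(u)$ case), so $|g|$ divides $v(u-1)$ or $v(u+1)$; in the $SL_3(u)$/$SU_3(u)$ case $|g_s|=3$ divides $u\mp1$ and $|g_u|\in\{v,4\}$, the value $4$ occurring only for $v=2$, so $|g|$ divides $v(u\mp1)$ unless $v=2$, where $|g|=\operatorname{lcm}(3,4)=12$. Finally I would check that nothing further arises: an element of order $u^2\pm u+1$ is regular, so its centralizer is a torus; and a unipotent element of order $v^2$ (for $v=2,3,5$) or $8$ (for $v=2$) is a regular unipotent, and a regular unipotent element of $G_2$ commutes with no nontrivial semisimple element, which excludes products such as $2\cdot 9=18$ and $3\cdot 8=24$.

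The hard part will be the small-characteristic bookkeeping of the last paragraph: pinning down the exact orders of the unipotent conjugacy classes of $G_2$ and the exact structure of the centralizers $C_G(g_s)$ when $v\in\{2,3,5\}$. This is precisely where the exceptional divisors $8$, $12$ (for $v=2$) and $9$, $25$ (for $v=3,5$) come from, and it is also where one must rule out the spurious orders — such as $18$ for $v=3$ or $24$ for $v=2$ — that a cruder bound on $\gexp(C_G(g_s))$ would permit. I would resolve this by appealing to the classification of unipotent classes of $G_2$ and of centralizers of semisimple elements in exceptional groups, together with the bound on $\gexp_v$ from \cite[Proposition~0.5]{95Tes}.
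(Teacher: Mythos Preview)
The paper does not prove this lemma at all: its entire proof is the sentence ``See \cite{84Der}, and also \cite[Lemma 1.4]{13VasSt.t}.'' So there is nothing to compare your argument to within the paper itself; the result is imported wholesale from the literature.

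Your plan is sound and is in fact the standard route that underlies those references. Deriziotis \cite{84Der} gives exactly the data you propose to use --- the list of maximal tori of $G_2(u)$ and the connected centralizers of semisimple elements --- and the explicit spectrum is assembled from that data in \cite[Lemma 1.4]{13VasSt.t} along the lines you sketch: semisimple orders from the tori, unipotent exponent from the Coxeter number (via Testerman \cite{95Tes}), and mixed orders from the Borel--de Siebenthal list of pseudo-Levi subgroups $T$, $A_1\cdot T_1$, $A_1\tilde A_1$, $A_2$. The exceptional entries $8$ and $12$ for $v=2$ and $v^2$ for $v=3,5$ arise precisely where you locate them: the regular unipotent for $8,9,25$, and the order-$3$ centre of $SL_3(u)$ (or $SU_3(u)$) combined with a unipotent of order $4$ for $12$. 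Your observation that a regular unipotent in $G_2$ has trivial semisimple centralizer (because $Z(G_2)=1$) is exactly what rules out the spurious orders $16$, $24$, $18$, $50$, etc.

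One small point of care in carrying out the plan: in the $A_1\cdot T_1$ and $A_1\tilde A_1$ centralizers you should verify that the \emph{centre} (not just the torus factor) has exponent dividing $u-1$ or $u+1$; in practice this is immediate because the central involution of $SL_2(u)$ already sits inside the one-dimensional torus when $u$ is odd, but it is worth saying explicitly. With that caveat, your argument would reproduce the content of \cite[Lemma 1.4]{13VasSt.t} and hence prove the lemma.
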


\begin{proof}
 See \cite{84Der}, and also \cite[Lemma 1.4]{13VasSt.t}.
\end{proof}

\begin{lemma}\label{l:spec_f4}
Let $u$ be a power of a prime $v$. Then $\omega(F_4(u))$ consists of the divisors of the numbers $u^4-u^2+1$, $u^4+1$, $(u^2\pm u+1)(u^2-1)$, $(u^4-1)/(2,u-1)$,
$v(u^3\pm 1)$, $v(u^2+1)(u\pm 1)$, and $v(u^2-1)$ together with the divisors of
\begin{enumerate}
 \item $4(u^2\pm1)$, $4(u^2\pm u+1)$, $8(u\pm 1)$, and $16$ if $v=2$;
 \item $9(u^2\pm1)$, $27$ if $v=3$;
 \item $25(u\pm 1)$ if $v=5$;
 \item $49\cdot 2$ if  $v=7$;
 \item $121$ if  $v=11$.
\end{enumerate}

\begin{proof}
See \cite{84Der}, and also \cite[Theorem 3.1]{16GrZv}.
\end{proof}

\end{lemma}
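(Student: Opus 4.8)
The plan is to determine $\omega(F_4(u))$ through the Jordan decomposition of elements. Every $g\in F_4(u)$ is uniquely a product $g=sw=ws$ with $s$ semisimple and $w$ unipotent, and $|g|=|s|\cdot|w|$ because $|s|$ is coprime to $v$. Hence it suffices to describe (a) the orders of semisimple elements, (b) the orders of unipotent elements, and (c) for each semisimple $s$, the orders of the unipotent elements of the centralizer $C_{F_4(u)}(s)$, whose structure is controlled by the closed subsystems of the root system of $F_4$.

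For (a) I would use that $F_4$ is simultaneously simply connected and of adjoint type, so the semisimple elements of $F_4(u)$ are exactly the elements of its maximal tori, and the order of any element of a maximal torus $T$ divides $|T|$. The conjugacy classes of maximal tori are in bijection with the conjugacy classes of the Weyl group $W(F_4)$, and each $|T|$ is a product of cyclotomic values $\Phi_d(u)$ with $d$ dividing the order of some element of $W(F_4)$, that is $d\in\{1,2,3,4,6,8,12\}$; this is classical, see \cite{84Der}. Selecting the maximal tori and computing their exponents produces $\Phi_{12}(u)=u^4-u^2+1$, $\Phi_8(u)=u^4+1$, $\Phi_1(u)\Phi_2(u)\Phi_3(u)=(u^2+u+1)(u^2-1)$, $\Phi_1(u)\Phi_2(u)\Phi_6(u)=(u^2-u+1)(u^2-1)$, and $\Phi_1(u)\Phi_2(u)\Phi_4(u)/(2,u-1)=(u^4-1)/(2,u-1)$, and every semisimple element order divides one of these.

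For (b), a Sylow $v$-subgroup of $F_4(u)$ is a maximal unipotent subgroup, so its exponent equals the largest order of a unipotent element; by \cite[Proposition~0.5]{95Tes} this is $v$ when $v>11$ and equals $16$, $27$, $25$, $49$, $121$ for $v=2,3,5,7,11$ respectively, and every unipotent order is a power of $v$. This accounts for the ``pure $v$'' entries in the statement.

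Part (c) is the bulk of the argument and, I expect, the main obstacle. For a nontrivial semisimple $s$ the identity component $C^\circ$ of $C_{F_4(u)}(s)$ is generated by a maximal torus and the root subgroups of the roots trivial on $s$, hence is a reductive group of rank $4$ whose derived subgroup has a root system among the proper closed subsystems of $F_4$ (types $B_4$, $C_3A_1$, $A_3A_1$, $A_2\widetilde A_2$, and their subsystems such as $B_3$, $C_3$, $A_2A_1$, \dots). One then runs over these centralizer types and over the relevant elements $s$, reading off the orders $v^j$ of the unipotent elements commuting with $s$ from small-rank classical-group data (again \cite{84Der}); the products $|s|\cdot v^j$ obtained in this way are $v(u^3\pm1)$, $v(u^2+1)(u\pm1)$, $v(u^2-1)$ in general, together with $4(u^2\pm1)$, $4(u^2\pm u+1)$, $8(u\pm1)$ when $v=2$, $9(u^2\pm1)$ when $v=3$, and $25(u\pm1)$ when $v=5$, the extra numbers appearing exactly because in these characteristics some centralizers contain unipotent elements of order larger than $v$. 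Verifying that no element order falls outside the listed set requires a finite but tedious inspection of all semisimple classes; rather than reproduce it I would cite it, as the proof above does, from \cite{84Der} and its restatement in \cite[Theorem~3.1]{16GrZv}.
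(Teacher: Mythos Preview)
Your proposal is correct and in fact goes well beyond what the paper does: the paper's own proof is purely a citation to \cite{84Der} and \cite[Theorem~3.1]{16GrZv}, with no argument given. What you have sketched---Jordan decomposition, semisimple orders via the maximal tori of $F_4$ (indexed by $W(F_4)$-classes), unipotent exponents via \cite{95Tes}, and mixed orders via the closed-subsystem centralizers $B_4$, $C_3A_1$, $A_2\widetilde A_2$, etc.---is exactly the machinery underlying those cited references, so your approach and the paper's are the same in substance.

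One small omission in your part~(c): you list the extra mixed orders only for $v\in\{2,3,5\}$, but the entry $49\cdot 2$ for $v=7$ arises by the same mechanism (an involution whose centralizer, of type $B_4$, contains a unipotent element of order $49$). Since you end by deferring the full case analysis to the same citations the paper uses, this does not affect correctness.
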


\begin{lemma}\label{l:spec_3d4}
Let $u$ be a power of a prime $v$. Then $\omega({}^3D_4(u))$ consists of the divisors of the numbers $u^4-u^2+1$, $(u^2\pm u+1)(u^2-1)$, and
$v(u^3\pm 1)$ together with the divisors of
\begin{enumerate}
 \item $4(u^2\pm u+1)$, $8$ if $v=2$;
 \item $v^2$ if $v=3,5$.
\end{enumerate}
\end{lemma}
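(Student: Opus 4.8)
The plan is to read off $\omega(G)$, where $G={}^3D_4(u)$, from the Jordan decomposition of elements together with the known structure of the maximal tori of $G$ and of the centralizers of its unipotent elements. Since $G$ is simple with trivial centre and admits no diagonal automorphisms, every $g\in G$ has a unique decomposition $g=su=us$ with $s$ semisimple, $u$ unipotent, $s\in C_G(u)$, $u\in C_G(s)$, and $|g|=|s|\cdot|u|$ where $(|s|,v)=1$ and $|u|$ is a power of $v$. So the problem divides into three parts: finding all orders of semisimple elements, finding all orders of unipotent elements, and, for each unipotent $x$, finding the orders of semisimple elements of $C_G(x)$ (equivalently, for each semisimple $s$, the unipotent elements of $C_G(s)$).

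For the semisimple part I would invoke Deriziotis's classification of the maximal tori of ${}^3D_4(u)$ (see \cite{84Der}): there are seven classes of maximal tori, and a direct inspection of their orders and structures shows that the exponent of each of them divides one of the numbers $u^4-u^2+1$, $(u^2+u+1)(u^2-1)$, $(u^2-u+1)(u^2-1)$ (so that, in particular, $u^3-1$ divides the second and $u^3+1$ the third), whereas all divisors of each of these three numbers do occur as orders of elements of the corresponding tori. Since a semisimple element always lies in a maximal torus, this yields exactly the divisors of the three displayed numbers as the orders of semisimple elements.

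The unipotent and mixed parts I would handle together by running through the unipotent conjugacy classes of $G$ and the reductive parts of their centralizers. A regular unipotent element of $G$ has order $v^{\lceil\log_v h\rceil}$, where $h=6$ is the Coxeter number of $D_4$ --- that is, $8$ if $v=2$, $9$ if $v=3$, $25$ if $v=5$, and $v$ if $v\geqslant 7$ --- and every unipotent element has order dividing this number; this accounts for the extra divisors of $8$ (for $v=2$) and of $v^2$ (for $v=3,5$). For the mixed elements, the long-root unipotent elements have order $v$ and are centralized by maximal tori of orders $u^3-1$ and $u^3+1$ coming from an $\mathrm{SL}_2(u^3)$-type subsystem subgroup, which yields the orders $v(u^3\pm1)$; when $v=2$ there are additionally unipotent elements of order $4$ centralizing semisimple elements of orders dividing $u^2+u+1$ and $u^2-u+1$, giving $4(u^2\pm u+1)$, while the regular unipotent element of order $8$, and the regular unipotent elements of order $9$ and $25$ when $v=3,5$, have no nontrivial semisimple element in their centralizers and so contribute nothing new. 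Collecting the three parts yields the asserted description.

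The hard part is the analysis of unipotent elements and their centralizers: one must have the complete list of unipotent classes of ${}^3D_4(u)$ and the precise structure --- including component groups and the subfields $\mathbb{F}_u$, $\mathbb{F}_{u^3}$ that appear because of the triality twist --- of the reductive parts of their centralizers, and then verify both that every product of a unipotent element order and the order of a semisimple element of its centralizer lies in the claimed list and that every number in the list is attained. This is exactly the content of Deriziotis's computation, so in practice I would derive the lemma from \cite{84Der} and its restatements in the literature, just as is done above for $G_2(u)$ and $F_4(u)$.
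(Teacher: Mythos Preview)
Your proposal is correct and takes essentially the same approach as the paper. The paper's own proof is simply a citation to \cite{87DerMich} (Deriziotis--Michler, the paper specifically devoted to ${}^3D_4(q)$) together with \cite[Theorem 3.2]{16GrZv}, rather than the more general \cite{84Der} you mention; your sketch just spells out the Jordan-decomposition-plus-tori-plus-unipotent-centralizers argument that underlies those references.
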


\begin{proof}
 See \cite{87DerMich}, and also \cite[Theorem 3.2]{16GrZv}.
\end{proof}

\begin{lemma}\label{l:small_prev}
Let $S$ be one of the groups $S_{2n}(u)$, $O_{2n+1}(u)$, where $n=3$ or $4$, $O_{2n}^\tau(u)$, where
$n=4$ or $5$, or $L_n^\tau(u)$, where $3\leqslant n\leqslant 8$, and let $r\in\pi(S)$. Suppose that $r$ is odd, $r$ does not divide $u$ and
$t(r,S)\geqslant 3$. Then the following holds.
\begin{enumerate}
 \item If $r$ divides $u-1$ or $u+1$, then either $L=L_3^\tau(u)$ and $r\in R_2(\tau u)$, or $L=L_n^\tau(u)$ and $(u-\tau)_r=(n)_r$.
 \item If $r>5$, then a Sylow $r$-subgroup of $S$ is a product of at most two isomorphic cyclic groups unless $L=L_7^\tau(u)$ and $(u-\tau)_7=7$.
\end{enumerate}

\end{lemma}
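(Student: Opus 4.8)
The plan is to run through the short list of groups $S$ and, for each odd prime $r$ with $r\nmid u$, to combine two classical ingredients: the structure of a Sylow $r$-subgroup of a classical group (which follows from the description of maximal tori, together with Weir's theorem) and the adjacency criterion for prime graphs of groups of Lie type. Write $e=e(r,u)$; for $S=L_n^\tau(u)$ put $\varepsilon'=e(r,\tau u)$, and for the symplectic and orthogonal groups set $\eta(e)=e$ if $e$ is odd and $\eta(e)=e/2$ if $e$ is even. The preliminary observation is that a Sylow $r$-subgroup of $S$ is, up to a normal subgroup whose order divides that of the centre of the universal group and, in the orthogonal and symplectic cases, an extra factor of $2$, isomorphic to a wreath product $C\wr P$, where $C$ is cyclic and $P$ is a Sylow $r$-subgroup of the symmetric group on the $m$ Singer blocks, with $m=\lfloor n/\varepsilon'\rfloor$ for $L_n^\tau(u)$ and $m=\lfloor n/\eta(e)\rfloor$ for the rank-$n$ groups of the other types. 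Since $r>5$ in part (ii) and $m\le 8$ for every group in the list, $m<r^2$, so $P$ is elementary abelian of rank $\lfloor m/r\rfloor$; hence $C\wr P$ is a product of at most two isomorphic cyclic groups exactly when $m\le 2$. Moreover, for $r>5$ the prime $r$ divides the order of the relevant centre only when $S=L_7^\tau(u)$ and $r=7\mid u-\tau$, and in all other cases passing from the universal group to $S$ preserves the property of being a product of at most two cyclic groups.

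I would prove (i) first, since it carries most of the weight and feeds into (ii). If $r\mid u-1$ or $r\mid u+1$, then $\eta(e)=1$ for the symplectic and orthogonal groups, and $\varepsilon'\in\{1,2\}$ for $L_n^\tau(u)$. A prime $r$ with $\eta(e)=1$ (i.e. $r\mid u^2-1$) divides $u^j\mp1$ for about half of all $j$, so it lies in many maximal tori; I would use the adjacency criterion to show, for each of the six rank-ranges, that every vertex of $GK(S)$ nonadjacent to $r$ lies in a single $\Phi_d$-class together with at most the characteristic, and that these are mutually adjacent, so $t(r,S)\le 2$ — contrary to hypothesis. This eliminates all non-linear/unitary cases. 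For $S=L_n^\tau(u)$ with $\varepsilon'=2$ the same kind of computation shows that the only $\Phi_d$-classes nonadjacent to $r$ are the "top" ones, which are pairwise adjacent unless $n=3$; hence either $n=3$ and $r\in R_2(\tau u)$, or $t(r,S)\le 2$ and then $(u-\tau)_r=1=(n)_r$ holds trivially. For $S=L_n^\tau(u)$ with $\varepsilon'=1$, i.e. $r\mid u-\tau$, the crux is when $r$ fails to be adjacent to the primes of the unique Singer $\Phi_d$-class: by Lemma~\ref{l:r-part} the Singer torus keeps its full $r$-part exactly when $(u-\tau)_r=(n)_r$, whereas for $(u-\tau)_r>(n)_r$ that torus loses a factor of $r$ and adjacency is restored; together with the facts that $r$ is adjacent to every other $\Phi_d$-class and (for $n\ge 4$) to the characteristic, this pins $t(r,S)\ge 3$ down to $(u-\tau)_r=(n)_r$.

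Part (ii) should then follow essentially formally. If $r>5$, the Sylow conclusion can fail only when $m\ge 3$ or when $r$ divides the centre, the latter being exactly the stated exception $S=L_7^\tau(u)$, $7\mid u-\tau$. For the symplectic and orthogonal groups, $m\ge 3$ forces $\eta(e)=1$, hence $r\mid u^2-1$, which contradicts $t(r,S)\ge 3$ by (i). For $S=L_n^\tau(u)$, $m\ge 3$ forces $\varepsilon'\le 2$; by (i) this leaves $n=3$ (where $m=\lfloor 3/2\rfloor=1$, a contradiction), the subcase $\varepsilon'=2$ with $n\in\{6,7,8\}$ (which I would rule out directly from the adjacency criterion, showing $t(r,S)\le 2$), and the subcase $\varepsilon'=1$ with $(u-\tau)_r=(n)_r$, where $r>5$ and $n\le 8$ force $n=7$, $r=7$ and $(u-\tau)_7=7$ — precisely the exception; combining this with the centre remark from the first paragraph closes the case.

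The hard part will be the bookkeeping in the last two parts of the (i)-argument: getting the nonneighbourhood of $r$ in $GK(L_n^\tau(u))$ exactly right for all $n$ with $3\le n\le 8$ and both signs of $\tau$, and in particular making precise the dichotomy "$(u-\tau)_r=(n)_r$ versus $(u-\tau)_r>(n)_r$" in terms of which maximal tori are divisible by $r$ and with what multiplicity. This is the one place where the statement is genuinely arithmetic rather than a formal consequence of the crude wreath-product-plus-adjacency picture, and it is where the precise shape of the exceptions comes from; everything else is a finite, if lengthy, check.
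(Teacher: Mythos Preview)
Your plan is essentially the paper's: use the Vasil'ev--Vdovin adjacency criterion for (i), then feed (i) into the explicit Sylow structure for (ii). The paper computes the Sylow $r$-subgroup order directly from the torus decomposition rather than via Weir's wreath-product description, but the content is identical, and your reduction of (ii) to $m\le 2$ is exactly the paper's observation that $k\le 2$.

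There is one step in your sketch of (i) that would fail as written. In the $\varepsilon'=1$ subcase ($r\mid u-\tau$) you speak of ``the unique Singer $\Phi_d$-class'' as the only potential non-neighbour of $r$; but if only one class can be non-adjacent to $r$, your own argument gives $t(r,S)\le 2$ in every case and the subcase becomes vacuous, which is wrong. What actually happens (and what the paper uses, quoting \cite[Propositions~4.2 and~4.3]{05VasVd.t}) is that $r$ is adjacent to $v$ and to every $r_k(\tau u)$ with $k\le n-2$, while $r$ is adjacent to \emph{at least one} of $r_n(\tau u)$ and $r_{n-1}(\tau u)$ unless $(n)_r=(u-\tau)_r$; so $t(r,S)\ge 3$ needs \emph{both} top classes non-adjacent, and that is precisely the condition $(u-\tau)_r=(n)_r$. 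A similar slip occurs in your $\varepsilon'=2$ subcase: the parenthetical ``$(u-\tau)_r=1=(n)_r$ holds trivially'' is false in general (take $r=3$, $n=6$, $3\mid u+\tau$); the correct conclusion there is simply that $t(r,S)\le 2$ for $n\ne 3$, contradicting the hypothesis, so the case does not arise. Both fixes are exactly the ``bookkeeping'' you flag at the end, but as stated the argument would not go through.
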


\begin{proof}
 (i) Suppose that $S=L_n^\tau(u)$, where $u$ is a power of a prime $v$,
 and $r$ divides $u-1$ or $u+1$.  It is clear that $r$ is adjacent in $GK(S)$ to every prime divisor of $u^k-\tau^k$, where $k\leqslant n-2$.
 If $r$ divides $u+\tau$, then $r$ is adjacent to the prime divisors of $u^k-\tau^k$, where $k$ is the even number in the set $\{n,n-1\}$,
 and if $n\neq 3$, then $r$ is adjacent to $v$. Finally, if $r$ divides $u-\tau$, then $r$ is adjacent $v$ and, furthermore, $r$ is adjacent to either $r_n(\tau u)$
  or $r_{n-1}(\tau $ unless $(n)_r=(u-\tau)_r$ by \cite[Propositions 4.2 and 4.3]{05VasVd.t}  If $S$ is a symplectic or orthogonal, the assertion easily follows from the results of \cite{05VasVd.t}.

 (ii)  Let $S=L_n^\tau(u)$ and let $j=e(r,\tau u)$. If $j=1$, then by (i), it follows that $(u-\tau)_r=(n)_r$. Since $n\leqslant 8$ and $r>5$, we have  $r=7$. If $j=2$, then $n=3$ and Sylow
$r$-subgroups of $S$ are cyclic.  If $j\geqslant 3$, then the order of Sylow $r$-subgroups of $S$ is equal to $(u^j-\tau^j)_r\dots (u^{jk}-\tau^{jk})_r$, where $jk$ is the largest multiple of $j$ not
exceeding $n$. It is clear that $k\leqslant 2$, and in particular $(u^{jk}-\tau^{jk})_r=(u^j-\tau^j)_r$. It remains to note that $S$ contains the direct product of $k$ cyclic subgroups of order
$(u^j-\tau^j)_r$. The case where $S$ is symplectic or orthogonal is similar.
\end{proof}

\begin{lemma}\label{l:maut}
Let $S$ be one of the groups $^2B_2(u)$, $^2G_2(u)$, $G_2(u)$, $F_4(u)$, or $E^\tau_6(u)$ and let  $l$ be equal to $1$, $1$, $2$, $4$, or $6$ respectively. Suppose that $a\in\omega(\Aut S)$, $a$ is odd and coprime to $v$.
Then either $a\in\omega(\Inndiag S)$, or $a<6u^{l/3}$, and in any case $a<2u^l$.
\end{lemma}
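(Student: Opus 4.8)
The plan is to exploit the structure of $\Aut S$ --- it is generated by $\Inndiag S$ together with field and, in two of the five families, graph or graph--field automorphisms --- in order to reduce an element of order $a$ to the product of a field automorphism and a commuting element of $\Inndiag S$, and then to estimate the latter via the spectrum of a group of the same type over a proper subfield. So I would first record, for each of the five types, the relevant features of $\Out S$: one has $\Inndiag S=S$ except when $S=E_6^\tau(u)$, where $|\Inndiag S:S|=(3,u-\tau)\le 3$; modulo the diagonal part, the group of field automorphisms is cyclic of order $f=\log_v u$; and the only graph or graph--field automorphisms that occur (namely for $G_2$ with $v=3$, for $F_4$ with $v=2$, and for $E_6$) have even order. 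Hence, if $g\in\Aut S$ has order $a$ with $a$ odd and $(a,v)=1$ and $g\notin\Inndiag S$ (otherwise $a\in\omega(\Inndiag S)$ and the first alternative holds), then the image of $g$ in $\Out S$, read modulo the diagonal part, is a nontrivial field automorphism of some odd order $d\mid f$, so $d\ge 3$.

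Next, by the standard analysis of automorphisms of order coprime to the characteristic --- the $\Inndiag S$-conjugacy of field automorphisms of a fixed order, together with the description of their fixed-point subgroups --- I would replace $g$ by an $\Inndiag S$-conjugate of the form $g=c\phi$, where $\phi$ is a field automorphism of $S$ of order $d$ and $c\in C_{\Inndiag S}(\phi)$. Since $d$ is odd it is coprime to the order of the twist, so $C_{\Inndiag S}(\phi)=\Inndiag(S_0)$, where $S_0$ is the group of the same (twisted) type as $S$ over the field of order $u_0=u^{1/d}$. As $c$ and $\phi$ commute with $\langle c\rangle\cap\langle\phi\rangle=1$, we get $a=|g|=[\,|c|,d\,]$, whence $|c|\in\omega(\Inndiag S_0)$ and $a\le d\,|c|$.

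It then remains to bound $|c|$, and here I would use the available spectrum data: Lemma~\ref{l:spec_g2} for $G_2(u_0)$, Lemma~\ref{l:spec_f4} for $F_4(u_0)$, the maximal-torus description behind Lemma~\ref{l:exp_e}(iii) (together with the diagonal factor $(3,u_0-\tau)$) for $E_6^\tau(u_0)$, and the classical spectra of the Suzuki and small Ree groups for the rank-one types. Whenever $S_0$ is a simple group of the prescribed type, every odd element of $\omega(\Inndiag S_0)$ that is coprime to $v$ turns out to be less than $2u_0^{\,l}=2u^{l/d}$; hence $|c|<2u^{l/d}$ and $a\le d\,|c|<2d\,u^{l/d}$. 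For $d=3$ this is precisely $a<6u^{l/3}$; for $d\ge 5$, where $u\ge v^{d}\ge 2^{5}$, a short computation gives $2d\,u^{l/d}\le 6u^{l/3}$; and the same spectrum bounds, now applied to $S$ itself, give $a<2u^{l}$ in the remaining case $a\in\omega(\Inndiag S)$. Finally, the finitely many configurations in which $u_0$ is too small for $S_0$ to be a simple group of the given type (so that $S_0$ degenerates to a group such as ${}^2B_2(2)\cong F_{20}$, ${}^2G_2(3)$, or $G_2(2)$) are disposed of by direct inspection of $\omega(\Aut S)$, which in particular reconfirms the universal bound $a<2u^{l}$.

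The principal difficulty I anticipate is the uniform treatment of the five families: pinning down $C_{\Inndiag S}(\phi)$ precisely --- including the exact isogeny type and the diagonal contribution for $E_6^\tau$ --- and checking that graph and graph--field automorphisms never enter an element of odd order. A secondary, more routine obstacle is the handful of degenerate small-field cases, which must be handled individually rather than through the general estimate.
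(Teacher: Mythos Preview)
Your proposal is correct and follows essentially the same route as the paper: reduce to an odd-order field automorphism $\phi$ of order $k\ge 3$, land in $\Inndiag S_0$ with $S_0$ of the same type over $\mathbb{F}_{u^{1/k}}$, bound the semisimple part by $2u^{l/k}$, and conclude $a<2k\,u^{l/k}\le 6u^{l/3}<2u^l$. The only difference is packaging: the paper invokes \cite[Theorem~2]{17Gr.t} to obtain $\omega(\phi\,\Inndiag S)=k\cdot\omega(S_0)$ in one stroke and \cite[Lemma~1.3]{09VasGrMaz1.t} for the bound $a<2u^l$ when $a\in\omega(\Inndiag S)$, whereas you unpack these via the conjugacy of field automorphisms, the centralizer description, and the explicit spectra in Lemmas~\ref{l:spec_g2}--\ref{l:spec_3d4}; your extra care about graph automorphisms having even order and about the degenerate subfield groups (${}^2B_2(2)$, ${}^2G_2(3)$, $G_2(2)$) is exactly what those citations absorb.
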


\begin{proof}
Let  $a\in \omega(\Inndiag S)$. If $S\neq E^\tau_6(u)$, then $a<2u^l$ by \cite[Lemma 1.3]{09VasGrMaz1.t}. If $S=E_6^\tau(u)$, then
$a$ is an element of some maximal tori of $\Inndiag S$. The maximal tori of $\Inndiag S$ are isomorphic to the maximal tori of the universal group corresponding to $S$,
and the structure of the latter tori is found in \cite{91DerFak}. By examining this structure, we derive the desired inequality $a<2u^6$.
% Finally, if  $S=E_6^\tau(u)$ и $(a,v)=v$, то $|a|\leqslant (3,u-\tau)m$, где $m$~---наибольший порядок элемента, кратный $v$, в группе $S$. Из \cite[теорема 1]{13But.t} следует, что $(3,u-\tau)m\leqslant v(u^6-1)/(u-1)<2u^6$.

If $a\not\in\omega(\Inndiag S)$, then there is a field automorphism $\alpha$ of $S$ of odd order $k$ such that $a\in\omega(\alpha\Inndiag S)$.
By \cite[Theorem 2]{17Gr.t}, we have $\omega(\alpha\Inndiag S)=k\cdot \omega(S_0)$, where $S_0$ is a group of the same type as $S$ but over the field of order $u^{1/k}$, and so $a\leqslant k\cdot 2u^{l/k}$. The function
$u_0^x/x$, with $u_0\geqslant 2$, increases with respect to $x$ for $x\geqslant 3$, and hence $2ku^{l/k}\leqslant 6u^{l/3}< 2u^l$.
\end{proof}

\section{Restrictions on the group $G$}

We begin with a result covering a broad class of simple classical groups, not only $S_6(q)$, $O_7(q)$, and $O_8^+(q)$.

% Соглашения: $q\equiv \varepsilon \pmod 4$, $q>5$

%Факты: $R_6(\varepsilon q)\cap \pi(K)=R_6(\varepsilon q)\cap \pi(\overline G/S)=\varnothing$

\begin{lemma} \label{l:a} Let $q$ be odd and let $L$ be one of the simple groups $L_n^\tau(q)$, where $n\geqslant 5$, $S_{2n}(q)$, where $n\geqslant 3$, $O_{2n+1}(q)$, where $n\geqslant 3$, or $O_{2n}^\tau(q)$, where $n\geqslant 4$.
Suppose that $G$ is a finite group such that $\omega(G)=\omega(L)$ and $K$ is the solvable radical of $G$. Then the socle $S$ of $\overline G=G/K$
is a nonabelian simple group and either $K$ is nilpotent, or one of the following holds:
\begin{enumerate}
 \item $L=O_{2n}^\tau(q)$, where $n$ is odd, $q\equiv \tau\pmod 8$, $\pi(K)\cap R_2(\tau q)\neq\varnothing$ and $R_n(\tau q)\cap \pi(S)\subseteq \pi(K)$;
 \item $L=L_n^\tau(q)$, where $1<(n)_2<(q-\tau)_2$, $\pi(K)\cap R_2(\tau q)\neq \varnothing$ and $R_{n-1}(\tau q))\cap\pi(S)\subseteq \pi(K)$;
 \item $L=L_n^\tau(q)$, where either $(n)_2>(q-\tau)_2$ or $(n)_2=(q-\tau)_2=2$, $\pi(K)\cap \pi((q-\tau)/(n,q-\tau))\neq \varnothing$ and $R_{n}(\tau q)\cap\pi(S)\subseteq \pi(K)$.
\end{enumerate}
\end{lemma}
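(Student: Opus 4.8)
\emph{Proof plan.} For every group $L$ in the list one has $t(L)\geqslant3$ and $t(2,L)\geqslant2$, which can be read off from the adjacency criterion of \cite{05VasVd.t, 11VasVd.t} (explicit cocliques witnessing this appear in the argument below). Hence Lemma \ref{l:str} applies: $\overline G$ is almost simple, i.e. $S\leqslant\overline G\leqslant\Aut S$ with $S$ a nonabelian simple group, and the normal subgroup occurring there is the solvable radical $K$. This gives the first assertion, so assume from now on that $K$ is not nilpotent. By Lemma \ref{l:nilp}, read contrapositively, there is a prime $r\in\pi(K)$ such that $ar\in\omega(G)=\omega(L)$ for every $a\in\omega(S)$ with $\pi(a)\cap\pi(K)=\varnothing$; we call such an $r$ a bad prime and must show that then $L$ and $r$ are as in one of (i)--(iii).

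The key is to translate badness into a statement about $GK(L)$ via two observations. First, if $s\in\pi(S)\setminus\pi(K)$ is a prime nonadjacent to $r$ in $GK(L)$, then $a=s$ contradicts the badness of $r$; since, by Lemma \ref{l:str}(iii), every prime nonadjacent to $2$ lies in $\pi(S)\setminus\pi(K)$, it follows that $r$ is adjacent to $2$ and that no non-neighbour of $r$ is nonadjacent to $2$. Second, if $m$ is a degree for which $L$ has an element of order $k_m(q)$ and $R_m(q)\cap(\pi(K)\cup\pi(\overline G/S))=\varnothing$, then an element of $G$ of order $k_m(q)$ has order coprime to $|K|\cdot|\overline G/S|$ and therefore projects onto an element of $S$ of the same order; hence if moreover $r$ is nonadjacent to the primes of $R_m(q)$, then $a=k_m(q)$ contradicts the badness of $r$. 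By Lemma \ref{l:str}(ii), the condition $R_m(q)\cap(\pi(K)\cup\pi(\overline G/S))=\varnothing$ is guaranteed as soon as $r$, a prime of $R_m(q)$, and a prime of $R_{m'}(q)$ form a coclique of $GK(L)$ of size $3$. Summarizing: a bad prime $r$ is adjacent to $2$ and lies in no coclique of $GK(L)$ of size $3$ whose other two vertices are primitive prime divisors of $q$.

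Now the adjacency criterion together with the structure of the maximal tori of $L$ pins $r$ down. Set $m=e(r,\tau q)$, taking $\tau=+$ in the symplectic and odd-dimensional orthogonal cases. A direct inspection of which degrees $m'$ arise as $e$-values of primitive prime divisors of $L$ shows that if $L=S_{2n}(q)$, $L=O_{2n+1}(q)$, or $L=O_{2n}^\tau(q)$ with $n$ even, then every prime of $GK(L)$ either is nonadjacent to $2$ or lies in a size-$3$ coclique together with two of $r_{2n}(q)$, $r_{2n-2}(q)$, $r_{2n-4}(q)$, $r_n(q)$, all nonempty by Lemma \ref{l:bz}; so no bad prime exists and $K$ is nilpotent, a contradiction. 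Therefore $L=L_n^\tau(q)$ with $n\geqslant5$ or $L=O_{2n}^\tau(q)$ with $n$ odd. The same inspection now yields that $m\in\{1,2\}$ and that $r$ has a single primitive-prime-divisor non-neighbour, which is $R_n(\tau q)$ in general and $R_{n-1}(\tau q)$ when $L=L_n^\tau(q)$ and $1<(n)_2<(q-\tau)_2$; analysing the Sylow $r$-subgroups in the linear and unitary case (cf. Lemma \ref{l:small_prev}) shows that then $r\mid(q-\tau)/(n,q-\tau)$ and separates (ii) from (iii). Applying the first observation with $a=s$ for $s$ running through $R_n(\tau q)\cap\pi(S)$ (resp. $R_{n-1}(\tau q)\cap\pi(S)$) gives the containment required in (i)--(iii), and $\pi(K)$ meets $R_2(\tau q)$ (resp. $\pi((q-\tau)/(n,q-\tau))$) because $r$ itself does. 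Finally, outside the three configurations the unique large partner $r_n(\tau q)$ (or $r_{n-1}(\tau q)$) turns out to be nonadjacent to $2$, so it lies in $\pi(S)\setminus\pi(K)$ and can be taken as $a$, contradicting badness; equivalently, (i)--(iii) are exactly the cases in which $2$ is adjacent to that partner. For instance, in case (i) this amounts to $q\equiv\tau\pmod 8$, which one checks by computing the $2$-part of the order of the torus of $O_{2n}^\tau(q)$ carrying $r_n(\tau q)$ using Lemmas \ref{l:r-part_c} and \ref{l:r-part} and the description of the spectra from \cite{10But.t, 08But.t}, and in the linear and unitary case the analogous condition on the $2$-part of the Singer-type torus is precisely the dichotomy in (ii)--(iii).

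The substance of the proof is thus the bookkeeping of the last two steps: verifying, type by type and for each position of $r$ relative to $q$, that apart from the listed configurations $r$ always has a non-neighbour in $\pi(S)\setminus\pi(K)$, and pinning down the exact arithmetic conditions under which the unique large partner becomes adjacent to $2$ (equivalently, may be absorbed into $\pi(K)$). The delicate points are the $2$-adic case distinctions --- for which residue of $q$ modulo $8$, and which parity and $2$-part of $n$, a given small prime is adjacent to $2$ and to the large primitive prime divisors, which needs the exact $2$-parts of $q^i\pm1$ and of the relevant torus orders --- and, in the linear and unitary case, the dependence of the unique large partner on $(n)_r$ versus $(q-\tau)_r$ and on the shape of the Sylow $r$-subgroups; the passage from $\omega(L)$ to $\omega(S)$ for the chosen $a$ is handled uniformly by the projection argument once the relevant $R_m(q)$ has been placed outside $\pi(K)\cup\pi(\overline G/S)$.
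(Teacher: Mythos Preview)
Your plan follows the same skeleton as the paper's proof---contrapose Lemma~\ref{l:nilp}, analyse a ``bad'' prime $r\in\pi(K)$ via cocliques, and use Lemma~\ref{l:str} to place primitive prime divisors inside $\pi(S)\setminus\pi(K)$---but the toolkit of admissible witnesses $a$ is too small, and this creates a real gap.

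Concretely, take $L=S_6(q)$ with $q\equiv\varepsilon\pmod 4$ and let $r$ be an odd prime dividing $(q+\varepsilon)/2$. Then in $GK(L)$ the prime $r$ is adjacent to $2$, to $p$, to every $r_4(q)$, and to every $r_6(\varepsilon q)$ (all via $(q^3+\varepsilon)/2$, $(q^2+1)(q+\varepsilon)/2$, $q^2-1$, $p(q^2-1)$); its \emph{only} non-neighbours are the primes of $R_3(\varepsilon q)$, and any two of those are adjacent to one another. Hence $r$ lies in no coclique of size $3$ with two primitive prime divisors, so your second observation never applies. Moreover, $R_3(\varepsilon q)$ is adjacent to $2$ (since $(q^3-\varepsilon)/2$ is even), so the first observation does not force $R_3(\varepsilon q)\subseteq\pi(S)\setminus\pi(K)$ either. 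Your ``direct inspection'' claim---that every vertex of $GK(L)$ is either nonadjacent to $2$ or sits in a size-$3$ coclique with two of $r_{2n},r_{2n-2},r_{2n-4},r_n$---is therefore false already for $S_6(q)$, and the plan does not eliminate such $r$.

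The paper closes this gap by allowing the witness $a$ to be a \emph{composite} element involving the characteristic, namely $a=p\cdot r_{2n-2}(q)$ (and analogously $a=p\cdot r_{n-2}(\tau q)$ in the other cases). The point is that from the cocliques $\{r_{2n}(\varepsilon q),r_n(\varepsilon q),r_{2n-2}(q)\}$ and $\{r_{2n}(\varepsilon q),r_n(\varepsilon q),p\}$, Lemma~\ref{l:str}(ii) yields that \emph{either} $R_n(\varepsilon q)$ \emph{or} $R_{2n-2}(q)\cup\{p\}$ is disjoint from $\pi(K)\cup\pi(\overline G/S)$; in the second alternative $p\,r_{2n-2}(q)\in\omega(S)$ with $\pi(p\,r_{2n-2}(q))\cap\pi(K)=\varnothing$, and since any element of $\omega(L)$ divisible by $p\,r_{2n-2}(q)$ must divide $p(q^{n-1}+1)$, which is coprime to $r$, one gets $r\cdot p\,r_{2n-2}(q)\notin\omega(G)$. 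Your two observations never produce such an $a$. The same device (with $a=p\,r_{n-2}(\tau q)$ or $a=p\,r_{n-2}(-\tau q)$) is what makes the linear/unitary and odd-$n$ orthogonal cases go through and is essential to the arithmetic case-splitting that isolates exactly the exceptions (i)--(iii). To repair the plan you need to add this third type of witness and redo the case analysis using cocliques that include $p$.
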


\begin{proof}
Since $t(L)\geqslant 3$ and $t(2,L)\geqslant 2$, it follows that $S$ is a nonabelian simple group by Lemma  \ref{l:str}.
We show that either $G$, $K$ and $S$ satisfy the following condition: for every $r\in\pi(K)$, there is $a\in\omega(S)$ such that $\pi(a)\cap\pi(K)=\varnothing$
and $ar\not\in\omega(G)$, in which case $K$ is nilpotent by Lemma \ref{l:nilp}, or one of assertions (i)--(iii) holds.
If $r=2$, we can take $a$ to be any prime in $\pi(G)$ that is not adjacent to 2 in $GK(G)$. We can assume, therefore, that $r$ is odd.

Let $L$ be one of the groups $S_{2n}(q)$, $O_{2n+1}(q)$, or $O_{2n+2}^+(q)$, where $n\geqslant 3$ is odd.
Choose $\varepsilon\in\{+1,-1\}$ so that $q\equiv \varepsilon \pmod 4$. Then every prime in $R_{2n}(\varepsilon q)$
is not adjacent to 2 in $GK(G)$, and hence $R_{2n}(\varepsilon q)\cap (\pi(K)\cup\pi(\overline G/S))=\varnothing$. Furthermore, if
$m\in \omega(G)$ is a multiple of $r_{2n}(\varepsilon q)$, then $m$ divides $(q^n+\varepsilon)/2$. Thus if
 $r$ is coprime to $(q^n+\varepsilon)/2$, we can take $a=r_{2n}(\varepsilon q)$.
Suppose that $r$ divides $(q^n+\varepsilon)/2$. Then $r$ is coprime to both $(q^n-\varepsilon)/2$ and $q^{n-1}+1$.
Since $\{r_{2n}(\varepsilon q), r_n(\varepsilon q), r_{2n-2}(q)\}$ and $\{r_{2n}(\varepsilon q), r_n(\varepsilon q), p\}$ are cocliques in $GK(G)$, it follows from Lemma \ref{l:str} that
at least one of the sets $R_n(\varepsilon q)$ and $R_{2n-2}(q)\cup\{p\}$ is disjoint from both $\pi(\overline G/S)$ and $\pi(K)$.
Thus we can take $a$ to be $r_n(\varepsilon q)$ or $pr_{2n-2}(q)$ respectively.

Let $L$ be one of the groups $S_{2n}(q)$, $O_{2n+1}(q)$, or $O_{2n}^-(q)$, where $n\geqslant 4$ is even. If $r$ is coprime to $(q^n+1)/2$, then take $a=r_{2n}(q)$.
Suppose that $r$ divides $(q^n+1)/2$. Then it is coprime to $q^{n-1}-1$ and $q^{n-1}+1$, and so $rr_{n-1}(q), rr_{2n-2}(q)\not\in\omega(G)$.
Since $\{r_{2n}(q), r_{2n-2}(q), r_{n-1}(q)\}$ is a coclique in $GK(G)$, at least one of the sets $R_{2n-2}(q)$ and $R_{n-1}(q)$ is disjoint from $\pi(\overline G/S)$ and $\pi(K)$.
Hence we can take $a=r_{2n-2}(q)$ or $a=r_{n-1}(q)$.

Let $L=O_{2n}^\tau(u)$, where $n\geqslant 5$ is odd. The sets $\{r_{n}(\tau q), r_{2n-2}(q), r_{n-2}(-\tau q)\}$
and $\{r_{n}(\tau q), r_{2n-2}(q), p\}$ are cocliques in $GK(G)$, so at most one of the sets $R_n(\tau q)$, $R_{2n-2}(q)$ and $R_{n-2}(-\tau q)\cup\{p\}$ is not disjoint from $\pi(K)\cup\pi(\overline G/S)$.
Also, every number in $\omega(G)$ that is a multiple of $r_n(\tau q)$, $r_{2n-2}(q)$, or $pr_{n-2}(-\tau q)$, has to divide $m_1=(q^n-\tau)/(4,q-\tau)$,  $m_2=(q^{n-1}+1)(q+\tau)/(4,q-\tau)$, or
$m_3=2p(q^{n-2}+\tau)/(4,q-\tau)$ respectively. Note that $(m_1,m_2)=(m_1, m_3)=1$, while $(m_2, m_3)$ divides $q+\tau$.

Suppose that $q\not\equiv \tau\pmod 8$. Then numbers in $R_n(\tau q)$ are not adjacent to 2 in $GK(G)$.
If $r$ is coprime to $m_1$, then take $a=r_n(\tau q)$. If $r$ divides $m_1$, then take $a=r_{2n-2}(q)$ or $a=pr_{n-2}(-\tau q)$, depending on whether
$R_{2n-2}(q)$ or $R_{n-2}(-\tau q)\cup\{p\}$ is disjoint from $\pi(K)\cup\pi(\overline G/S)$. Let $q\equiv \tau\pmod 8$. In this case, the primes not adjacent to 2 in $GK(G)$ are exactly elements of $R_{2n-2}(q)$.
If $r$ is coprime to $m_2$, then take $a=r_{2n-2}(q)$. Assume that $r$ divides $m_2$. If there is $s\in (\pi(S)\cap R_n(\tau q))\setminus \pi(K)$, then take $a=s$. If $\pi(S)\cap R_n(\tau q)\subseteq \pi(K)$
and $r$ is coprime to $m_3$, then $a=pr_{2n-2}(q)$. Thus we are left with the situation where $q\equiv \tau\pmod 8$,  $\pi(S)\cap R_n(\tau q)\subseteq \pi(K)$ and $r$ divides $(m_2,m_3)=q+\tau$, as
required in (i).

Let $L=L_n^\tau(q)$, where $n\geqslant 5$. The sets $\{r_n(\tau q), r_{n-1}(\tau q), r_{n-2}(\tau q)\}$ and $\{r_n(\tau q), r_{n-1}(\tau q), p\}$ are cocliques in $GK(G)$, so
at most one of the sets $R_n(\tau q)$, $R_{n-1}(\tau q)$ and $R_{n-2}(\tau q)\cup\{p\}$ can be not disjoint from $\pi(\overline G/S)\cup\pi(K)$. Every number in $\omega(G)$ that is a multiple of $r_n(\tau q)$, $r_{n-1}(\tau q)$, or   $pr_{n-2}(\tau q)$ divides respectively $m_1=(q^n-\tau^n)/(q-\tau)(n,q-\tau)$,  $m_2=(q^{n-1}-\tau^{n-1})/(n,q-\tau)$, or $m_3=p(q^{n-2}-\tau^{n-2})/(n,q-\tau)$. Observe that $(m_1,m_2)=1$ and $(m_2, m_3)=(q-\tau)/(n,q-\tau)$.

Suppose that $n$ is odd. Then the numbers not adjacent to  2 in $GK(G)$ are precisely elements of $R_n(\tau q)$. Also $(m_1,m_3)=1$ in this case.
If $r$ is coprime to $m_1$, then take $a=r_n(\tau q)$. If $r$ divides $m_1$, then take $a=r_{n-1}(\tau q)$ or $a=pr_{n-2}(q)$, according as $R_{n-1}(\tau q)$ or $R_{n-2}(\tau q)\cup\{p\}$ is disjoint from
$\pi(K)\cup\pi(\overline G/S)$.

Let $n$ be even. If $(n)_2=(q-\tau)_2>2$, then elements of $R_n(\tau q)\cup R_{n-1}(\tau q)$ are not adjacent to $2$ in $GK(G)$, so we can take $a=r_n(\tau q)$ if $r$ is coprime to $m_1$, and $a=r_{n-2}(\tau q)$ otherwise.
Let $(n)_2<(q-\tau)_2$. If $r$ is coprime to $m_1$, then $a=r_n(\tau q)$. If $r$ divides  $m_1$ and there is $s\in (\pi(S)\cap R_{n-1}(\tau q))\setminus \pi(K)$, then $a=s$. If $r$ divides $m_1$ but not $m_3$ and
$\pi(S)\cap R_{n-1}(\tau q)\subseteq \pi(K)$, then $a=pr_{n-2}(\tau q)$. It remains to note in this case that $(m_1,m_3)=(q+\tau)/2$. Let $(n)_2>(q-\tau)_2$ or $(n)_2=(q-\tau)_2=2$. If $r$ is coprime
to $m_2$, then $a=r_{n-2}(\tau q)$.
If $r$ divides $m_2$ and there is $s\in (\pi(S)\cap R_{n}(\tau q))\setminus \pi(K)$, then $a=s$. Finally, if $r$ divides $m_2$ but not $m_3$, then $a=pr_{n-2}(\tau q)$. Thus (ii) or (iii) holds, and the proof is complete.

\end{proof}

Now we begin work toward a proof of Theorem \ref{t:2}. In the rest of the paper, $L$ is one of the groups $S_6(q)$, $O_7(q)$, or $O_8^+(q)$, where $q>3$ is a power of an odd prime $p$, and $G$ is a
finite group isospectral to   $L$. By Lemmas \ref{l:str} and \ref{l:a}, we have that the structure of $G$ is as follows: $$S\leqslant \overline G= G/K\leqslant \Aut S,$$ where $K$ is a normal
nilpotent subgroup of $G$ and $S$ is a nonabelian simple group.
To prove Theorem \ref{t:2}, we assume that $S$ is a group of Lie type in characteristic $v\neq p$ over a field of order $u$.

Fix $\varepsilon\in\{+1,-1\}$ such that $q\equiv \varepsilon \pmod 4$. Then the numbers not adjacent to 2 in $GK(G)$ are precisely elements of $R_6(\varepsilon q)$. By Lemma \ref{l:str}, the set
$R_6(\varepsilon q)$
is disjoint from $\pi(K)\cup\pi(\overline G/S)$, in particular, $k_6(\varepsilon q)\in \omega(S)$.

\begin{lemma}\label{l:rest_k} Suppose that $r\in\pi(K)$ and $r\neq v$.
\begin{enumerate}
 \item If $S$ contains a Frobenius group with kernel a $v$-group and cyclic complement of order $a$, then $ra\in\omega(G)$.
 \item If $S\neq L_2(v)$, then $rv\in \pi(G)$.
 \item If $S\neq L_2(v)$ and $S$ is a classical group, then $r\in R_1(q)\cup R_2(q)$.
 \item If $S\neq L_2(v)$, $s\in R_3(q)\cup R_6(q)$, $s\neq r$ and $s$ divides the order of a proper parabolic subgroup of $S$, then $rs\in \omega(G)$.
\end{enumerate}
\end{lemma}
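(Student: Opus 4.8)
The plan is to exploit the fact that $K$ is nilpotent (so its Sylow $r$-subgroup $R$ is normal in $G$) together with the Hall--Higman-type lemmas \ref{l:nonc}, \ref{l:frob}, \ref{l:hh} proved above, applied to the action of $\overline G$ on the factor group $R/\Phi(R)$, viewed as a vector space over $\mathbb{F}_r$. The standing hypotheses give $S\leqslant \overline G\leqslant \Aut S$ with $K$ nilpotent, so for each $r\in\pi(K)$ with $r\neq v$ we have a normal $r$-subgroup of $G$ and may pass to the faithful action on $V=R/\Phi(R)$ of the subgroup $\overline G/C_{\overline G}(V)$; the point is that since $r\neq v$, the relevant subgroups of $S$ (which are mostly $v$-groups or act with $v$-power order) act coprimely on $V$.

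First I would prove (i). Let $P\rtimes C$ be a Frobenius subgroup of $S$ with $P$ a $v$-group and $C$ cyclic of order $a$; since $v\neq r$, the group $P$ acts on $V$. If $P$ acts nontrivially, apply Lemma \ref{l:frob} (with the roles: $R$ the normal $r$-group, $P\rtimes C$ the Frobenius quotient) to conclude $ra\in\omega(G)$. If $P$ centralizes $V$, then $P$ centralizes $R$ (coprime action plus $P$ acting trivially on $R/\Phi(R)$ forces trivial action on $R$), but then $P\leqslant C_G(R)$ is normalized by $C$ and $[R,C]$ is a nontrivial $r$-group on which $C$ acts — again one gets an element of order $ra$ by a Frobenius argument, or one simply notes $P$ cannot centralize the whole of $K$ consistently; I would route this through Lemma \ref{l:frob} applied inside $R\rtimes(P\rtimes C)$ after checking $P\not\leqslant R C_G(R)$, the latter following because $R$ acts trivially on $P$ by coprimality and $C_S(P)$ does not contain $C$. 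For (ii), I would use that $S\neq L_2(v)$ forces $S$ to contain a Frobenius subgroup with kernel a nontrivial $v$-group and complement of order divisible by $v'$... more directly: a simple group of Lie type $S\neq L_2(v)$ in characteristic $v$ has a Borel subgroup $B=U\rtimes H$ with $U$ a nontrivial $v$-group and $H\neq 1$ containing an element of order $v$ in a suitable parabolic; actually the cleanest statement is that $S$ contains an element of order $vw$ for some prime $w\neq v$ (true whenever $S\neq L_2(v)$, e.g.\ $U$ plus a semisimple element acting nontrivially), and then apply Lemma \ref{l:nonc} or \ref{l:hh} to get $rv\in\pi(G)$; one shows $R/\Phi(R)$ cannot be centralized by the $vw$-subgroup without $R$ being centralized, contradicting faithfulness considerations, hence $rv\in\omega(G)$.

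For (iii), once $rv\in\pi(G)$ from (ii), the adjacency criterion for prime graphs of simple groups of Lie type (cited in Section 3) says that for a classical group $S$ in characteristic $v$, the primes adjacent to $v$ lie in a controlled set; combined with $r\in\pi(L)=\pi(S)$ and the structure of $\omega(L)$ from Lemma \ref{l:spec_c3} (where every element order is a divisor of the listed numbers, all of whose odd prime divisors lie in $R_1(q)\cup R_2(q)\cup R_3(q)\cup R_4(q)\cup R_6(q)\cup\{p\}$), one eliminates $R_3(q)$, $R_4(q)$, $R_6(q)$: a prime $r\in R_6(\varepsilon q)$ cannot lie in $\pi(K)$ at all by the remark preceding this lemma, and $r\in R_3(q)\cup R_4(q)$ combined with $rv\in\omega(G)$ would force, via the classical-group adjacency criterion and the spectrum of $L$, a product $rr'$ with $r'$ another primitive divisor to lie in $\omega(L)$, contradicting $\mu(L)$. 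For (iv), I would take a proper parabolic $P_1=Q\rtimes(M\times T)$ of $S$ whose Levi factor has order divisible by $s\in R_3(q)\cup R_6(q)$; then $s$ is the order of (a power of) an element $g$ normalizing the $v$-group $Q$, and $Q\rtimes\langle g\rangle$ is (or contains) a Frobenius group with kernel a $v$-group and complement of order $s$, so (i) applied with $a=s$ gives $rs\in\omega(G)$ — modulo checking $g$ really acts fixed-point-freely, which one arranges by choosing $g$ regular in the torus so that the relevant part of $Q$ gives a Frobenius action.

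The main obstacle I anticipate is part (iv), and more precisely the verification that the parabolic subgroup contains a genuine Frobenius subgroup of the required shape: one must pin down, for the specific $s\in R_3(q)\cup R_6(q)$ dividing the order of a proper parabolic, that the semisimple element of order $s$ acts without nonzero fixed points on at least one non-trivial $v$-subgroup lying in the unipotent radical, so that Lemma \ref{l:frob} (via part (i)) applies. This is a case analysis over the possible $S$ (driven by $k_6(\varepsilon q)\in\omega(S)$), and the delicate point is distinguishing when the fixed-point space $C_Q(g)$ is trivial versus when it is not — exactly the obstruction flagged in Lemma \ref{l:hh}(i). A secondary nuisance is handling $r=v$-adjacent subtleties cleanly in (ii)--(iii) when $S$ is one of the small exceptional-behaviour groups ($L_3(v)$, ${}^2B_2$, etc.), where the Borel subgroup structure degenerates; these I would treat separately using the explicit spectra in Lemmas \ref{l:spec_g2}--\ref{l:spec_3d4}.
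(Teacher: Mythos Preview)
Your setup (pass to $V=R/\Phi(R)$ and use the Hall--Higman-type lemmas) is right, and (i) works essentially as you say; note that the non-faithful case is disposed of uniformly at the outset: if $S$ does not act faithfully on $V$ then $r\cdot\omega_{r'}(S)\subseteq\omega(G)$ and every part follows at once, so there is no need for the contortions you sketch when ``$P$ centralizes $V$''. For (ii) the paper's argument is much shorter than yours: if $v=2$ then $r$ is adjacent to $2$ in $GK(G)$ by Lemma~\ref{l:str}(iii), and if $v$ is odd then $S\neq L_2(v)$ guarantees a noncyclic abelian $v$-subgroup, so Lemma~\ref{l:nonc} gives $rv\in\omega(G)$ directly.

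The genuine gap is in (iii) and (iv). Your outline for (iii) does not work: you write ``combined with $r\in\pi(L)=\pi(S)$'', but $\pi(S)$ need not equal $\pi(L)$, and the adjacency criterion you invoke concerns $GK(S)$, whereas $rv\in\omega(G)$ is a statement about $GK(G)$; these are different graphs and you cannot transfer constraints between them in the way you suggest. The paper's argument for (iii) is quite different. Assuming $r\in R_3(\varepsilon q)\cup R_4(q)\cup\{p\}$, one first picks $s$ in the same set with $rs\not\in\omega(G)$ (possible from the shape of $\mu(L)$), observes $s\not\in\pi(K)$ via the coclique $\{r,s,r_6(\varepsilon q)\}$, and then uses the purely structural fact that in a classical group at most one prime different from $v$ can fail to divide the order of a proper Levi factor; hence at least one of $s$ and $r_6(\varepsilon q)$, call it $t$, lies in a proper parabolic $P=U\rtimes L_P$. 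One then applies Lemma~\ref{l:hh} (not \ref{l:frob}) to $U\rtimes\langle g\rangle$ with $|g|=t$; the input $[U,g]\neq 1$ comes from \cite[13.2]{83GorLy}, and the exceptional conclusion of \ref{l:hh} would force $t$ to be a Fermat prime with $v=2$. Since every prime in $R_3(q)\cup R_6(q)$ is $\equiv 1\pmod 3$, it is never a Fermat prime, so if $t=r_6(\varepsilon q)$ one is done immediately, and if $t=s\in R_4(q)\cup\{p\}$ a short extra argument (moving to a parabolic with abelian unipotent radical, or a genuine Frobenius subgroup when $n\geqslant 4$) finishes.

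This same idea dissolves the obstacle you flag in (iv): you do \emph{not} need a Frobenius subgroup, i.e.\ a fixed-point-free action of $\langle g\rangle$ on a $v$-subgroup. Lemma~\ref{l:hh} requires only $[U,g]\neq 1$, and its exceptional clause is excluded precisely because $s\in R_3(q)\cup R_6(q)$ forces $s\equiv 1\pmod 3$, hence $s$ is not a Fermat prime. So the ``delicate point'' you anticipate---distinguishing $C_U(g)=1$ from $C_U(g)\neq 1$---never arises.
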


\begin{proof}
Let $\widetilde G=G/(O_{r'}(K)\Phi(O_r(K))$ and $\widetilde K=K/(O_{r'}(K)\Phi(O_r(K))$. The group $S$ acts on $\widetilde K$ and if this action is not faithful, then $r\cdot\omega_{r'}(S)\subseteq \omega(G)$.
Thus (i) follows from Lemma \ref{l:frob}.

(ii) If $v=2$, then $2r\in\omega(G)$ by Lemma \ref{l:str}, so we can assume that $v$ is odd. Since $S\neq L_2(v)$, there is a noncyclic abelian $v$-subgroup in $S$, and hence $vr\in\omega(G)$ by Lemma \ref{l:nonc}.

(iii) Suppose that $r\not\in R_1(q)\cup R_2(q)$. Then $r\in R_3(\varepsilon q)\cup R_4(q)\cup\{p\}$, and so there is $s\in  R_3(\varepsilon q)\cup R_4(q)\cup\{p\}$ such that
$rs\not\in\omega(G)$. Observe that $s\not\in\pi(K)$, since $\{r,s,r_6(\varepsilon q)\}$ is a coclique in $GK(G)$. Also, $s\neq v$ and $r_6(\varepsilon q)\neq v$ by (ii).
By hypothesis, $S$ is a classical group, therefore, at least one of the numbers $r_6(\varepsilon q)$ and $s$, say $t$, divides the order of the Levi factor of a proper parabolic subgroup $P$ of $S$
(cf. \cite[Lemma 3.8]{15Vas}).
We derive a contradiction by proving that $rt\in\omega(G)$. We can assume that $S$ acts on $\widetilde K$ faithfully.

Let $U$ be the unipotent radical of $P$ and let $g\in P$ be an element of order $t$. By \cite[13.2]{83GorLy}, it follows that $g$ does not centralize $U$.
Applying Lemma \ref{l:hh} to $U\rtimes\langle g\rangle$, we see that either $tr\in\omega(G)$, as desired, or $v=2$, $t$ is a Fermat prime, $U$ is nonabelian
and $C_U(g)\neq 1$ (in particular, $2t\in\omega(S)$). Since both $r_3(q)$ and $r_6(q)$ are congruent to 1 modulo 3, these numbers cannot be Fermat primes, and hence
$t\in R_4(q)\cup\{p\}$ and $r\in R_3(\varepsilon q)$.

If $S\neq U_n(u)$, then the conditions $v=2$ and $2t\in\omega(S)$ imply that $t$ divides the order of some maximal parabolic subgroup with abelian unipotent radical (namely the order of $P_1$ in notation of \cite{90KlLie}).
Suppose that $S=U_n(u)$ and $k=e(t,u)$. Then $k\leqslant n-2$ and $k$ divides $t-1$, and in particular $k=1$ or $k$ is even. If $n\geqslant 4$, then $S$ contains a Frobenius group with complement of order $t$ and
kernel a $v$-group, and so $rt\in\omega(G)$ by Lemma \ref{l:frob}. Let $n=3$. Note that $t$ divides $u+1$ because $2t\in\omega(S)$.  Since $r\in R_3(\varepsilon q)$, we can take any element of $R_4(q)\cup\{p\}$ as $t$, and thus we can assume that every element of $R_4(q)\cup\{p\}$
is a Fermat prime dividing $u+1$. However, $2^b+1$ divides $2^a+1$ if and only if $a/b$ is an odd integer, so $u+1$ cannot be divisible by two different Fermat primes, and we have a contradiction.

(iv) If $s=v$, then $rv\in\omega(G)$ by (ii). If $s\neq v$, then we argue as in (iii) keeping in mind that $s$ is not a Fermat prime.
\end{proof}

\begin{lemma}\label{l:q5}
If $q=5$, then $S$ is not a group of Lie type in characteristic $v\neq 5$.
\end{lemma}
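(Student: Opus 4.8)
The plan is to assume, for contradiction, that $S$ is a group of Lie type in characteristic $v\ne p=5$ and to argue from the already-established structure $S\leqslant\overline G=G/K\leqslant\Aut S$ with $K$ nilpotent. First I would record the numerical constraints. Since $q=5$, Lemma~\ref{l:expc3} gives $\gexp(L)=5^2(5^6-1)(5^2+1)/2=2^3\cdot3^2\cdot5^2\cdot7\cdot13\cdot31$, and $\omega(S)\subseteq\omega(G)=\omega(L)$ yields $\gexp(S)\mid\gexp(L)$. Hence $\pi(S)\subseteq\pi(L)=\{2,3,5,7,13,31\}$, so $v\in\{2,3,7,13,31\}$; the Sylow $v$-exponent $\gexp_v(S)$ divides $8,9,7,13,31$ according as $v=2,3,7,13,31$; and $\gexp_r(S)=r$ for every $r\in\{7,13,31\}\cap\pi(S)$. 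Also $R_3(5)=\{31\}$, $R_4(5)=\{13\}$, $R_6(5)=\{7\}$, so $\{7,13,31\}$ is a coclique of $GK(G)$; since $7=k_6(5)$ and $R_6(5)$ is disjoint from $\pi(K)\cup\pi(\overline G/S)$, Lemma~\ref{l:str}(ii) forces at least two of $7,13,31$ into $\pi(S)$ --- in particular $7\in\pi(S)$ together with at least one of $13,31$.

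Suppose first that $S$ is classical. If $S\ne L_2(v)$, Lemma~\ref{l:rest_k}(iii) gives $\pi(K)\setminus\{v\}\subseteq R_1(5)\cup R_2(5)=\{2,3\}$; as $7\notin\pi(K)$, the only way this fails to give $\pi(K)\subseteq\{2,3\}$ is $v\in\{13,31\}\cap\pi(K)$, which puts all of $7,13,31$ into $\pi(S)$ and is disposed of below, so assume $\pi(K)\subseteq\{2,3\}$. Then, if some $r_0\in\{13,31\}$ were absent from $\pi(S)$, it would have to divide $|\operatorname{Out}(S)|$, forcing the Lie rank of $S$ to be at least $r_0-1\geqslant12$ or the defining field to have size at least $v^{r_0}\geqslant2^{13}$; a short computation with the exponent bounds of Lemma~\ref{l:exp} and Bang--Zsigmondy (Lemma~\ref{l:bz}) then produces a prime of $\pi(S)$ outside $\pi(L)$ or contradicts $\gexp(S)\mid\gexp(L)$. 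So $\{7,13,31\}\subseteq\pi(S)$; but a similar estimate shows that a classical $S$ realising primitive prime divisors $7$, $13$ and $31$ (so containing maximal tori accounting for $e(7,v)$, $e(13,v)$, $e(31,v)$) must again have a prime divisor outside $\pi(L)$ or violate $\gexp(S)\mid\gexp(L)$. Hence $S=L_2(v)$ with $v\in\{7,13,31\}$. Here $\pi(L_2(7))=\{2,3,7\}$ and $\pi(L_2(31))=\{2,3,5,31\}$ each meet $\{7,13,31\}$ in a single point, contradicting the previous paragraph, while in $L_2(13)$ a Borel subgroup is a Frobenius group with kernel a $13$-group and cyclic complement of order $6$; since $31\in\pi(K)$ in this case, Lemma~\ref{l:rest_k}(i) yields $6\cdot31=186\in\omega(G)\setminus\omega(L)$.

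Now suppose $S$ is exceptional. Reading the constraints $\pi(S)\subseteq\{2,3,5,7,13,31\}$ and $\gexp(S)\mid\gexp(L)$ against the spectra and exponents in Lemmas~\ref{l:spec_g2}, \ref{l:spec_f4}, \ref{l:spec_3d4}, \ref{l:exp_e} and the standard data for the Suzuki and Ree groups, the only survivors are $S\in\{G_2(3),\,G_2(4),\,{}^2B_2(8),\,{}^3D_4(2)\}$: every other exceptional group has a primitive prime divisor outside $\{2,3,5,7,13,31\}$ or an element order contradicting $\gexp(S)\mid\gexp(L)$. Since $14\in\omega({}^3D_4(2))$ but $14\notin\omega(L)$, we get $S\ne{}^3D_4(2)$. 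For the remaining three the missing prime $31$ lies in $\pi(K)$ (as $31\nmid|\operatorname{Out}(S)|$), and an auxiliary lemma finishes the job: $G_2(3)$ and $G_2(4)$ each contain a section isomorphic to $C_3\times C_3$, so Lemma~\ref{l:nonc} with $p=3$, $r=31$ gives $3\cdot31=93\in\omega(G)\setminus\omega(L)$; and ${}^2B_2(8)$ has a Borel subgroup that is a Frobenius group with kernel a $2$-group and cyclic complement of order $7$, so Lemma~\ref{l:rest_k}(i) gives $7\cdot31=217\in\omega(G)\setminus\omega(L)$. This exhausts all cases.

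I expect the main obstacle to be the passage, in each branch, from the crude conditions $\pi(S)\subseteq\pi(L)$ and $\gexp(S)\mid\gexp(L)$ to an explicit finite list of possibilities for $S$, and then the verification that the few ``borderline'' groups whose prime sets and exponents happen to be compatible with $L$ --- here $L_2(13)$, $G_2(3)$, $G_2(4)$, ${}^2B_2(8)$, ${}^3D_4(2)$ --- are nonetheless excluded. This requires, for each such $S$, either exhibiting an element order absent from $\omega(L)$ or locating a Frobenius (or noncyclic abelian) section that feeds into Lemmas~\ref{l:nonc}--\ref{l:hh} and~\ref{l:rest_k}, and checking that no case slips through.
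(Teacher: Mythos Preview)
Your overall strategy matches the paper's: reduce to a short explicit list of candidates for $S$ and then kill each by producing a forbidden element order in $G$ via Lemma~\ref{l:rest_k}(i) or Lemma~\ref{l:nonc}. The difference is in how the short list is obtained. The paper simply invokes \cite[Table~1]{09Zav} (finite simple groups with narrow prime spectrum): from $7\in\pi(S)\subseteq\{2,3,5,7,13,31\}$, the coclique condition forcing at least one of $13,31$ into $\pi(S)$, and $\omega(S)\subseteq\omega(G)$, that table immediately yields $S\in\{L_2(13),\,G_2(3),\,G_2(4),\,{}^2B_2(8)\}$. Your attempt to reproduce this classification by hand is the real content of the lemma, and your classical branch is only a sketch: the assertions that ``a short computation with Lemma~\ref{l:exp} and Bang--Zsigmondy produces a prime outside $\pi(L)$'' and that ``a similar estimate'' disposes of classical $S$ with $\{7,13,31\}\subseteq\pi(S)$ are plausible but non-trivial, and the side remark that $v\in\{13,31\}\cap\pi(K)$ ``puts all of $7,13,31$ into $\pi(S)$'' is not quite right (it does not by itself rule out the remaining prime landing in $\pi(\overline G/S)$). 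Citing \cite{09Zav} is exactly what buys the paper a clean three-line reduction here.

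For the eliminations themselves, the paper is more uniform than your proposal. It observes once that $31\notin\pi(\Aut S)$ for all four candidates, hence $31\in\pi(K)$, and then applies Lemma~\ref{l:rest_k}(i) to a Frobenius subgroup with $v$-kernel and cyclic complement of order $(u-1)/(2,u-1)$, $u^2-1$, or $u-1$ in $L_2(u)$, $G_2(u)$, ${}^2B_2(u)$ respectively, obtaining $31\cdot6$, $31\cdot8$, $31\cdot15$, $31\cdot7\in\omega(G)$; each contradicts the fact that the only multiple of $31$ in $\mu(L)$ is $62$. Your alternative for $G_2(3)$ and $G_2(4)$ via Lemma~\ref{l:nonc} (a $C_3\times C_3$ section giving $93\in\omega(G)$) is correct but less streamlined. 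Finally, ${}^3D_4(2)$ need not be carried as a separate case: it is already excluded by $\omega(S)\subseteq\omega(G)$, since $28\in\omega({}^3D_4(2))$ while $2\cdot7\notin\omega(L)$.
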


\begin{proof}
Suppose that this is false. Since $k_6(\varepsilon q)=7$, it follows that $7\in\pi(S)\subseteq \{2,3,5,7,13,31\}$. Furthermore, $\{7,13,31\}$ is a coclique in $GK(G)$,
so by Lemma \ref{l:str}, at least one of the numbers $13$ and $31$ lies in $\pi(S)$. Also, it is clear that $\omega(S)\subseteq\omega(G)$.
Using \cite[Table  1]{09Zav}, we check that the only groups of Lie in characteristic $v\neq 5$ satisfying these conditions are
$L_2(13)$, $G_2(3)$, $G_2(4)$, and $^2B_2(8)$. In particular, $31\not\in\omega(\Aut S)$, and hence $31\in\pi(K)$. In $L_2(u)$, $G_2(u)$, or $^2B_2(u)$, there is
a Frobenius subgroup with kernel a $v$-group and cyclic complement of order $(u-1)/(2,u-1)$, $u^2-1$, or $u-1$ respectively (this is a Borel subgroup in $L_2(u)$ and $^2B_2(u)$, and for
$G_2(u)$,  see \cite[Lemma 1.4]{04CaoChenGr.t}). Applying Lemma \ref{l:rest_k}(i), we conclude that $G$ has an element of order $31\cdot 6$, $31\cdot 8$, $31\cdot 15$, or $31\cdot 7$ respectively.
This contradicts the fact that the only multiple of 31 in $\mu(L)$ is $31\cdot 2$.
\end{proof}

\begin{lemma}\label{l:k3v} Let $q>5$.
\begin{enumerate}
\item If $k_4(q)=v^l$, then $l=2$ and $S=L_2(u), L_3^\pm(u), G_2(u),$ or ${}^3D_4(u)$. %$u=v^t$, $t$ нечетно
%\item Если $(v^l-1)_{2'}\in\omega(S)$, то $k_3(q)\neq v^l$ и $k_6(q)\neq v^l$.
\item $k_3(\varepsilon q)\neq v$ and $v\not\in R_6(\varepsilon q)$.
\end{enumerate}
\end{lemma}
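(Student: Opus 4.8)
The plan is to play the structure already obtained --- $S\le\bar G=G/K\le\Aut S$ with $K$ nilpotent (Lemma~\ref{l:a}), $k_6(\varepsilon q)\in\omega(S)$, and $\omega(S)\subseteq\omega(G)=\omega(L)$, whence $\gexp(S)\le\gexp(L)$ --- against the exponent estimates of Section~3: the upper bounds $\gexp(L)<q^9$, $\gexp(L)<a^4$, $\gexp(L)<5b^3$ of Lemma~\ref{l:expc3} (with $a=p(q^2+1)/2$, $b=(q^3-1)/2$) and the lower bounds $\gexp(S)>u^{c(S)}$ of Lemmas~\ref{l:exp}--\ref{l:exp_e}. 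The common mechanism is that the hypotheses ``$k_4(q)=v^l$'' and ``$k_3(\varepsilon q)=v$'' or ``$v\in R_6(\varepsilon q)$'' pin the characteristic $v$ of $S$, hence $u=v^f\ge v$, to something large relative to $q$, so that $\gexp(S)>u^{c(S)}$ overtakes $q^9$ unless $S$ is of very small Lie rank; the remaining candidates are then confronted with $k_6(\varepsilon q)\in\omega(S)$ and $\pi(S)\subseteq\pi(L)$.

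\emph{Part (i).} Put $k_4(q)=(q^2+1)/2=v^l$; by Lemma~\ref{l:nl4} either $l=1$, or $l=2$ with $q$ prime, or $l=4$, $q=239$, $v=13$. Since $(q^2+1)/2\in\omega(G)$, the group $G$ contains an element of order $v^l$; I would first check that $v\notin\pi(K)$ (for $S$ classical with $S\ne L_2(v)$ by Lemma~\ref{l:rest_k}(iii), since $v\in R_4(q)$, the remaining cases being cleared with the final analysis), so that $v^l\in\omega(\bar G)$. Writing $u=v^f$ and using that diagonal automorphisms of $S$ have order prime to $v$ (as $v\nmid u\pm1$), that graph automorphisms have order at most $3$, and the description of $\omega(\alpha\Inndiag S)$ in \cite[Theorem~2]{17Gr.t} (as in the proof of Lemma~\ref{l:maut}), I would factor $v^l=v^{l_1}v^{l_2}$ with $v^{l_1}\le\gexp_v(S)$ and $v^{l_2}\mid f$. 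If $l_2\ge2$, or $l_1\ge3$, or $l=1$ and $l_2=1$, then either $f$, hence $u$, or the rank of $S$ is forced to be so large that Lemma~\ref{l:exp} gives $\gexp(S)>q^9$; this handles $l=4$, and it handles $l=1$ except for $l_1=1$, in which case the order-$v$ element is merely unipotent and one must combine the exponent count with $k_6(\varepsilon q)\in\omega(S)$, $\gcd(k_6(\varepsilon q),q^2-1)=1$ and $k_6(\varepsilon q)>16q^2/51$ (the small $q$ going to the tables of Lemma~\ref{l:k3}). There remains $l=2$ with $v^{l_1}\le\gexp_v(S)$, $v^{l_2}\mid f$, $l_1+l_2=2$; running through the types of simple groups of Lie type in characteristic $v$ under these restrictions together with $\gexp(S)<q^9$ leaves as the surviving possibilities exactly $L_2(u)$, $L_3^\pm(u)$, $G_2(u)$, and ${}^3D_4(u)$.

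\emph{Part (ii).} For $k_3(\varepsilon q)\ne v$, suppose $k_3(\varepsilon q)=v$; then $v\in R_3(\varepsilon q)$ is a prime with $v>10q^2/33$ (Lemma~\ref{l:k3}(ii)), so $u=v^f\ge v$, and $\gexp(S)<5b^3<q^9$ with $\gexp(S)>u^{c(S)}$ forces $c(S)<\tfrac92$; thus $S$ is one of $L_2(u)$, $L_3^\pm(u)$, ${}^2B_2(u)$, ${}^2G_2(u)$, $S_4(u)$, or a few other small groups. For each of these $\omega(S)$ is, modulo the unipotent factor, a set of divisors of numbers of size $O(u^4)$ assembled from $u\pm1$, $u^2\pm u+1$ and the like, and using $\gcd(k_6(\varepsilon q),q^2-1)=1$, the congruence $q^3\equiv\pm1\pmod{k_6(\varepsilon q)}$ and $k_6(\varepsilon q)>16q^2/51$, one checks that $k_6(\varepsilon q)\in\omega(S)$ cannot hold for large $q$, the remaining $q$ being covered by Lemma~\ref{l:k3}. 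For $v\notin R_6(\varepsilon q)$, suppose $v\mid k_6(\varepsilon q)$; then $S$ has an element of order divisible both by $v$ and by the $v'$-part $m$ of $k_6(\varepsilon q)$, so $m$ is bounded by $\gexp_{v'}$ of a proper reductive subgroup of $S$, and since $m$ is still of order comparable to $q^2$ while $\gexp(S)<q^9$, the same bookkeeping gives a contradiction.

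\emph{Main obstacle.} The hard part is the last step of each argument: once the exponent estimates have reduced $S$ to a short list of small-rank groups, ruling out (or, in (i), keeping) each remaining family requires a careful interplay between the Diophantine hypothesis $k_4(q)=v^l$ (respectively $k_3(\varepsilon q)=v$ or $v\in R_6(\varepsilon q)$), the precise form of $\omega(S)$, and the constraint $k_6(\varepsilon q)\in\omega(S)$; this is exactly where the numerical tables of Lemma~\ref{l:k3} carry the load.
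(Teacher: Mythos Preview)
Your plan attacks the problem through the exponent machinery of Section~3, but the paper's proof is far more elementary and avoids that apparatus entirely. The key observation you are missing is this: since $v\in R_4(q)$ (respectively $v\in R_3(q)\cup R_6(q)$) forces $v>3$, the group $S$ is not a Suzuki or Ree group, and therefore $S$ contains elements of order $(v\pm1)/2$ --- indeed, every simple group of Lie type in odd characteristic $v$ contains a copy of $L_2(v)$ inside a rank-one subgroup. Moreover, unless $S$ is $L_2(u)$, $L_3^\pm(u)$, $G_2(u)$ or ${}^3D_4(u)$, the group $S$ also contains an element of order $(v^2+1)/2$. The paper then simply computes $(v^l+1)/2=(q^2+3)/4$ and checks by elementary gcd arithmetic against $\mu(L)$ (Lemma~\ref{l:spec_c3}) that this number does not lie in $\omega(G)$ for $q>5$; this kills $l=1$ outright and kills $l=2$ except for the four listed types. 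The exceptional case $l=4$, $q=239$, $v=13$ is dispatched by observing $61\in\pi(S)\setminus\pi(G)$. Part~(ii) is handled the same way: one writes down $(v+1)/2$ explicitly in terms of $q$ and shows by gcd computations that it cannot divide any element of $\mu(L)$, with a short finite check for small~$q$. The clause $v\notin R_6(\varepsilon q)$ then drops out in one line: if $S\ne L_2(u)$ then $2v\in\omega(S)$, contradicting Lemma~\ref{l:str}; if $S=L_2(u)$ then $k_6(\varepsilon q)\in\omega(S)$ and $v\mid k_6(\varepsilon q)$ force $k_6(\varepsilon q)=v$, already excluded.

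Your route via $\gexp(S)<q^9$ and the $u^{c(S)}$ bounds is in principle workable but leaves real gaps. In~(i) you correctly note that the case $l=1$, $l_1=1$ (i.e.\ the element of order $v=(q^2+1)/2$ is simply unipotent in $S$) is not touched by exponent considerations, since \emph{every} $S$ has unipotent elements of order~$v$; you then wave at ``combining the exponent count with $k_6(\varepsilon q)\in\omega(S)$'', but give no argument. In~(ii), your bound $c(S)<9/2$ still leaves $L_2(u)$, $L_3^\pm(u)$, ${}^2B_2(u)$ etc.\ on the table, and your proposed elimination of these (``one checks that $k_6(\varepsilon q)\in\omega(S)$ cannot hold for large~$q$'') is asserted rather than proved; for $L_2(u)$ with $u=v^f$, nothing yet prevents $k_6(\varepsilon q)\mid(u+1)/2$. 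The $(v^j+1)/2\in\omega(S)$ device bypasses all of this.
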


\begin{proof} Observe that $v>3$ if $v\in R_4(q)$ or $v\in R_3(q)\cup R_6(q)$, and so $S$
is not a Ree or Suzuki group in this case. In particular, $S$ contains elements of orders $(u\pm 1)/2$, and hence elements of orders $(v\pm1)/2$.

(i) Suppose that $q^2+1=2v^l$. By Lemma \ref{l:nl4}, it follows that either $q=239$ and $v=13$, or $l\leqslant 2$.
Assume that $q=239$ and $v=13$. Since $k_6(\varepsilon q)=k_3(239)=19\cdot 3019\in\omega(S)$ and $e(3019,13)=503$, there is am element of order $61\in R_3(13)$ in $S$.
On the other hand,  $e(61,239)=15$, and so $61\not\in\omega(G)$.

Thus $l=1$ or $l=2$. Using the equality $(v^l+1)/2=(q^2+3)/4$ and the condition $q>5$, it is not hard to check that $(v^l+1)/2$ does not lie in $\omega(G)$.
As we noted, $S$ contains an element of order $(v+1)/2$.
Furthermore, if $S\neq L_2(u)$, $L_3^\tau(u)$, $G_2(u)$, or $^3D_4(u)$, then $S$ contains elements of orders $(u^2+1)/2$ and $(u\pm 1)/2$, and hence elements of order $(v^2+1)/2$.

(ii) Suppose that $k_3(\eta q)=v$ for some $\eta\in\{+,-\}$. Then $v>5$.

If $(3,q-\eta)=3$, then $\omega(G)$ contains $q+2\eta$ because this number is odd and divides $$v-1=k_3(\eta q)-1=(q+2\eta)(q-\eta)/3.$$
Also this number is a multiple of $3$, and so it must divide $(q^3+\eta)/2$ or $(q^4-1)/4$. Since $(q+2\eta,q^3+\eta)=(q+2\eta,9)$ and $(q+2\eta,q^4-1)=(q+2\eta,15)$
it follows that $q+2\eta$ is equal to $9$ or $15$, which yields $q=7$, $11$, $13$, or $17$.
If $q=17$, then $k_3(\eta q)=7\cdot 13$. If $q=13$, then $v=61$ and $(v+1)/2\not\in\omega(G)$.
If $q=11$, then  $v=37$ and $\pi(S)\subseteq \{2,3, 5,7, 11, 19, 37, 61\}$. By \cite[Table 1]{09Zav}, this implies that $S=L_2(37)$,
but then $7\cdot 19=k_6(\varepsilon q)\not\in\omega(S)$. If $q=7$, then  $v=19$ and $\pi(S)\subseteq \{2,3, 5,7,  19, 43\}$,
whence $S=L_2(19)$ or $U_3(19)$. In both cases $43\not\in\pi(S)$ and since $\{5,19,43\}$ is a coclique in $GK(G)$, we have that $S$ contains an element of order $k_4(7)=25$, which is not the case.

If $(3,q-\eta)=1$, there is an element of order $a=(q^2+\eta q+2)/2=(v+1)/2$ in $G$.
It is easy to calculate that $$(q^2+\eta q+2, q+\eta)=(q^2+\eta q+2,q^2+1)=2,$$ $$(q^2+\eta q+2, q-\eta)=(4,q-\eta),$$ and
$$(q^2+\eta q+2, q^2-\eta q+1)=(3\eta q+1,q^2-\eta q+1)=(3\eta q+1,q-4\eta)=(13,q-4\eta).$$
If $a$ divides $(q^3-\eta)/2$ or $(q^4-1)/4$, then $a$ divides $q-\eta$, and hence $a\leqslant 4$. If $a$ divides $(q^3+\eta)/2$, then $a\leqslant 13$.
However $a\geqslant 22$.

Thus $k_3(\varepsilon q)\neq v$ and $k_6(\varepsilon q)\neq v$. If $S\neq L_2(u)$, then $2v\in\omega(S)$, and so $v\not\in R_6(\varepsilon q)$ by Lemma \ref{l:str}. If
$S=L_2(u)$ and $v\in R_6(\varepsilon q)$, then $k_6(\varepsilon q)=v$. This contradiction completes the proof.
\end{proof}

\begin{lemma}\label{l:rest_aut} Let $a\in\omega(\overline G/S)$.
\begin{enumerate}
\item Suppose that $r\in\pi(S)$, $r\not\in\pi(K)\cup \pi(\overline G/S)$ and $rs\not\in\omega(G)$ for all $s\in\pi(a)$. If a Sylow $r$-subgroup of $S$ is a direct product of
$l$ isomorphic cyclic groups, then $a$ divides $\gexp^l_r(G)-1$.
\item If $3$ is coprime to $q+\varepsilon$, then $\pi(a)\subseteq R_1(q)\cup R_2(q)\cup \{p\}$.
If $3$ divides $q+\varepsilon$ and $s\in\pi(a)\setminus R_2(\varepsilon q)$, then either $s=5\in R_4(q)$ and $(a)_5=5$, or $s=7\in R_3(\varepsilon q)$ and $(a)_7=7$.
\item If a Sylow $p$-subgroup of $S$ is a direct product of at most two isomorphic
cyclic groups, then $R_3(\varepsilon q)\cap\pi(\overline G/S)=\varnothing$.
\item Suppose that $p\neq 3$ and for all $r\in R_3(\varepsilon q)\cap \pi(S)$, a Sylow $r$-subgroup of $S$ is a direct product of at most two
isomorphic cyclic groups. Then $(p\cup R_4(q))\cap \pi(\overline G/S)=\varnothing$.
\item If $r\in \pi(\overline G/S)$ and $r\neq 2, v$, then $rv\in\omega(G)$.

\end{enumerate}
\end{lemma}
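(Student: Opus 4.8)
The plan is to prove (i) first, since it is the engine for everything else, and then to obtain (iii) and (v) by running the same coprime‑action mechanism on Sylow $p$‑ and $v$‑subgroups of $S$, and finally (ii) and (iv) by feeding (i) into a bookkeeping argument built on the cocliques of $GK(L)=GK(G)$. For (i): since $r\notin\pi(K)\cup\pi(\overline G/S)$, a Sylow $r$‑subgroup $P$ of $S$ is a Sylow $r$‑subgroup of $\overline G$ and, lifting through $K$, of $G$, so $\gexp_r(G)=\gexp_r(P)$ and $P$ is a direct product of $l$ cyclic groups of order $\gexp_r(G)$. I would apply the Frattini argument $\overline G=S\,N_{\overline G}(P)$ to pick $\bar x\in N_{\overline G}(P)$ projecting onto an element of order $a$, and, replacing $\bar x$ by its $\pi(a)$‑part, arrange that $\bar x$ is an $r'$‑element with $a\mid|\bar x|$ and $\pi(|\bar x|)=\pi(a)$. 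The key point is that the hypothesis forbids $\langle\bar x\rangle$ from having nontrivial fixed points on $P$: if $\bar z\in\langle\bar x\rangle$ had prime order $s\in\pi(a)$ and $1\ne y\in C_P(\bar z)$ with $|y|=r$, then $\bar zy$ would have order $rs\in\omega(\overline G)\subseteq\omega(G)$ (the spectrum of a quotient is contained in the spectrum), a contradiction. Hence $\langle\bar x\rangle$ acts freely on $P$, hence, by coprimality, freely on $P/\Phi(P)\cong\mathbb F_r^{\,l}$, and a cyclic group acting on an $l$‑dimensional $\mathbb F_r$‑space with no nonzero fixed vector for any nontrivial element has order dividing $r^l-1$. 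Thus $a\mid|\bar x|\mid r^l-1\mid\gexp_r(P)^l-1=\gexp_r^l(G)-1$.

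For (iii) and (v) I would repeat this argument with $P$ a Sylow $p$‑subgroup, respectively a Sylow $v$‑subgroup, of $S$. In (iii), an element of $\overline G$ inducing an automorphism of $S$ of prime order $s\in R_3(\varepsilon q)$ normalizes some Sylow $p$‑subgroup $P$ of $S$, which by hypothesis is a product of at most two cyclic groups; since $sp\notin\omega(L)$ (because $p$ is nonadjacent in $GK(L)$ to every prime of $R_3(\varepsilon q)$), the automorphism acts freely on $P/\Phi(P)\cong\mathbb F_p^{\,l}$ with $l\le2$, so $s\mid p^2-1$, i.e. $e(s,q)\in\{1,2\}$, contradicting $e(s,\varepsilon q)=3$. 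In (v), for $r\in\pi(\overline G/S)$ with $r\ne2,v$ I would exhibit a $v$‑element centralizing an automorphism $\alpha$ of $S$ of order $r$: a field automorphism centralizes a subsystem subgroup over a subfield, the order‑$3$ graph automorphism of $D_4(u)$ centralizes $G_2(u)$, and a diagonal automorphism fixes some root subgroup unless it is regular, in which case one produces an element of order $rv$ directly inside a Borel subgroup of $S$; so $rv\in\omega(S\langle\alpha\rangle)\subseteq\omega(\overline G)\subseteq\omega(G)$.

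For (ii) and (iv) I would combine (i) with the structure $\overline G/S\le\Aut S$ and the facts, already established, that $k_6(\varepsilon q)\in\omega(S)$ and that $R_6(\varepsilon q)$ avoids $\pi(K)\cup\pi(\overline G/S)$. Since $\pi(a)\subseteq\pi(\overline G/S)\subseteq\pi(L)=\{p\}\cup R_1(q)\cup R_2(q)\cup R_3(\varepsilon q)\cup R_4(q)\cup R_6(\varepsilon q)$, the content is to show that a prime of $\pi(a)$ lying in $R_3(\varepsilon q)\cup R_4(q)$ cannot occur, except for the listed exceptions. Using the cocliques $\{p,r_3(\varepsilon q),r_6(\varepsilon q)\}$ and $\{r_4(q),r_3(\varepsilon q),r_6(\varepsilon q)\}$ of $GK(L)$ together with Lemma \ref{l:str}, one sees that whenever such a prime exists, $R_3(\varepsilon q)$ lies in $\pi(S)\setminus(\pi(K)\cup\pi(\overline G/S))$, so (i) applies with $r\in R_3(\varepsilon q)$ and bounds the offending prime via $\gexp_r^{\,l}(L)-1$ (with $l\le2$ under the hypothesis of (iv)); on the other hand an outer automorphism of $S$ of prime order $\ge5$ must come from the diagonal or the field part of $\operatorname{Out}S$, and the inequality $\gexp(S)\le\gexp(L)<q^9$ from Lemma \ref{l:expc3}, together with $k_6(\varepsilon q)\in\omega(S)$, then pins down the type of $S$ and the size of its defining field, leaving a bounded range of $q$, which is controlled by Lemma \ref{l:k3}(iii)–(iv) and cleared case by case with the help of Lemma \ref{l:k3v}.

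The routine parts are (i), (iii) and (v): once the free‑action principle and the description of centralizers of Lie‑type automorphisms are in hand, these go through formally. The main obstacle will be (ii) and (iv) — in particular, organizing the interplay between the type of $S$, the type of the outer automorphism, and the arithmetic of $q$, and discharging the residual small configurations, where the real point is that $k_6(\varepsilon q)$, an explicit integer, cannot belong to $\omega(S)$ once the defining field of $S$ is constrained as above; this is precisely what the arithmetic statements of Lemmas \ref{l:k3} and \ref{l:k3v} are designed to handle.
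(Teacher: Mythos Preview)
Your treatment of (i), (iii), and (v) is correct and essentially matches the paper's argument (the paper concludes $a\mid |R|-1$ directly in (i) rather than passing to the Frattini quotient, but this is the same mechanism).

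The real gap is in (ii). You propose to run (i) with $r\in R_3(\varepsilon q)$, but for (ii) there is no hypothesis controlling the Sylow $r$-structure of such primes in $S$, so (i) yields nothing usable; you then pivot to constraining the isomorphism type of $S$ via $\gexp(S)<q^9$ and case analysis. That route is both vague and circular in spirit: this lemma is precisely one of the tools the paper uses \emph{later} to eliminate candidates for $S$. The idea you are missing is to apply (i) with $r\in R_6(\varepsilon q)$ instead. Such $r$ are already known to lie in $\pi(S)\setminus(\pi(K)\cup\pi(\overline G/S))$ and to satisfy $r\neq v$; the key point is that $2r\notin\omega(S)$, and the adjacency criterion for prime graphs of simple groups then forces the Sylow $r$-subgroup of $S$ to be \emph{cyclic}. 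So (i) with $l=1$ gives $(a)_s\mid (k_6(\varepsilon q))_r-1$ for every $r\in R_6(\varepsilon q)$, and multiplying over $r$ yields $(a)_s\mid k_6(\varepsilon q)-1$. Now $k_6(\varepsilon q)-1$ equals $q(q-\varepsilon)$ if $3\nmid q+\varepsilon$, which forces $s\in\{p\}\cup R_1(\varepsilon q)$ immediately; and equals $(q-2\varepsilon)(q+\varepsilon)/3$ if $3\mid q+\varepsilon$, whence $(a)_s\mid q-2\varepsilon$, and then $\gcd(q-2\varepsilon,q^2+1)\mid 5$ and $\gcd(q-2\varepsilon,q^2+\varepsilon q+1)\mid 7$ isolate the two exceptions. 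No information about $S$ or $\Aut S$, and nothing from Lemmas~\ref{l:k3} or~\ref{l:k3v}, enters.

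For (iv), you correctly identify the application of (i) with $r\in R_3(\varepsilon q)$ and $l\leqslant 2$, which gives $s\mid k_3(\varepsilon q)^2-1$. The finish is again pure arithmetic combined with the already-proved (ii), not structural analysis of $S$: one checks that $k_3(\varepsilon q)\pm 1$ are both coprime to $p$ when $3\mid q-\varepsilon$ and both coprime to $k_4(q)$ when $3\nmid q-\varepsilon$, and (ii) disposes of the remaining possibility in each case.
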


\begin{proof}
(i) Let $R$ be a Sylow $r$-subgroup of $S$. By hypothesis, the order of $R$ is equal to $(\gexp_r(S))^l=(\gexp_r(G))^l$. By the Frattini argument, $N_{\overline G}(R)$ contains an element of order
$a$,
and this element acts fixed-point-freely on $R$. Thus $a$ divides $|R|-1$.

(ii) Assume that $s\in\pi(a)\setminus R_2(\varepsilon q)$. Denote $k_6(\varepsilon q)$ by $k$ and let $r\in R_6(\varepsilon q)$.
%По лемме \ref{l:str} число $r$ делит порядок группы $S$ и $r$-периоды групп $S$ и $G$ совпадают.
Observe that $r\neq v$ by Lemma \ref{l:k3v}.
Using the fact that $2r\not\in\omega(S)$ and the adjacency criterion in prime graphs of simple groups, we see that Sylow $r$-subgroups of $S$ are cyclic.
By (i), we have $(k)_r\equiv 1\pmod{ (a)_s}$. This congruence holds for every $r\in R_6(\varepsilon q)$, therefore, $(a)_s$ divides $k-1$.

If $3$ divides $q+\varepsilon$, then $k-1=q(q-\varepsilon)$, whence $s\in R_1(\varepsilon q)$ or $s=p$. If $3$ divides $q+\varepsilon$, then $k-1=(q^2-\varepsilon q-2)/3=(q-2\varepsilon)(q+\varepsilon)/3$,
and so $(a)_s$ divides $q-2\varepsilon$. It is clear that $s$ is coprime to both $q-\varepsilon$ and $k$, and hence $(a)_s$ divides $k_4(q)$ or $k_3(\varepsilon q)$.
Since $(q-2\varepsilon,q^2+1)=(q-2\varepsilon, 5)$ and $(q-2\varepsilon, q^2+\varepsilon q+1)=
(7,q^2+\varepsilon q+1)$, the second part of (ii) follows.

(iii) Assume that $s\in R_3(\varepsilon q)\cap\pi(\overline G/S)$. Then $p\in\omega(S)$ and $p\not\in \pi(K)\cup \pi(\overline G/S)$. Applying (i), we have that $s$ divides $\gexp_p(G)-1$ or $\gexp_p^2(G)-1$.
Since the $p$-exponent of $G$ is at most $p^2$, it follows that $s$ divides $p^4-1$. This contradicts our assumption that $s\in R_3(\varepsilon q)$.

(iv) Suppose that $s\in (p\cup R_4(q))\cap\pi(\overline G/S)$. Then $R_3(\varepsilon q)\cap(\pi(K)\cup \pi(\overline G/S))=\varnothing$. Denoting $k_3(\varepsilon q)$ by $k$
and reasoning as in (ii), we see that $s$ divides $k^2-1$.

Assume that $3$ divides $q-\varepsilon$. Then $k-1=(q^2+\varepsilon q-2)/3$ and $k+1=(q^2+\varepsilon q+4)/3$. Both these numbers are coprime to $p$. Also, $s\not\in R_4(q)$ by (ii).
If $3$ does not divide $q-\varepsilon$, then  $k-1=q(q+\varepsilon)$ and $k+1=q^2+\varepsilon q+2$. Both these numbers are coprime to $k_4(q)$, and $s\neq p$ by (ii).

(v) Let $G/S$ contain an element of order $r$. Since $v$ divides the order of the centraliser of any field automorphism of $S$, we can assume that this element is not an image of a field
automorphism.
Then either $r=3$ and $S$ is one of the groups $D_4(q)$, $E_6(q)$, $^2E_6(q)$, or $r$ divides $q-\tau$ and $S=L_n^\tau(u)$, with $n>2$. In the former case $3v\in\omega(S)$ by \cite[Proposition 3.2]{05VasVd.t}.
In the latter case, applying \cite[Proposition 3.1]{05VasVd.t}, we have that $rv\in\omega(S)$ unless $L=L_3^\tau(u)$, $r=3$ and $(u-\tau)_3=3$.
In this situation,  $u$ is not a cube since  $(u-\tau)_3=3$, and hence $\overline G$ includes  $PGL_3^\tau(u)$. But then $v(u-\tau)\in\omega(\overline G)$, as required.
\end{proof}

\section{The case of classical groups}

In this section we show that $S$ is not a classical group. Recall that $S$ is a group over a field of order $u$ and characteristic $v$ and that we chose $\varepsilon\in\{+1,-1\}$ so that
$q\equiv \varepsilon \pmod 4$.
According to Lemma \ref{l:q5}, we can assume that $q\geqslant 7$, and so $k_3(\varepsilon q), k_6 (\varepsilon q)\geqslant 19$.

\begin{lemma}\label{l:l2} $S\neq L_2(u)$.

\end{lemma}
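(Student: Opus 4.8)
The plan is to rule out $S=L_2(u)$ by a counting argument comparing the exponent of $L$ with what $L_2(u)$ can supply, together with the constraint that $k_6(\varepsilon q)\in\omega(S)$ forces $u$ to be not too small relative to $q$. First I would recall that $\omega(L_2(u))$ consists of the divisors of $v$, $(u-1)/(2,u-1)$, and $(u+1)/(2,u-1)$, so every element order of $S$ divides one of three ``cyclic'' numbers of size at most $u+1$; in particular $\gexp(S)$ divides $v(u^2-1)/(2,u-1)$, which is less than $u^3$. On the other hand, by Lemma~\ref{l:str} we have $k_6(\varepsilon q)\in\omega(S)$, hence $k_6(\varepsilon q)$ divides $v$, $(u-1)/(2,u-1)$, or $(u+1)/(2,u-1)$; since $v\notin R_6(\varepsilon q)$ by Lemma~\ref{l:k3v}(ii) and $k_6(\varepsilon q)\geqslant 19>v$ would be the only way to have $k_6=v$ excluded, we get $k_6(\varepsilon q)\leqslant u+1$. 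Combined with Lemma~\ref{l:k3}(ii), which gives $k_6(\varepsilon q)>16q^2/51$, this yields $u>16q^2/51-1$, so $u$ is roughly of size $q^2$.

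Next I would push the lower bound on $\omega(G)=\omega(L)$ against the upper bound on $\omega(S)$ using the field-automorphism part of $\overline G$. Since $\overline G\leqslant \Aut S=\Aut L_2(u)$ and $\Aut L_2(u)/\mathrm{Inndiag}\,L_2(u)$ is the (cyclic) group of field automorphisms of order $\log_v u$, the exponent $\gexp(\overline G)$ divides $(\log_v u)\cdot \gexp(L_2(u))\leqslant (\log_v u)\cdot u^3$; moreover $K$ is nilpotent with $\pi(K)\subseteq R_1(q)\cup R_2(q)$ by Lemma~\ref{l:rest_k}(iii), so $\gexp(G)$ divides $\gexp_{(R_1\cup R_2)'}(\overline G)\cdot \gexp_{R_1\cup R_2}(G)$, and the part of $\gexp(G)$ coming from $K$ is controlled by element orders of $L$ that are already divisors of $q^2-1$. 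Putting these together, $\gexp(L)$ must divide a number not much larger than $(q^2-1)\cdot (\log_v u)\cdot u^3$, roughly $q^2\cdot u^3\approx q^8$ up to logarithmic factors. But Lemma~\ref{l:expc3} gives $\gexp(L)<q^9$ only as an upper bound, so this crude comparison is too weak; instead I would use the sharper structural fact that $\omega(L)$ contains three pairwise ``independent'' large numbers — $(q^3-1)/2$, $(q^3+1)/2$, and $(q^2+1)(q\pm1)/2$ — which are the orders of elements in pairwise non-conjugate maximal tori, and show that $L_2(u)$ (even extended by field automorphisms, which by Lemma~\ref{l:maut}-type reasoning only multiply torus orders of a smaller $L_2$) cannot realize all of them simultaneously without $u$ being absurdly large.

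Concretely, the decisive step is: the three numbers $k_3(\varepsilon q)$, $k_6(\varepsilon q)$, and $k_4(q)=(q^2+1)/2$ are pairwise coprime (up to tiny common factors) and each lies in $\omega(L)$, hence each divides one of $v$, $(u-1)/(2,u-1)\cdot(\log_v u)$-type bounds; since at most a bounded number of these can divide the same one of $u\pm1$, and $v\notin R_3\cup R_4\cup R_6$ by Lemmas~\ref{l:k3v} and the primitive-divisor bookkeeping, we would be forced to have, say, $k_3(\varepsilon q)$ dividing $(u-\eta)/(2,u-1)$ and $k_6(\varepsilon q)$ dividing $(u+\eta)/(2,u-1)$ for $\eta\in\{+,-\}$, but then $u\geqslant 2k_3(\varepsilon q)k_6(\varepsilon q)/\gcd\geqslant c\,q^4$ for an explicit constant, while $\gexp(L)\geqslant (q^3-1)/2$ forces an element of order $\approx q^3$ in $\overline G$, i.e. in $\mathrm{Aut}\,L_2(u)$, hence $\approx q^3$ divides $(\log_v u)(u\pm1)$ — fine for $u\approx q^4$ — so I would instead derive the contradiction from a fourth independent quantity. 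The cleanest route, and the one I expect to actually use, is to invoke Lemma~\ref{l:rest_k}(i): $L_2(u)$ contains a Frobenius subgroup (its Borel subgroup) with kernel a $v$-group and cyclic complement of order $(u-1)/(2,u-1)$; if some prime $r\in\pi(K)$ (which exists unless $K=1$) is coprime to $v$ then $r\cdot(u-1)/(2,u-1)\in\omega(G)$, and since $(u-1)/(2,u-1)$ is at least $\approx 8q^2/51$ while $\pi(K)\subseteq R_1(q)\cup R_2(q)$, this product exceeds every element order in $\omega(L)$; and if $K$ is a $v$-group then $v\in\pi(K)\subseteq R_1(q)\cup R_2(q)$, contradicting that $v$ also cannot be $2$ here, or directly contradicting Lemma~\ref{l:k3v} after noting $k_4(q)$ or $k_3(\varepsilon q)$ would have to be a power of $v$. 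The main obstacle I anticipate is the bookkeeping when $K=1$ and $\overline G=S=L_2(u)$ with $u$ genuinely large: there one must rely entirely on the incompatibility of the cocliques $\{r_3(\varepsilon q),r_4(q),p\}$ and $\{r_6(\varepsilon q),\ldots\}$ of $GK(L)$ with the prime graph of $L_2(u)$, whose non-edges are governed only by the partition of $\pi(u)\cup\pi(u-1)\cup\pi(u+1)$, and showing that $GK(L_2(u))$ simply cannot contain $\omega(L)$ as a subset of its divisor-closed downset — this is where the explicit small-$q$ table in Lemma~\ref{l:k3}(iii)--(v) will be needed to clean up the finitely many borderline cases.
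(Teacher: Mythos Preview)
Your proposal sketches several plausible directions but does not complete any of them, and the step you call ``decisive'' contains a genuine error. You claim that if $r\in\pi(K)\setminus\{v\}$ then $r\cdot(u-1)/(2,u-1)\in\omega(G)$ and ``this product exceeds every element order in $\omega(L)$''. But you only know $u+1\geqslant k_6(\varepsilon q)>16q^2/51$, so $(u-1)/(2,u-1)$ is on the order of $q^2$, while the maximal element orders of $L$ are on the order of $q^3$ (e.g.\ $(q^3\pm1)/2$). Since $r\in R_1(q)\cup R_2(q)$ can be as small as $3$, the product $r\cdot(u-1)/(2,u-1)$ need not exceed anything. The Frobenius-subgroup lemma is the right tool, but it should be used for \emph{adjacency}, not size: if $p\in\pi(K)$, then $p\cdot(u-1)/(2,u-1)\in\omega(G)$, and once you know $k_3(\varepsilon q)$ or $k_6(\varepsilon q)$ divides $(u-1)/(2,u-1)$, this forces $p\,r_3(\varepsilon q)$ or $p\,r_6(\varepsilon q)$ into $\omega(G)$, contradicting the coclique.

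The paper's proof is organized around a case split on where $p$ lives, which your sketch never isolates. First one rules out $p\in\pi(K)\cup\pi(\overline G/S)$ (using the Frobenius argument above together with Lemma~\ref{l:rest_aut}(iv)), so $p$ divides $(u-\eta)/(2,u-1)$. Then one shows every $s\in R_3(\varepsilon q)\setminus\{v\}$ would land in $\pi(S)$ and be adjacent to $p$ or $r_6(\varepsilon q)$, forcing $k_3(\varepsilon q)=v^l$ with $l>1$. This last case --- $k_3(\varepsilon q)$ a prime power equal to a power of the characteristic of $S$ --- is the genuinely delicate one you never address: here $v\in\pi(K)$, one deduces $p(q^2+1)/2$ divides $(u-\eta)/2$, pins down $(u-\eta)/2=p(q^2+1)$ exactly from $\mu(G)$, and finishes with an explicit gcd computation showing $q^2-\varepsilon q+1$ cannot divide $(u+\eta)/2$. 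Your exponent-comparison and ``$K=1$'' paragraphs do not substitute for this arithmetic endgame.
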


\begin{proof}
Let $S=L_2(u)$. Then $\omega(S)$ consists of all divisors of the numbers $$v, (u-1)/(2,u-1) \text{, and } (u+1)/(2,u-1).$$ By Lemma \ref{l:k3v}, neither
$k_6(\varepsilon q)$, nor $k_3(\varepsilon q)$ equals $v$. Thus $k_6(\varepsilon q)$ divides $(u+\eta)/(2,u-1)$ for some $\eta\in\{+1,-1\}$.
Denote by $F$ a Frobenius subgroup of $S$ with kernel of order $u$ and cyclic complement of order $(u-1)/(2,u-1)$.

Assume that $p\in\pi(\overline G/S)$ or $p\in\pi(K)$. Then $R_3(\varepsilon
q)\cap(\pi(K)\cup \pi(\overline G/S))=\varnothing$, and therefore $k_3(\varepsilon
q)\in\omega(S)$,
which implies that $k_3(\varepsilon q)$ divides $(u-\eta)/(2,u-1)$. If $p=3$,
then 3 divides $u+\eta$ or $u-\eta$, and so $p$ is adjacent to $r_3(q)$ or $r_6(q)$ in $GK(S)$, a contradiction.
Now by Lemma \ref{l:rest_aut}(iv), it follows that $p\not\in\pi(\overline G/S)$.
Hence $p\in\pi(K)$. Applying Lemma \ref{l:rest_k}(i) to $F$,
we have that one of the numbers $pk_3(\varepsilon q)$ and $pk_6(\varepsilon q)$
lies in $\omega(G)$, a contradiction.

Thus $p\not\in\pi(K)\cup\pi(\overline G/S)$, and so $p$ divides $(u-\eta)/(2,u-1)$. Again by Lemma \ref{l:rest_k}(i), we have $R_3(\varepsilon q)\cap \pi(K)\subseteq \{v\}$. Furthermore, Lemma
\ref{l:rest_aut}(iii) implies that $R_3(\varepsilon q)$ is disjoint from $\pi(\overline G/S)$. If $R_3(\varepsilon q)$ contains a prime number $s$ not equal to $v$,
then $s\in \pi(S)$ and $s$ is adjacent to $p$ or $r_6(\varepsilon
q)$ in $GK(S)$, which is a contradiction.

So we can assume that $k_3(\varepsilon q)=v^l$, where $l>1$, and $v\geqslant 7$.
Since $v\not\in\omega(\overline G/S)$, we have  $v\in\pi(K)$. Then
$k_4(q)\in\omega(S)$, and hence $(q^2+1)/2$ divides $(u-\eta)/2$.
So $p(q^2+1)/2$ divides $(u-\eta)/2$. One of the numbers $p(q^2+1)/2$ and
$p(q^2+1)$ lies in $\mu(G)$. The number $(u-\eta)/2$ is even, so
\begin{equation}\label{e:l2}(u-\eta)/2=p(q^2+1).\end{equation}
Then $$(u+\eta)/2=p(q^2+1)+\eta.$$
Recall that $(q^2-\varepsilon q+1)/(3,q+\varepsilon)$ divides $(u+\eta)/2$. If
$3$ divides $q+\varepsilon$, then $p\neq 3$, so $3$ does not divide $u-\eta$ by
\eqref{e:l2}, and therefore $3$ divides $u+\eta$. Hence $q^2-\varepsilon q+1$
divides $(u+\eta)/2$.
But
$$(q^2-\varepsilon q+1,p(q^2+1)+\eta)=(q^2-\varepsilon q+1,
pq+\varepsilon\eta)=(q^2-\varepsilon q+1, q+\frac{\eta q}{p}-\varepsilon)$$ and
$q+q/p+1<q^2-q+1$, a contradiction.
\end{proof}

Our next step is to consider classical groups of not very large dimensions.

\begin{lemma}\label{l:small} $S$ is not one of the following groups:
$S_{2n}(u)$, $O_{2n+1}(u)$, where $n\leqslant 4$, $O_{2n}^\tau(u)$, where
$n\leqslant 5$, $L_n^\tau(u)$, where $3\leqslant n\leqslant 8$.
\end{lemma}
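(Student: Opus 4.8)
The goal is to eliminate every classical group $S$ of "small" rank as a candidate for the socle of $\overline{G}$; the key arithmetic fact at our disposal is that $k_6(\varepsilon q)\in\omega(S)$ together with the structural constraints from Lemma~\ref{l:str}, Lemmas~\ref{l:rest_k}--\ref{l:rest_aut}, and the exponent bounds of Section~3. The overall strategy is a size argument combined with the primitive-prime-divisor analysis. First I would observe that $S$, being a classical group of one of the listed types, has $\gexp_{v'}(S)$ bounded above by an explicit product of cyclotomic polynomials evaluated at a power of $u$ (Lemma~\ref{l:exp}), hence $\gexp(S)<u^{c}$ for an explicit constant $c$ depending only on the type (at most roughly $u^{32}$ for the largest case $O_{10}^\tau(u)$ or $L_8^\tau(u)$). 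On the other hand, $\gexp(G)=\gexp(L)$ and $k_6(\varepsilon q)\in\omega(S)\subseteq\omega(G)$, so every prime of $R_6(\varepsilon q)$ divides $|S|$; by Lemma~\ref{l:bz} and Lemma~\ref{l:k3} these primes are $\geqslant 19$ and, more importantly, $k_6(\varepsilon q)>16q^2/51$. This already forces $u$ to be bounded below in terms of $q$: since the prime $r_6(\varepsilon q)$ must have a bounded value of $e(r,u)$ (namely $e(r,u)\leqslant 2n$ where $2n$ is roughly the dimension), we get $r_6(\varepsilon q)\mid u^{2n}-1$, and combining with $k_6(\varepsilon q)\leqslant\gexp_{v'}(S)$ one obtains $q^2\lesssim u^{c}$, i.e. $u\gtrsim q^{2/c}$.

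**Key steps, in order.** (1) Pin down, type by type and dimension by dimension, which element orders of $S$ are "large": for each listed $S$ there is an element of order $k_j(u)$ for the top values of $j$ (e.g.\ $j=2n$, $j=n$, etc.), and these must lie among the divisors of the numbers in $\mu(L)$, which by Lemma~\ref{l:spec_c3} are controlled (all of the form $(q^3\pm1)/2$, $(q^2+1)(q\pm1)/2$, $q^2-1$, $p(q^2\pm1)/d$, plus small exceptions). (2) Use Lemma~\ref{l:k3v} to rule out the coincidences $k_3(\varepsilon q)=v$, $k_4(q)=v^l$ with $l\neq 2$, and $v\in R_6(\varepsilon q)$, so that $v$ plays no pathological role. (3) Run a counting/coclique argument: $t(S)\geqslant t(G)-1$ and $t(2,S)\geqslant t(2,G)$ by Lemma~\ref{l:str}, and for the groups $L$ in question $t(L)$ and $t(2,L)$ are known, which restricts which small classical $S$ can even have a large enough coclique in its prime graph. (4) For the survivors, use the largest element orders: an element of order (say) $k_{2n}(u)$ in $S$ must divide one of the numbers in $\mu(L)$; compare sizes via Lemma~\ref{l:cycl} and Lemma~\ref{l:expc3} to show $u$ cannot be too large, then for the finitely many remaining $(u,q)$ invoke the explicit tables of Lemma~\ref{l:k3} and the list of small groups in \cite{09Zav} to finish. (5) For the $L_n^\tau(u)$ cases, additionally feed in Lemma~\ref{l:small_prev} (Sylow structure and adjacency of primes dividing $u\pm1$) and the parabolic/Frobenius arguments of Lemma~\ref{l:rest_k}(i),(iv) to kill the remaining configurations, e.g.\ by producing a forbidden element order $rs$ with $r\in\pi(K)$.

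**The main obstacle.** The hard part will be the bookkeeping in step (4)–(5): the size inequalities only bound $u$ polynomially in $q$, so there genuinely are infinitely many $(u,q)$ pairs surviving the crude estimate, and one must use the \emph{exact} arithmetic of which cyclotomic values $\Phi_i(u)$ (or $\Phi_i(u^2)$) can divide the short list of numbers in $\mu(L)=\{(q^3\pm1)/2,\,(q^2+1)(q\pm1)/2,\,q^2-1,\,p(q^2\pm1)/d,\dots\}$. The cleanest route is: a prime $r_{2n}(u)$ (top primitive divisor of $|S|$) must lie in $\pi(L)$, hence equals one of $r_3(\varepsilon q), r_6(\varepsilon q), r_4(q), r_1(q), r_2(q)$ or $p$, which forces $e(r,u)=2n$ and $e(r,q)\in\{1,2,3,4,6\}$ simultaneously; pairing these with the inequality $k_{2n}(u)\le$ (some number in $\mu(L)$) and $k_6(\varepsilon q)\le\gexp_{v'}(S)$ collapses the possibilities to a short explicit list, which is dispatched by Lemma~\ref{l:k3}(iii)--(v). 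The delicate symplectic/orthogonal cases where $n=2^s$ (so the $2$-part of the exponent drops) and the $L_n^\tau$ case with $(n)_2$ versus $(q-\tau)_2$ — exactly the situations singled out in Lemma~\ref{l:a}(i)--(iii) — will need the most care, and there I expect to lean on Lemma~\ref{l:rest_aut} to control $\pi(\overline{G}/S)$ and on Lemma~\ref{l:nilp}/\ref{l:rest_k} to control $\pi(K)$, deriving a contradiction either from a missing large element order or from a forbidden product $rs\in\omega(G)$.
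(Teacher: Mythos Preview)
Your plan correctly identifies many of the relevant tools, but it misses the central device that the paper uses for this lemma, and your size/exponent argument would not close. The exponent comparison $\gexp(S)\leqslant\gexp(G)<q^9$ combined with $k_6(\varepsilon q)\mid k_j(u)$ is exactly how the paper handles the \emph{large}-rank cases (Lemmas~\ref{l:lin}--\ref{l:sym}); for the small ranks of the present lemma the two inequalities you write down, roughly $q^2\lesssim u^n$ and $u^{\varphi(\text{top})}\lesssim q^3$, are compatible with infinitely many pairs $(u,q)$ (for instance for $S_6(u)$ they only give $q^2\lesssim u^3\lesssim q^{9/2}$), so nothing collapses to a finite list. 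Your ``cleanest route'' in the obstacle paragraph still tracks only a single prime $r_{2n}(u)$ and a single inequality $k_{2n}(u)\leqslant(\text{something in }\mu(L))$; that is not enough.

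What the paper actually does is this. After arranging (via Lemmas~\ref{l:rest_k}, \ref{l:rest_aut} and \ref{l:small_prev}, and after disposing of $p=3$ separately) that $p$, $R_4(q)$ and a suitable $r_3'(\varepsilon q)$ all land in $\pi(S)$, and that $\{p,r_3'(\varepsilon q),r_6(\varepsilon q)\}$ and $\{r_4'(q),r_3'(\varepsilon q),r_6(\varepsilon q)\}$ are cocliques in $GK(S)$, it picks \emph{three} elements $a_4,a_3,a_6\in\mu(S)$ divisible respectively by $pr_4'(q)$, $r_3'(\varepsilon q)$, $r_6(\varepsilon q)$. Because these must divide $p(q^2+1)$, $(q^3-\varepsilon)/2$, $(q^3+\varepsilon)/2$ respectively, one gets $(a_i,a_j)\mid 2$ for all $i\neq j$. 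This pairwise near-coprimality of three maximal element orders is an extremely rigid constraint on the torus structure of $S$: type by type one checks that at least two of the $a_i$ are forced to share a common factor such as $(u\pm1)/(2,u-1)$ or $(u-\tau)/(n,u-\tau)$, which pins $u$ (and for $L_n^\tau(u)$ also the pair $(n,u)$ via $u-\tau\mid 2n$) to a short explicit list, disposed of by Lemma~\ref{l:k3}. Your proposal does not contain this three-element coprimality idea; without it the ``exact arithmetic'' you allude to has no handle to grab.
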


\begin{proof}
Assume the contrary. Since $k_3(\varepsilon q)$ is larger than $7$, Lemma \ref{l:rest_aut}(ii)
implies that $k_3(\varepsilon q)\not\in\omega(\overline G/S)$. So
there exists a number $r'_3(\varepsilon q)\in R_3(\varepsilon q)$ that lies in
$\pi(S)\cup\pi(K)$. Also by Lemma \ref{l:rest_aut}(ii), the numbers $p$ and
$r_4(q)$ cannot both lie in $\pi(\overline G/S)$, and hence there exists a
number $s\in \{p\}\cup R_4(q)$ that  lies in $\pi(S)\cup\pi(K)$.
By Lemma \ref{l:rest_k}(iii), we have  $\{s, r'_3(\varepsilon
q)\}\cap\pi(K)\subseteq \{v\}$, and therefore
$\{s, r'_3(\varepsilon q), r_6(\varepsilon q)\}$ is a coclique in $GK(S)$. In
particular, $t(S)\geqslant 3$, and so $S\neq S_4(u)$.

We claim that $p\neq 3$. Otherwise $\{3, r'_3(\varepsilon q),  r_6(\varepsilon
q)\}$ is a coclique in $GK(S)$ not containing $2$. On the other hand, since $v\neq 3$,
it follow that $3$ divides $u-1$ or $u+1$. By Lemma \ref{l:small_prev}(i), we
have that either $S=L_3^\tau(u)$ and $3$ divides $u+\tau$, or $S=L_3^\tau(u), L_6^\tau(u)$ and $(u-\tau)_3=3$.
If $S=L_6^\tau(u)$ and $(u-\tau)_3=3$, or $S=L_3(\tau u)$ and $(u+\tau)_3>3$,
then $\omega(S)$ contains a number of the form $9k$, where $k>1$, unless $S=L_3(8)$:
this is $v(u^3-\tau)$ in the first case and $u+\tau$ in the second. This leads to a contradiction
since $9\in\mu(G)$ and there are no cocliques of size $3$ consisting of odd primes in $GK(L_3(8))$.
Thus we can assume that $S=L_3^\tau(u)$, where $(u-\tau)_3=3$ or $(u+\tau)_3=3$. In either case,
$u$ is not a cube and $\gexp_3(S)=3$. Since $p\not\in\pi(K)$ by Lemma \ref{l:rest_k}(iii), it follows that $3\in \pi(\overline
G/S)$. Hence $(u-\tau)_3=3$, $\overline G$ includes $\Inndiag S$ and $r_3'(\varepsilon q)\neq v$ by Lemma \ref{l:rest_aut}(v).
It follows that $\overline G$ has an element of order $u^2-1$ and one of the numbers $r'_3(\varepsilon q)$ and $r_6(\varepsilon q)$
divides $u^2-1$. This contradicts the fact that $\{3, r'_3(\varepsilon q),  r_6(\varepsilon
q)\}$ is a coclique in $GK(G)$.

Suppose that the intersection of $\pi(\overline G/S)$ and
$R_4(q)\cup\{p\}$ is not empty. Then $R_3(\varepsilon q)$ is disjoint from
$\pi(K)\cup \pi(\overline G/S)$, and so $\{s, r_3(\varepsilon q),
r_6(\varepsilon q)\}$ is a coclique in $GK(S)$ for every $r_3(\varepsilon q)$.
Using he mentioned above properties of Sylow subgroups of $S$ and applying  Lemma \ref{l:rest_aut}(iv),
we derive a contradiction unless $7\in R_3(\varepsilon q)$ and
$S=L_7^\tau(u)$. In this case, if $t$ lies in the intersection of $\pi(\overline G/S)$ and
$R_4(q)\cup\{p\}$, then $t$ can be the order of a field automorphism only, and
since $7\in\pi(L_7^\tau(u_0))$ for every $u_0$, we obtain that
$7t\in\omega(\overline G)$, a contradiction.

Thus $p\in\pi(S)$, $R_4(q)\subseteq \pi(S)$, and $\{p,r_3'(\varepsilon q),
r_6(\varepsilon q)\}$, $\{r_4(\varepsilon q),r_3'(\varepsilon q),
r_6(\varepsilon q)\}$ are cocliques in $GK(S)$. As we remarked, if $S\neq L_n^\tau(u)$, then
these cocliques do not contain divisors of $u\pm 1$.
If $k_4(q)\neq v^l$, then define $r_4'(q)$ to be any element of
$R_4(q)\setminus\{v\}$. If $k_4(q)=v^l$, then $S=L_3^\tau(u)$ by Lemma
\ref{l:k3v}, and we take $r_4'(q)=v$. In both cases $pr_4'(q)\in \omega(S)$. Let
$a_4$, $a_3$ and $a_6$ be the numbers in $\mu(S)$ divisible by $pr_4'(q)$,
$r_3'(\varepsilon q)$ and $r_6(\varepsilon q)$ respectively.  Observe that $a_6$ is coprime to $v$ and
if $L\neq L_3^\tau(u)$, then $a_4$ is coprime to $v$ too.  It is clear that
$a_4$ divides $p(q^2+1)$, while  $a_3$ and $a_6$ divide $(q^3-\varepsilon)/2$ and
$(q^3+\varepsilon)/2$ respectively. Hence \begin{equation}\label{e:small} \text{
for any distinct }i,j\in\{4,3,6\}\text{, the number }(a_i,a_j)\text{ divides
}2.\end{equation}

We will show that \eqref{e:small} can hold only for finitely many $S$. In these remaining
cases, we will calculate the numbers in $\omega(S)$ that can be divisible by $k_6(\varepsilon q)$
and then for every divisor $h$ of these numbers, solve the equation $k_6(\varepsilon q)=h$ using Lemma \ref{l:k3}. This
will give in turn finitely many possible $q$.

Let $S=S_6(u)$, $O_7(u)$, or $O_8^+(u)$. Then $u>2$, because otherwise $t(S)=2$.
The number $a_6$ is equal to $(u^3+\eta)/(2,u-1)$ for some $\eta\in\{+1,-1\}$.
By \cite[Table 3]{11VasVd.t}, it follows that either $r_3'(\varepsilon q)\in R_3(\eta u)$ and $r'_4(q),p\in R_4(u)$, or
$r'_4(q),p\in R_3(\eta u)$ and  $r_3'(\varepsilon q)\in R_4(u)\cup\{v\}$.
Hence one of the numbers $a_3$ and $a_4$ is equal to $(u^3-\eta)/(2,u-1)$, and
we can take another one to be a multiple of $(u^2+1)(u+1)/(2,u-1)$ or $v(u^2-1)/(2,u-1)$. Applying
\eqref{e:small}, we have that $(u+1)/(2,u-1)$ divides $2$, whence $u=3$. Then
$k_6(\varepsilon q)$ divides $k_3(3)=13$. This is a contradiction since $k_6(\varepsilon
q)\geqslant 19$.

Let $S=S_8(u)$, $O_9(u)$, or $O_8^-(u)$. The maximal under divisibility orders of semisimple elements of
$S$ are equal to $(u^4\pm 1)/(2,u-1)$ and $(u^2\pm u+1)(u^2-1)/(2,u-1)$. If $r_3'(\varepsilon q)=v$, then
we can take $a_3$ to be $v(u^2-1)$. Hence there are two numbers among $a_3$, $a_4$, $a_6$ that are divisible by
$(u^2-1)/(2,u-1)$. Thus $u^2-1$ divides $2(2,u-1)$, which is impossible.

Let $S=O_{10}^\tau(u)$. We claim that for every $i\in\{3,4,6\}$, we can choose $a_i$
so that it is a multiple of $(u-\tau)/(4,u-\tau)$ or $(u+\tau)(2,u-1)$.
If $r_3'(\varepsilon q)=v$, then we take $a_3$ to be a multiple of $v(u^2-1)$.
Now assume that all the numbers $a_3$, $a_4$ and $a_6$ are the orders of semisimple elements, so
each of them has the form $a=[u^{i_1}-\varepsilon_1, \dots, u^{i_s}-\varepsilon_s]/c$, where
$i_1+\dots+i_s=5$, $\varepsilon_1\dots\varepsilon_s=\tau$ and $c$ is as follows: $c=1$ if $s>2$, $c\leqslant (2,u-\tau)$ if $s=2$, and
$c=(4,u-\tau)$ if $s=1$. If at least two of $i_1,\dots,i_s$, say $i_1$ and $i_2$ are equal
to 1, then we can replace $a$ by $[u-1, u-\varepsilon_1\varepsilon_2, u^{i_3}-\varepsilon_3,\dots,u^{i_s}-\varepsilon_s]$.
We can also assume that none of the numbers $u^{i_j}-\varepsilon_j$ are equal to $u^2-1$ or $u^4-1$,
and that these numbers are pairwise distinct. In particular, this yields $s\leqslant 2$. Then
we are left with $(u^5-\tau)/(4,u-\tau)$,  $(u^4+1)(u+\tau)/(4,u-\tau)$ and
$(u^2+1)(u^3+\tau)/(4,u-\tau)$, which satisfy the required property. Thus
there are two numbers among $a_3$, $a_4$ and $a_6$ that are both divisible by $(u-\tau)/(4,u-\tau)$ or $(u+\tau)(2,u-1)$.
Applying \eqref{e:small}, we see that
$u-\tau$ divides 8 or $u+\tau$ divides 4. Hence $u\leqslant 5$,
or $u=7$ and $\tau=-$,  or $u=9$ and $\tau=+$.
Calculating the numbers in $\mu(S)$ that can be divisible by
$k=k_6(\varepsilon q)$ and discarding their prime divisors that are not congruent to 1 modulo
3, we conclude that $k$ is one of the numbers $31$, $61$, $193$, $313$, and $1201$. This
is impossible by Lemma \ref{l:k3}(vi,v).

Let $S=L_n^\tau(u)$, where $3\leqslant n\leqslant 8$. Then every number in $\mu(S)$
except for $(u^n-\tau^n)/(u-\tau)(n,u-\tau)$ and possibly some power of $v$
is a multiple of $(u-\tau)/(n,u-\tau)$. It is clear that $a_4$ and $a_6$ are not powers of $v$,
and we can also choose $a_3$ to be not a power of $v$.
Thus there are two numbers  among $a_3$, $a_4$ and $a_6$ that are multiples of $(u-\tau)/(n,u-\tau)$
and by \eqref{e:small}, it follows that $u-\tau$ divides $2n$. In particular,
$u\leqslant 17$. Let $k=k_6(\varepsilon q)$. Reasoning as above, we have that either $k\leqslant 601$,
or the situation is impossible by Lemma \ref{l:k3}(v). If $k\leqslant 601$, then by Lemma \ref{l:k3}(iv),
there are the following possibilities: $S=U_6(11)$ with $k=19$ and $q=7$; $S=L_7(2),
L_8(2)$ with $k=127$ and $q=19$; $S=L_7(8)$ with $k=19,127$ and $q=7,19$; $S=L_8(9),
U_8(3)$ with $k=547$ and $q=41$. It is easy to check that $\pi(S)\not\subseteq\pi(G)$ in all cases.
\end{proof}

It is clear that $\gexp(S)$ divides $\gexp(G)$, and hence $q$ is
bounded from below in terms of $u$. On the other hand, $k_6(\varepsilon q)$ divides $k_j(u)$ for some $j$, and so $q$ is bounded from above
in terms of $u$. We will show that these two bounds are incompatible for all remaining classical groups $S$.
Recall that  $k_6(\varepsilon q)\geqslant q^2/4+1$
and $k_3(\varepsilon q)>q^2/4$ by Lemma \ref{l:k3},
and $\gexp(G)<q^9$ by Lemma \ref{l:expc3}. Also recall that $F(n)=\sum_{i=1}^{n}\varphi(i)$. The following table
displays the values of this function for small $n$.

$$\begin{array}{|l|c|c|c|c|c|c|c|c|c|c|}
  \hline
  n& 5 & 6& 7 & 8& 9&10&11&12&13&14\\
  \hline
  F(n)& 10&12&18&22&28&32&42&46&58&64\\
  \hline
  n& 15 & 16& 17 & 18& 19&20&21&22&23&24\\
  \hline
  F(n)& 72&80&96&102&120&128&140&150&172&180\\
  \hline
  \end{array}
$$

\begin{lemma}\label{l:lin}
 $S\neq L_n^\tau(u)$, where $n\geqslant 9$.
\end{lemma}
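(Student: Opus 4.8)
\emph{Proof proposal.} The plan is to trap $q$ between a lower and an upper bound in terms of $u$ and show the two clash. For the lower bound, $\gexp(S)$ divides $\gexp(G)=\gexp(L)$; by Lemma~\ref{l:exp}(i) and Lemma~\ref{l:cycl} we have $\gexp(S)\geq\frac{n}{c}\prod_{i=1}^{n}|\Phi_i(\tau u)|>u^{3F(n)/4}$ (and one may sharpen this using $\prod_{i=1}^{n}|\Phi_i(\tau u)|\geq(u-1)^{F(n)}$), while $\gexp(L)<q^9$ by Lemma~\ref{l:expc3}. Hence, invoking Lemma~\ref{l:sum_varphi}, $q>u^{F(n)/12}$, an inequality whose exponent in $u$ grows quadratically in $n$.

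For the upper bound we exploit $k_6(\varepsilon q)$. We already know $k_6(\varepsilon q)\in\omega(S)$, and every prime $r\mid k_6(\varepsilon q)$ lies in $R_6(\varepsilon q)$ and is therefore nonadjacent to $2$ in $GK(G)$; since $\omega(S)\subseteq\omega(G)$, such $r$ is also nonadjacent to $2$ in $GK(S)=GK(L_n^\tau(u))$. By the adjacency criterion for linear and unitary groups this forces $e(r,\tau u)=n$ (or $n-1$ when $v=2$). Moreover, since all these primes occur in a single element of $S$, they cannot realise two distinct values of $e(\cdot,\tau u)$ (two values from $\{n-1,n\}$ would sum to more than $n$, contradicting adjacency), so they share one value $j\in\{n-1,n\}$; consequently $k_6(\varepsilon q)$ divides $k_j(\tau u)$, so $k_6(\varepsilon q)\leq|\Phi_j(\tau u)|<u^{\varphi(j)+1}$. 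Combined with $k_6(\varepsilon q)>q^2/4$ (Lemma~\ref{l:k3}) this yields $q<2u^{(\varphi(j)+1)/2}$.

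Since $\varphi(j)\leq j-1$ while $F(n)$ is quadratic, $u^{F(n)/12}<2u^{(\varphi(j)+1)/2}$ is impossible for all but a short list of small $n$; comparing with the table of $F(n)$ one is left essentially with $9\leq n\leq 12$, and for those the same two inequalities confine $u$, and then $q$, to an explicit finite set. To dispose of this finite list I would use $\pi(S)=\pi(L_n^\tau(u))\subseteq\pi(L)=\pi(G)$: each primitive prime $r_3(\tau u),\dots,r_n(\tau u)$ divides exactly one of $q-1$, $q+1$, $q^2+q+1$, $q^2-q+1$, $q^2+1$, so by pigeonhole two of them, $r_a(\tau u)$ and $r_b(\tau u)$ with $a\neq b$, divide a common $\Phi_e(q)\in\omega(G)$; as these primes lie in $\pi(S)$ but, by Lemma~\ref{l:rest_k} and Lemma~\ref{l:rest_aut}, not in $\pi(K)\cup\pi(\overline G/S)$, we get $r_a(\tau u)r_b(\tau u)\in\omega(S)$, whence $a+b\leq n$ by the adjacency criterion --- a contradiction once $n>9$. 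The genuinely hard case is $n=9$ (and to a lesser extent $n=11$), where none of these estimates closes on its own: here one has to interleave the exponent bound, the divisibility $k_6(\varepsilon q)\mid k_9(\tau u)$, a careful accounting of which primitive divisors of the numbers $u^j-\tau^j$ can occur among the prime divisors of $q^6-1$ and $q^2+1$, and the possible location of $p$ in the normal structure of $G$, together with the arithmetic restrictions of Lemma~\ref{l:k3}.
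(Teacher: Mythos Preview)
Your overall strategy --- bound $q$ from below via $\gexp(S)\mid\gexp(G)$ and from above via $k_6(\varepsilon q)\mid k_j(\tau u)$ --- is exactly the paper's, but two of the steps you rely on do not go through as stated.

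\medskip
\textbf{The reduction to $9\le n\le 12$.} Your two inequalities give essentially $u^{3F(n)/4}<q^9$ and $q^2/4<2u^{n-1}$, hence $F(n)<6n+O(1)$. With Lemma~\ref{l:sum_varphi} this leaves $n\lesssim 24$, not $n\le 12$; check the table ($F(20)=128>120$ barely fails, $F(14)=64<96$ easily survives). The paper has to work considerably harder here: after cutting down to $n\le 14$ by ad~hoc sharpenings of the exponent estimate, it brings in the number $b=(q^3+\varepsilon)/2$ and the bound $\gexp(L)<5b^3$ from Lemma~\ref{l:expc3}. Either $b\in\omega(\overline G)$, in which case $b\le 2u^{n-1}$ kills everything down to $n=9$, $u\le 3$; or else $r_2(\varepsilon q)\in\pi(K)$, which via nilpotence of $K$ and Lemma~\ref{l:rest_k}(iv) forces $k_3(\varepsilon q)$ to divide $k_n(\tau u)$ or $k_{n-1}(\tau u)$ as well. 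The decisive point is then that one of $k_3(\varepsilon q),k_6(\varepsilon q)$ must divide $k_i(\tau u)$ for the \emph{even} $i\in\{n-1,n\}$, where $\varphi(i)\le 6$; this yields $q^2/4<2u^6$ (resp.\ $2u^4$) and finishes $10\le n\le 14$. None of this mechanism is present in your outline.

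\medskip
\textbf{The pigeonhole endgame.} Even granting a short list of $n$, your argument does not close. You put the primes $r_3(\tau u),\dots,r_n(\tau u)$ into five boxes $q\pm1$, $q^2\pm q+1$, $q^2+1$ and conclude that some pair $r_a,r_b$ shares a box, whence $r_ar_b\in\omega(G)$, whence $r_ar_b\in\omega(S)$, whence $a+b\le n$. Two problems. First, $a+b\le n$ is no contradiction unless you can force $a,b>n/2$; plain pigeonhole gives you no control over which indices collide (for $n=10$ the pair $a=3,b=4$ is perfectly consistent). Second, the passage $r_ar_b\in\omega(G)\Rightarrow r_ar_b\in\omega(S)$ needs $r_a,r_b\notin\pi(K)\cup\pi(\overline G/S)$. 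Lemma~\ref{l:rest_k}(iii) only says $\pi(K)\setminus\{v\}\subseteq R_1(q)\cup R_2(q)$, so any $r_j(\tau u)$ landing in the $q-1$ or $q+1$ box may well lie in $\pi(K)$, and your appeal to Lemmas~\ref{l:rest_k} and~\ref{l:rest_aut} does not exclude this. The paper avoids this altogether: once reduced to $n=9$, $u\in\{2,3\}$, it simply reads off the possible values of $k_6(\varepsilon q)$ and checks $\pi(S)\not\subseteq\pi(G)$ by hand.
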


\begin{proof}
Let $S=L_n^\tau(u)$, where $n\geqslant 9$.
By Lemma \ref{l:exp}, we have 
$$\gexp(S)\geqslant \frac{n}{c}\cdot \prod_{i=1}^n\Phi_i(\tau u)>
\frac{n}{c}\cdot u^{3F(n)/4},$$
where $c=r$ if $r\in\pi(u-\tau)$ and $n=r^s$, and $c=1$ otherwise.

The number $k_6(\varepsilon q)$ divides $k_n(\tau u)$ or
$k_{n-1}(\tau u)$, and since both these numbers do not exceed $2u^{n-1}$, we have $q^2/4<2u^{n-1}$.
Hence $$u^{3F(n)/4}<\gexp(S)\leqslant \gexp(G)\leqslant
q^9<8^{9/2}u^{9(n-1)/2}.$$
This yields  $3F(n)/4<27/2+9(n-1)/2$, whence $F(n)<6n+12$.
Applying Lemma \ref{l:sum_varphi}, we see that  $[(n+1)/2]^2<6n+12$, and so
$n\leqslant 24$.
Furthermore, the values of $F(n)$ for $21\leqslant n\leqslant 24$ show that
$n\leqslant 20$.

Now using more precise estimates of $\gexp(S)$ and $k_n(\tau u)$, $k_{n-1}(\tau u)$,
we show that $n\leqslant 14$.

If $n=20$, then $\gexp(S)\geqslant 20u^{3F(n)/4}$ and $q^2/4<2u^{n-2}$, so
$$3F(n)/4<8^{9/2}u^{9(n-2)/2}/20<2^{19/2}u^{9(n-2)/2}.$$ Hence
$3F(n)/4<19/2+9(n-2)/2$, which yields $F(n)<6n+2/3$. This is a contradiction because $F(20)=128$.

If $n=15,16$, then $\gexp(S)\geqslant 8u^{3F(n)/4}$ and $q^2/4<2u^{8}$. It follows that 
$3F(n)/4<(27/2-3)+ 36$, and so $F(n)<62$. But $F(15)=72$, a
contradiction.

Let $n=17,19$. If $n=19$, then $\prod_{i=1}^n\Phi_i(\tau u)>u^{F(n)}/2^6$ since
$\Phi_i(u)\Phi_i(-u)>u^{2\varphi(i)}$ for $i=3$, $5$, $7$, or $9$; $\Phi_i(\tau
u)>u^{\varphi(i)}$ for $i=4$, $8$, or $16$; $\Phi_i(\tau u)>u^{\varphi(i)}/2$ for
$i=11$, $12$, $13$, $15$, or $17$; and
$\Phi_{19}(\tau)(u^2-1)=(u^{19}-\tau)(u+\tau)>u^{20}/2$. A similar inequality
holds if $n=17$. Thus $u^{F(n)}<2^6\cdot 2^{27/2}u^{9(n-1)/2}$, whence
$F(n)<9n/2+15$, which is not true.

Thus $n\leqslant 14$. We claim that
\begin{equation} \label{e:lin}
 \gexp(S)\geqslant 40u^{3(n-1)},
\end{equation}
unless $n=9$ and $u\leqslant 3$. 
Bounding $\prod_{i=1}^n\Phi_i(\tau u)$ as in the case $n=19$, we calculate
that \begin{equation}\label{e:lin1} \gexp(S)>u^{F(n)}/32\text{ if }n\geqslant
11,\end{equation} \begin{equation}\label{e:lin2} \gexp(S)>10u^{F(n)}/4\text{ if
}n=10,\end{equation}
 and \begin{equation} \label{e:lin3} \gexp(S)>3u^{F(n)}/8\text{ if
}n=9.\end{equation} If $n\geqslant 11$, then $F(n)-3(n-1)\geqslant 12$, so
$u^{F(n)-3(n-1)}\geqslant 2^{12}>32\cdot 40$. If $n=10$, then $F(n)-3(n-1)=5$,
and hence $u^{F(n)-3(n-1)}=u^5> 40\cdot 2/5$. If $n=9$, then $F(n)-3(n-1)=4$ and therefore
$u^{F(n)-3(n-1)}=u^4> 40\cdot 8/3$ for $u>3$. Thus \eqref{e:lin} holds.

Suppose that $b=(q^3+\varepsilon)/2\in \omega(\overline G)$. Then $b\leqslant
2u^{n-1}$ by \cite[Table 3]{15GMPS}, and Lemma \ref{l:exp} implies
that
$$\gexp(S)\leqslant \gexp(G)<5b^3\leqslant 40u^{3(n-1)},$$ which contradicts
\eqref{e:lin}. Hence $r_2(\varepsilon q)\in\pi(K)$. By nilpotence of $K$,
we have  $R_3(\varepsilon q)\cap \pi(K)=\varnothing$. Let $r\in
R_3(\varepsilon q)\cap \pi(S)$. If $v\not\in R_2(\varepsilon q)$, then  $r$
cannot divide the order of a proper parabolic subgroup of $S$ by Lemma
\ref{l:rest_k}(iv), and if $v\in R_2(\varepsilon q)$, then $vr(\varepsilon
q)\not\in\omega(S)$. In either case,
$r$ divides $k_n(\tau u)$ or $k_{n-1}(\tau u)$. Since $7\in \pi(S)$ and $7$ does
not divide $k_n(\tau u)k_{n-1}(\tau u)$, we have  $7\not\in
R_3(\varepsilon q)$. Now applying Lemma \ref{l:rest_aut}(ii), we conclude that
$k_3(\varepsilon q)$ divides $k_n(\tau u)$ or $k_{n-1}(\tau u)$.

Let $i$ be the even number in the set $\{n, n-1\}$. Then $\varphi(i)=6$ if
$n=14$, and $\varphi(i)=4$ if $9\leqslant n\leqslant 13$. By the above reasoning,
one of the numbers $k_3(\varepsilon q)$ and $k_6(\varepsilon q)$ divides $k_i(u)$,
and so $q^2/4<k_i(u)<2u^{\varphi(i)}$.
If $n=14$, then $q^2/4<2u^6$ and $F(n)=64$, which yields $$\gexp(S)<q^9\leqslant
2^{27/2}u^{27}< u^{41}<u^{F(n)}/32.$$ If $n=11,12,13$, then $F(n)\geqslant 42$,
and we  have that $$\gexp(S)<q^9<2^{27/2}u^{18}<u^{32}<u^{F(n)}/32.$$ If
$n=10$, then $\gexp(S)<u^{32}=u^{F(n)}$. Finally, if $n=9$ and $u>3$, then
$$\gexp(S)<q^9<2^{27/2}u^{18}<u^{25}<3u^{F(n)}/8.$$ The derived inequalities 
contradict \eqref{e:lin1}--\eqref{e:lin3}.

We are left with the case $S=L_9^\tau(u)$, where $u=2,3$. Let $u=2$. Since
$k_9(2)=73$, $k_9(-2)=19$ and $k_8(2)=17$,  it follows that $k_6(\varepsilon
q)=73$ or $k_6(\varepsilon q)=19$. Then $q=9$ or $q=7$ by Lemma \ref{l:k3}. In
either case, $17\in\pi(S)\setminus\pi(G)$. Let $u=3$. Then
$k_6(\varepsilon q)$ divides $k_9(3)=757$ or $k_9(-3)=19\cdot 37=703$. Lemma
\ref{l:k3} implies that $\tau=-$, $k_6(\varepsilon q)=19$ and $q=7$, and so
$37\in\pi(S)\setminus\pi(G)$. This contradiction completes the proof.
\end{proof}

\begin{lemma}\label{l:orth}
 $S\neq O_{2n}^\tau(u)$, where $n\geqslant 7$ is odd.
\end{lemma}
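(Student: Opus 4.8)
The proof will follow the template of Lemma~\ref{l:lin}. Set $S=O_{2n}^\tau(u)$ with $n\geqslant7$ odd. Since $n$ is odd, Lemma~\ref{l:exp}(iii) for $\tau=-$ and Lemma~\ref{l:exp}(iv) for $\tau=+$ give in both cases $\gexp(S)\geqslant\frac{2n-1}{2}\,u^{3(F(n)+F(n-1))/4}$. On the other hand $k_6(\varepsilon q)\in\omega(S)$, and every element of $\overline G$ has order at most $2u^n$ (see, e.g., \cite[Table 3]{15GMPS}), so $k_6(\varepsilon q)\leqslant 2u^n$; combined with $k_6(\varepsilon q)>16q^2/51$ (Lemma~\ref{l:k3}(ii)) this bounds $q$ by a power of $u$. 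Plugging this into $\gexp(S)\leqslant\gexp(G)=\gexp(L)<q^9$ (Lemma~\ref{l:expc3}) and using the tabulated values of $F$ together with Lemma~\ref{l:sum_varphi}, one obtains $n\leqslant 11$, so $n\in\{7,9,11\}$ (and for $n=11$ also $u=2$).

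For the remaining $n$ I would first argue, exactly as in Lemma~\ref{l:lin}, that $r_2(\varepsilon q)\in\pi(K)$: otherwise $b=(q^3+\varepsilon)/2\in\omega(L)$ would be the order of an element of $\overline G$ divisible by $r_2(\varepsilon q)$, but such orders are again at most a small multiple of $u^n$, so $\gexp(S)\leqslant\gexp(L)<5b^3$ (Lemma~\ref{l:expc3}) contradicts the exponent bound above, the few pairs with small $u$ being postponed. With $r_2(\varepsilon q)\in\pi(K)$ and $K$ nilpotent, $R_3(\varepsilon q)\cap\pi(K)=\varnothing$; then for $r\in R_3(\varepsilon q)\cap\pi(S)$ Lemma~\ref{l:rest_k}(iv) shows (when $v\notin R_2(\varepsilon q)$) that $r$ cannot divide the order of any proper parabolic subgroup of $S$, while $vr\notin\omega(S)$ when $v\in R_2(\varepsilon q)$; since also $2r\notin\omega(S)$ for $r\in R_6(\varepsilon q)$, the adjacency criterion forces the multiplicative order $e(r,u)$ of each such $r$ to lie in a short explicit set of indices $J_{n,\tau}$ attached to the maximal tori of $O_{2n}^\tau(u)$ invisible in a proper parabolic and non-adjacent to $2$. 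Using $7\in\pi(S)$ to rule out $7\in R_3(\varepsilon q)$ and then Lemma~\ref{l:rest_aut}(ii), I would conclude that both $k_3(\varepsilon q)$ and $k_6(\varepsilon q)$ divide $\prod_{j\in J_{n,\tau}}k_j(u)$, with every $j\in J_{n,\tau}$ having $\varphi(j)$ small (one finds, for instance, $J_{7,-}=\{14\}$ and $J_{7,+}=\{12\}$).

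Now the key point: $k_3(\varepsilon q)$ and $k_6(\varepsilon q)$ are coprime (their numerators $q^2+\varepsilon q+1$ and $q^2-\varepsilon q+1$ differ by $2\varepsilon q$ and are odd), so when both divide the \emph{same} $k_j(u)$ one gets $k_3(\varepsilon q)k_6(\varepsilon q)\mid k_j(u)$, hence $q^4/9<k_j(u)<2u^{\varphi(j)}$ — a quartic bound in $q$ rather than the quadratic bound $q^2/4<2u^{\varphi(j)}$. Fed back into $\gexp(S)\leqslant\gexp(L)$ together with the finer exponent estimates of Lemma~\ref{l:expc3}, this leaves only finitely many $(u,q)$ for each $n\in\{7,9,11\}$ (for $n=7$ it excludes all $u$ at once, since $J_{7,\tau}$ is a singleton); when $k_3(\varepsilon q)$ and $k_6(\varepsilon q)$ land in different $k_j(u)$ one finishes by the same bound applied with the smaller $\varphi(j)$ or by a short coclique check. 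For the residual cases — the postponed small $u$ and the finitely many $(u,q)$ above — I would, as in Lemma~\ref{l:small}, list the numbers in $\mu(S)$ divisible by $k_6(\varepsilon q)$, discard the divisors with a prime factor $\not\equiv1\pmod3$ (Lemma~\ref{l:k3}(i)), solve $k_6(\varepsilon q)=h$ for the rest via the tables and exclusions of Lemma~\ref{l:k3}, and in each case exhibit a prime of $S$ — typically one $\not\equiv1\pmod3$ — that is absent from $\pi(G)$.

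The main obstacle, as for the linear groups, is twofold. First, the crude estimate $\gexp(S)>u^{3(F(n)+F(n-1))/4}$ together with $k_6(\varepsilon q)\leqslant 2u^n$ barely fails to settle $n=7$ (and the borderline small-$u$ cases for $n=9,11$), so the proof must genuinely use the coprimality of $k_3(\varepsilon q)$ and $k_6(\varepsilon q)$ to upgrade to a quartic bound in $q$, together with the sharper exponent bounds of Lemma~\ref{l:expc3}. Second, determining the sets $J_{n,\tau}$ — that is, which primitive prime divisors of $|O_{2n}^\tau(u)|$ can simultaneously avoid every proper parabolic and be non-adjacent to $2$ — requires a careful analysis of the maximal tori of $O_{2n}^\tau(u)$ for $n\in\{7,9,11\}$ and both signs, complicated by the fact that whether a given torus has even or odd order in the simple group (hence whether its primitive divisors are non-adjacent to $2$) depends on $u$ modulo $4$.
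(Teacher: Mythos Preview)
Your outline would work, but it is substantially more elaborate than what the paper actually does. The paper's proof never touches $\pi(K)$, never invokes Lemma~\ref{l:rest_k}(iv), and never needs the coprimality of $k_3(\varepsilon q)$ and $k_6(\varepsilon q)$; it is pure exponent comparison almost all the way down.

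The key simplification you missed is this: since every prime in $R_6(\varepsilon q)$ is non-adjacent to $2$ in $GK(G)$, hence in $GK(S)$, the adjacency criterion for $O_{2n}^\tau(u)$ with $n$ odd forces $k_6(\varepsilon q)$ to divide $k_n(\tau u)$ or $k_{2n-2}(u)$. Both of these are at most $2u^{n-1}$, so one gets $q^2/4\leqslant 2u^{n-1}$ from the outset, not merely $k_6(\varepsilon q)\leqslant 2u^n$. With this sharper bound the inequality
\[
\frac{2n-1}{2}\,u^{3(F(n)+F(n-1))/4}<\gexp(S)\leqslant\gexp(G)<q^9<8^{9/2}u^{9(n-1)/2}
\]
already gives $F(n)+F(n-1)<6n+8$, hence $n\leqslant 13$, and the tabulated values rule out $n=11,13$ directly. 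For $n=9$ the paper sharpens the constant in the lower bound to $\gexp(S)>2u^{50}$ (by estimating the individual $\Phi_i(u^2)$), and $q^2/4\leqslant u^8$ then gives the contradiction $2u^{50}<2^9u^{36}$. For $n=7$ and $u\geqslant 8$ one uses $k_7(\tau u)<8u^6/7$ and $k_{12}(u)<u^4$ together with $\gexp(S)\geqslant 13u^{30}/4$; only $n=7$ with $u\leqslant 7$ survives, and that is dispatched by the tabular method of Lemma~\ref{l:k3} exactly as you describe.

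So your second and third paragraphs --- the $r_2(\varepsilon q)\in\pi(K)$ manoeuvre, the analysis of the index sets $J_{n,\tau}$, and the quartic bound from coprimality --- are all machinery imported from Lemma~\ref{l:lin} that is simply not needed here. In the linear case that machinery was forced because $k_6(\varepsilon q)$ could sit in $k_n(\tau u)$ or $k_{n-1}(\tau u)$, both of degree about $n-1$, and the exponent bound alone left a genuine gap; for the odd-dimensional orthogonal groups the non-adjacency to $2$ already confines $k_6(\varepsilon q)$ to tori of degree $\leqslant n-1$, and the gap closes by itself. Your approach would eventually arrive at the same place, but through a detour whose most delicate step (pinning down $J_{n,\tau}$ for both signs and both parities of $u$) is precisely the part you flag as an obstacle --- and which the paper sidesteps entirely.
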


\begin{proof}
Let $S=O_{2n}^\tau(u)$, where $n\geqslant 7$ is odd. Then $k_6(\varepsilon q)$
divides $k_n(\tau u)$ or $k_{2n-2}(u)$. In either case, $q^2/4\leqslant 2u^{n-1}$.
It follows that $$\gexp(S)\geqslant \frac{2n-1}{2}\cdot \Phi_n(\tau
u)\prod_{i=1}^{n-1}\Phi_i(u^2)>\frac{2n-1}{2}\cdot u^{3(F(n)+F(n-1))/4}.$$

Let $n\geqslant 11$. Then
$$ 10u^{3(F(n)+F(n-1))/4}<\gexp(S)\leqslant \gexp(G)<q^9<8^{9/2}u^{9(n-1)/2},$$
whence
$F(n)+F(n-1)<6n+8$. Since $F(n)+F(n-1)\geqslant n^2/2$, we conclude that
$n\leqslant 13$. Calculating $F(n)+F(n-1)$ for $n=13,11$ yields $n\geqslant 9$.

Let $n=9$. Then $q^2/4\leqslant u^8$. Also $\gexp(S)>17/2\cdot
u^{F(n)+F(n-1)}/4>2u^{50}$ since $\Phi_i(u^2)>u^{2\varphi(i)}$ for
$i=2,4,5,7$, or $8$,
$\Phi_3(u^2)\Phi_6(u^2)>u^{8}$, $\Phi_9(\tau u)>u^6/2$ and $\Phi_1(u^2)>u^2/2$.
Thus $2u^{50}<2^9u^{36}$, which is a contradiction.

Let $n=7$ and $u\geqslant 8$. Then $k_7(\tau u)<u^7/(u-1)\leqslant 8u^6/7$. It
is clear that $k_{12}(u)<u^4$. Thus $q^2/4<8u^6/7$.
Furthermore, $\gexp(S)\geqslant 13u^{30}/4$. It follows that 
$13u^{30}/4<(32/7)^{9/2}u^{27}$, whence $u^3<287$, a contradiction.

It remains to deal with the case when $n=7$ and $u\leqslant 7$. Reasoning as in the proof of 
Lemma \ref{l:small}, we calculate possible values for $k=k_6(\varepsilon q)$ and then apply Lemma \ref{l:k3}
to determine $q$. This shows that either $S=O_{14}^+(2)$, $O_{14}^+(4)$, $k=127$ and $q=19$, or
$S=O_{14}^-(3)$, $k=547$ and $q=41$. Then $\pi(S)\setminus\pi(G)$ contains 17 or
61. This contradiction completes the proof.

\end{proof}

\begin{lemma}\label{l:sym}
 $S\neq S_{2n}(u), O_{2n+1}(u)$, where $n\geqslant 5$, and $S\neq
O^\pm_{2n}(u)$, where $n\geqslant 6$ is even.
\end{lemma}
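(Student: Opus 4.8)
The plan is to argue exactly as in the proofs of Lemmas~\ref{l:small}, \ref{l:lin} and~\ref{l:orth}: play the lower bound for $\gexp(S)$ coming from Lemma~\ref{l:exp} against the upper bound $\gexp(S)\leqslant\gexp(G)<q^9$ from Lemma~\ref{l:expc3} (recall $q\geqslant 7$), using that $k_6(\varepsilon q)>q^2/4$ divides the order of some element of $S$ in order to bound $q$ in terms of $u$. For $S=S_{2n}(u)$ or $O_{2n+1}(u)$ with $n\geqslant 5$, and for $S=O_{2n}^-(u)$ with $n\geqslant 6$ even, Lemma~\ref{l:exp}(ii),(iii) gives $\gexp(S)>u^{3F(n)/2}$, while for $S=O_{2n}^+(u)$ with $n\geqslant 6$ even, Lemma~\ref{l:exp}(iv) gives $\gexp(S)=\gexp(O_{2n-1}(u))>u^{3F(n-1)/2}$. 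On the other hand, the description of $\mu(S)$ together with \cite[Table~3]{15GMPS} shows that every element order of $S$ divisible by $k_6(\varepsilon q)$ is smaller than $2u^{n}$, so $q^2/4<k_6(\varepsilon q)<2u^{n}$. Substituting, we obtain $u^{3F(n)/2}<8^{9/2}(2u^{n})^{9/2}$ (with $F(n-1)$ in place of $F(n)$ for $O_{2n}^+(u)$), hence $3F(n)/2<\tfrac{9}{2}n+c$ for an explicit constant $c$, that is, $F(n)<3n+c'$. By Lemma~\ref{l:sum_varphi} this forces $[(n+1)/2]^2<3n+c'$, so $n$ is bounded by a small constant.

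For the finitely many remaining values of $n$ I would sharpen both estimates in the manner of Lemmas~\ref{l:lin} and~\ref{l:orth}. One improves the lower bound on $\prod_{i=1}^{n}\Phi_i(u^2)$ using $\Phi_i(u^2)>u^{2\varphi(i)}$ for $i\geqslant 2$, $\Phi_1(u^2)=u^2-1>u^2/2$, and the groupings $\Phi_3(u^2)\Phi_6(u^2)>u^{8}$ and $\Phi_9(\tau u)>u^{6}/2$. One improves the bound on $q$ as follows: a short argument as in Lemma~\ref{l:lin} (using that $7\in\pi(S)$ while $7$ divides none of the relevant largest primitive divisors $k_j(u)$, their indices $j$ exceeding $6$) shows $7\notin R_3(\varepsilon q)$; moreover, arguing as in Lemma~\ref{l:lin} one gets $r_2(\varepsilon q)\in\pi(K)$, whence by nilpotence $R_3(\varepsilon q)\cap\pi(K)=\varnothing$, and then Lemmas~\ref{l:rest_k}(iv) and~\ref{l:rest_aut}(ii) force one of $k_3(\varepsilon q)$, $k_6(\varepsilon q)$ to divide a single $k_j(u)$ with $\varphi(j)$ as small as the structure of $\mu(S)$ permits, giving $q^2/4<2u^{\varphi(j)}$. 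These refinements cut the problem down to a short list of pairs $(n,u)$ with both $n$ and $u$ small.

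For the finitely many surviving pairs $(S,q)$ I would conclude just as at the end of Lemmas~\ref{l:small} and~\ref{l:orth}: list the numbers in $\mu(S)$ that $k=k_6(\varepsilon q)$ can divide, discard their prime divisors not congruent to $1$ modulo $3$ by Lemma~\ref{l:k3}(i), and solve $k=h$ for each remaining divisor $h$ using Lemma~\ref{l:k3}(iii)--(vi); this yields a short explicit list of admissible $q$, and for each entry one exhibits a prime lying in $\pi(S)$ but not in $\pi(G)=\pi(L)$, a contradiction. The main obstacle is not any single step but the bookkeeping: for each of the families $S_{2n}(u)$, $O_{2n+1}(u)$, $O_{2n}^\pm(u)$ one must pin down the precise maximal element orders divisible by primitive prime divisors of $q^6-1$, and keep the constants in the exponent inequalities tight enough that only genuinely small $(n,u)$ remain for the hand computation.
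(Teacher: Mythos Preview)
Your overall strategy matches the paper's: the coarse step (Lemma~\ref{l:exp} lower bound for $\gexp(S)$ versus $\gexp(G)<q^9$, combined with $q^2/4<k_6(\varepsilon q)\leqslant u^n+1$ to get $F(n)<3n+c$ and hence $n$ bounded) and the final step (for the few surviving small $(n,u)$, compute the admissible values of $k_6(\varepsilon q)$, solve via Lemma~\ref{l:k3}, and exhibit a prime in $\pi(S)\setminus\pi(G)$) are exactly what the paper does. The paper also streamlines the case split by reindexing $O_{2n}^+(u)$ with $n\geqslant 6$ even as $O_{2(n-1)+2}^+(u)$ with $n-1\geqslant 5$ odd, so that all four families are governed by the single bound $\gexp(S)>\tfrac{2n-1}{4}u^{3F(n)/2}$; this is a minor packaging difference from your treatment.

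Where you diverge is the middle step. For the remaining $5\leqslant n\leqslant 11$ the paper does \emph{not} invoke the $r_2(\varepsilon q)\in\pi(K)$, $R_3(\varepsilon q)\cap\pi(K)=\varnothing$, $7\notin R_3(\varepsilon q)$ machinery you import from Lemma~\ref{l:lin}. Instead it simply sharpens the constants: for $n=9,10,11$ it uses $\prod_{i=1}^n\Phi_i(u^2)>u^{2F(n)}/8$ to get $\gexp(S)>u^{2F(n)}$ and checks $2F(n)\geqslant 9+9n/2$ fails; for $n=8,7,6,5$ it tightens both the $\gexp(S)$ estimate and the bound on $q^2/4$ (e.g.\ $q^2/4\leqslant u^8/(2,u-1)$ when $n=8$, $q^2/4\leqslant 2u^6$ when $n=7$) to force $u$ into a short explicit range, and only then passes to the hand check. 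Your proposed route would also work, but it is heavier than necessary here: the symplectic/orthogonal case is numerically tame enough that no $\pi(K)$ analysis is needed, and carrying it out would roughly double the length of the argument without gain.
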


\begin{proof}
Assume the contrary. For convenience, we rewrite the group $O_{2n}^+(q)$, where $n\geqslant 6$ even,
as $O_{2n+2}(q)$, where $n\geqslant 5$ is odd. 
Then $$\gexp(S)\geqslant\frac{2n-1}{c(2,u-1)}\cdot
\prod_{i=1}^n\Phi_i(u^2)>\frac{2n-1}{4}u^{3F(n)/2},$$
where $c=(2,u-1)$ if $n=2^s$ and $c=1$ otherwise.

Since $k_6(\varepsilon q)$ divides one of the numbers $k_{2n}(u)$, $k_{n}(u)$,
$k_{2n-2}(u)$, and $k_{n-1}(u)$, we have  
$q^2/4\leqslant k_6(\varepsilon q)-1\leqslant u^n$.
Hence $u^{3F(n)/2}<2^9u^{9n/2}$, which yields $F(n)< 3n+6$. The inequality
$F(n)\geqslant [(n+1)/2]^2$ forces $n\leqslant 12$. Furthermore, $F(12)=46$,
and so $n\leqslant 11$.

Let $n=9,10,11$. Then $\gexp(S)>17/2\cdot u^{2F(n)}/8>u^{2F(n)}$, and so
$2F(n)<9+9n/2$, which is not true.

Let $n=8$. Then $\gexp(S)>15/(2,u-1)^2\cdot u^{2F(n)}/2>4u^{44}/(2,u-1)^2$ and
$q^2/4\leqslant u^8/(2,u-1)$,
and hence $4u^{44}/(2,u-1)^2<2^9u^{36}/(2,u-1)^{9/2}$. This yields $u^8<2^7$, which is 
impossible. 

Let $n=7$. Then $\gexp(S)>13/2\cdot u^{2F(n)}/2>2^{3/2}u^{36}$ and
$q^2/4\leqslant 2u^6$. It follows that $2^{3/2}u^{36}<2^{27/2}u^{27}$,
whence $u^9<2^{12}$, and so $u=2$. Then $k_6(\varepsilon q)=127$ or
$k_6(\varepsilon q)=43$. Lemma \ref{l:k3} shows
that $q=19$. But then $17\in\omega(S)\setminus\omega(G)$.

Let $n=6$. Then $\gexp(S)>11/(2,u-1)\cdot u^{2F(n)}/2=11u^{24}/2(2,u-1)$ and
$q^2/4\leqslant 2u^4$. This yields 
$u^6<(2,u-1)2^{29/2}/11$, which forces $u=2,3$. If
$u=2$, then $k_6(\varepsilon q)=31$, which is impossible.
If $u=3$, then $k_6(\varepsilon q)$ is equal to one of the numbers $73$ and
$61$, and so $q=9$ by Lemma \ref{l:k3}. This is a contradiction because $p\neq v$.

Now let $n=5$. Assume that $u\geqslant 17$. Then $\gexp(S)\geqslant 17u^{20}/4$
and $q^2/4\leqslant u^5/(u-1)\leqslant 17u^4/16$.
It follows that $17u^{20}/4< (17/4)^{9/2}u^{18}$, whence $u^2<159$, a
contradiction.
For $u\leqslant 13$, we calculate all possible values of $k_6(\varepsilon q)$ as we did 
previously and then apply Lemma \ref{l:k3} to deduce that $u=8$,
$k_6(\varepsilon q)=331$ and $q=31$. Then $11\in\pi(S)\setminus \pi(G)$, and this contradiction completes
the proof. 
\end{proof}

\section{The case of exceptional groups}

In this section we show that $S$ is not an exceptional group of Lie type. We continue 
to assume that $u$ is a power of a prime $v$ and $q\equiv \varepsilon \pmod 4$.

\begin{lemma}
 $S\neq{}^2B_2(u)$.
\end{lemma}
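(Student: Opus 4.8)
The plan is to suppose, for contradiction, that $S={}^2B_2(u)$; then $v=2$ and $u=2^{2m+1}$ with $m\geqslant1$, and we put $r=\sqrt{2u}=2^{m+1}$. Recall that $\omega({}^2B_2(u))$ consists of the divisors of $4$, $u-1$, $u-r+1$, and $u+r+1$, that the three odd numbers $u-1$, $u-r+1$, $u+r+1$ are pairwise coprime with $(u-r+1)(u+r+1)=u^2+1$, and hence that Sylow subgroups of $S$ for odd primes are cyclic while $2$ is nonadjacent in $GK(S)$ to every odd prime in $\pi(S)$.

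First I would determine which of the numbers $k_3(\varepsilon q)$, $k_4(q)$, $k_6(\varepsilon q)$ lie in $\omega(S)$. We already know $R_6(\varepsilon q)\cap(\pi(K)\cup\pi(\overline G/S))=\varnothing$, so $k_6(\varepsilon q)\in\omega(S)$. Since odd-order Sylow subgroups of $S$ are cyclic, Lemma~\ref{l:rest_aut}(iii) gives $R_3(\varepsilon q)\cap\pi(\overline G/S)=\varnothing$ and Lemma~\ref{l:rest_k}(iii) gives $R_3(\varepsilon q)\cap\pi(K)=\varnothing$, whence $k_3(\varepsilon q)\in\omega(S)$; if moreover $p\neq3$, then Lemma~\ref{l:rest_aut}(iv) similarly yields $k_4(q)=(q^2+1)/2\in\omega(S)$ and $p\in\pi(S)$ (the case $p=3$ is handled separately, as elsewhere in the paper). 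These numbers are pairwise coprime and at least $19$, and, being greater than $4$, each of them divides one of $u-1$, $u-r+1$, $u+r+1$. The key point is that they divide \emph{distinct} ones: if, say, $k_3(\varepsilon q)$ and $k_6(\varepsilon q)$ both divided the same $N$, then $N$ would be an element of $\omega(L)$ divisible by $r_3(\varepsilon q)$, hence a divisor of the unique maximal element $(q^3-\varepsilon)/2$ of $\omega(L)$ divisible by $r_3(\varepsilon q)$, and likewise a divisor of $(q^3+\varepsilon)/2$; but $\gcd((q^3-\varepsilon)/2,(q^3+\varepsilon)/2)=1$ while $N\geqslant u-r+1>1$. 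Entirely analogous (and short) $\gcd$-computations with the maximal elements of $\omega(L)$ divisible by $r_4(q)$ place $k_4(q)$ into the third slot when $p\neq3$. Thus $u-1$, $u-r+1$, $u+r+1$ fall into three separate slots.

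Next I would turn this into numeric bounds. From $k_4(q)=(q^2+1)/2<2u$ (valid when $p\neq3$) we get $u>q^2/4$, from the fact that the slot containing $k_3(\varepsilon q)$ divides $(q^3-\varepsilon)/2$ we get $u<q^3$, and $\gexp({}^2B_2(u))=4(u-1)(u^2+1)$ divides $\gexp(L)$ by Lemma~\ref{l:expc3}. Feeding these in as in the proof of Lemma~\ref{l:sym}, $q$ is confined to a short list; for each surviving $q$ one lists the divisors of $k_j(u)$ that $k_6(\varepsilon q)$ could equal, discards them by Lemma~\ref{l:k3}, and eliminates the remaining pairs $(q,u)$ by exhibiting a prime in $\pi(S)\setminus\pi(G)$.

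The hard part will be the possibility $v=2\in\pi(K)$, where Lemmas~\ref{l:frob} and \ref{l:hh} do not apply directly because the relevant module characteristic equals $v$. The plan here is first to exclude $2\notin\pi(K)$: since $2$ is adjacent to $r_3(\varepsilon q)$ in $GK(L)$ we have $2k_3(\varepsilon q)\in\omega(G)$, whereas $2k_3(\varepsilon q)\notin\omega(\Aut S)$ — indeed $2r\notin\omega({}^2B_2(u))$ for every odd $r\in\pi(S)$, and by Lemma~\ref{l:maut} together with the description of $\omega$ on cosets of field automorphisms a field automorphism cannot supply an odd factor as large as $k_3(\varepsilon q)$ — so an element of order $2k_3(\varepsilon q)$ in $G$ already forces an involution into $K$. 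Given $2\in\pi(K)$, one passes to $G/\Phi(O_2(K))$ and shows that $S$ acts faithfully on $V=O_2(K)/\Phi(O_2(K))$ (otherwise $S$ centralizes $V$ and $2k_6(\varepsilon q)\in\omega(G)$, contradicting the nonadjacency of $2$ and $r_6(\varepsilon q)$); then the Borel subgroup $[u^2]{:}(u-1)$ of $S$, a Frobenius group with a $2$-group kernel and complement of order $u-1$, does not centralize $V$, so Lemma~\ref{l:frob} applied to its preimage gives $2(u-1)\in\omega(G)$. Since the slot containing $k_3(\varepsilon q)$ divides $(q^3-\varepsilon)/2$, this forces a divisibility of the form $4(u-1)\mid q^3-\varepsilon$ (or the analogue with $k_6(\varepsilon q)$, resp.\ $k_4(q)$), which sharpens the bound on $u$ enough to feed back into the finite analysis above. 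Carrying out this last reduction carefully — and doing the parallel bookkeeping when $p=3$ — is where the genuine effort lies.
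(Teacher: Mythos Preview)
Your proposal has two genuine gaps that break the argument.

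First, you invoke Lemma~\ref{l:rest_k}(iii) to conclude $R_3(\varepsilon q)\cap\pi(K)=\varnothing$, but that part of the lemma explicitly requires $S$ to be a classical group, and ${}^2B_2(u)$ is not. So nothing yet prevents some $r_3(\varepsilon q)$ from lying in $\pi(K)$, and without this your slotting of $k_3(\varepsilon q)$, $k_4(q)$, $k_6(\varepsilon q)$ into the three pairwise coprime numbers $u-1$, $u\pm\sqrt{2u}+1$ does not go through. (Lemma~\ref{l:rest_aut}(iii) is fine, since a trivial Sylow $p$-subgroup is vacuously cyclic; but that only handles $\pi(\overline G/S)$.)

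Second, in your treatment of $2\in\pi(K)$ you apply Lemma~\ref{l:frob} to the Borel subgroup $[u^2]{:}(u-1)$ acting on the elementary abelian $2$-group $V$. But the hypothesis of Lemma~\ref{l:frob} is $(|F|,r)=1$, and here $r=2$ while the Frobenius kernel $F$ has order $u^2$, a $2$-power; the lemma simply does not apply. The other Frobenius subgroups of ${}^2B_2(u)$ with odd kernel, namely $(u\pm\sqrt{2u}+1){:}4$ and $D_{2(u-1)}$, have complements of order $4$ or $2$, so they only yield $8$ or $4$ in $\omega(G)$, which is useless. Your route to $2(u-1)\in\omega(G)$ therefore fails, and the sharpened bound on $u$ that you need at the end is not available.

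The paper's proof proceeds quite differently and avoids both issues. The two key structural facts it exploits are that $3\nmid|{}^2B_2(u)|$ while $\gexp_3(G)\geqslant 9$, and that $\gexp_2(\Aut S)=4$ while $\gexp_2(G)\geqslant 8$. The second immediately gives $2\in\pi(K)$ without any module-theoretic argument. The first forces a case split on where the prime $3$ sits (either $p=3$, or $3\in R_1(\varepsilon q)$, or $3\in R_2(\varepsilon q)$); in each case one tracks the element of order $9$ or $3k_i(\varepsilon q)$ up to $\overline G$, and Lemma~\ref{l:maut} converts this into a very strong bound of the shape $q^2<8u^{1/3}$ (since any odd element of $\omega(\overline G)\setminus\omega(S)$ has order at most $6u^{1/3}$). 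That bound, combined with $\gexp(S)\mid\gexp(G)/18$, finishes the argument cleanly without the finite case analysis you envisage.
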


\begin{proof}
Let $S={}^2B_2(u)$, where $u=2^{2m+1}\geqslant 8$. It is well known that
$\mu(S)=\{4, u-1, u+\sqrt{2u}+1, u-\sqrt{2u}+1\}$ (see, e.g., \cite{62Suz}). So
$\gexp(S)=4(u^2+1)(u-1)$. Note that $\gexp_3(G)\geqslant 9$ and $\gexp_3(S)=1$.
Since $\gexp_2(S)=\gexp_2(\Aut S)=4$ and $\gexp_2(G)\geqslant 8$, we have
$2\in \pi(K)$.

Let $p=3$. Then $\gexp_3(S)=9$. Since $18\not\in\omega(G)$, neither $K$ nor
$\overline G/S$ contains elements of order $9$. So
$\gexp_3(K)=\gexp_3(\overline G/S)=3$.
The latter equality yields $u=u_0^3$ and $3\cdot
\omega({}^2B_2(u_0))\subseteq\omega(\overline G)$. Furthermore, $k_3(\varepsilon
q)k_6(\varepsilon q)$ divides $u^2+1$, and therefore it is coprime to $u_0^2+1$.
Hence $q^4+q^2+1$ divides $(u^2+1)/(u_0^2+1)=u_0^4-u_0^2+1$, whence
$q<u_0=u^{1/3}$. The above observations about the 2- and 3-exponents of $S$ and $G$ imply that
$\gexp(S)$ divides $\gexp(G)/18$, and so $$2u^3<\gexp(S)\leqslant
\gexp(G)/18=(q^6-1)(q^2+1)/4<q^8/2<u^{8/3}/2,$$ a contradiction.

Let $3\in R_2(\varepsilon q)$. Then $2\gexp_3(S)\not\in\omega(G)$ and
$k_6(\varepsilon q)\gexp_3(S)\in\omega(G)$. It follows that
$\gexp_3(S)\not\in\omega(K)$ and $3k_6(\varepsilon q)\in\omega(\overline G)$.
By Lemma \ref{l:maut}, we have $3k_6(\varepsilon q)<6u^{1/3}$.

Now let $3\in R_1(\varepsilon q)$. Suppose that $\gexp_3(G)\in\omega(K)$ or
$r_3(\varepsilon q)\in\pi(K)$. Then $\pi(K)\subseteq R_1(\varepsilon q)\cup
R_3(\varepsilon q)$.
Furthermore, $\pi(\overline G/S)\cap(\{p\}\cup R_4(q))=\varnothing$ by Lemma
\ref{l:rest_aut}(iv). So $p(q^2+1)/2\in\omega(S)$ and
$(q^3+\varepsilon)/2\in\omega(\overline G)$. If $p(q^2+1)/2$ divides $u-1$, then
$G$ contains an element of order $3p(q^2+1)/2$ or $pr_3(\varepsilon q)r_4(q)$,
which is not the case. It follows that \begin{equation}\label{e:2b2}
p(q^2+1)/2=u+\eta\sqrt{2u}+1\end{equation} for some $\eta$, and in particular
$u\geqslant 32$. Similar reasoning shows that $k_6(\varepsilon q)$ divides
$u-\eta\sqrt{2u}+1$. Suppose that $(q^3+\varepsilon)/2\in\omega(G)$. Then
\begin{equation}\label{e:2b21} (q^3+\varepsilon)/2=u-\eta\sqrt{2u}+1.
\end{equation}

If $q=p$, then $\eta=+$. Subtracting \eqref{e:2b21} from \eqref{e:2b2} yields 
$(q-\varepsilon)/2=2\sqrt{2u}$, and so
$(q^2+1)/2=16u+4\varepsilon\sqrt{2u}+1>u+\sqrt{2u}+1$, a contradiction. If
$q>p$, then $\eta=-$ and
$$1/2>\frac{p(q^2+1)}{q^3-\varepsilon}=\frac{u-\sqrt{2u}+1}{u+\sqrt{2u}+1}
\geqslant 25/41,$$ a contradiction. It follows that
$(q^3+\varepsilon)/2\in\omega(\overline G)\setminus\omega(G)$, and by Lemma
\ref{l:maut}, we have $(q^3+\varepsilon)/2<6u^{1/3}$.
If $\gexp_3(G)\not\in\omega(K)$ and $R_3(\varepsilon q)\cap\pi(K)=\varnothing$,
then  $3k_3(\varepsilon q)\in\omega(\overline G)$, and then 
$3q^2/4<3k_3(\varepsilon q)<6u^{1/3}$.

Thus if $p\neq 3$, then $q^2<8u^{1/3}$, and so $$2u^3<\gexp(G)/18\leqslant
q^9/18<8^{9/2}u^{3/2}/18.$$
This shows that $u=2^3$ or $u=2^5$. But then
$q^2<8\cdot 2^{5/3}<32$, which is a contradiction.
\end{proof}

\begin{lemma}
 $S\neq {}^2G_2(u)$.
\end{lemma}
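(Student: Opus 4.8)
The plan is to follow the same strategy as in the treatment of ${}^2B_2(u)$, exploiting the very restrictive spectrum of the small Ree groups. Recall that $S = {}^2G_2(u)$ with $u = 3^{2m+1} \geqslant 27$ has $\mu(S) = \{9, u-1, (u+1)/2, u + \sqrt{3u} + 1, u - \sqrt{3u} + 1\}$ (and for $u=27$ one must be careful with the isomorphism ${}^2G_2(3)' \cong L_2(8)$, but $u\geqslant 27$ is genuinely the small Ree group here since $m\geqslant 1$), so $\gexp(S) = 9(u^2+1)(u-1)(u+1)/2$. Since the characteristic of $S$ is $v=3$ and we are assuming $v \neq p$, we have $p \neq 3$. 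First I would compare $3$-exponents: $\gexp_3(S) = 9$, $\gexp_3(\Aut S) = 9$, while $\gexp_3(G) = \gexp_3(L) = 9$ as well when $p=3$ — but since $p\neq3$, in fact $\gexp_3(G) = \gexp_3(L)$ is $9$ only if $p=3$; here $\gexp_3(L)$ equals $9$ precisely when $3 \mid q\mp1$ appropriately, so I would instead record the adjacencies forced by $9 \in \omega(L)$ and the position of $3$ in $GK(G)$.

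The key arithmetic input is that $k_3(\varepsilon q) k_6(\varepsilon q)$ divides $(u^2+1)/(u_0^2+1)$-type expressions, or more directly: by Lemma \ref{l:k3v}, $v=3 \notin R_6(\varepsilon q)$ and $k_3(\varepsilon q) \neq v$, and since $k_6(\varepsilon q), k_3(\varepsilon q) \geqslant 19$ are coprime to $3$ and to $u-1$ (as they are products of primitive prime divisors with the relevant $e(r,q)$), each of $k_6(\varepsilon q)$ and $k_3(\varepsilon q)$ must divide one of $u+1$, $u + \sqrt{3u} + 1$, $u - \sqrt{3u} + 1$. In particular $k_6(\varepsilon q) \leqslant u + \sqrt{3u} + 1 < 2u$, so $q^2/4 < 2u$, giving $q^2 < 8u$. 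Combined with $\gexp(S) \mid \gexp(G) < q^9$ from Lemma \ref{l:expc3} and the lower bound $\gexp(S) > \tfrac12 u^3$ (crudely), one gets $\tfrac12 u^3 < q^9 < (8u)^{9/2}$, i.e. $u^{3} < 2 \cdot 8^{9/2} u^{9/2}$, which is no contradiction by itself — so, exactly as in the ${}^2B_2$ argument, I must do better by locating $2 \in \pi(K)$ (forced since $\gexp_2(G) \geqslant 8$ by $8 \in \omega(L)$ while $\gexp_2(S) = \gexp_2(\Aut S) = 8$, and then using $16 \in \omega(L)$ vs. $16 \notin \omega({}^2G_2(u))$ to push a factor of $2$ into $K$) and then applying Lemma \ref{l:rest_k}(i) together with the existence of a Frobenius subgroup of $S$ with kernel a $3$-group and cyclic complement of order $(u-1)/2$ or $u\pm\sqrt{3u}+1$. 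This yields elements of order $2 k_3(\varepsilon q)$ or $2 k_6(\varepsilon q)$ of large size, or — combining with the position of $p$ and of $R_3(\varepsilon q), R_4(q)$ relative to $\pi(K) \cup \pi(\overline G/S)$ via Lemma \ref{l:rest_aut} and Lemma \ref{l:maut} (with $l=1$, giving $a < 2u$ or $a < 6u^{1/3}$ for odd $v'$-elements of $\Aut S$) — equations of the shape $p(q^2+1)/2 = u \pm \sqrt{3u} + 1$ and $(q^3+\varepsilon)/2 = u \mp \sqrt{3u} + 1$, which one then shows are incompatible by subtracting them and comparing magnitudes, much as \eqref{e:2b2} and \eqref{e:2b21} were handled.

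The main obstacle, as in the Suzuki case, is not any single estimate but the bookkeeping: one must split into the cases $p \in \pi(K) \cup \pi(\overline G/S)$ versus $p \notin \pi(K) \cup \pi(\overline G/S)$, and within the latter track whether $\gexp_p(G) \in \omega(K)$ or some $r \in R_3(\varepsilon q)$ lies in $\pi(K)$, in each branch producing either an element of $\omega(G)$ that contradicts $\mu(L)$ or a Diophantine relation forcing $q^2 < cu^{1/3}$ for a small constant $c$; the final step is then $\tfrac12 u^3 < \gexp(S) \leqslant \gexp(G)/c' < q^9/c' < (cu^{1/3})^{9/2}/c'$, which bounds $u$ by a small power of $2$… rather, of $3$, leaving only $u = 27$ (and possibly $u = 3^3, 3^5$) to be dispatched by hand: for these one has $k_6(\varepsilon q) \in \{7 \cdot 19, \dots\}$ explicitly (e.g. $k_3(27)$-type numbers and $u\pm\sqrt{3u}+1 = 27 \pm 9 + 1 \in\{37,19\}$), so Lemma \ref{l:k3}(iii)--(vi) pins down $q$ to a tiny list, and for each such $q$ a prime of $\pi(S)$ fails to lie in $\pi(G)$.
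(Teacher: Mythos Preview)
Your plan transplants the ${}^2B_2$ argument too literally and contains several factual slips that undermine the intended steps. The $2$-exponent of ${}^2G_2(u)$ is $2$, not $8$ (all even orders in $\mu(S)$ have $2$-part exactly $2$); the exponent is $9(u^3+1)(u-1)/4$, not $9(u^2+1)(u^2-1)/2$; and there is no Frobenius subgroup of $S$ with $3$-kernel and cyclic complement of order $u\pm\sqrt{3u}+1$ --- those tori are the anisotropic ones, and their normalizers are Frobenius with the \emph{torus} as kernel and complement of order $6$. The only Frobenius subgroup with unipotent kernel that Lemma~\ref{l:rest_k}(i) can use has complement of order dividing $u-1$, and since $k_6(\varepsilon q)$ divides $u\pm\sqrt{3u}+1$ rather than $u-1$, this does not produce the forbidden $2k_6(\varepsilon q)$ you want. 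Your proposed case split on $\gexp_p(G)\in\omega(K)$ has no clear analogue here either, since $p\neq v=3$ and there is nothing special about how $p$ sits in $\pi(S)$.

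The feature of ${}^2G_2(u)$ that the paper exploits, and which you miss, is that $v=3$ forces every multiple of $3$ in $\omega(S)$ to divide $9$ or $6$; in particular $3m\notin\omega(S)$ whenever $m>2$ is coprime to $3$. The paper first disposes of $(q^3+\varepsilon)/2\in\omega(\overline G)$ via Lemma~\ref{l:maut} and the exponent comparison (using that $\gexp_2(K)\geqslant\gexp_2(G)/2$ is large), which puts $r_2(\varepsilon q)\in\pi(K)$ and hence $R_3(\varepsilon q)\cap\pi(K)=\varnothing$. Then, with $q\equiv\eta\pmod 3$, one has $\gexp_3(G)\,k_3(\eta q)\in\omega(G)$ since both factors divide $(q^3-\eta)/2$; nilpotency of $K$ together with $2,r_2(\varepsilon q)\in\pi(K)$ shows $\gexp_3(G)\notin\omega(K)$, and $\pi(k_3(\eta q))\cap\pi(K)=\varnothing$, so $3k_3(\eta q)\in\omega(\overline G)\setminus\omega(S)$. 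Lemma~\ref{l:maut} then gives $3k_3(\eta q)<6u^{1/3}$, hence $q^2<8u^{1/3}$, and the final exponent bound kills all $u$. No Diophantine pair of equations is needed; the subtraction step you sketch is bypassed entirely by tracking the prime $3$ rather than $p$.
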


\begin{proof}
Let $S={}^2G_2(u)$, where $u=3^{2m+1}>3$. Then $\mu(S)=\{9, 6, (u+1)/2,
u-1, uq\pm \sqrt{3u}+1\}$ (see, e.g., \cite{93BrShi}). So the exponent of $S$
is equal to $9(u^{3}+1)(u-1)/4$ and the maximal order of an element is equal to
$u+\sqrt{3u}+1$.
Since $\gexp_2(S)=\gexp_2(\Aut S)$, we have
$\gexp_2(G)/2=2(q-\varepsilon)_2\in\omega(K)$.

Suppose that $(q^3+\varepsilon)/2\in\omega(\overline G)$. Then
$(q^3+\varepsilon)/2\leqslant u+\sqrt{3u}+1$ or $(q^3+\varepsilon)/2<6u^{1/3}$
by Lemma \ref{l:maut}.
In either case, $(q^3+\varepsilon)/2<3u/2$, and hence $q^3\leqslant 3u$. Since $\gexp(S)$
divides $\gexp(G)/4$, it follows that  $$4u^4<9(u^3+1)(u-1)=4\gexp(S)<
\gexp(G)<q^9\leqslant 27u^3,$$ a contradiction. Thus $r_2(\varepsilon q)\in
\pi(K)$. In particular, $R_3(\varepsilon q)\cap \pi(K)=\varnothing$.

Every number in $\omega(G)$ that is a multiple of $\gexp_3(G)$ divides
$(q^3+\varepsilon)/2$ or $(q^3-\varepsilon)/2$, and
so $\omega(G)$ does not contain $r_2(\varepsilon q)\gexp_3(G)\gexp_2(G)/2$.
Hence $\gexp_3(G)\not\in\omega(K)$. On the other hand, $\gexp_3(G)k_3(\eta
q)\in\omega(G)$, where
$q\equiv \eta \pmod 3$, and $\gexp_3(G)>3$, and therefore $3k_3(\eta
q)\in\omega(\overline G)$. It is clear that $3k_3(\eta q)\not\in\omega(S)$,
and so $3k_3(\eta q)\leqslant 6u^{1/3}$ by Lemma \ref{l:maut}, whence $q^2\leqslant
8u^{1/3}$. If $u>27$, then the last inequality yields $q^3\leqslant
8^{3/2}u^{1/2}<3u$. By the computation of the previous paragraph, this is not possible. If
$u=27$, then $q^2\leqslant 24$, which is a contradiction.

\end{proof}

\begin{lemma}
 $S\neq{}^3D_4(u)$.
\end{lemma}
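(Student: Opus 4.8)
Suppose $S={}^3D_4(u)$. Using Lemma \ref{l:spec_3d4} one computes $\gexp_{v'}(S)=(u^6-1)(u^4-u^2+1)$ (the numbers $u^4-u^2+1$, $u^3-1$, $u^3+1$, $(u^2+u+1)(u^2-1)$, $(u^2-u+1)(u^2-1)$ having this as their least common multiple), so $\gexp(S)=\gexp_v(S)\cdot(u^6-1)(u^4-u^2+1)>u^{10}$, while every element order of $S$ is smaller than $2u^4$. Since $k_6(\varepsilon q)\in\omega(S)$, Lemma \ref{l:k3}(ii) gives $q^2<7u^4$, hence $q<3u^2$; on the other hand $u^{10}<\gexp(S)\leqslant\gexp(G)<q^9$ by Lemma \ref{l:expc3}. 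These two inequalities are compatible in isolation, so the real content of the proof is to sharpen the bound on $q$ in terms of $u$.

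I would first dispose of $v\in\{2,3,5\}$, where $\gexp_v(S)\in\{8,9,25\}$, by comparing with $\gexp_v(G)=\gexp_v(L)$ computed from Lemma \ref{l:spec_c3}; this either gives an outright contradiction or reduces to the generic situation. So assume $v\geqslant 7$; then $\gexp_v(S)=v$, ${}^3D_4(u)$ has no diagonal automorphisms, and $\overline G/S$ is cyclic of order dividing $3f$ (where $u=v^f$), generated by a field automorphism. As in the proofs for ${}^2B_2(u)$ and ${}^2G_2(u)$, I would then examine the large odd coclique element orders of $G$, namely $(q^3+\varepsilon)/2$ and $3k_3(\eta q)$, where $q\equiv\eta\pmod3$. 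Suppose one of them, say $a$, lies in $\omega(\overline G)$. If $a\notin\omega(S)$, then $a$ is coprime to $v$ and, by \cite[Theorem 2]{17Gr.t} applied to the coset of a field automorphism (the analogue of Lemma \ref{l:maut} with ``$l=4$''), $a<6u^{4/3}$; hence $q=O(u^{2/3})$, and then $u^{10}<\gexp(S)\leqslant q^9=O(u^6)$ bounds $u$. If $a\in\omega(S)$, then $a$ divides one of the five maximal element orders of ${}^3D_4(u)$, and one argues as in Lemma \ref{l:small} that $k_6(\varepsilon q)$, $k_3(\varepsilon q)$ and $a$ cannot simultaneously divide pairwise nonadjacent element orders of ${}^3D_4(u)$ unless $u$ is small. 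If instead $a\notin\omega(\overline G)$, then, $K$ being nilpotent, a suitable prime --- an $r_2(\varepsilon q)$, or $p$, or a divisor of $\gexp_p(G)$ --- must lie in $\pi(K)$; combining this with a Frobenius subgroup of ${}^3D_4(u)$ whose kernel is a $v$-group and whose cyclic complement has order $(u^3-1)/(2,u-1)$ (obtained from a maximal torus acting on a long root subgroup, cf.\ \cite{04CaoChenGr.t}) and with Lemma \ref{l:rest_k}(i), one produces an element of $G$ whose order divides no member of $\mu(L)$. After this only a finite list of pairs $(q,u)$ survives: for each one I would compute the values of $k_j(u)$, $j\in\{3,6,12\}$, that $k_6(\varepsilon q)$ can divide, use Lemma \ref{l:k3} to determine $q$, and finish by exhibiting a prime in $\pi(S)\setminus\pi(G)$, exactly as in Lemmas \ref{l:small}, \ref{l:lin}, \ref{l:orth}.

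The main obstacle is that $\gexp({}^3D_4(u))$ has degree about $10$ in $u$, so --- unlike for ${}^2B_2(u)$ and ${}^2G_2(u)$, whose exponents have degree at most $4$ --- a single polynomial bound on $q$ does not suffice: one genuinely needs the field-automorphism estimate (equivalently, one must force the large element orders out of $\omega(S)$, or combinatorially exclude their placement inside $\omega(S)$), and a nontrivial finite verification still remains at the end. A secondary difficulty is that ${}^3D_4$ is twisted, so extracting a Frobenius subgroup of the correct shape and tracking the $v$-part of the various semisimple element orders --- needed to invoke Lemmas \ref{l:rest_k} and \ref{l:rest_aut} --- takes more care than in the untwisted classical cases.
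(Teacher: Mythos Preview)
Your strategy follows the exponent-bound template used for ${}^2B_2(u)$, ${}^2G_2(u)$, $F_4(u)$, $E_6^\tau(u)$, and in principle it can be pushed through, but it is only a sketch: the ${}^3D_4$ analogue of Lemma~\ref{l:maut} is not proved in the paper and would need to be established separately; the ``placement'' argument when $a\in\omega(S)$ is asserted rather than carried out; and you end with an undetermined finite verification.

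The paper's proof is entirely different and avoids all of this. The decisive observation is that for every prime $s\in\pi(S)\setminus(R_{12}(u)\cup\{v\})$ the group ${}^3D_4(u)$ has a \emph{non-cyclic abelian} $s$-subgroup (this comes from the torus structure in \cite{87DerMich}: there are tori of order $(u^2\pm u+1)^2$ and $(u^3\mp1)(u\pm1)$). Since $k_6(\varepsilon q)$ divides $u^4-u^2+1$, any prime $s$ appearing in a coclique $\{r,s,r_6(\varepsilon q)\}$ with $r\in\pi(K)$ lies outside $R_{12}(u)$, so Lemma~\ref{l:nonc} forces $rs\in\omega(G)$, a contradiction. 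This single step empties $\pi(K)\cap(\{p\}\cup R_4(q)\cup R_3(\varepsilon q))$ at once, with no exponent estimates. Then Lemma~\ref{l:rest_aut}(iv) (together with a short argument for $p=3$) rules out $\pi(\overline G/S)\cap(\{p\}\cup R_4(q))$, so $p(q^2+1)/2\in\omega(S)$. The coclique $\{r,r_3'(\varepsilon q),r_6(\varepsilon q)\}$ for every $r\in\{p\}\cup R_4(q)$ now forces $\{p\}\cup R_4(q)\subseteq R_3(\eta u)$, hence $p(q^2+1)/2$ divides $u^2+\eta u+1$; but $4(u^2+\eta u+1)\in\omega(S)$ by Lemma~\ref{l:spec_3d4} while $4\cdot p(q^2+1)/2\notin\omega(G)$, which finishes the proof.

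So the paper trades your analytic machinery for one structural fact about tori of ${}^3D_4(u)$ plus Lemma~\ref{l:nonc}, and gets a proof with no asymptotics and no residual computation. Your approach is not wrong in spirit, but as written it is incomplete, and even completed it would be substantially longer.
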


\begin{proof}
Assume the contrary. The spectrum of $^3D_4(u)$ is given in Lemma
\ref{l:spec_3d4}, this lemma implies that $k_6(\varepsilon q)$ divides
$k_{12}(u)=u^4-u^2+1$.

Let $\{r,s,r_6(\varepsilon q)\}$ be a coclique in $GK(G)$ and let $r\in\pi(K)$.
Then $s\in\pi(S)$ and $s\not\in R_{12}(u)$, and so  $S$ has a  non-cyclic
abelian $s$-subgroup (the structure of maximal tori of $S$ is described in
\cite{87DerMich}). Applying Lemma \ref{l:nonc}, we have $rs\in\omega(G)$,
a contradiction.
Thus $(\{p\}\cup R_4(q)\cup R_3(\varepsilon q))\cap \pi(K)=\varnothing$. In
particular, there exist numbers $s\in \pi(S)\cap(\{p\}\cup R_4(q))$ and
$r_3'(\varepsilon q)\in \pi(S)\cap R_3(\varepsilon q)$.

Assume that $(\{p\}\cup R_4(q))\cap\pi(\overline G/S)\neq\varnothing$. Then
$\{s, r_3(\varepsilon q), r_6(\varepsilon q)\}$ is a coclique in $GK(S)$ for
every $r_3(\varepsilon q)$, and so
$R_3(\varepsilon q)\subseteq R_3(\eta u)$ for some $\eta\in\{+1,-1\}$. Hence
$S$ has a Hall $R_3(\varepsilon q)$-subgroup and this subgroup is a product of two
isomorphic cyclic groups.
If $p\neq 3$, we have a contradiction by Lemma \ref{l:rest_aut}(iv). If
$p=3$, then $p\in R_1(u)\cup R_2(u)$ and $pr_3(\varepsilon q)\in\omega(S)$, which is
again a contradiction.

Thus $p(q^2+1)/2\in\omega(S)$. The set $\{r, r_3'(\varepsilon q), r_6(\varepsilon
q)\}$ forms a coclique in $GK(S)$ for every $r\in R_4(q)\cup\{p\}$, so
$\{p\}\cup R_4(q)\subseteq R_3(\eta u)$ for some $\eta\in\{+1,-1\}$. Hence
$p(q^2+1)/2$ divides $u^2+\eta u+1$. Then $4(u^2+\eta
u+1)\in\omega(S)\setminus\omega(G)$, a contradiction.
\end{proof}

\begin{lemma}
 $S\neq G_2(u)$.
\end{lemma}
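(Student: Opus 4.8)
The plan is to follow the pattern of the preceding lemma for $S={}^3D_4(u)$, exploiting that by Lemma~\ref{l:spec_g2} the spectrum of $G_2(u)$ is very thin: every element order divides one of $u^2+u+1$, $u^2-u+1$, $u^2-1$, $v(u-1)$, $v(u+1)$ (together with a couple of small numbers when $v\leqslant5$). Hence $\gexp(S)$ divides $v^3(u^6-1)$, and in $GK(S)$ the primes of $S$ split into the clique on $\pi(u^2+u+1)$, the clique on $\pi(u^2-u+1)$, and the part $\pi(v(u^2-1))$, with almost no edges between them.

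First I would show that $k_6(\varepsilon q)$ divides $u^2+\eta u+1$ for some $\eta\in\{+1,-1\}$. Indeed $k_6(\varepsilon q)\in\omega(S)$, all its prime divisors are congruent to $1$ modulo $3$ by Lemma~\ref{l:k3}(i), so $v\nmid k_6(\varepsilon q)$ (in fact $v\notin R_6(\varepsilon q)$ by Lemma~\ref{l:k3v}(ii)); moreover every prime dividing $u^2-1$ is adjacent to $2$ in $GK(S)$, since $G_2(u)$ has an even element order (namely $u^2-1$, or $v(u\pm1)$ when $v=2$) divisible by it, whereas $r_6(\varepsilon q)$ is not adjacent to $2$ in $GK(G)$. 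Therefore $k_6(\varepsilon q)$ divides neither $u^2-1$ nor $v(u\pm1)$ nor any of the small exceptional numbers, and the claim follows. Combined with Lemma~\ref{l:k3}(ii) this yields $q^2/4<k_6(\varepsilon q)<2u^2$, so $q<3u$.

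The core of the argument rests on the two size-$3$ cocliques $\{r_3(\varepsilon q),r_6(\varepsilon q),p\}$ and $\{r_3(\varepsilon q),r_6(\varepsilon q),r_4(q)\}$ of $GK(G)$. Using that by Lemma~\ref{l:str}(ii) at most one prime of each divides $|K|\cdot|\overline G/S|$, together with Lemma~\ref{l:nonc} (applied through the rank-$2$ tori $(u\mp1)^2$ of $G_2(u)$ for primes dividing $u^2-1$), Lemma~\ref{l:rest_k} (applied through the Frobenius subgroups of $G_2(u)$ with $v$-group kernel coming from root $SL_2(u)$-subgroups) and Lemma~\ref{l:rest_aut}(ii)--(iv), I would deduce that $\pi(K)$ is disjoint from $\{p\}\cup R_4(q)\cup R_3(\varepsilon q)$ and $\pi(\overline G/S)$ is disjoint from $\{p\}\cup R_4(q)$. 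Then $\{p\}\cup R_4(q)\subseteq\pi(S)$, so the number $p(q^2+1)$ — or $p(q^2+1)/2$ when $L\neq S_6(q)$ — lies in $\omega(S)$; since $\gcd(q^2+1,q^2\mp\varepsilon q+1)=1$, this number is coprime to both $k_3(\varepsilon q)$ and $k_6(\varepsilon q)$. Because $r_6(\varepsilon q)$ divides $u^2+\eta u+1$ and no prime of $R_4(q)\cup\{p\}$ or of $R_3(\varepsilon q)\cap\pi(S)$ may divide that same factor (by the coclique property), this number and a prime $r_3'(\varepsilon q)\in R_3(\varepsilon q)\cap\pi(S)$ get squeezed into the remaining factors $u^2-\eta u+1$ and $u^2-1$; this either makes $r_3'(\varepsilon q)$ adjacent in $GK(S)$ to $p$ or to an $R_4(q)$-prime, contradicting a coclique, or — via the above coprimalities and the bounds $k_3(\varepsilon q),k_6(\varepsilon q)>q^2/4$ — gives $u<u_0$ for an explicit $u_0$. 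The finitely many remaining pairs $(u,v)$, including $v\in\{2,3,5\}$ and the case $k_4(q)=(q^2+1)/2=v^2$ allowed by Lemma~\ref{l:k3v}(i), are then settled in the familiar way: one lists the possible values of $k_6(\varepsilon q)$ as divisors of $u^2\pm u+1$, applies Lemma~\ref{l:k3}(iii)--(v) to obtain finitely many candidate $q$, and checks $\pi(S)\not\subseteq\pi(G)$ in each.

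The main obstacle is precisely the case bookkeeping in the third step. Unlike ${}^3D_4(u)$, whose anisotropic tori $(u^2\pm u+1)^2$ are noncyclic, $G_2(u)$ has two \emph{distinct cyclic} anisotropic tori $u^2+u+1$ and $u^2-u+1$ of comparable order, and no proper parabolic subgroup of $G_2(u)$ has order divisible by a primitive prime of $u^2\pm u+1$; hence neither Lemma~\ref{l:nonc} nor the parabolic argument of Lemma~\ref{l:rest_k}(iv) reaches the primes living there, and one must split according to which of $u-1$, $u+1$, $u^2+u+1$, $u^2-u+1$ each relevant prime divides, arguing via adjacency to $2$, the coprimality relations above, and Lemma~\ref{l:maut}. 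A purely size-based approach is hopeless, since $\gexp(G_2(u))$ is only of order $u^6$ whereas $\gexp(L)<q^9$, and the spectral constraints alone force $u$ and $q$ merely to be of the same order of magnitude; the sparseness of $GK(S)$ has to do the real work.
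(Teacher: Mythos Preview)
Your outline has the right overall shape and correctly identifies the central difficulty (the two cyclic anisotropic tori of $G_2(u)$), but the logical order is wrong in a way that hides a genuine gap. You claim to first establish that $\pi(K)$ is disjoint from $\{p\}\cup R_4(q)\cup R_3(\varepsilon q)$ using Lemma~\ref{l:nonc} (for primes of $u^2-1$) and the Frobenius subgroup with complement of order $u^2-1$. But neither tool touches primes living in the anisotropic tori $u^2\pm u+1$: if, say, $p\in\pi(K)$ and every $r_3(\varepsilon q)\in\pi(S)$ happens to divide $u^2-\eta u+1$ (the torus not containing $r_6$), nothing you cite yields $pr_3\in\omega(G)$. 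The paper does \emph{not} attempt your step~3. Instead it proves a Diophantine ``incompatibility'': the two equalities $p(q^2+1)/2=u^2+\tau u+1$ and $(q^3+\varepsilon)/2=u^2-\tau u+1$ cannot hold simultaneously (direct subtraction when $q=p$, a ratio estimate when $q>p$). This single claim is the engine for everything else: it disposes of $R_3(\varepsilon q)\cap\pi(K)\neq\varnothing$ (since then $R_2(\varepsilon q)\cap\pi(K)=\varnothing$ and $p(q^2+1)/2\in\omega(S)$, forcing $p\mid u^2-1$ and then Frobenius gives the contradiction), and later of $p\in\pi(K)$ (first forcing $u^2-1\mid q^2-1$ via Frobenius, hence $r'_4(q)\in\pi(K)$, and then splitting on whether $r_2(\varepsilon q)\in\pi(K)$). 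Your last paragraph gestures at case-splitting of this kind, but you have placed it \emph{after} the disjointness claim rather than as its proof.

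Two smaller omissions: the case $p=3$ must be handled separately (the paper uses $t(3,G_2(u))=2$ to kill it in one line, but this has to be said before invoking Lemma~\ref{l:rest_aut}(iv)); and the exclusion of $k_4(q)=v^2$, which Lemma~\ref{l:k3v}(i) explicitly allows for $S=G_2(u)$, needs its own argument---the paper gives one via $u^2-\tau u+1\leqslant q^2-\varepsilon q+1$ combined with $(q^2+1)/2=v^2$, not just a finite check.
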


\begin{proof}
Let $S=G_2(u)$, where $u>2$. The spectrum of $G_2(u)$ is given in Lemma
\ref{l:spec_g2}. In particular, this lemma implies that $$\gexp(S)\geqslant
7(u^6-1)/(3,u^2-1),$$ $k_6(\varepsilon q)$ divides $u^2-\tau u+1$ for some
$\tau\in\{+1,-1\}$, and $u^2-\tau u+1$ divides $(q^3+\varepsilon)/2$.

Assume that $(q^3+\varepsilon)/2\in\omega(\overline G)\setminus\omega(S)$. Then
$(q^3+\varepsilon)/2<6u^{2/3}$, and therefore $q^3\leqslant 12u^{2/3}$. Hence
$u^6\leqslant \gexp(S)\leqslant \gexp(G)<q^9<12^3u^2$, which yields
$u\leqslant 5$. Then $\pi(\overline G/S)\subseteq \{2\}$, and so
$\omega(\overline G)\setminus\omega(S)$ cannot contain the odd number
$(q^3+\varepsilon)/2$.

We claim that the equalities $p(q^2+1)/2=u^2+\tau u+1$ and $R_2(\varepsilon
q)\cap \pi(K)=\varnothing$ are incompatible.  Assume the contrary.
Then $(q^3+\varepsilon)/2\in\omega(\overline G)$ and by the result of the previous
paragraph, it follows that $(q^3+\varepsilon)/2\in\omega(S)$. Hence
$(q^3+\varepsilon)/2=u^2-\tau u+1$.
If $p<q$, then $\tau=-$ and
$$1/2>\frac{p(q^2+1)}{q^3+\varepsilon}=\frac{u^2-u+1}{(u^2+u+1}\geqslant 7/13,$$
because $q\geqslant 7$ and $u\geqslant 3$. If $p=q$, then $\tau=+$. Subtracting
$u^2-u+1$ from $u^2+u+1$ yields $(q-\varepsilon)/2=2u$, and so
$q=4u+\varepsilon$. Then $q^2=(4u+\varepsilon)^2>u^2+u+1>q^2+1$, a contradiction.

Assume that $\pi(K)\cap R_3(\varepsilon q)\neq\varnothing$. Then
$R_2(\varepsilon q)\cap \pi(K)=\varnothing$ and $p(q^2+1)/2\in\omega(S)$.
By the above, $p(q^2+1)/2\neq u^2+\tau u+1$, and hence either $p(q^2+1)/2$ divides
$u^2-1$ (in which case $v=2$), or $p(q^2+1)/2$ divides $v(u-\eta)$ for some $\eta$ and
$v\in R_4(q)$.
In either case, $v\not\in R_3(\varepsilon q)$ and $p$ divides $u^2-1$. The group
$S$ has a Frobenius subgroup with kernel of order $u^2$ and cyclic complement of
order $u^2-1$ \cite[Lemma 1.4]{04CaoChenGr.t},
so $pr_3(\varepsilon q)\in\omega(G)$, a contradiction.

Thus $k_3(\varepsilon q)\in\omega(\overline G)$. Since $k_3(\varepsilon q)>7$,
we apply Lemma \ref{l:rest_aut}(ii) to obtain that there exists
$r'_3(\varepsilon q)\in R_3(\varepsilon q)\cap \pi(S)$.
Assume that $p=3$. Then $v\neq 3$ and $t(3,S)=2$, but $\{3,r_6(\varepsilon q),
r_3'(\varepsilon q)\}$ is a coclique in $GK(S)$, a contradiction.
By Lemma \ref{l:rest_aut}(iii, iv), it follows  that $\{p\}\cup R_4(\varepsilon q)\cup
R_3(\varepsilon q)$ is disjoint from $\pi(\overline G/S)$.

Suppose that $k_3(\varepsilon q)$ does not divide $u^2+\tau u+1$. Then
$k_3(\varepsilon q)$ divides $v(u^2-1)$, and hence every prime $r$ that is
not adjacent to $r_3(\varepsilon q)$ in $GK(G)$
does not divide $v(u^2-1)$ and does not lie in $\pi(K)$. So
$p(q^2+1)/2=u^2+\tau u+1$ and $R_2(\varepsilon q)\cap \pi(K)=\varnothing$, a
contradiction.
Thus $k_3(\varepsilon q)$ divides $u^2+\tau u+1$.

We claim that $k_4(q)\neq v^l$. Otherwise $v\geqslant 5$ and $l=2$ by Lemma
\ref{l:k3v}, and so $(q^2-1)/2=v^2-1$. In particular, $q+\varepsilon$ divides
$v^2-1$, and therefore
$(u^2-\tau u+1, q+\varepsilon)$ divides $(u^2-\tau u+1, v^2-1)$. The last
number does not exceed 3. Since $(u^2-\tau u+1)_3\leqslant 3$ and
$(q^2-\varepsilon q+1)_3=3$ if $(q+\varepsilon,3)=3$, we conclude that $u^2-\tau
u+1$ divides $q^2-\varepsilon q+1$.
So
$$u^2-\tau u+1\leqslant q^2-\varepsilon q+1<2(q^2+1)=4v^2,$$
whence $u=v$. Then $$u^2=(q^2+1)/2<5(q^2-q+1)/8\leqslant 5(u^2+u+1)/8,$$ which
is a contradiction because $u\geqslant 5$.
Thus we can take $r_4'(q)\in R_4(q)\setminus\{v\}$. 

Suppose that $p\in \pi(K)$. Then $p(u^2-1)\in\omega(G)$. Since $u^2-1$ is
divisible by $3$ or $8$ and $(q^2+1)/2$ is not divisible by any of these
numbers, we have that $u^2-1$ divides $q^2-1$.
So $r_4'(q)\in\pi(K)$. If $R_2(\varepsilon q)\cap \pi(K)\neq \varnothing$, then
$pr_2(\varepsilon q)r_4(q)\in\omega(G)\setminus \omega(S)$. Hence
$u^2-\tau u+1=(q^3+\varepsilon)/2$. It follows that $(u^2-1, (q+\varepsilon)/2)$
divides $(u^2-1, u^2-\tau u+1)=(3,u+\tau)$, and so $u^2-1$ divides
$6(q-\varepsilon)$. Then
$$q^3-1\leqslant 2(u^2-\tau u+1)<4(u^2-1)\leqslant 24(q+1),$$ a contradiction.

Thus $p\in \pi(S)$ and $p$ divides $u^2-1$. If $r_4'(q)\in\pi(K)$, then
$r_4'(q)(u^2-1)\in\omega(G)$.
If $r_4'(q)\in\pi(S)$, then $r_4'(q)$ divides $u^2-1$. In either case, one of the
numbers $3pr_4'(q)$ and $4pr_4'(q)$ lies in $\omega(G)$. This contradiction completes the proof.

\end{proof}

\begin{lemma}
 $S\neq F_4(u)$.
\end{lemma}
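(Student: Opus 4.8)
The plan is to follow the template of the $G_2(u)$ and ${}^3D_4(u)$ cases: bound $\gexp(F_4(u))$ from below, pin down where $k_6(\varepsilon q)$ can sit inside $\omega(F_4(u))$, and then show that the resulting arithmetic constraints, together with the fact that $\gexp(S)$ divides $\gexp(G)<q^9$, leave only finitely many $u$, all of which are excluded. From Lemma~\ref{l:spec_f4} one reads off that $\gexp_{v'}(F_4(u))$ is, up to a factor dividing $2$, the value at $u$ of $\Phi_1\Phi_2\Phi_3\Phi_4\Phi_6\Phi_8\Phi_{12}$, so Lemma~\ref{l:cycl} gives
$$\gexp(F_4(u))>\tfrac38\,u^{16}.$$
On the other hand, inspecting the spectrum in Lemma~\ref{l:spec_f4}: when $v$ is odd the number $u^4+1$ has $2$-part equal to $2$, so every one of its odd prime divisors is joined to $2$ in $GK(F_4(u))$, while if $v=2$ then $u^4+1$ is odd; in either case the only vertices of $GK(F_4(u))$ nonadjacent to $2$ lie in $R_{12}(u)$, together with $R_8(u)$ when $v=2$. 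Since no prime of $R_6(\varepsilon q)$ is adjacent to $2$ in $GK(G)=GK(L)$ and $\omega(S)\subseteq\omega(G)$, we conclude $R_6(\varepsilon q)\subseteq R_{12}(u)$, or $R_6(\varepsilon q)\subseteq R_8(u)$ with $v=2$. Moreover the numbers in $\omega(L)$ divisible by $r_6(\varepsilon q)$ are exactly the divisors of $(q^3+\varepsilon)/2$, while those in $\omega(F_4(u))$ divisible by a prime of $R_{12}(u)$ (resp.\ $R_8(u)$) are the divisors of $\Phi_{12}(u)=u^4-u^2+1$ (resp.\ $\Phi_8(u)=u^4+1$); hence $\Phi_{12}(u)$ (or $\Phi_8(u)$) divides $(q^3+\varepsilon)/2$, and in particular $k_6(\varepsilon q)$, which exceeds $q^2/4$ by Lemma~\ref{l:k3}, divides that cyclotomic value, so $q<3u^2$.

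Next I would analyse the element order $(q^3+\varepsilon)/2\in\mu(L)$ itself. Let $g\in G$ have this order and let $n$ be the order of its image in $\overline G$; since $R_6(\varepsilon q)\cap\pi(K)=\varnothing$, the full $R_6(\varepsilon q)$-part survives, so $k_6(\varepsilon q)\mid n$ and $n>q^2/4$, and $n$ is odd and essentially prime to $v$. If $n\notin\omega(F_4(u))$, Lemma~\ref{l:maut} (with $l=4$) gives $n<6u^{4/3}$, whence $q^2<24u^{4/3}$ and then $\tfrac38u^{16}<\gexp(G)<q^9<(5u^{2/3})^9$, forcing $u\le 5$; but for these $u$ the group $\overline G/S$ is a $2$-group, so the odd number $n$ already lies in $\omega(S)$, a contradiction. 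Thus $n\in\omega(F_4(u))$, so $n$ divides $\Phi_{12}(u)$ or $\Phi_8(u)$. To upgrade this to the bound $q^3\lesssim u^4$ one must exclude the possibility that the nilpotent radical $K$ or the outer part $\overline G/S$ hides a factor of size $\asymp q$ in the passage from $g$ to its image; here I would run the coclique machinery of Lemmas~\ref{l:str}, \ref{l:nonc}, \ref{l:rest_k} and \ref{l:rest_aut}, exactly as in the $G_2(u)$ and ${}^3D_4(u)$ arguments, to force $(\{p\}\cup R_4(q)\cup R_3(\varepsilon q))\cap\pi(K)=\varnothing$ and the relevant primes out of $\overline G/S$, so that $p(q^2+1)/2\in\omega(F_4(u))$ as well. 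Matching $p(q^2+1)/2$ against Lemma~\ref{l:spec_f4} — it must divide one of $v(u^3\pm1)$, $v(u^2+1)(u\pm1)$ or $(u^4-1)/(2,u-1)$ — yields a second equation which, combined with $\Phi_{12}(u)\mid(q^3+\varepsilon)/2$, is incompatible on size grounds and reduces $u$ to a handful of small values.

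For the remaining $u$ the argument finishes directly. If $u\le 3$ then, since $v$ is the characteristic, $k_6(\varepsilon q)$ divides $\Phi_{12}(u)\in\{13,73\}$ (the value $\Phi_8(u)$ enters only when $v=2$, i.e.\ $u=2$, where $\Phi_8(2)=17$), so from $k_6(\varepsilon q)\ge 19$ we get $k_6(\varepsilon q)=73$, and then Lemma~\ref{l:k3}(iv) gives $q=9$, contradicting $v\ne p$; and if the cruder reduction only yields $u\le 5$, the cases $u=4,5$ are killed because $\pi(F_4(u))\not\subseteq\pi(G)$ (for instance $31\in\pi(F_4(5))$, while $31$ divides $|L|$ for none of the groups $L\in\{S_6(27),O_7(27),O_8^+(27)\}$ permitted by $k_6(\varepsilon q)=\Phi_{12}(5)=601$). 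The main obstacle is precisely the passage to $q^3\lesssim u^4$: the plain exponent comparison only gives $q\lesssim u^2$, which is consistent with $\gexp(F_4(u))\asymp u^{16}$ dividing $\gexp(G)\asymp u^{18}$, so one genuinely needs the fine prime-graph bookkeeping to determine which maximal element orders of $L$ and of $F_4(u)$ are forced to coincide.
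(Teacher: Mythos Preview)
Your outline has the right overall shape, but the crucial step is not ``exactly as in the $G_2(u)$ and ${}^3D_4(u)$ arguments.'' In ${}^3D_4(u)$ the argument via Lemma~\ref{l:nonc} works because the \emph{only} odd primes in $\pi(S)$ with cyclic Sylow subgroups lie in $R_{12}(u)$; so in any coclique $\{r,s,r_6(\varepsilon q)\}$ with $r\in\pi(K)$, the prime $s$ automatically falls outside $R_{12}(u)$ and Lemma~\ref{l:nonc} applies. In $F_4(u)$ there are \emph{two} such classes, $R_{12}(u)$ and $R_8(u)$, and they are nonadjacent to each other in $GK(F_4(u))$. Hence nothing prevents, say, $r_6(\varepsilon q)\in R_{12}(u)$ while both $p$ and some $r_4(q)$ lie in $R_8(u)$; in that situation Lemma~\ref{l:nonc} gives no contradiction (and $pr_4(q)\in\omega(G)$, so their adjacency in $GK(S)$ is harmless). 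Your plan to force $(\{p\}\cup R_4(q)\cup R_3(\varepsilon q))\cap\pi(K)=\varnothing$ by the same mechanism therefore fails precisely here, and with it the passage to $p(q^2+1)/2\in\omega(S)$ and the ``upgrade to $q^3\lesssim u^4$.''

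The paper closes this gap with a genuinely different ingredient: $F_4(u)$ is \emph{unisingular} in every cross characteristic, and moreover every element of order $u^4-u^2+1$ lies in a ${}^3D_4(u)$ subgroup and hence fixes a nonzero vector in any such module. This gives $r\cdot r_6(\varepsilon q)\in\omega(G)$ for every $r\in\pi(K)\setminus\{v\}$ at once, forcing $\pi(K)\subseteq R_2(\varepsilon q)$ (or the $u^4+1$ alternative with $v=2$) without any case analysis on Sylow structure. From there the paper proceeds not by your proposed size comparison with $\Phi_{12}(u)\mid(q^3+\varepsilon)/2$, but by first showing $b=(q^3+\varepsilon)/2\notin\omega(\overline G)$ via the sharper bound $\gexp(S)\geqslant 13(u^{12}-1)(u^4+1)/(2,u-1)^2$ (your $(3/8)u^{16}$ is too weak for this step), then deducing $r_2(\varepsilon q)\in\pi(K)$, pinning $k_3(\varepsilon q)$ into $R_8(u)$, and finishing with a careful analysis of where $p(q^2+1)/2$ can sit when $v\in R_4(q)$ versus $v\notin R_4(q)$. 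So both the key structural lemma and the quantitative endgame differ from what you sketch.
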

\begin{proof}
Let $S=F_4(u)$. By Lemma \ref{l:spec_f4}, it follows that
$$\gexp(S)\geqslant 13\cdot\frac{(u^{12}-1)(u^4+1)}{(2,u-1)^2}$$ and
$k_6(\varepsilon q)$ divides $u^4-u^2+1$ or $u^4+1$, and $v=2$ in the latter case.

The group $S$ is unisingular \cite{03GurTie}. Furthermore, every element of $S$
of order $u^4-u^2+1$ lies in a subgroup isomorphic to $^3D_4(q)$, and so it has 
a fixed point in every module in characteristic different from $v$ \cite[Proposition
2]{13Zav.t}. This implies that either $\pi(K)\subseteq R_2(\varepsilon q)$, or
$k_6(\varepsilon q)$ divides $u^4+1$ and $v=2$.
In both cases, $v\not\in \pi(K)$ since otherwise $v$ is adjacent to every number
in $\pi(S)\setminus\{v\}$, and then by Lemmas \ref{l:rest_k}(ii) and
\ref{l:rest_aut}(v),  it is adjacent to every number in $\pi(G)\setminus\{v\}$
in $GK(G)$.

If $(3,q+\varepsilon)=3$, then $k_6(\varepsilon
q)-1=(q-2\varepsilon)(q+\varepsilon)/3$ is not divisible by 4, so
$k_6(\varepsilon q)$ cannot be equal to $u^4-u^2+1$, and if $v=2$, then it
cannot be equal to $u^4+1$ either.
Hence $q^2-q+1\leqslant u^4+1$, and  therefore
\begin{equation}\label{e:f4qu}q^2\leqslant  7u^4/6.\end{equation}

Suppose that $b=(q^3+\varepsilon)/2\in\omega(S)$. Then $b=u^4-u^2+1$, or $v=2$
and $b=u^4+1$. In either case,
$$\gexp(S)/b\geqslant \frac{13(u^{12}-1)}{4}> \frac{13u^{12}}{5}.$$
On the other hand, $\gexp(G)/b\leqslant 6q^6/5$ by Lemma \ref{l:expc3}. In view
of \eqref{e:f4qu}, it follows that $13u^{12}<6q^6\leqslant 7^3u^{12}/6^2$. This is
a contradiction since $13\cdot6^2>7^3$.

Suppose that $b\in\omega(\overline G)$. By the results of the preceding paragraph and
Lemma \ref{l:maut}, we have $b<6u^{4/3}$, and so $q^3\leqslant
12u^{4/3}$. Then $$3u^{16}<\gexp(S)\leqslant \gexp(G)<q^9<12^3u^4,$$ a
contradiction.

Thus $r_2(\varepsilon q)\in\pi(K)$, and hence $R_3(\varepsilon
q)\cap\pi(K)=\varnothing$. Since $k_3(\varepsilon q)>7$, the set
$R_3(\varepsilon q)\cap\pi(S)$ is not empty.
Let $r_3'(\varepsilon q)\in R_3(\varepsilon q)\cap\pi(S)$. By Lemma
\ref{l:rest_k}(iv), the number $r_3'(\varepsilon q)$ cannot divide the order of
a proper parabolic subgroup. It  cannot divide $u^4-u^2+1$ by the above reasoning either.
Hence $r_3'(\varepsilon q)$ divides $u^4+1$ and $k_6(\varepsilon q)$ divides
$u^4-u^2+1$, and so $\pi(K)\subseteq R_2(\varepsilon q)$.

Observe that $p\neq 3$ because $9\in\omega(S)\setminus\mu(S)$. Lemma
\ref{l:rest_aut}(iv) yields $(p\cup R_4(q))\cap\pi(\overline
G/S)=\varnothing$. Hence $a=p(q^2+1)/2$ lies in $\omega(S)$.
If $v\not\in R_4(q)$, then $a$ divides a number that is a multiple of $u^2-1$, which is
impossible since one of the numbers $a$ and $2a$ lie in $\mu(G)$ and neither $3$ nor $2$ divides $a$. If $v\in R_4(q)$,
then $v\geqslant 5$ and $u\equiv 1\pmod 4$, so $a$ divides one of the numbers
$25(u+1)$, $v(u^2+1)(u+1)$ and $v(u^3+1)$. If $p\in R_2(u)\cup R_4(u)$, then one
of the numbers $v(u^2-1)$ and $v(u^2+1)(u-1)$ lies in
$\omega(S)\setminus\omega(G)$. This shows that $p(q^2+1)/2=v(u^3+1)/2$. Then
$$vu^{16}/4<\gexp(S)\leqslant \gexp(G)<a^4<v^4u^{12},$$  whence
$u^4<4v^3$, which is a contradiction since $v\geqslant 5$.
\end{proof}

\begin{lemma} $S\neq E_6^\tau(u)$.
\end{lemma}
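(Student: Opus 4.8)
Suppose $S=E_6^\tau(u)$, so that $l=6$ in Lemma~\ref{l:maut}, and by Lemma~\ref{l:exp_e}(iii) we have $\gexp(S)>u^{22}$. The plan is to run the argument parallel to the proof of the previous lemma (the case $S=F_4(u)$). The first task is to locate $k_6(\varepsilon q)$ inside $\omega(S)$: since $R_6(\varepsilon q)$ is disjoint from $\pi(K)\cup\pi(\overline G/S)$, we have $k_6(\varepsilon q)\in\omega(S)$, hence all primes of $R_6(\varepsilon q)$ divide the order of a common element of $S$, and so a common maximal torus order. Using the description of the maximal tori of $E_6^\tau(u)$ from~\cite{91DerFak}, together with the facts that every prime of $R_6(\varepsilon q)$ is congruent to $1$ modulo $3$ (Lemma~\ref{l:k3}(i)) and coprime to $v$ (Lemma~\ref{l:k3v}), one checks that $k_6(\varepsilon q)$ divides a single product of cyclotomic polynomials $\Phi_j(u)$ of total degree at most $6$; in particular $k_6(\varepsilon q)<3u^6$, and with $k_6(\varepsilon q)>q^2/4$ this yields $q^2<12u^6$. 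Here the indices $j$ that occur differ for the two values of $\tau$: $E_6(u)$ carries the factors $\Phi_5$ and $\Phi_9$, whereas ${}^2E_6(u)$ carries $\Phi_{10}$ and $\Phi_{18}$, and this distinction must be carried along throughout.

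Next I treat the maximal element order $b=(q^3+\varepsilon)/2$ of $L$. If $b\in\omega(S)$, then $b$ divides a maximal torus order $T$ of $S$; since $T$ divides $\gexp_{v'}(S)$ and $T<2u^7$, we get $\gexp(S)/b\geqslant\gexp(S)/T>u^{15}$, whereas Lemma~\ref{l:expc3} gives $\gexp(G)/b<6q^6/5$, and in view of $q^2<12u^6$ this is a contradiction once $u$ is large; the finitely many remaining $u$ are handled directly by listing the admissible values of $k_6(\varepsilon q)$ and applying Lemma~\ref{l:k3}. If $b\notin\omega(S)$ but $b\in\omega(\overline G)$, then by Lemma~\ref{l:maut} either $b<2u^6$ or $b<6u^2$, so in any case $q^3$ is small compared with $u^6$ and the inequality $u^{22}<\gexp(S)\leqslant\gexp(G)<q^9$ fails for all but finitely many $u$, which are again dispatched by Lemma~\ref{l:k3}. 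Therefore $r_2(\varepsilon q)\in\pi(K)$, and since $K$ is nilpotent it follows that $R_3(\varepsilon q)\cap\pi(K)=\varnothing$; as $k_3(\varepsilon q)>7$ we may fix $r_3'(\varepsilon q)\in R_3(\varepsilon q)\cap\pi(S)$.

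To complete the proof I bring in $a=p(q^2+1)/2$. Since $E_6^\tau(u)$ has elements of order $9m$ with $m>1$ while $9\in\mu(G)$, we get $p\neq3$, and then Lemma~\ref{l:rest_aut}(iii),(iv) forces $(\{p\}\cup R_4(q)\cup R_3(\varepsilon q))\cap\pi(\overline G/S)=\varnothing$; in particular $a\in\omega(S)$, and, choosing $r_4'(q)$ as in Lemma~\ref{l:small} (an element of $R_4(q)\setminus\{v\}$, or $v$ itself when $k_4(q)$ is a power of $v$, in which case $S=E_6^\tau(u)$ is excluded by Lemma~\ref{l:k3v}, so this last alternative does not arise), the sets $\{p,r_3'(\varepsilon q),r_6(\varepsilon q)\}$ and $\{r_4'(q),r_3'(\varepsilon q),r_6(\varepsilon q)\}$ are cocliques in $GK(S)$. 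As in Lemma~\ref{l:small}, let $a_4,a_3,a_6$ be the numbers of $\mu(S)$ divisible respectively by $pr_4'(q)$, by $r_3'(\varepsilon q)$, and by $r_6(\varepsilon q)$; then $a_4$ divides $p(q^2+1)$, $a_3$ divides $(q^3-\varepsilon)/2$, and $a_6$ divides $(q^3+\varepsilon)/2$, so $(a_i,a_j)$ divides $2$ for all distinct $i,j\in\{4,3,6\}$. Running through the maximal torus orders of $E_6^\tau(u)$ that can be $a_4,a_3,a_6$ — using the inventory from~\cite{91DerFak} and distinguishing $\tau=+$ from $\tau=-$ as above — two of them will be forced to share a factor $(u-\tau)/(6,u-\tau)$ or $(u+\tau)$ (as happens for every maximal torus except the few whose order is built from $\Phi_3,\Phi_6,\Phi_9,\Phi_{12},\Phi_{18}$ only, which are matched against the specific primes to yield a contradiction on their own), and then the constraint $(a_i,a_j)\mid 2$ forces $u-\tau$ or $u+\tau$ to divide a small number, so $u$ is bounded; the finitely many surviving $S$ are eliminated by computing $k_6(\varepsilon q)$ with Lemma~\ref{l:k3} and checking $\pi(S)\not\subseteq\pi(G)$.

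The hard part is the first step, together with the torus bookkeeping in the last one. Because $E_6^\tau(u)$ has the largest torus rank among the groups under consideration, the bare exponent comparison $\gexp(S)>u^{22}$ against $\gexp(G)<q^9$ is too weak to close the argument, and one is forced to couple it with the sharper bounds $\gexp(G)<6q^6b/5$ and $\gexp(G)<a^4$ of Lemma~\ref{l:expc3} and with a precise, $\tau$-dependent inventory of the maximal tori of $E_6^\tau(u)$ and of which of the primes $r_3(\varepsilon q)$, $r_4(q)$, $r_6(\varepsilon q)$, $p$ can divide their orders. The graph automorphism of $E_6(u)$ enters only through $\pi(\overline G/S)$ and is kept under control by Lemma~\ref{l:rest_aut}.
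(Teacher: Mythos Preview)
Your step 2 does not produce the claimed contradiction. In the subcase $b\in\omega(S)$ you bound $\gexp(S)/b>u^{15}$ via $b\leqslant T<2u^7$, and compare with $\gexp(G)/b<6q^6/5$; but your only control on $q$ is the degree-six estimate $q^2<12u^6$, and substituting gives $u^{15}<(6/5)\cdot12^3\cdot u^{18}$, which is vacuous. More generally, locating $k_6(\varepsilon q)$ in an arbitrary torus of $E_6^\tau(u)$ yields at best $q^2\lesssim u^6$, hence $q^9\lesssim u^{27}$, and since $\gexp(S)$ is only known to exceed $u^{22}$, no inequality of the shape $u^{22}<\gexp(G)<Cq^9$ can close the gap. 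Your step 3 is only a sketch and has its own problem: the hypotheses of Lemma~\ref{l:rest_aut}(iii),(iv) require the relevant Sylow subgroups of $S$ to be products of at most two isomorphic cyclic groups, which you have not verified for $E_6^\tau(u)$ and which fails, for instance, for primes dividing $u-\tau$.

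The paper's proof avoids both the weak degree-six bound and the torus bookkeeping. Since elements of $R_6(\varepsilon q)$ are not adjacent to $2$ in $GK(S)$, the number $k_6(\varepsilon q)$ must divide one of $u^4+1$, $u^4-u^2+1$, or $k_9(\tau u)$. If $k_6(\varepsilon q)$ or $k_3(\varepsilon q)$ divides $u^4+1$ or $u^4-u^2+1$, then $q^2\lesssim u^4$, and now $u^{22}<q^9\lesssim u^{18}$ forces $u\leqslant4$, dispatched by Lemma~\ref{l:k3}. Otherwise $k_6(\varepsilon q)$ divides $k_9(\tau u)$; then $b\leqslant2u^6$ together with $\gexp(G)<5b^3$ from Lemma~\ref{l:expc3} gives $u^{22}<40u^{18}$, so $u=2$, again finished directly, and hence $r_2(\varepsilon q)\in\pi(K)$. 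The decisive step is then Lemma~\ref{l:rest_k}(iv): it forces every $r\in R_3(\varepsilon q)\cap\pi(S)$ to avoid all proper parabolics of $S$, hence to lie in $R_8(u)\cup R_{12}(u)$, and Lemma~\ref{l:rest_aut}(ii) then shows that $k_3(\varepsilon q)$ itself divides $u^4+1$ or $u^4-u^2+1$, reducing to the first case. This conversion of a degree-six bound into a degree-four bound via the parabolic argument is the idea you are missing.
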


\begin{proof}

Assume that $S=E_6^\tau(u)$. Then $k_6(\varepsilon q)$ divides one of the
numbers $u^4+1$, $u^4-u^2-1$, and $k_9(\tau u)$, with $v=2$ in the first case.

Suppose that at least one of the numbers $k_6(\varepsilon q)$ and
$k_3(\varepsilon q)$, say $k$, divides $u^4+1$ or
$u^4-u^2+1$. Then
$2q^2/7\leqslant u^4$, whence $q\leqslant (7/2)^{1/2}u^2$. Lemma \ref{l:exp_e}
yields  $$u^{22}<\gexp(S)\leqslant \gexp(G)<q^9\leqslant
(7/2)^{9/2}u^{18}.$$ Hence $u^4<(7/2)^{9/2}<281$, and so $u\leqslant 4$.
We know that $k\geqslant 19$ and every prime divisor of $k$ is congruent to 1
modulo 3. It follows that either $u=4$ and $k=241$, or $u=3$ and $k=73$. Now
Lemma \ref{l:k3} implies that $u=3$, $k=73$ and $q=9$. This a contradiction with the assumption
$v\neq p$.

Thus we can assume that $k_6(\varepsilon q)$ divides $k_9(\tau u)$.
Suppose that $b=(q^3+\varepsilon)/2$ lies in $\omega(\overline G)$. Then it does not exceed $2u^6$ by Lemma \ref{l:maut}, and so by Lemma \ref{l:expc3}, we have 
$$u^{22}<\gexp(S)\leqslant \gexp(G)<5(2u^6)^3,$$ which yields $u^4<40$. This forces $u=2$, and hence $k_6(\varepsilon q)$ is equal to $73$ if $\tau=+$, and $19$ if $\tau=-$. 
Lemma \ref{l:k3} shows that then $q=9$ or $q=7$
respectively. In either case, we see that  $17\in\omega(S)\setminus\omega(G)$, a contradiction. Thus 
$(q^3+\varepsilon)/2$ does not lie in $\omega(\overline G)$. Then $r_2(\varepsilon q)\in\pi(K)$ and $R_3(\varepsilon q)\cap \pi(K)=\varnothing$.

Let $r\in R_3(\varepsilon q)\cap\pi(S)$. If $v\not\in R_2(\varepsilon q)$, then Lemma \ref{l:rest_k} implies that $r$ cannot divide the order of a proper parabolic subgroup of $S$.
If $v\in R_2(\varepsilon q)$, then $rv\not\in\pi(S)$. In either case, $r$ divides $u^4+1$ or $u^4-u^2+1$. In particular, $7\not\in R_3(\varepsilon q)$ because $7\in \pi(S)$ and $7$ divides $v(u^6-1)$.
By Lemma \ref{l:rest_aut}(ii), we conclude that $k_3(\varepsilon q)$ lies in $\omega(S)$ and divides $u^4+1$ or $u^4-u^2+1$. We argued previously that this is impossible, and so the proof is complete. 
\end{proof}

\begin{lemma} $S\neq E_7(u), E_8(u)$, $^2F_4(u)$.

\end{lemma}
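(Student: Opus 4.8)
The plan is to eliminate all three families at once by the sandwiching technique already used for the smaller exceptional groups. Write $S$ over a field of $u$ elements, where $u$ is a power of the prime $v\neq p$. Two facts are played against each other. First, $\gexp(S)$ divides $\gexp(G)$ and $\gexp(G)<q^9$ by Lemma~\ref{l:expc3}, so the lower bounds $\gexp(E_8(u))>2u^{80}$ and $\gexp(E_7(u))>3u^{48}$ from Lemma~\ref{l:exp_e}, together with the value $\gexp({}^2F_4(u))=16(u^6+1)(u^3+1)(u-1)/3$, force $q$ to be very large relative to $u$. Second, $k_6(\varepsilon q)\in\omega(S)$ (noted just after Lemma~\ref{l:a}), and $k_6(\varepsilon q)$ is coprime to $v$ by Lemma~\ref{l:k3v}; hence it divides the exponent, and a fortiori the order, of a maximal torus of $S$. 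Since $E_7(u)$ and $E_8(u)$ have rank $7$ and $8$, while every maximal torus of ${}^2F_4(u)$ has order less than $2u^2$, this gives $k_6(\varepsilon q)\leqslant(u+1)^7$, $k_6(\varepsilon q)\leqslant(u+1)^8$, and $k_6(\varepsilon q)<2u^2$ respectively. Combined with $k_6(\varepsilon q)>16q^2/51$ from Lemma~\ref{l:k3}, these make $q$ small relative to $u$, and I expect the two estimates to be incompatible except for a short finite list of pairs $(S,q)$.

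For $S=E_8(u)$ the first fact gives $q>u^{80/9}$, hence $k_6(\varepsilon q)>16u^{160/9}/51$; since $16u^{160/9}/51>(u+1)^8$ for every $u\geqslant 2$, this is a contradiction and no value of $u$ survives. For $S=E_7(u)$ I get $q>u^{16/3}$, hence $k_6(\varepsilon q)>16u^{32/3}/51$, while $k_6(\varepsilon q)\leqslant(u+1)^7$; the inequality $16u^{32/3}/51>(u+1)^7$ holds for all $u\geqslant 3$, so only $u=2$ remains. For $u=2$ the exponent bound forces $q\geqslant 47$, and as soon as $q$ is large enough that $k_6(\varepsilon q)>2187=(2+1)^7$ we are again done; thus only the finitely many odd prime powers $q$ with $47\leqslant q\leqslant 81$ have to be treated, and for each of them I would compute $k_6(\varepsilon q)$ and verify that it is not in $\omega(E_7(2))$ --- either it has a prime divisor outside $\pi(E_7(2))$, or some prime power $r^a$ dividing it exceeds $\gexp_r(E_7(2))$ --- contradicting $R_6(\varepsilon q)\subseteq\pi(S)$ and $k_6(\varepsilon q)\in\omega(S)$.

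For $S={}^2F_4(u)$, so $u=2^{2m+1}\geqslant 8$ and $v=2$, one has $\gexp(S)>14u^{10}/3$, while the largest maximal torus of ${}^2F_4(u)$ has order $u^2+u\sqrt{2u}+u+\sqrt{2u}+1<2u^2$, so $k_6(\varepsilon q)<2u^2$. Together with $k_6(\varepsilon q)>16q^2/51$ and $q^9>\gexp(S)>14u^{10}/3$ this forces $u<900$, that is $u\in\{8,32,128,512\}$. For each of these four fields the same two inequalities bound $q$ both above and below, leaving only finitely many $q$; for each surviving $q$ I would determine $k_6(\varepsilon q)$ via Lemma~\ref{l:k3} and check that $k_6(\varepsilon q)\notin\omega({}^2F_4(u))$ --- either a prime divisor of $k_6(\varepsilon q)$ is missing from $\pi({}^2F_4(u))$, or no single element of ${}^2F_4(u)$ has order divisible by all its prime divisors --- exactly as at the ends of Lemmas~\ref{l:small}--\ref{l:orth}. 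The Tits group ${}^2F_4(2)'$ does not occur, since no $k_6(\varepsilon q)$ is a product of primes from $\{2,3,5,13\}$ of size at least $19$.

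The hard part is not the size estimates --- the large gaps in the exponents make those mechanical --- but the handful of smallest fields, namely $u=2$ for $E_7$ and $u\in\{8,32,128,512\}$ for ${}^2F_4$, where the torus-order bound is comparable to $16q^2/51$ and the pure counting argument stalls. There one must pass to the arithmetic of $k_6(\varepsilon q)$: solve $k_6(\varepsilon q)=h$ for the finitely many candidate element orders $h$ of $S$ using Lemma~\ref{l:k3}, and then compare $\pi(S)$ (and, where the prime sets coincide, $\omega(S)$) with $\pi(L)$ and $\omega(L)$. Beyond careful bookkeeping I do not anticipate any new difficulty.
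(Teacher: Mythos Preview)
Your overall strategy---sandwiching $q$ between a lower bound coming from $\gexp(S)\mid\gexp(G)<q^9$ and an upper bound coming from $k_6(\varepsilon q)\in\omega(S)$---is exactly the paper's, and your argument is correct. The difference lies in the upper bound you use for $k_6(\varepsilon q)$.

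You bound $k_6(\varepsilon q)$ by the largest torus order, roughly $(u+1)^r$ for $E_7$ and $E_8$, and by $2u^2$ for ${}^2F_4$. The paper instead exploits a fact already established before Lemma~\ref{l:rest_k}: every prime in $R_6(\varepsilon q)$ is non-adjacent to $2$ in $GK(G)$, hence in $GK(S)$. By the adjacency criterion for exceptional groups, this forces $k_6(\varepsilon q)$ to divide $k_i(u)$ with $i\in\{15,20,24,30\}$ for $E_8(u)$ and $i\in\{7,9,14,18\}$ for $E_7(u)$, giving $q^2/4<2u^8$ and $q^2/4<2u^6$ respectively. These sharper bounds contradict the exponent lower bounds for every $u$, so no residual cases remain for $E_7$ or $E_8$. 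Similarly, for ${}^2F_4(u)$ the non-adjacency restricts $k_6(\varepsilon q)$ to divide one of $u^2-u+1$, $u^2\pm\sqrt{2u^3}+u\pm\sqrt{2u}+1$; bounding these by $4u^2/3$ (for $m\geqslant 2$) leaves only $u\in\{8,32\}$, which are then dispatched via Lemma~\ref{l:k3}.

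Your route leaves $E_7(2)$ with roughly a dozen values of $q$ to check individually (incidentally, your cutoff should be $q\leqslant 83$ rather than $81$, since $16\cdot 83^2/51<2187$), and ${}^2F_4$ with $u\in\{8,32,128,512\}$ rather than just $\{8,32\}$. These extra checks are all feasible and your sketch of how to handle them is sound, but the non-adjacency-to-$2$ observation is what makes the paper's proof clean and case-free for $E_7$ and $E_8$.
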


\begin{proof} Assume the contrary. We show that the condition $k_6(\varepsilon q)\in \omega(S)$ implies that $\gexp(G)<\gexp(S)$, at least for sufficiently large $u$.

Let $S=E_8(u)$. Then  $k_6(\varepsilon q)$ divides $k_i(u)$ for some $i\in\{15, 20, 24, 30\}$, and therefore
$q^2/4\leqslant k_6(\varepsilon q)<2u^8$. By Lemma \ref{l:exp_e}, it follows that 
$$\gexp(G)<q^9<8^{9/2}u^{36}<2^{14}u^{36}< 2u^{80}<\gexp(S).$$

Let $S=E_7(u)$. Then $k_6(\varepsilon q)\leqslant k_i(u)$,
where $i\in\{7,9,14,18\}$, and so $q^2/4<2u^6$. Hence
$$\gexp(G)<q^9<8^{9/2}u^{27}<2^{14}u^{27}< 3u^{48}<\gexp(S).$$

Let $S={}^2F_4(u)$, where $u=2^{2m+1}$ and $m\geqslant 1$. Then $k_6(\varepsilon q)$ divides one of the numbers $u^2-u+1$, $u^2\pm\sqrt{2u^3}+u\pm\sqrt{2u}+1$. Assume that $m\geqslant 2$. 
Then $u^2/\sqrt{2u^3}\geqslant 4$, and so
$$u^2+\sqrt{2u^3}+u+\sqrt{2u}+1\leqslant 4^{2m+1}+4^{2m}+\dots+1=(4^{2m+2}-1)/3<4u^2/3.$$  It follows from Lemma \ref{l:k3} that
$16q^2/51<k_6(\varepsilon q)<4u^2/3$, and hence $q<(17/4)^{1/2}u$.  If $\gexp(S)\leqslant \gexp(G)$, then applying  Lemma \ref{l:exp_e} yields
$$16u^{9}(u-1)/3<\gexp(S)\leqslant \gexp(G)< q^9<(17/4)^{9/2}u^9,$$ whence $u\leqslant 3\cdot (17/4)^{9/2}/16<127$, and so $u\leqslant 2^{5}$.

Now let $u=2^3, 2^5$ and find the numbers that can be equal to $k_6(\varepsilon q)$. Recall that $k_6(\varepsilon q)\geqslant 19$ and every prime divisor of this number is congruent to 1 modulo $3$.
These numbers are $19$, $37$, $109$ if $u=2^3$ and $331$, $61$, $13\cdot 61$, $1321$ if $u=2^5$. By Lemma \ref{l:k3}(iv,v), we see that either $u=2^3$ and $q=7$, or $u=2^5$ and $q=31$.
We have $13\in\pi(S)\setminus\pi(G)$ in the first case and $11\in\pi(S)\setminus\pi(G)$ in the second. This contradiction completes the proof.

\end{proof}

Thus the proof of Theorem \ref{t:2} is complete. We now prove Theorem \ref{t:1}. As already mentioned in Introduction, if $p=2$ or $q=3$, then the statement of Theorem \ref{t:1} follows from 
\cite{12Sta.t} and \cite{97ShiTan} respectively.
For all other $q$, Lemma \ref{l:str} implies that a group $G$ isospectral to $L$ has the only nonabelian composition factor $S$. This factor is not an alternating or sporadic group by \cite[Theorems
1 and 2]{09VasGrMaz.t}, and applying Theorem \ref{t:2}, we have that $S$ is a group of Lie type in characteristic $p$. Then \cite[Theorem 2]{15VasGr1} asserts that $S$ is isomorphic to $L$. By
\cite[Theorem 1.1]{15Gr}, the solvable radical of $G$ is trivial, and so $G$ is an almost simple group with socle $L$. Now applying \cite[Theorem 1]{16Gr.t} and \cite[Theorem 1]{18Gr.t}
completes the proof.

%\section*{Acknowledgements}

\end{document}